\theoremstyle{plain} 
\newtheorem{theorem}{Theorem}[section]
\newtheorem{lemma}[theorem]{Lemma}
\newtheorem{proposition}[theorem]{Proposition}
\newtheorem{corollary}[theorem]{Corollary} 
\newtheorem{definition}[theorem]{Definition} 
\theoremstyle{remark}
\newtheorem{remark}[theorem]{Remark}
\newtheorem{example}[theorem]{Example}
\newtheorem*{theorem*}{Theorem}
\newtheorem*{lem*}{Lemma}
\newtheorem*{sublem*}{Sublemma}
\newtheorem*{remark*}{Remark}
\newtheorem*{NB*}{NB}
\definecolor{antiquebrass}{rgb}{0.8, 0.58, 0.46}
\definecolor{armygreen}{rgb}{0.29, 0.33, 0.13}
\definecolor{bole}{rgb}{0.47, 0.27, 0.23}
\newcommand{\C}{  \mathbb{C}   }
\newcommand{\Z}{  \mathbb{Z}   }
\newcommand{\N}{  \mathbb{N}   }
\newcommand{\R}{  \mathbb{R}}
\newcommand{\PP}{  \mathbb{P}\,}
\newcommand{\bJ}{ \,\mathbb J  }
\newcommand{\Q}{  \mathbb{Q}   }
\newcommand{\T}{  \mathbb{T}   }
\newcommand{\bA}{  \mathbb{A}   }
\newcommand{\A}{  \mathcal{A}   }
\newcommand{\M}{  \mathcal{M}   }
\newcommand{\F}{  \mathcal{F}   }
\newcommand{\Ca}{  \mathcal{C}   }
\newcommand{\J}{  \mathcal{J}   }
\newcommand{\E}{  \mathcal{E}   }
\newcommand{\NF}{  \mathcal{NF}   }
\renewcommand{\H}{  \mathcal{H}   }
\renewcommand{\O}{  \mathcal{O}   }
\newcommand{\B}{  \mathcal{B}   }
\newcommand{{\Tc}}{  \mathcal{T}   }
\newcommand{\Rc}{  \mathcal{R}   }
\newcommand{\Cc}{  \mathcal{C}   }
\newcommand{\D}{  \mathcal{D}}
\renewcommand{\P}{  \mathcal{P}   }
\renewcommand{\L}{  \mathcal{L}   }
\newcommand{\Omad}{  \Omega_{\text{adm}}}
\newcommand{\Omsad}{  \Omega_{\text{s\_adm}}}
\newcommand{\cA}{  \mathcal{A}   }
\newcommand{\cB}{  \mathcal{B}   }
\newcommand{\cC}{  \mathcal{C}   }
\newcommand{\cM}{  \mathcal{M}   }
\newcommand{\cT}{  \mathcal{T}   }
\newcommand{\cZ}{  \mathcal{Z}   }
\newcommand{\Da}{  \mathcal{D}_{c_1}}
\newcommand{\Db}{  \mathcal{D}_{c_2}}
\newcommand{\bB}{  \mathbf{B}   }
\newcommand{\bS}{  \mathbf{S}   }
\newcommand{\Tg}{  {\mathbf T}_\rho  }
\newcommand{\fJ}{  \mathfrak J  }
\newcommand{\zz}{\mathfrak z}
\newcommand{\eps}{\varepsilon}
\newcommand{\om}{  \omega   }
\newcommand{\Om}{  \Omega}
\newcommand{\tl}{  \theta_\ell }
\newcommand{\ga}{\gamma   }
\newcommand{\s}{  \sigma   }
\newcommand{\ka}{  \kappa   }
\newcommand{\ls}{  \lambda_s   }
\newcommand{\Am}{  \Lambda}
\newcommand{\la}{  \lambda_a   }
\newcommand{\lb}{  \lambda_b   }
\newcommand{\La}{  \Lambda_a   }
\newcommand{\Lb}{  \Lambda_b   }
\newcommand{\li}{  \lambda_i   }
\newcommand{\lj}{  \lambda_j   }
\newcommand{\lk}{  \lambda_k   }
\newcommand{\lel}{  \lambda_\ell   }
\newcommand{\yy}{\rho}
\renewcommand{\r}{  \rho   }
\newcommand{\vark}{  \varkappa   }
\newcommand{\vs}{  \varsigma    }
\newcommand{\de}{  \delta   }
\newcommand{\al}{  \alpha   }
\renewcommand{\phi}{  \varphi  }
\newcommand{\p}{\partial}
\newcommand{\bb}{\beta_{\#}}
\renewcommand{\(}{  \big(   }
\renewcommand{\)}{  \big)   }    
\newcommand{\sa}{  strongly admissible}
\newcommand{\no}{ \;\text{not}\;   }
\newcommand{\ap}{ a^{\prime\prime}   }
\newcommand{\aaa}{ a, a', a^{\prime\prime}  }
\newcommand{\an}{ \, \angle\,}
\newcommand{\ann}{ \, \angle\!\angle\,}
\newcommand{\ov}{  \overline  }
\newcommand{\DD}{  \tilde{Q}_{l}  }
\newcommand{\dd}{  \text{d}   }
\newcommand{\tkd}{  { K}^d   }
\newcommand{\tkn}{   K ^{n/d}  }
\newcommand{\cc}{\frac1{2\sqrt2}}
\def\der#1#2{\frac{d^{#1}\omega_{#2}}{dm^{#1}}}
\newcommand{\lsim}{  \lesssim   }
\newcommand{\gsim}{  \gtrsim   }
\def\ab#1{\left|#1\right|}
\def\aa#1{\left\Vert#1\right\Vert}
\newcommand{\diag}{\operatorname{diag}}
\newcommand{\meas}{\operatorname{meas}}
\newcommand{\dist}{\operatorname{dist}}
\newcommand{\Leb}{\operatorname{meas}}
\newcommand{\cAd}{\operatorname{ad}}
\newcommand{\Id}{\operatorname{Id}}
\newcommand{\cte}{ {\operatorname{ct.}  } }
\newcommand{\Cte}{ {\operatorname{Ct.}  } }
\newcommand{\Haus}{ {\operatorname{Hausdorff}  } }
\newcommand{\be}{\begin{equation}}
\newcommand{\ee}{\end{equation}}
\newcommand{\ben}{\begin{equation*}}
\newcommand{\een}{\end{equation*}}
\newcommand{\ban}{\begin{align*}}
\newcommand{\ean}{\end{align*}}
\numberwithin{equation}{section}
\author{L. Hakan   Eliasson}
\address{
Univ. Paris Diderot, Sorbonne Paris Cit\'e\\
Institut de Math\'emathiques de Jussieu-Paris rive gauche, UMR 7586\\
CNRS\\
Sorbonne Universit\'es, UPMC Univ. Paris 06\\
F-75013, Paris, France
} 
\email{hakan.eliasson@imj-prg.fr}
 \author{ Beno\^it Gr\'ebert}
\address{Laboratoire de Math\'ematiques Jean Leray, Universit\'e de Nantes, UMR CNRS 6629\\
2, rue de la Houssini\`ere \\
44322 Nantes Cedex 03, France}
\email{benoit.grebert@univ-nantes.fr}
\author{ Sergei B. Kuksin }
\address{
CNRS\\
Institut de Math\'emathiques de Jussieu-Paris rive gauche, UMR 7586\\
Univ. Paris Diderot, Sorbonne Paris Cit\'e\\
Sorbonne Universit\'es, UPMC Univ. Paris 06\\
F-75013, Paris, France
}
\email{sergei.kuksin@imj-prg.fr}
\title[KAM for the nonlinear beam equation]{KAM for the nonlinear beam equation.
}
\begin{document}

\begin{abstract}
In this paper we  prove a KAM theorem for small-amplitude solutions of 
 the non linear beam equation on the d-dimensional torus
$$u_{tt}+\Delta^2 u+m u + \partial_u G(x,u)=0\ ,\quad  t\in {  \mathbb{R}}
, \; x\in \ {  \mathbb{T}}^d, \qquad \qquad
(*)
$$
 where $G(x,u)=u^4+ O(u^5)$.  Namely, we show  that,
for generic $m$, many of the small amplitude invariant finite dimensional  tori of the linear equation $(*)_{G=0}$,
written as the system
$$
u_t=-v,\quad v_t=\Delta^2 u+mu,
$$
persist as invariant tori  of the nonlinear equation  $(*)$, re-written similarly. 
The persisted tori are filled in with time-quasiperiodic solutions of  $(*)$. 
If $d\ge2$, then not all the persisted tori are  linearly stable,
and  we construct explicit examples  of partially hyperbolic  invariant tori.
The unstable invariant tori, situated in the vicinity of the origin, create around 
them   some local instabilities, in agreement with the popular belief  in the nonlinear 
physics that 
small-amplitude solutions of  space-multidimensional Hamiltonian
PDEs behave in a chaotic way. 

 \end{abstract}

\subjclass{37K55, 70H08, 70H09, 70K25, 70K43, 70K45, 74H40, 74K10}
\keywords{Beam equation, KAM theory, Hamiltonian systems, stable solutions, unstable solutions.
}

\maketitle
\tableofcontents

\section{Introduction}
\subsection{The beam equation  and  KAM for PDE's} \label{s_1.1}
The paper deals with small-amplitude solutions of 
 the multi-dimensional nonlinear 
 beam equation on the torus:
\be \label{beam}u_{tt}+\Delta^2 u+m  u =   -  g(x,u)\,,\quad u=u(t,x), \ 
  t\in \R, \ x\in \T^d=\R^d/(2\pi\Z)^d,
\ee
 where  $g$ is a real analytic function of $x\in\T^d$ and of $u$ in the vicinity of the origin in 
 $\R$. We shall consider  functions $g$ of the form
 \begin{equation}  \label{g}
 g=\partial_u G,\quad G(x,u)=u^4+O(u^5).
 \end{equation}
 The polynomial $u^4$ is the {\it main part} of $G$ and $O(u^5)$ is its {\it higher order  part}.  $m$ is the mass parameter and we assume that $m\in[1,2]$. 
 
 This equation is interesting by itself. Besides, it is a good model for the Klein--Gordon equation 
 \be\label{KG}
 u_{tt} - \Delta u+mu=-\partial_u G(x,u),\qquad x\in\T^d, 
 \ee
 which is among the most important equations of mathematical physics. We feel confident that the ideas and methods 
 of our work apply -- with additional technical efforts -- to  eq.~\eqref{KG} (but the situation with the nonlinear wave 
 equation \eqref{KG}${}_{m=0}$, as well as with the zero-mass beam equation, may be quite different).
 
 Our goal is to develop a general KAM-theory for small-amplitude solutions of \eqref{beam}.  To do this we compare them 
     with time-quasi-periodic solution of the linearised at zero equation 
  \be\label{linear}
  u_{tt} +\Delta^2 u+ mu=0\,.
  \ee
 Decomposing real functions $u(x)$ on $\T^d$ to Fourier series 
 $$
 u(x)= \sum_{a\in\Z^d} u_a e^{{\bf i}\langle a,  x\rangle }\ +\text{c.c.}
 $$
 (here c.c. stands for  ``complex conjugated"), we write time-quasiperiodic solutions for \eqref{linear}, corresponding 
 to a finite set of excited wave-vectors $\A \subset \Z^d $,  as
 \be\label{sol}
 u(t,x) = \sum_{a\in\A} (\xi_a e^{{\bf i}\lambda_a t}+ \eta_ae^{-{\bf i}\lambda_a t}) e^{{\bf i}\langle a,  x\rangle }
 + \text{c.c.},
 \ee
 where $\lambda_a = \sqrt{|a|^4+m}\,$.   We examine these solutions and their perturbations in eq.~\eqref{beam}
 under the assumption that the  action-vector 
 $
I= \{\tfrac12( |\xi_a|^2 +|\eta_a|^2),\ a\in\A\}\ 
 $
 is small. 
  In our work this goal is achieved  provided that 
 
 \noindent
 - the  finite set $\A$ is typical in a probabilistic  sense;
  
 \noindent
 - the mass parameter $m$ does not belong to a certain set of zero measure.

  The linear stability of the obtained solutions for  \eqref{beam} is under control. If $d\ge2$, and $|\A|\ge2$, 
  then some of them are  linearly unstable. 
 \smallskip

 The specific choice of a Hamiltonian PDE with the mass parameter which we work with   -- the beam equation \eqref{beam} -- 
 is sufficiently arbitrary. This is simply the easiest non-linear space-multidimensional equation from mathematical
 physics for which we can perform our programme of the KAM-study of small-amplitude solutions in space-multidimensional 
 Hamiltonian PDEs, and obtain for them the results, outlines above. 
 \smallskip
 
 Before to give exact statement of the result, we discuss the state of affairs in the KAM for PDE theory. The theory 
 started in late 1980's and originally applied to 1d Hamiltonian PDEs, see in \cite{K87, K93, Cr}. The first works 
 on this theory  treated 
 
 a) perturbations of linear Hamiltonian PDE, depending on a vector-parameter of the dimension, equal to 
  the number  of frequencies of the unperturbed quasiperiodic solution of the linear system (for solutions \eqref{sol} this is
 $|\A|$). Next the theory was applied to 
 
 b) perturbations of integrable Hamiltonian PDE, e.g.  of the KdV or Sine-Gordon equations, see \cite{K00}. In paper \cite{BoK} 
 
 c) small-amplitude solutions of the 1d Klein-Gordon equation \eqref{KG} with $G(x,u)=-u^4+O(u^4)$ 
  were treated as perturbed  solutions of the Sine-Gordon equation,  and a singular version of the KAM-theory b) was developed to study them. 
 (Notice that for suitable $a$ and $b$ we   have $mu-u^3+O(u^4) = a\sin bu+O(u^4)$. So the 1d equation \eqref{KG} is the Sine-Gordon equation,   perturbed by a small term $O(u^4)$.)

  It was proved in \cite{BoK}   that for a.a. values of $m$ and for 
  any finite set $\A$  most of the  small-amplitude solutions \eqref{sol} for the 
  linear Klein-Gordon equation (with $\lambda_a=\sqrt{|a|^2+m}$) persist as linearly stable time-quasipe\-rio\-dic 
  solutions for \eqref{KG}. In \cite{KP} it was realised that it can be fruitful in 1d
  equations like   \eqref{KG}, just as it is in finite-dimensional Hamiltonian systems  (see for example \cite{E88}), to study small solutions not as perturbations of solutions for an integrable PDE, but rather as perturbations of solutions 
  for a Birkhoff--integrable system, after the equation is normalised by a Birkhoff transformation. The paper \cite{KP} deals not with 
  1d Klein-Gordon
   equation \eqref{KG}, but with 1d NLS equation, which is similar to \eqref{KG} 
   for the problem under discussion; in \cite{P} the method of \cite{KP} was applied to the 1d equation \eqref{KG}.
    The approach of \cite{KP}   turned out to be very efficient and later was used for many other 1d Hamiltonian PDEs.  
     In \cite{GY06b} it was applied to the $d$-dimensional beam equation \eqref{beam} with an $x$-independent 
     nonlinearity $g$    and allowed to treat perturbations of some special solutions \eqref{sol}.
     \smallskip
    
  Space-multidimensional KAM for PDE theory started  10 years later with the paper \cite{B1} and, next, publications 
  \cite{B2}
  and \cite{EK10, EK09}. The just mentioned works deal with perturbations of 
  parameter-depending linear equations (cf. a)\,). The approach of 
  \cite{EK10, EK09} is different from that of \cite{B1, B2} and allows to analyse the linear stability of the obtained KAM-solutions. 
  Also see \cite{BB12, BB13}.  Since integrable space-multidimensional PDE (practically) do not exist, then no 
  multi-dimensional analogy of the 1d theory b) is available. 
  
  Efforts to create space-multidimensional analogies of the KAM-theory c) were made in \cite{WM} and \cite{PP1, PP2}, using the
  KAM-techniques of \cite{B1, B2} and \cite{EK10}, respectively. Both works deal with the NLS equation. Their main 
  disadvantage compare to the 1d theory c) is severe restrictions on the finite set $\A$ (i.e. on the class of unperturbed solutions 
  which the methods allow to perturb).
  The result of \cite{WM} gives examples 
  of some sets $\A$ for which the KAM-persistence of the corresponding small-amplitude solutions \eqref{sol} holds,
  while the result of \cite{PP1, PP2} applies to solutions \eqref{sol}, where the set $\A$ is nondegenerate in certain very 
  non-explicit way. The corresponding 
   notion of non-degeneracy is so complicated that it is  not easy to give  examples of 
  non-degenerate sets $\A$. 
  
  Some KAM-theorems for small-amplitude solutions of multidimensional beam equations \eqref{beam}   
  with typical $m$   were obtained in
  \cite{GY06a, GY06b}. Both works treat equations  with a constant-coefficient nonlinearity 
  $g(x,u)=g(u)$, which  is significantly easier than the general case (cf. the linear theory, where constant-coefficient
  equations may be integrated by the Fourier method). Similar to \cite{WM, PP1, PP2}, the theorems of  \cite{GY06a, GY06b}
  only allow to perturb solutions \eqref{sol} with very special sets $\A$ (see also Appendix B). Solutions of \eqref{beam}, constructed in these works, 
  all are  linearly stable.

 \subsection{Beam equation in real and complex variables}\label{s_complex}
 Introducing $v=u_t\equiv\dot u$ we rewrite 
 \eqref{beam} as 
\be\label{beam'}
 \left\{\begin{array}{ll}
 \dot u &= - 
 v,\\
 \dot v &=\Lambda^2 u    +g(x,u)\,,
\end{array}\right.
\ee
where $\Lambda=(\Delta^2+m)^{1/2}$. 
Defining 
 $
 \psi(t,x) =\frac 1{\sqrt 2}(\Lambda^{1/2}u  +{\bf i}\Lambda^{-1/2}v) $
 we get for the complex function 
  $\psi(t,x)$ the equation
$$
\frac 1{\bf i}\dot \psi =\Lambda \psi+ \frac{1}{\sqrt 2}\Lambda^{-1/2}g\left(x,\Lambda^{-1/2}\left(\frac{\psi+
\bar\psi}{\sqrt 2}\right)\right)\,.
$$
Thus, if we endow the space   $L^2(\T^d, \C)$ with the standard  real symplectic structure,  given by the two-form
$\ 
-{\bf i}d\psi\wedge d\bar \psi,
$
then equation 
 \eqref{beam} becomes a Hamiltonian system 
$$\dot \psi={\bf i} \,{\partial h} /{\partial \bar\psi}
$$
with the Hamiltonian function
$$
h(\psi,\bar\psi)=\int_{\T^d}(\Lambda \psi)\bar\psi \dd x +\int_{\T^d}G\left(x,\Lambda^{-1/2}\left(\frac{\psi+\bar\psi}{\sqrt 2}\right)\right)\dd x.
$$
The linear operator $\Lambda$ is diagonal in the complex Fourier basis  
$$
\{e_a(x)= {(2\pi)^{-d/2}}e^{{\bf i}\langle a,  x\rangle }, \ a\in\Z^d\}.
$$
Namely, 
$$
\Lambda e_a=\lambda_a e_a,\;\;\lambda_a= \sqrt{|a|^4+m},
\qquad  \forall\,a\in\Z^d\,.
$$

Let us decompose $\psi$ and $\bar\psi$  in the   basis $\{e_a\}$:
$$
\psi=\sum_{a\in\Z^d}\xi_a e_a,\quad \bar\psi=\sum_{a\in\Z^d}\eta_a e_{-a}\,.
$$
Let
\be\label{change}
\left\{\begin{array}{l} 
p_a=\frac1{\sqrt2}(\xi_a+\eta_a) \\
q_a=\frac{{\bf i}}{\sqrt2}(\xi_a-\eta_a)
\end{array}\right.\ee
and denote by $\zeta_a$ the pair $(p_a,q_a)$. 
\footnote{\  $\zeta_a$ will be considered as a line-vector or a colon-vector
according to the context.}

We fix any $m_*>d/2$ and define the Hilbert space
\be\label{YC}
Y = \{\zeta=(p,q)\in \ell^2(\Z^d,\C)\times\ell^2(\Z^d,\C) \mid 
\aa{\zeta}^2=\sum_a \langle a\rangle^{2m_*} |\zeta_a|^2  <\infty \}\,,
\ee
-- $\langle a\rangle =\max(1, |a|)$ -- corresponding to the decay of Fourier coefficients of complex functions 
$(\psi(x), \bar\psi(x))$ from the Sobolev space 
 $ H^{m_*}(\T^d, \C^2)$. A vector $\zeta\in Y$ is called {\it real} if all its components are real.
 
 Let us endow $Y$ with the 
symplectic structure 
\be\label{J}
\big(dp \wedge dq\big) (\zeta,\zeta')= \sum_a \langle J\zeta_a,\zeta'_a\rangle,\quad
J=\left(\begin{array}{cc} 0&1\\-1&0\end{array}\right)\,,
\ee
 and  consider there the Hamiltonian system
\be \label{beam2} 
\dot\zeta_a=J\frac{\partial h}{\partial \zeta_a},\quad a\in\Z^d\,,
\ee
where the Hamiltonian function $h$ equals the quadratic part
\be\label{H2} 
h_2=\frac12 \sum_{a\in\Z^d}\lambda_a (p^2_a+q^2_a)\ee
plus the higher order term
\be\label{H1}
h_{\ge4}=
\int_{\T^d}G\left(x,\sum_{a\in\Z^d}\frac{(p_a-{\bf i}q_a) e_a+(p_{-a}+{\bf i}q_{-a}) e_a}{2\sqrt{\lambda_a}}\right)\dd x.\ee
The  beam equation \eqref{beam'}, considered in the Sobolev space $\{(u,v) \mid(\psi, \bar\psi) \in H^{m_*}\}$, is 
 equivalent to the  Hamiltonian system \eqref{beam2}.

We will  write the Hamiltonian $h$ as 
\be\label{PPP}
h = h_2+h_{\ge4}
= h_2+h_4+h_{\ge5}\,,
\ee
where 
 \be\label{quatr}
 h_4=\int_{\T^d}u^4\dd x= \int_{\T^d}\left(\sum_{a\in\Z^d}\frac{(p_a-{\bf i}q_a) e_a+(p_{-a}+{\bf i}q_{-a}) e_a}{2\sqrt{ \lambda_a}}\right)^4\dd x,
 \ee
 $h_{\ge5} = O(u^5)$ comprise the remaining higher order terms and $ h_{\ge4} = h_4+h_{\ge5}$. 
  Note that $h_4$ 
 satisfies the {\it zero momentum condition}, i.e. 
 $$
 h_4=\sum_{a,b,c,d\in\Z^d}C(a,b,c,d) (\xi_a+\eta_{-a})
( \xi_b+\eta_{-b}) 
(\xi_c+\eta_{-c})
(\xi_d+\eta_{-d})\,,
 $$
 where $C(a,b,c,d)\ne0$ only if $a+b+c+d=0$.  This  condition turns out to  be  useful to restrict the set of small divisors that  have to be controlled.   If the function $G$ does not depend on $x$, then $h$ satisfies a similar property at any order.
 
 \subsection{Invariant tori and admissible sets}
 The quadratic Hamiltonian $h_2$ (which is $h$ when $G=0$ in \eqref{beam}) is integrable and its phase-space is foliated into
(Lagrangian or isotropic)  invariant tori. Indeed, take  a finite  subset  $\A\subset\Z^d$ and let
 $$
\L = \Z^d\setminus \A\,.$$
 For any subset $X$ of $\Z^d$, consider the projection
$$\pi_X:(\C^2)^{\Z^d}\to (\C^2)^{X}=\{\zeta\in (\C^2)^{\Z^d}: \zeta_a=0\ \forall a\notin X\}.$$
We can thus write $(\C^2)^{\Z^d}=(\C^2)^{X}\oplus (\C^2)^{\Z^d\setminus X}$,
$\zeta=(\zeta_X,\zeta_{\Z^d\setminus X})$,
and when $X$ is finite this gives an injection
$$
(\C^2)^{\#X}\hookrightarrow (\C^2)^{\Z^d}$$
whose image is $ (\C^2)^{X}$. \footnote{\ we shall frequently, without saying, identify $ (\C^2)^{X}$ and $(\C^2)^{\#X}$}

For any real vector  with positive components $I_\A=(I_a)_{a\in\A}$, the $|\A|$-dimensional torus
\be\label{ttorus}
T_{I_\A}=
 \left\{\begin{array}{lll}
 p_a^2+q_a^2 =2I_a& p_a, q_a\in \R,\; &a\in \A\\
p_a=q_a=0& & a\in \L \,,
\end{array}\right.
\ee
is invariant under the flow of $h_2$. $T_{I_\A}$ is the image of the torus 
\be\label{ttorusbis}
\T^\A=\{r_\A=0\}\times \{\theta_a\in\T: a\in\A\}\times\{\zeta_\L=0\}\ee
under the  embedding
\be\label{embedding}
U_{I_\A}:\theta_\A\mapsto 
 \left\{\begin{array}{ll}
p_a-{\bf i}q_a=\sqrt{2I_a}  \, e^{{\bf i} \theta_a} &a\in \A\\
p_a=q_a=0 & a\in \L  \,,
\end{array}\right.
\ee
and the pull-back, by $U_{I_\A}$, of the induced flow is simply the translation
\be\label{inducedflow}
\theta_\A\mapsto \theta_\A+t\om_\A,\ee
where we have denoted  the translation vector (the tangential frequencies) by $\om_\A$, i.e. $\lambda_a=\om_a$ for $a\in\A$.
The  parametrised  curve
$$
 t \mapsto U_{I_\A}(\theta +t\omega)$$
is thus a  quasi-periodic solution of the beam equation \eqref{beam2} when $G=0$.

When $G\not=0$ the higher order terms in $h$ give rise to a perturbation of $h_2$  --  a perturbation that gets smaller, the smaller
is $I$. Our goal is to prove the persistency of the invariant torus $T^{\A}_I$, or, more precisely, of the invariant embedding $ U_I^\A$,  for most values of $I$ when the higher order terms are taken into account. The problem doing this for this model is two-fold. First the integrable Hamiltonian $h_2$ is completely degenerate in the sense of KAM-theory: the frequencies $\om_\A$ do not depend on $I$. One can try to improve  this  by adding to $h_2$ an integrable part of the Birkhoff normal form. This will,  in ``generic'' situations, correct this default. However, and that's the second problem, our model is far from ``generic'' since the eigenvalues $\{\la: a\in \Z^d\}$ are very resonant. This has the effect that the  Birkhoff normal form is not integrable, and therefore is difficult to use.

An important part of our analysis will be to show that this program can be carried out if we exclude a zero-measure set of masses $m$ and restrict the choice of $\A$ to {\it admissible} or {\it strongly admissible} sets.

\medskip

Let $|\cdot|$ denote the euclidean norm in $\R^d$. For  vectors $a,b \in\Z^d$ we define 
\be\label{ddd}
a \an b \quad \text{iff} \quad \#\{x\in \Z^d \mid |x|=|a| \;\text{and}\;
 |x-b| = |a-b|\} \le 2\,.\ee
Relation $a\an b$ means that 
the integer sphere of radius $|b-a|$ with the centre at $b$ intersects the integer sphere
$\{x\in\Z^d\mid |x|=|a|\}$  
in at most two points. 

\begin{definition}\label{adm}
A finite set $\A\in\Z^d$  is called admissible iff
$$
 a,b\in\A, \ a\ne b
\Rightarrow |a|\neq |b|\,.$$

An admissible set $\A$ is called strongly admissible iff
$$
 a,b\in\A, \ a\ne b
\Rightarrow a\an a+ b\,.
$$

\end{definition}
Certainly if $|\A|\le1$, then $\A$  is admissible, but for  $|\A| >1$  this is not true.
For $d\le2$ every admissible set is strongly admissible, but in higher dimension this is no longer true:
see  for example the set \eqref{AAA} in Appendix B. 

However, strongly admissible, and hence admissible sets are typical: see Appendix E for a precise formulation and proof
of this statement.


\medskip

We shall define a subset of $\L$, important for our construction:
\be\label{L+}
 {\L_f}=\{a\in\L \mid \exists\ b\in\A \text{ such that } |a|=|b|  \}.
\ee
Clearly $\L_f$ is a finite subset of $\L$. For example, if $d=1$ and $\A$ is admissible, then $\A\cap-\A\subset\{0\}$, so
if $d=1$,  then  ${\L_f} = -(\A\setminus\{0\})$.

 \subsection{The Birkhoff normal form}
 
In a neighbourhood of an invariant torus $T_{I_\A}$  
 we introduce (partial) action-angle variables $(r_\A,\theta_\A,\xi_\L,\eta_\L)$ by the relation 
\be\label{actionangle}
 \frac1{\sqrt2}(p_a-{\bf i}q_a)=\sqrt{I_a+r_a}  \, e^{{\bf i} \theta_a},\quad a\in\A.
\ee

These variables define a diffeomorphism from a neighbourhood of $\T^\A$ in (the Hilbert manifold)
\be\label{YCmanifold}
\C^{\cA}\times(\C/2\pi \Z)^{\cA}\times \pi_{\L}Y\ee
to a neighbourhood of $T_{I_\A}$ in $Y$. It is {\it real} in the sense that it gives real values to real arguments.

The symplectic structure on $Y$ is pull-backed to
\be\label{Jbis}
dr_\A\wedge d\theta_\A +d\xi_\L\wedge d\eta_\L,\ee
which endows the space \eqref{YCmanifold} with a symplectic structure.

In these variables $h$ will depend on $I$, but its integrable part $h_2$ becomes, up to an additive constant, 
$$
\sum_{a\in\A}\om_a r_a+  \frac12\sum_{a\in\L}\lambda_a (p_a^2+q_a^2)$$
which does not depend in $I$. 
\footnote{\  both $h_2$ and the higher order terms of $h$ depend on the mass $m$.} 
The Birkhoff normal form will provide us with an integrable part that does depend on $I$.
We shall prove

  \begin{theorem}\label{NFTl}
 There exists a zero-measure Borel set $\Cc\subset[1,2]$ such that for any $m\notin\Cc$, any
  admissible set $\A$,  any $c_*\in(0,1/2]$ and any analytic  nonlinearity  of the form \eqref{g}, 
 there exist $\nu_0>0$ and $\beta_0>0$ such that for any $0<\nu\le\nu_0$, $0<\bb\le\beta_0$ there exists an open set
  $Q \subset [\nu c_*,\nu]^\A$,
  $$\Leb([\nu c_*,\nu]^\A\setminus Q)\le  C\nu^{\#\A+\bb},$$
  and for every $I=I_\A\in Q$ there exists a real symplectic holomorphic 
 diffeomorphism $\Phi_{I} $, defined in a neighbourhood (that depends on $c_*$ and $\nu$) of  $\T^\A$
 such that
 \be\label{transff}
\begin{split}
& h\circ\Phi_I(r_\A, \theta_\A, p_\L, q_\L)=
\langle\Omega(I), r_\A\rangle +\frac12\sum_{a\in \L\setminus \L_f  } \Lambda_a(I) (p_a^2+q_a^2)+\\
&+  \frac12\sum_{b\in\L_f\setminus\F} \Lambda_b(I)    (p_b^2+q_b^2)
 +  \langle K(I)\zeta_\F,\zeta_\F\rangle + f_I(r_\A,\theta_\A, p_\L,q_\L)\,,
\end{split}
\ee
where $\F=\F_I$ is a (possibly empty) subset of $ \L_f$, has the following properties:

i) $\Omega(I)=\om_\A + M I$ and the matrix $M$ is invertible;  

ii) each  $\La(I)$, $a\in \L\setminus \L_f$,   is real and close to $\la$,
$$\ab{ \La(I)-\la }\le C\ab{I} \langle a\rangle^{-2};$$

iii) each  $\Lb(I)$, $b\in  \L_f\setminus \F$, is real and non-zero,
$$C^{-1}   \ab{I}^{1+ c\bb}\le |\Lb (I)  |\le  C\ab{I}^{1-  c\bb};$$

iv) the operator $K(I)$ is real symmetric and satisfies  $\| K(I)\|\le C \ab{I}^{1-c\bb}$.
The Hamiltonian operator $JK(I)$  is hyperbolic (unless $\F_I$ is empty), and the moduli of the real parts of its 
eigenvalues are bigger than $C^{-1} \ab{I}^{1+\bb}$.

v)  The function $f_I$ is much smaller than the quadratic part.

Moreover, all objects depend $C^\infty$ on $I$. 
 \end{theorem}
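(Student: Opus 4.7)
The plan is to follow the classical Birkhoff normal form scheme, adapted to the degenerate spectrum of $\Lambda$, and then to extract the non-degenerate integrable piece together with the normal-direction quadratic form, excluding a zero-measure set of masses and a small set of actions. First I would pass to the partial action-angle coordinates $(r_\A,\theta_\A,p_\L,q_\L)$ of \eqref{actionangle} in a complex neighbourhood of $\T^\A$, and Taylor-expand $h = h_2 + h_4 + h_{\ge 5}$ around $T_{I_\A}$. In these variables $h_2$ is the integrable piece $\langle\omega_\A,r_\A\rangle+\tfrac12\sum_{a\in\L}\lambda_a(p_a^2+q_a^2)$, while $h_4$ becomes a finite sum of monomials $e^{{\bf i}\langle k,\theta_\A\rangle}r_\A^\alpha\zeta_\L^\beta$ with coefficients depending analytically on $I$. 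The zero-momentum condition cuts the list of non-zero monomials to those whose wave-vectors sum to zero, which drastically restricts the set of small divisors to be inverted.

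Next I would construct a real analytic symplectic map $\Phi_I = \exp(X_\chi)$ by solving the homological equation $\{h_2,\chi\} = h_4 - Z_4$, where $Z_4$ is the resonant part. Each Fourier mode of $\chi$ is the ratio of the corresponding coefficient of $h_4$ to a small divisor $\langle k,\omega_\A\rangle + \sum_j\varepsilon_j\lambda_{b_j}$ with $b_j\in\L$ and $\varepsilon_j\in\{\pm1\}$; a second Melnikov estimate, valid for $m$ outside a Borel set $\Cc\subset[1,2]$ of zero Lebesgue measure, gives the arithmetic lower bound that makes $\chi$ converge on a suitable complex neighbourhood of $\T^\A$. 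Admissibility of $\A$ (the $\lambda_a$, $a\in\A$, being pairwise distinct) together with the definition of $\L_f$ force the resulting resonant part $Z_4$ to consist of a function $P(I,r_\A)$ of the actions alone, a diagonal piece $\tfrac12\sum_{a\in\L\setminus\L_f}\mu_a(I)(p_a^2+q_a^2)$, and finitely many small quadratic Hamiltonian blocks $\langle K_b(I)\zeta_{S_b},\zeta_{S_b}\rangle$ indexed by the equivalence classes $S_b = \{c\in\L_f : |c|=|b|\}$.

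The first piece gives $\Omega(I) = \partial_{r_\A}P(I,0) = \omega_\A + MI$, where $M$ is obtained by an explicit contraction of the Fourier expansion of $u^4$; a direct computation shows that $\det M(m)$ is a not-identically-zero real-analytic function of $m$, so after enlarging $\Cc$ we may assume $M$ is invertible, yielding (i). The second piece is diagonal and gives $\Lambda_a(I) = \lambda_a + \mu_a(I)$ with $|\mu_a(I)|\lesssim |I|\langle a\rangle^{-2}$, proving (ii). For each block $JK_b(I)$, whose entries are analytic in $I$ and vanish at $I=0$, a parameter-dependent diagonalisation splits its spectrum into purely imaginary eigenvalues $\pm{\bf i}\Lambda_b(I)$ (contributing to $\L_f\setminus\F_I$) and eigenvalues with non-zero real part (contributing to $\F_I$); the bounds in (iii)--(iv) then become quantitative statements on the eigenvalues of a matrix with known real-analytic dependence on $I$ and $m$.

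The measure estimate $\Leb([\nu c_*,\nu]^\A\setminus Q)\le C\nu^{\#\A+\bb}$ follows by removing from $[\nu c_*,\nu]^\A$ the finitely many semi-analytic subsets on which $\det M$, any $\Lambda_b(I)$, any real part of a hyperbolic eigenvalue, or any spectral gap needed to diagonalise some $JK_b$, falls below a suitable power of $\nu$; a Lojasiewicz-type bound for analytic functions then produces the claimed estimate. The remainder $f_I$ collects $h_{\ge 5}$, the quadratic-in-$r_\A$ and higher-degree remainders of $Z_4$, and the images of these under $\Phi_I$, and a direct norm computation yields the smallness in (v); the $C^\infty$ dependence on $I$ is inherited from the real-analyticity of $\chi$ and from the parameter-dependent spectral projections above. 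The main obstacle is the analysis of the blocks $JK_b$: verifying that for $m\notin\Cc$ and $I\in Q$ each block is either uniformly elliptic with $|\Lambda_b|\gtrsim|I|^{1+c\bb}$ or uniformly hyperbolic with real parts $\gtrsim|I|^{1+\bb}$ requires computing the leading $I$-dependence of $K_b$ explicitly from \eqref{quatr} and combining admissibility, zero-momentum, and non-trivial $m$-dependence to rule out persistent degeneracies.
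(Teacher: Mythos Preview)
Your outline follows the paper's scheme up to and including the Birkhoff step, but it contains a genuine gap in the description of the resonant part $Z_4$, and that gap propagates through the rest of the argument.

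The claim that admissibility of $\A$ forces $Z_4$ to be a function of the actions plus a diagonal piece on $\L\setminus\L_f$ plus quadratic blocks on the spheres $S_b=\{c\in\L_f:|c|=|b|\}$ is false. After the Birkhoff step the resonant quartic contains, besides the integrable terms, the monomials
\[
\sum_{(a,b)\in(\L_f\times\L_f)_+}\frac{\xi_{\ell(a)}\xi_{\ell(b)}\eta_a\eta_b+\eta_{\ell(a)}\eta_{\ell(b)}\xi_a\xi_b}{\lambda_a\lambda_b}
\quad\text{and}\quad
\sum_{(a,b)\in(\L_f\times\L_f)_-}\frac{\xi_a\xi_{\ell(b)}\eta_{\ell(a)}\eta_b}{\lambda_a\lambda_b},
\]
where $\ell(a),\ell(b)\in\A$ (see Lemma~4.7). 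In partial action--angle variables $\xi_{\ell(a)}=\sqrt{I_{\ell(a)}}\,e^{{\bf i}\theta_{\ell(a)}}$, so these terms are \emph{$\theta$-dependent} quadratic forms in $\zeta_\L$; a single Lie transform generated by $\chi$ cannot remove them, because they are genuinely resonant (their associated divisor vanishes identically). The paper deals with this by an additional, explicitly \emph{non-near-identity} symplectic change $\Psi$ (Section~4.4), of the reducibility type $\xi_a\mapsto\tilde\xi_a e^{{\bf i}\theta_{\ell(a)}}$, $I_\ell\mapsto I_\ell-\sum_{|a|=|\ell|}\tilde\xi_a\tilde\eta_a$, which absorbs the angles into the normal variables and produces the $\theta$-independent matrix $K(\rho)$ of \eqref{K}. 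Your outline does not contain this step.

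A second consequence is that your block decomposition of $K$ is wrong. After $\Psi$, the off-diagonal entries of $K$ couple $a$ and $b$ exactly when $(a,b)\in(\L_f\times\L_f)_\pm$, and by Lemma~4.6 this forces $|a|\neq|b|$. The invariant blocks of $JK$ are therefore the classes of the equivalence $\sim$ of \eqref{class}, which are \emph{transversal} to the spheres $\{|c|=\text{const}\}$, not contained in them. The spectral analysis in (iii)--(iv), in particular the separation of elliptic from hyperbolic eigenvalues and the lower bounds $|\Lambda_b|\gtrsim|I|^{1+c\beta_\#}$, is carried out block by block for this equivalence (Sections~5.1--5.3, Lemmas~5.2 and~5.3); it uses that each block has size $\le\#\A$, that $K(\rho)$ is polynomial in $\sqrt\rho$, and a comparison with the diagonal case at a corner $\rho_*$ of the cube. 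Your analysis on the spheres $S_b$ would not see the off-diagonal coupling at all and would miss the hyperbolic eigenvalues entirely.
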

 
  This result is proven in Part II.  For a more precise formulation, giving in particular the domain of definition of  $\Phi_I$, the smallness in $f_I$ and estimates of the derivatives with respect to $I$, see Theorem~\ref{NFT}. The matrix $M$ is explicitly defined in \eqref{Om}, and 
 the functions $\La$ are explicitly defined in \eqref{Lam}.  An interesting information is that the mapping $\Phi_I$ and the domain $Q$
 only depend on $h_2+h_4$, and that the set $\F_I$ is empty on some connected components of $Q$.

\subsection{The KAM theorem}

The Hamiltonian $h_I\circ\Phi_I$ \eqref{transff} is much better than $h_I$ since its integrable part depends on $I$ in a non-degenerate
way  because $M$ is invertible. Does the invariant torus \eqref{ttorusbis}
persist under the perturbation $f_I$? \dots and, if so, is the persisted torus reducible? 

In finite dimension the answer is yes under very general conditions  --  for the first proof in the purely elliptic case see \cite{E88}, and for a more general case see \cite{GY99}. These statements say that, under general conditions, the invariant torus persists and remains reducible under sufficiently small perturbations for a subset of parameters of large Lebesgue measure. 

In infinite dimension the situation is more delicate, and results can only be proven under quite severe restrictions
on  the normal frequencies $\Lambda_a$; see the discussion above in Section~\ref{s_1.1}. 
  A result for the beam equation (which is a simpler model than the 
Schr\"odinger  and  wave equations)
was first obtained in \cite{GY06a} and  \cite{GY06b}. Here we prove  a KAM-theorem which 
improves  on these results
  in at least  two respects:
\begin{itemize}
\item
We have imposed no ``conservation of momentum''  on the perturbation, which allows us to treat equations \eqref{beam} with 
 $x$-dependent nonlinearities $g$. This   has the effect that our normal form is not diagonal in the purely elliptic directions. In this respect it resembles the normal form obtained in \cite{EK10} for the non-linear 
Schr\"odinger equation, and where the block diagonal form is the same.
\item
We have a finite-dimensional, possibly hyperbolic, component, whose treatment requires higher smoothness in the parameters.
\end{itemize}

The proof has the structure of a classical KAM-theorem carried out in a complex infinite-dimensional situation. The main part is,
as usual, the solution of the homological equation with reasonable estimates. The fact that the block structure is not
diagonal complicates a lot:  see for example, \cite{EK10} where this difficulty was also encountered. The iteration combines a finite linear iteration with a ``super-quadratic'' infinite iteration. This has become quite common in KAM and was also used 
in \cite{EK10}.

 A technical difference, with respect to  \cite{EK10}, is that here we use a different matrix norm
which has much better multiplicative properties. This simplifies a lot the functional analysis which is described in Part I.

A special difficulty in our setting is  that we are facing a  {\it singular perturbation problem}. The perturbation $f_I$ becomes small only by taking  $I$ small, but when $I$ gets smaller the integrable part becomes more degenerate. 
This is seen for example in the lower bounds for
$\Lb(I)$ and for  the real parts of the eigenvalues of $JK(I)$. 
So there is a competition between the smallness condition on the
perturbation and the degeneracies of the integrable part which requires quite careful estimates.

A KAM-theorem which is adapted to our beam equation is proven in Part III and formulated in Theorem~\ref{main} 
and its Corollary~\ref{cMain-bis}.

 \subsection{Small amplitude solutions for the beam equation}
 Applying to the normal form of Part II, the KAM theorem of Part III, we  in Part IV obtain the 
 main results of this work. To state them we  recall that a Borel subset ${\mathfrak J}\subset \R^{\A}_+$ is said to have 
  a {\it positive density}  at the origin if
 \be\label{posdens}
 \liminf_{\nu\to0}\frac{\meas(\fJ\cap \{x\in\R^{\A}_+ \ab{x}<\nu\})}{\meas\{x\in\R^{\A}_+\ab{x}<\nu\}} >0\,.
 \ee
 The set $\fJ$ has the {\it density one} at the origin if the $ \liminf$ above equals one (so the ratio of the measures
 of the two sets converges to one as $\nu\to0$). 
  
 \begin{theorem}\label{t72} There exists a  zero-measure 
 Borel set $\Cc\subset[1,2]$ such that for any\sa\  set $\A\subset\Z^d $,  any $m\notin\Cc$ and any analytic 
 nonlinearity  \eqref{g},  there exist constants $\aleph_1\in (0,1/16], 
 \aleph_2>0$, only depending on $\cA$ and $m$, and
 a set $\fJ=\fJ_\A\subset ]0,1]^{\A}$,  having  density one  at the origin, with the following property: 
 
 \noindent 
 There exist a constant $C>0$,   a real continuous    mapping
 $\ 
U'=U'_\A: \T^{\A}\times \fJ   \to Y,
 $ 
 analytic in the first argument,  satisfying
 \be\label{dist1}
  \big|\big| U'(\theta,I)-U_{I}(\theta)
  \big|\big| 
  \le C |I|^{1 -\aleph_1}\
  \ee
 (see \eqref{embedding}) 
 for all $(\theta,I)\in\T^\A\times \fJ$,  and a continuous  mapping
 $ \Om'=\Om'_\A : \fJ \to \R^{\A}$,
 \be\label{dist11}
 |  \Om'(I) -\om_\A - MI| \leq C |I|^{1+ \aleph_2}\,,
\ee
where the matrix  $M$ is the same as in \eqref{transff},  such that:

i) for any $I\in \fJ$ and $\theta\in\T^{\A}$  the  parametrised  curve
\be\label{solution}
 t \mapsto U'(\theta +t \Om'(I),I)
\ee
is a  solution of the beam equation \eqref{beam2}-\eqref{H1}, and,
 accordingly, the analytic torus $U'(\T^{\A},I)$ is invariant for this equation; 
 
 ii) the set $ \fJ$ may be written as a countable  disjoint union of compact sets $ \fJ_j$, 
 such that the restrictions of the mappings $U'$  and $\Om'$ to the sets $\T^{\A}\times \fJ_j$  are $C^1$ Whitney -smooth;
  
iii) the  solution \eqref{solution}  is linearly stable if and only if in \eqref{transff} the operator $K(I)$ is 
 trivial (i.e. the set  $\F=\F_I$ is non-empty). 
 The set of $I\in \fJ$ such that  $K(I)$ is trivial is always of positive measure, and it equals $\fJ$ if 
 $d=1$ or $| \A |=1$, but for $d\ge2$ and for some choices of the  set $\A$  its complement 
 has positive measure. 
 
   \end{theorem}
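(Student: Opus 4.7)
The plan is to deduce Theorem~\ref{t72} by composing two transformations: the Birkhoff normal form of Theorem~\ref{NFTl} (Part II) with the KAM iteration of Part III. First I would fix $m\notin\Cc$ (with $\Cc$ the zero-measure set from Theorem~\ref{NFTl}, possibly enlarged by another zero-measure set produced by the KAM step), a strongly admissible $\A$, and constants $c_*$, $\nu_0$, $\beta_0$ as supplied by that theorem. For a given small $\nu\le\nu_0$ and $\beta\le\beta_0$, applying Theorem~\ref{NFTl} to $h=h_2+h_4+h_{\ge5}$ gives an open set $Q\subset[\nu c_*,\nu]^\A$ of relative measure at least $1-C\nu^\bb$, together with a family of real symplectic diffeomorphisms $\Phi_I$ putting $h$ into the form \eqref{transff}. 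On this normal form the integrable part has tangential frequencies $\Omega(I)=\omega_\A+MI$ that depend on $I$ through the \emph{invertible} twist matrix $M$, block-diagonal elliptic normal frequencies $\Lambda_a(I)$ for $a\in\L\setminus\L_f$, and a finite hyperbolic-or-elliptic block governed by $K(I)$ on $\F_I\subset\L_f$; the perturbation $f_I$ is much smaller than the quadratic normal part.

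Next I would feed this normal form into Theorem~\ref{main}/Corollary~\ref{cMain-bis} of Part III, treating $I\in Q$ as the external parameter. The hypotheses are satisfied: the twist condition holds because $M$ is invertible (item i); the separation and reality of the normal frequencies is ensured by items (ii)-(iv); and the smallness of $f_I$ relative to the quadratic part (item v, in the quantitative form of Theorem~\ref{NFT}) gives the required smallness of the perturbation. Standard KAM output produces a Borel subset $\fJ\subset Q$ (decomposable as a countable union of compact sets $\fJ_j$), a Whitney-$C^1$ family of corrected frequencies $\Omega'(I)$ satisfying $|\Omega'(I)-\Omega(I)|\le C|I|^{1+\aleph_2}$, and a real analytic (in $\theta$) embedding $\tilde U(\cdot,I):\T^\A\to$ the domain of $\Phi_I$ close to the trivial embedding $\theta_\A\mapsto(0,\theta_\A,0,0)$. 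Setting $U'(\theta,I):=\Phi_I(\tilde U(\theta,I))$ and pulling back the flow gives \eqref{solution}. The estimate \eqref{dist1} then follows because $\Phi_I$ maps the standard torus $\T^\A$ in the partial action-angle chart \eqref{actionangle} onto the torus $T_{I_\A}=U_I(\T^\A)$ of \eqref{embedding} up to an error of order $|I|^{1-\aleph_1}$ inherited from the corrections $\Phi_I-\mathrm{Id}$ and $\tilde U-U_I$; similarly \eqref{dist11} combines $\Omega(I)=\omega_\A+MI$ with the KAM frequency shift.

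For the density statement, I would estimate in $[\nu c_*,\nu]^\A$: the BNF step removes at most $C\nu^{|\A|+\bb}$, and the KAM step removes a subset of $Q$ whose measure is bounded by $C\nu^{|\A|+\aleph_2'}$ for some $\aleph_2'>0$, using the standard Diophantine conditions on $\Omega'(I)$ and the separation estimates from items (ii)-(iv). Taking the union of such subsets as $\nu$ runs over a geometric sequence accumulating at $0$ and normalising by the measure of $\{x\in\R^\A_+:|x|<\nu\}$, one checks that the relative measure of the complement of $\fJ$ tends to $0$, so $\fJ$ has density one at the origin. The strong admissibility of $\A$ is used, via \eqref{ddd}, to keep the set of small divisors that have to be excluded under control. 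For the stability assertion (iii), linearising the Hamiltonian flow of the KAM-reduced normal form along the invariant torus gives a constant-coefficient Hamiltonian system whose normal part is block-diagonal with purely imaginary eigenvalues $\pm\mathbf{i}\Lambda_a(I)$ on $\L\setminus\F_I$ and the Hamiltonian matrix $JK(I)$ on $\F_I$; by item (iv) the latter is hyperbolic with real spectrum bounded away from $0$ whenever $\F_I\neq\emptyset$. Thus linear stability is equivalent to $\F_I=\emptyset$. The claim that $\{I\in\fJ:\F_I=\emptyset\}$ has positive measure, and exhausts $\fJ$ when $d=1$ or $|\A|=1$, follows from the structure of $\F_I$ built in Theorem~\ref{NFT} (it is empty on an entire connected component of $Q$, and $\L_f=-(\A\setminus\{0\})$ when $d=1$, while $|\A|=1$ makes the fourth-order resonances forbidding hyperbolicity trivial); the existence in $d\ge2$ of admissible $\A$ giving positive measure for $\F_I\neq\emptyset$ is exhibited by the explicit examples of Appendix~B.

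The main obstacle is the \emph{singular} character of the perturbation: the normal frequencies $\Lambda_b(I)$ on $\L_f\setminus\F_I$ and the real parts of the hyperbolic eigenvalues of $JK(I)$ degenerate like $|I|^{1\pm c\bb}$ as $|I|\to 0$, while $f_I$ is small only because $|I|$ is small. Consequently the second Melnikov conditions involve small divisors that approach zero as the perturbation does, and the measure estimates as well as the tame estimates on the homological equation must be carried out with exponents carefully matched so that the KAM iteration still converges. This is precisely what the matrix norm and the combined finite-linear/super-quadratic iteration scheme of Part III are designed to handle, and it is where all the quantitative hypotheses of Theorem~\ref{NFTl} (in the sharper version Theorem~\ref{NFT}) must be used in an essential way.
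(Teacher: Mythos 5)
Your overall strategy — compose the Birkhoff normal form of Part~II with the KAM theorem of Part~III, and read off the embedding $U'=\Phi_I\circ\tilde U$ and frequency map $\Omega'$ — is the same as the paper's, and there is really no other sensible route. But there is one genuine gap and a couple of imprecisions worth flagging.

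\textbf{The density-one statement.} For a fixed $c_*\in(0,1/2]$, the normal-form parameter $\rho$ ranges over $[c_*,1]^{\A}$, so the corresponding actions $I=\nu\rho$ fill only the annular corner region $[\nu c_*,\nu]^{\A}$ of the octant $\R^\A_+$. Taking the union over a geometric sequence $\nu\downarrow 0$ \emph{with $c_*$ held fixed} therefore gives an invariant set whose relative density at the origin is bounded above by $(1-c_*)^{\#\A}<1$; you obtain only positive density, not density one. The paper's proof lets \emph{both} $c_*$ and $\nu$ range over a countable dense set in $(0,1]$ (subject to $\nu^\bb\le c_*$) and takes the union of the resulting sets $\nu Q'(c_*,\bb,\nu)$; since $\Leb\bigcup_{\nu}Q'(c_*,\bb,\nu)=(1-c_*)^{\#\A}$, sending $c_*\to 0$ is what pushes the density to $1$. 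This is exactly where strong admissibility matters: it is what guarantees (via Lemma~\ref{l_nond} and Hypothesis~A1, in particular \eqref{laequiv-bis}) that one can do the KAM step on the \emph{whole} cube $[c_*,1]^\A$ rather than only on the small region $\D_0^1\subset[0,c_0]^{n-1}\times[1-c_0,1]$ used for merely admissible sets; in the latter case one only gets Theorem~\ref{t73} with positive density.

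\textbf{Disjointization.} Related to the above, the sets $\nu Q'(c_*,\bb,\nu)$ for different pairs $(c_*,\nu)$ overlap, so to produce the countable family of \emph{disjoint compact} sets $\fJ_j$ of item~(ii) one must extract pairwise disjoint closed subsets that together still have full density; the paper does this explicitly, and it is needed for the Whitney-$C^1$ statement and for $U'$ to be well defined on $\fJ$. You should not skip this.

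\textbf{Minor imprecisions.} In the stability argument the hyperbolic block $JK(I)$ need not have real spectrum — the example in Appendix~B produces a quadruple $\{\pm{\bf i}\Lambda,\pm{\bf i}\bar\Lambda\}$; what item~(iv) of Theorem~\ref{NFTl} gives is that the \emph{real parts} of the eigenvalues are bounded away from zero, and one then has to check this persists under the small KAM correction of the normal form (the paper uses the bound $\|\frac1\nu JH'(\rho)-JK(\rho)\|\le\nu^{\aleph}$ for this). Also, the positive-measure claim for $\{I:\F_I=\emptyset\}$ is not automatic from "the structure of $\F_I$"; it rests on the explicit perturbation analysis near a vertex of the cube (Lemma~\ref{laK}), where the matrix $K(\rho_*)$ is diagonal with simple real eigenvalues, so that $\F_\rho=\emptyset$ on an entire connected component of $Q$ of positive measure. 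Finally, the hypothesis verification is not as automatic as you suggest: one must produce the KAM constants $c',\delta_0,\chi$ with the precise power laws $c',\delta_0\ge\nu^{1+\bb}$, $\chi,\xi\lesssim\nu^{1-\bb}$, $\eps\lesssim\nu^{3/2-\bb}$ and check the scaling condition \eqref{epsi-bis}; this is the content of Theorem~\ref{p_KAM}, and the Hypothesis~A2(iii) verification there uses the second-Melnikov/discriminant inequality \eqref{altern1} in an essential way, which is not captured by saying "items (ii)-(iv) ensure the separation."
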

   
 If  the set  $\A$ is admissible but not strongly admissible, 
 then a weaker version of the theorem above is true.

 \begin{theorem}\label{t73} 
 There exists a  zero-measure 
 Borel set $\Cc\subset[1,2]$ such that for any admissible set $\A\subset\Z^d $, any $m\notin\Cc$ and  any analytic 
 nonlinearity  \eqref{g},  there exist constants $\aleph_1\in (0,1/16],  \aleph_2>0$, only
  depending on $\cA$ and $m$, and
 a set $\fJ=\fJ_\A\subset ]0,1]^{\A}$,  having  positive density   at the origin, such that all assertions of Theorem~\ref{t72} are true.
  \end{theorem}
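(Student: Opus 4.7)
The plan is to combine Theorem~\ref{NFTl} with the KAM theorem of Part~III (Theorem~\ref{main} and its Corollary~\ref{cMain-bis}). Since $\A$ is admissible and $m\notin\Cc$, Theorem~\ref{NFTl} puts the Hamiltonian $h$ into the block-normal form \eqref{transff} on a neighbourhood of $\T^\A$, valid for every $I$ in an open set $Q \subset [\nu c_*, \nu]^\A$ whose relative complement has measure $O(\nu^{\bb})$. The resulting integrable part enjoys a non-degenerate twist $M$, normal frequencies $\La(I)$ with controlled asymptotics, and a finite-dimensional possibly hyperbolic block $JK(I)$ over $\F\subset \L_f$; the remainder $f_I$ is of strictly higher order than the quadratic block and depends $C^\infty$ on $I$.

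The next step is to rescale: write $I=\nu \yy$ with $\yy\in Q/\nu \subset [c_*,1]^\A$, and rescale the symplectic coordinates $r_\A$, $(p,q)_{\L\setminus\L_f}$, $(p,q)_{\L_f\setminus\F}$, $\zeta_\F$ by appropriate powers of $\nu$ so that (a) the quadratic blocks become comparable in each direction, (b) the twist term $\nu M\yy\cdot r$ drives a non-degenerate modulation by $\yy$, and (c) the transformed perturbation is of size $\nu^\sigma$ for some $\sigma>0$. Corollary~\ref{cMain-bis} applied to this one-parameter family indexed by $\yy$ yields a Cantor set $\fJ^{\nu}\subset Q/\nu$, and for every $\yy\in\fJ^{\nu}$ a real analytic embedding $\Psi_\yy:\T^\A\to Y$ together with a frequency vector $\tilde\Om'(\yy)$, such that $t\mapsto \Psi_\yy(\theta+t\tilde\Om'(\yy))$ solves the rescaled normal form. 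Undoing the rescaling and composing with $\Phi_I$ produces the maps $U'(\cdot,I)$ and $\Om'(I)$ of the statement; estimates \eqref{dist1} and \eqref{dist11} follow from the closeness of $\Phi_I$ and $\Psi_\yy$ to identity, with $\aleph_1,\aleph_2$ determined by the scaling exponents and by $\bb$.

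Setting $\fJ:=\bigcup_{k\ge k_0}\nu_k\,\fJ^{\nu_k}$ with $\nu_k=\nu_0 2^{-k}$, the positive density at the origin follows as soon as $\Leb(\fJ^{\nu_k})/\Leb(Q/\nu_k)$ is bounded below by a positive constant uniform in $k$: summing over dyadic annuli $\{c_*\nu_k\le |I|\le \nu_k\}$ yields \eqref{posdens}. The Whitney $C^1$ decomposition $\fJ=\bigsqcup_j \fJ_j$ of part~ii) is read off from the standard KAM output, the pieces $\fJ_j$ being the strata of the excluded resonances. Assertion~iii) is immediate from the block structure of the normal form: when $\F_I=\emptyset$ all normal directions are elliptic with $\La(I)\in\R\setminus\{0\}$, so the torus is linearly stable, while if $\F_I\ne\emptyset$ the block $JK(I)$ contributes eigenvalues with real parts bounded away from $0$ by item~iv) of Theorem~\ref{NFTl}, producing genuine hyperbolic directions.

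The main obstacle is the verification of the second Melnikov conditions in the measure estimates of the KAM iteration, which is a singular perturbation issue: the normal frequencies $\Lb(I)$ for $b\in\L_f\setminus\F$ and the eigenvalues of $JK(I)$ are themselves of order $|I|^{1\pm c\bb}$, so the divisors $\langle k,\Om'(I)\rangle\pm \La(I)\pm \Lb(I)$ are small to begin with and must be separated from zero with enough margin to absorb the perturbation. This requires the $C^\infty$-dependence of all quantities on $I$ and the invertibility of $M$ supplied by Theorem~\ref{NFTl}, but also crucially exploits the geometric input in Definition~\ref{adm}. In the strongly admissible case of Theorem~\ref{t72}, the condition $a\an a+b$ guarantees that the relevant pairs of normal frequencies vary with enough independent transversality in $I$ to yield a set $\fJ^\nu$ of relative measure tending to $1$. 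For merely admissible $\A$ as in Theorem~\ref{t73}, this transversality is degraded, and one can only secure a positive lower bound on $\Leb(\fJ^\nu)/\Leb(Q/\nu)$, uniform in $\nu$; this is exactly the source of the positive-density (rather than density-one) conclusion.
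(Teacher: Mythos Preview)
Your overall architecture is correct and matches the paper: Theorem~\ref{NFTl} (more precisely its refined form, Theorem~\ref{NFT}) puts $h$ into block-normal form, one checks that the unperturbed part satisfies the hypotheses A1--A3 of Section~\ref{ssUnperturbed}, then Corollary~\ref{cMain-bis} is applied on each ball $\D\subset Q$, and one assembles $\fJ$ from the resulting Cantor sets over a countable family of scales. The dyadic-in-$\nu$ union you propose is adequate for positive density (the paper actually ranges over rational $(c_*,\nu)$, which is needed for density one in Theorem~\ref{t72} but not here).

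However, your diagnosis of \emph{why} merely admissible $\A$ yields only positive density is not the paper's mechanism. It is not that the second Melnikov transversality is degraded so that the KAM excision removes a fixed positive fraction at each scale. What fails for admissible-but-not-strongly-admissible $\A$ is the non-degeneracy Lemma~\ref{l_nond}: when $r_1,r_2>M_0$ one can have $\Lambda^{r_1}_j\equiv\Lambda^{r_2}_k$ identically (see the $d=3$ example \eqref{AAA}), so the separation \eqref{K04} for pairs $\Lambda^j_{k_1}+\Lambda^r_{k_2}$ cannot be enforced on all of $Q$. This in turn means that hypothesis \eqref{laequiv-bis} of A1, namely $|\Lambda_a(\r)+\Lambda_b(\r)|\ge c'$ for $a,b\in\L_\infty$, cannot be verified for all $\r\in Q$ (the paper flags this as the \emph{only} hypothesis needing strong admissibility). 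The remedy is to restrict, \emph{before} running KAM, to the fixed subdomain $\D_0=\D_0^1$ of \eqref{DD}, on which Lemma~\ref{laK} supplies the uniform lower bound \eqref{aaa}. On $Q\cap\D_0$ all of A1--A3 hold and the KAM set still has relative measure tending to $1$ as $\nu\to0$; the positive-density constant comes from $\Leb(\D_0)\ge\tfrac12 c_0^{\#\A}$, not from a weaker KAM estimate.

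So your last paragraph should be rewritten: the singular-perturbation issue you describe (small normal frequencies of order $|I|^{1\pm c\bb}$) is genuine and is handled identically in both theorems; the distinction between Theorems~\ref{t72} and \ref{t73} lies entirely in the size of the parameter domain on which the unperturbed hypotheses can be checked, which is governed by whether Lemma~\ref{l_nond} applies in full.
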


  \begin{remark} \label{r_1}
  1) The torus $U_{I}(\T^\A,I)$ (see \eqref{ttorus}), 
   invariant for the linear beam equation \eqref{beam2}${}_{G=0}$, 
  is of  size $\sim\sqrt I$. The constructed invariant torus $U'_\A(\T^{\A},I)$
  of the nonlinear beam equation is its  small perturbation
     since by \eqref{dist1} the   Hausdorff   distance between $U'_{\A}(\T^{\A},I)$ and 
   $U_{I}(\T^\A)$ is smaller than $C |I|^{1 -2\aleph_1}\le C |I|^{7/8}$.

 2) Denote by $\cT_\A$ the image of the mapping $U'_\A$. This set is invariant for the beam equation
 and is filled in with its time-quasiperiodic solutions. By the item~ii) of Theorem~\ref{t72} its Hausdorff 
 dimension equals $2|\A|$. Now consider $\cT = \cup \cT_\A$, where the onion is taken over all 
 strongly admissible sets $\A\subset\Z^d$. This invariant set has infinite Hausdorff dimension. Some 
 time-quasiperiodic solutions of \eqref{beam}, lying on $\cT$, are linearly stable, while, if $d\ge2$, then
  some others are unstable.

     3) Our result applies to eq. \eqref{beam} with any $d$. Notice that   for $d$ sufficiently large the global in time 
 well-posedness of this equation is unknown. 
 
  4) The construction of  solutions \eqref{solution} crucially depends on certain equivalence relation in 
  $\Z^d$, defined in terms of the set $\A$ (see \eqref{class}). 
   This equivalence is trivial if $d=1$ or $|\A|=1$ and is non-trivial  otherwise. 
   
 5) We discuss in 
  Appendix~B examples of sets $\A$ for which the operator $K(I)$ is non-trivial  for certain values of $I$.

    6) The solutions \eqref{solution} of eq. \eqref{beam2},  written in terms of the $u(x)$-variable as solutions $u(t,x)$ of eq.~\eqref{beam}, 
  are $H^{m_*+1}$-smooth as functions of $x$ and analytic as functions of $t$. Here $m_*$ is
   a parameter of the   construction for which we can take any real number $>d/2$ (see \eqref{YC}).   The set $\fJ $ depends on
  $m_*$, so the assertion of the theorem does not imply immediately that the  solutions $u(t,x)$ 
    are  $C^\infty$--smooth in $x$. 
  Still, since 
    $$
    -(\Delta^2+m) u = u_{tt}+\partial_u G(x,u),
    $$
    where $G$ is an analytic function, then the theorems 
     imply by induction that the   solutions $u(t,x)$      define analytic curves
    $\R\to H^m(\T^d)$, for any $m$. In particular, they are smooth functions. 
  \end{remark}

 \noindent 
{\bf  Structure of text}
 The paper consists of Introduction and four parts.  Part~I comprises general techniques needed to read the paper. The main Parts~II-III
 are independent of each other, and the final Part~IV, containing the proofs of Theorems~\ref{t72},~\ref{t73}, 
  uses only the main theorems of Parts~II-III, and the intermediate results 
 are not needed to understand it. 
 
 \medskip
  
 \noindent 
{\bf Some notation and agreements.} 
 We denote a cardinality of a set $X$ as $|X|$ or as $\,\# X$. For $a\in\Z^N$ we denote $\langle a\rangle =\max(1, |a|)$.
 
In any finite-dimensional  space $X$ we denote by
$|\cdot|$ the Euclidean norm.  For subsets $X$ and $Y$ of a Euclidean space we denote 
$$
\underline{\text{dist}}\,(X,Y) = \inf_{x\in X, y\in Y} |x-y|\,,\qquad \text{diam}\,(X) = \sup_{x,y\in X}|x-y|\,.$$
The distance on a torus  induced by the  Euclidean distance (on the tangent space) will be denoted 
$|\cdot - \cdot|$.

For any matrix $A$, finite or infinite, we denote by ${}^t\!A$ the transposed matrix. 
 $I$ stands for  the identity  matrix of any dimension. 

The space of bounded linear operators between Banach spaces $X$ and
$Y$ is denoted $\B(X,Y)$. Its operator norm will be usually denoted $\|\cdot\|$ without specification the 
spaces.  If $A$ is a finite matrix, then $\|A\|$ stands for its operator-norm.  

 We call analytic mappings between domains  in complex Banach 
spaces {\it holomorphic} to reserve the name {\it analytic} for mappings between domains in real Banach 
spaces. This definition extends from Banach spaces to Banach manifolds.
 \smallskip

\noindent{\it Pairings in $l^2$-spaces}. 
The scalar product on any complex Hilbert space is, by convention, complex anti-linear in the first variable and
complex linear in the second variable.
For any $l^2$-space $X$ of finite or infinite dimension, 
 the natural complex-bilinear pairing is denoted 
\be\label{pairing}\langle \zeta,\zeta'\rangle=\langle \bar \zeta,\zeta'\rangle_{l^2},\qquad \zeta,\zeta'\in X.\ee
This is a symmetric complex-bilinear mapping.
\smallskip

\noindent{\it Constants}. 
The numbers $d$ (the space-dimension) and  $\#\A$, as well as  $s_*,m_*$ and $\#\P, \#\F$ (that will occur in Part II) 
will be fixed in this paper. Constants depending only on the numbers and on the choice of finite-dimensional norms are regarded as absolute constants. 
An absolute constant only depending on $x$ is thus a constant that, 
besides these factors, only depends on $x$. Arbitrary constants will often be denoted by $Ct., ct.$ and, 
when they occur as an exponent, by $exp$. Their values may change from line to line. 
For example we allow ourselves to write $2Ct. \le Ct.$. 
\smallskip

 
\noindent 
{\bf Acknowledgments.} We acknowledge the support from Agence Nationale de la Recherche through the grant
  ANR-10-BLAN~0102. 
  The third author wishes to thank P.~Milman and V.~\v{S}ver\'ak for
  helpful discussions.

\bigskip
\bigskip
\begin{samepage}
\centerline{PART I. SOME FUNCTIONAL ANALYSIS}
\section{Matrix algebras and function spaces.}
\end{samepage}

\subsection{The phase space}\label{sThePhaseSpace}
Let $\cA$ and $\F$ be two finite sets in $\Z^{d}$ and let $\L_\infty$ be an infinite subset of $\Z^{d}$. Let
$\L$ be the disjoint union $\F\sqcup \L_{\infty}$.
\footnote{\ this is a more general setting than in the introduction, where $\L$ and $\A$ were two disjoint subsets of $\Z^d$}
Let $\cZ$ be the disjoint union $\cA\sqcup \F\sqcup \L_{\infty}$
and consider $(\C^2)^{\cZ}$.

 For any subset $X$ of $\cZ$, consider the projection
$$\pi_X:(\C^2)^{\cZ}\to (\C^2)^{X}=\{\zeta\in (\C^2)^{\cZ}: \zeta_a=0\ \forall a\notin X\}.$$
We can thus write $(\C^2)^{\cZ}=(\C^2)^{X}\times (\C^2)^{\cZ\setminus X}$,
$\zeta=(\zeta_X,\zeta_{\L\setminus X})$,
and when $X$ is finite this gives an injection
$$
(\C^2)^{\#X}\hookrightarrow (\C^2)^{\cZ}$$
whose image is $ (\C^2)^{X}$.

In $\R^2$ we consider the partial ordering
$(\ga_1',\ga_2')\le (\ga_1,\ga_2)$ if, and only if $\ga_1'\le\ga_1$ and $\ga_1'\le \ga_2'$.

\medskip

Let $\ga=(\ga_1,\ga_2)\in\R^2$ and  let
$Y_\ga$  be the Hilbert  space of sequences $\zeta\in (\C^2)^{\cZ}  $ such that
\be\label{Ygamma}
||\zeta||_{\ga}^2= 
{ \sum_{a\in\cZ} |\zeta_a|^2e^{2\ga_1|a|}\langle a\rangle^{2 \ga_2} }<\infty\,,
\ee
provided with the scalar product
 \footnote{\ complex linear in the second variable and complex anti-linear in the first}
$$ \langle \zeta, \zeta' \rangle_{\ga} =
 \sum_{a\in\cZ} \langle \zeta_a,\zeta_a'\rangle_{\C^2} e^{2\ga_1|a|}\langle a\rangle^{2 \ga_2}.$$
\noindent
If $\ga_1\ge0$ and $\ga_2>d/2$, then this space is an algebra with respect to the convolution.
If $\ga_1=0$, this is a classical property of Sobolev spaces. For the case  $\ga_1>0$ 
see \cite{EK08}, Lemma~1.1.  (The space $Y_{(0,m_*)}$ coincides with the space $Y$, defined in \eqref{YC}, while $Y_{(0,0)}$ is the $l^2$-space of complex sequences $(\C^2)^\cZ$.)

\begin{example}\label{analyt} Let $\A=\F=\emptyset$, $\L_\infty=\Z^d$ and $\varrho>0$.  Then any vector  $\hat f=(\hat f_a, a\in\Z^d)\in Y_\varrho$ defines  a holomorphic vector-function $f(y) = \sum \hat f_ae^{{\bf i}\langle a, y\rangle}$ on the $\varrho$-vicinity  $\T^n_\varrho$ of the torus $\T^n$, $\T^n_\varrho=\{y \in\C^n/2\pi\Z^n\mid |\Im y|<\sigma\}$, where  its norm is bounded by $C_d\|\hat f\|_\varrho$. Conversely, if  $f: \T^n_\varrho  \to\C^2$ is a bounded holomorphic function, then    its Fourier coefficients satisfy  $|\hat f_a|\le\,$Const$\, e^{-|a|\varrho}$, so  $\hat f\in Y_{\varrho'}$ for any $\varrho'<\varrho$. \end{example}

\medskip

Write $\zeta_a=(p_a,q_a)$ and let
$$\Om= \sum_{a\in\cZ} dp_a\wedge dq_a.$$
$\Om$ is  an anti-symmetric bi-linear form which is  continuous on
$$Y_\ga\times Y_{-\ga}\cup Y_{-\ga}\times Y_{\ga}\to \C$$ 
with norm $\aa{\Om}= 1$.  The subspaces $(\C^2)^{\{a\}}$  are symplectic subspaces of two (complex) dimensions carrying the canonical symplectic structure.

$\Om$  defines (by contraction on the second factor ) a bounded bijective operator
$$Y_\ga\ni \zeta\mapsto \Om(\cdot,\zeta) \in Y^*_{-\ga}$$
where $Y^*_{-\ga}$ denotes the Banach space dual of $Y_{-\ga}$.
(Notice that $\zeta'\mapsto \Om(\zeta', \zeta)$ is a well-defined bounded linear form on $Y_{-\ga}\,.$)
We shall denote its inverse by
$${J_\Om}:   Y^*_{-\ga}\to Y_\ga.$$ 

We shall also let $J_\Om$ act on operators
$$J_\Om:  \cB(X, Y^*_{-\ga})\to \cB(X,Y_\ga)$$
through $(J_\Om H)(x)=J_\Om (H(x))$ for any bounded operator $H:X\to Y^*_{-\ga}$.

\medskip
\begin{remark}
 The complex-bilinear pairing \eqref{pairing} on the $l^2$-space $Y_{(0,0)}$
 extends to a continuous mapping $Y_\ga \times Y_{-\ga} \to \C\,$
 which allows to identify $Y_{-\ga} $ with the dual space  $Y_{\ga}^* $. 
Then
\be\label{oega}
\Om(\zeta, \zeta') = \langle J \zeta, \zeta'\rangle\,,
\ee
where $J$ here stands for the linear operator $\zeta \mapsto  J\zeta$ defined by
$$
(J\zeta)_a = J\zeta_a\;\  \; \; \forall a\in\cZ,$$
where the $2\times2$-matrix $J$ (in the right hand side) is defined in \eqref{J}. 
\footnote{\ sorry for the abuse of notation}
Then we have
\be\label{equal}
J_\Om \zeta = J\zeta \qquad \forall\, \zeta\in Y^*_{-\ga}\,,
\ee
where $\zeta$ in the r.h.s. is regarded as a vector in $Y_\ga$, and we shall frequently denote the operator $J_\Om$ by $J$. (It will be clear 
from the context which of the two operators $J$ denotes.)
\end{remark}

\medskip

A  bijective bounded operator  $A:Y_\ga\to Y_\ga$, $\ga\ge(0,0)$,
is {\it symplectic} if, and only if,
$$
\Om(A\zeta, A\zeta') = \Om(\zeta, \zeta')\qquad \forall \ \zeta,\zeta'\in  Y_\ga\,. 
$$
Writing $\Om$ in the form \eqref{oega} we see that $A$ is symplectic if and only if ${}^t AJA=J$.
Here  ${}^t A$ stands for the operator, symmetric to $A$ with respect to the pairing  $\langle \cdot,\cdot\rangle$
(its matrix is transposed to that of $A$). 

\medskip

Let 
$$\bA^{\cA} =\C^{\cA}\times(\C/2\pi \Z)^{\cA}$$
and consider the Hilbert manifold
$\bA^{\cA} \times \pi_{\L} Y_\ga$ whose elements are denoted $x=(r,\theta= [z],w)$.
We provide 
this manifold with the metric 
$$\aa{x-x'}_\ga=
\inf_{p\in\Z^{d}}  ||(r, z+2\pi p, w)-(r', z',w')||_\ga.
\quad\footnote{\  using this notation for the metric on the manifold will not confuse it with 
the norm on the tangent space, which is also denoted $||\cdot||_\ga$, we hope}
$$

We provide $\bA^{\cA} \times \pi_{\L} Y_\ga$ with the  symplectic structure $\Om$. To any 
$C^{1}$-function  $f(r,\theta,w)$ on (some open set in)  $\bA^{\cA}\times \pi_{\L} Y_{\ga}$ it associates
a  vector field $X_f= J(df)$  --  the Hamiltonian vector field of $f$
-- which in the coordinates $(r,\theta,w)$ takes the form
$$
\left(\begin{array}{c} \dot r_a \\ \dot \theta_a\end{array}\right)=J
\left(\begin{array}{c} \frac{\p}{\p r_a} f(r,\theta,w)\\
\frac{\p}{\p \theta_a} f(r,\theta,w)\end{array}\right)
\qquad
\left(\begin{array}{c} \dot p_a \\ \dot q_a\end{array}\right)=J
\left(\begin{array}{c} \frac{\p}{\p p_a} f(r,\theta,w)\\
 \frac{\p}{\p q_a} f(r,\theta,w)\end{array}\right).
 \quad\footnote{\ there is no agreement as to the sign of the Hamiltonian vectorfield --  we've used the choice of Arnold 
\cite{Arn}}
$$

\subsection{A matrix algebra}\label{ssMatrixAlgebra}
\ 
The mapping
\be\label{pdist}
(a,b)\mapsto [a-b]=\min (|a-b|,|a+b|)\ee
is a pseudo-metric on $\Z^{d}$, i.e. it verifies  all the relations of a metric with the only exception that
$[a-b]$ is $=0$ for some $a\not=b$.
This is most easily seen by observing that $[a-b]=\textrm{d}_{\Haus}(\{\pm a\},\{\pm b\})$.
We have $ [a-0]=|a|$.

Define, for any $\ga=(\ga_1,\ga_2)\ge(0,0)$ and $\vark\ge0$,
\be\label{weight}
e_{\ga,\vark}(a,b)=Ce^{\ga_1[a-b]}\max([a-b],1)^{\ga_2}\min(\langle a\rangle,\langle b\rangle)^\vark.\ee

\begin{lemma}\label{lWeights}
\ 
\begin{itemize}
\item[(i)] If $\ga_1, \ga_2- \vark\ge0$, then 
$$e_{\ga, \vark}(a,b)\le   e_{\ga,0}(a,c) e_{\ga,\vark}(c,b),\quad \forall a,b,c,$$
if $C$ is sufficiently large (bounded with  $\ga_2,\vark$).

\item[(ii )] If $-\ga\le\tilde \ga\le \ga$, then
$$e_{\tilde\ga, \vark}(a,0)\le   e_{\ga,\vark}(a,b) e_{\tilde\ga,\vark}(b,0),\quad \forall a,b$$
if $C$ is sufficiently large  (bounded with  $\ga_2,\vark$).
\end{itemize}
\end{lemma}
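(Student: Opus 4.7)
The plan is to reduce both parts to two elementary facts about the pseudo-metric $[\cdot]$: the triangle inequality $[a-b]\le [a-c]+[c-b]$, and the bound $\langle a\rangle \le \langle b\rangle + [a-b]$, which follows from $|a|\le |b|+\min(|a-b|,|a+b|)$.

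For (i), I would separate the bound into exponential, polynomial, and $\min$-factors. The exponential is immediate from $\ga_1\ge 0$ and the triangle inequality. The polynomial uses the sharper bound $\max([a-b],1)\le 2\max(\max([a-c],1),\max([c-b],1))$, giving $\max([a-b],1)^{\ga_2}\le 2^{\ga_2}\max(X,Y)^{\ga_2}$ with $X=\max([a-c],1)$, $Y=\max([c-b],1)$; this leaves a saving factor $1/\min(X,Y)^{\ga_2}$ when compared with the $X^{\ga_2}Y^{\ga_2}$ on the right-hand side. The crux is the $\min$-factor. Setting $m=\min(\langle a\rangle,\langle b\rangle)$ and $m'=\min(\langle c\rangle,\langle b\rangle)$, the case $m\le m'$ is trivial. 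When $m>m'$, one first observes that $m'=\langle c\rangle$ (otherwise $m'=\langle b\rangle\ge m$), and both $m\le \langle c\rangle+[c-b]$ and $m\le \langle a\rangle\le \langle c\rangle+[a-c]$ hold; hence $m\le m'+\min([a-c],[c-b])\le 2m'\min(X,Y)$, using $m'\ge 1$. Therefore $(m/m')^\vark\le 2^\vark\min(X,Y)^\vark$, and combining with the polynomial saving yields an extra factor $\min(X,Y)^{\vark-\ga_2}\le 1$, using exactly the hypothesis $\vark\le\ga_2$ and $\min(X,Y)\ge 1$. The $2^{\ga_2+\vark}$ overhead is absorbed by the constant $C$ in the definition of the weight.

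For (ii), the analogous but simpler argument applies with the triangle $a,b,0$. Since $[a]=|a|$ and $\langle 0\rangle=1$, the $\min$-factors on both sides equal $1$, and the claim reduces to $e^{\tilde\ga_1|a|}\max(|a|,1)^{\tilde\ga_2}\le C\,e^{\ga_1[a-b]+\tilde\ga_1|b|}\max([a-b],1)^{\ga_2}\max(|b|,1)^{\tilde\ga_2}$. The exponential follows from $\tilde\ga_1(|a|-|b|)\le |\tilde\ga_1|[a-b]\le \ga_1[a-b]$, using $|\tilde\ga_1|\le \ga_1$. The polynomial uses $\max(|a|,1)\le 2\max(|b|,1)\max([a-b],1)$ when $\tilde\ga_2\ge 0$, or the symmetric inequality when $\tilde\ga_2<0$; in either case $|\tilde\ga_2|\le\ga_2$ lets the residual factor $\max([a-b],1)^{|\tilde\ga_2|}$ be absorbed into $\max([a-b],1)^{\ga_2}$.

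The main obstacle I anticipate is in (i): the naive multiplicative triangle bound for $\max([a-b],1)^{\ga_2}$ yields exactly $X^{\ga_2}Y^{\ga_2}$ with no room to accommodate the $\min$-factor, so one has to use the sharper max-bound on $\max([a-b],1)$ and then cash in the hypothesis $\vark\le\ga_2$ at the very end.
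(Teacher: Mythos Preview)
Your proof is correct. For part (ii) it is essentially the same as the paper's, with one small imprecision: the factor $e_{\ga,\vark}(a,b)$ on the right-hand side still carries $\min(\langle a\rangle,\langle b\rangle)^\vark$, which is not $1$ in general; but since this factor is $\ge 1$, dropping it only strengthens the inequality you then prove, so no harm is done.

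For part (i) your route differs from the paper's in an interesting way. The paper first reduces by homogeneity to $\ga_2=1$ (hence $\vark\le 1$), then uses the \emph{additive} bound $\max([a-b],1)\le X+Y$ and controls each summand separately via the two estimates $Y\,(m')^{\vark}\gtrsim \langle b\rangle^{\vark}\ge m^{\vark}$ and $X\,(m')^{\vark}\gtrsim m^{\vark}$. You instead keep $\ga_2$ general, use the sharper $\max([a-b],1)\le 2\max(X,Y)$ to produce a saving factor $\min(X,Y)^{-\ga_2}$, and then bound the $\min$-ratio directly by $m\le m'+\min([a-c],[c-b])\le 2m'\min(X,Y)$. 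The virtue of your argument is that the hypothesis $\vark\le\ga_2$ appears transparently in the single closing step $\min(X,Y)^{\vark-\ga_2}\le 1$, whereas in the paper it is encoded in the reduction to $\ga_2=1$. The paper's argument, on the other hand, avoids the case split on $m\lessgtr m'$. Both are short and elementary; yours is arguably a touch more direct.
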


\begin{proof} 
(i). Since $[a-b]\le [a-c]+[c-b]$ it is sufficient to prove this for $\ga_1=0$. If $\ga_2=0$ then the statement holds for any
$C\ge1$, so it is sufficient to consider $\ga_2>0$. This reduces easily to $\ga_2=1$ and, hence, $\vark\le 1$. 
Then we want to prove
\begin{multline*}
\max([a-b],1)\min(\langle a\rangle,\langle b\rangle)^\vark 
\le C \max([a-c],1)\max([c-b],1)
\min(\langle c \rangle,\langle b\rangle)^\vark.
\end{multline*}
Now $\max([a-b],1)\le \max([a-c],1)+\max([c-b],1)$,
$$
\max([c-b],1)
\min(\langle c \rangle,\langle b\rangle)^\vark\gsim  \langle b\rangle^\vark,
$$
and
$$
\max([a-c],1)
\min(\langle c \rangle,\langle b\rangle)^\vark\gsim  \min(\langle a \rangle,\langle b\rangle)^\vark.$$
This gives the estimate.

(ii) Again it suffices to prove this for $\ga_1=0$ and $\ga_2=1$. Then we want to prove
$$
\max(\ab{a},1)^{\tilde \ga_2}  \le C \max([a-b],1)\min(\langle a \rangle,\langle b\rangle)^\vark \max(\ab{b},1)^{\tilde \ga_2}.$$
The inequality is fulfilled with $C\ge1$ if $a$ or $b$ equal $0$. Hence we need to prove
$$
\ab{a}^{\tilde \ga_2}  \le C \max([a-b],1)\min(\langle a \rangle,\langle b\rangle)^\vark \ab{b}^{\tilde \ga_2}.$$
Suppose $\tilde\ga_2\ge0$. If $\ab{a}\le 2 \ab{b}$ then this holds for any  $C\ge2$. If $\ab{a}\ge2\ab{b}$ then
$[a-b]\ge \tfrac12\ab{a}$ and the statement holds again  for any  $C\ge2$.

If instead $\tilde\ga_2<0$, then we get the same result with $a$ and $b$ interchanged.

\end{proof} 

\subsubsection {The space $\cM_{\ga,\vark}$}\label{sM2}

We shall consider matrices $A:\cZ\times\cZ\to gl(2,\C)$, formed by $2\times2$-blocs,
(each $A_a^{b}$ is a complex  $2\times2$-matrix). Define
\be\label{matrixnorm}
|A|_{\ga,\vark}=\max\left\{\begin{array}{l}
\sup_a\sum_{b} \aa{A_a^b} e_{\ga,\vark}(a,b)\\
\sup_b\sum_{a} \aa{A_a^b} e_{\ga,\vark}(a,b),
\end{array}\right.\ee
where the norm on $A_a^{b}$ is the matrix operator norm.

Let $\cM_{\ga,\vark}$ denote the space of all matrices $A$ such that
$\ab{A}_{\ga,\vark}<\infty$.  Clearly $\ab{\cdot}_{\ga,\vark}$ is a norm
on $\cM_{\ga,\vark}$  --  this is indeed true for all $(\ga_1,\ga_2,\vark)\in\R^3$. It follows by well-known results that $\cM_{\ga,\vark}$,
provided with this norm, is a Banach space.

Transposition --  $({}^tA)_a^b={}^t\!A_b^a$ --  and $\C$-conjugation --
$(\overline{A})_a^b={\overline{A_a^b}})$  -- do not change 
this norm.The identity matrix is in $\cM_{\ga,\vark}$ if, and only if, $\vark=0$, and then $|I|_{\ga,0}=C$.

\begin{remark*} The ``$l^1$-norm'' used here is a bit 
 more complicated than the ``sup-norm'' used in \cite{EK10}, but it has, as we shall see,
much better multiplicative properties.
\end{remark*}

\subsubsection{Matrix multiplication}

We define (formally) the {\it matrix product}
$$(AB)_a^b=\sum_{c} A_a^cB_c^b.$$
Notice that complex conjugation, transposition and taking the adjoint behave in the usual
way under this formal matrix product.

\begin{proposition}\label{pMatrixProduct} Let $\ga_2\ge\vark$. 
If $A\in \cM_{\ga,0}$ and $B\in \cM_{\ga,\vark}$,   then
$AB$ and $BA\in \cM_{\ga,\vark}$ and
$$
\ab{{AB} }_{\ga,\vark}\ \textrm{and}\ 
\ab{{BA} }_{\ga,\vark}\le \ab{A}_{\ga,0} \ab{B}_{\ga,\vark}.$$ 

\end{proposition}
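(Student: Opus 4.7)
The plan is to apply the submultiplicativity property of the weights from Lemma \ref{lWeights}(i) to reduce the bound on the product to a product of the two factor norms. Since the hypothesis $\ga_2 \ge \vark$ is exactly what Lemma \ref{lWeights}(i) requires, we may write, for all $a,b,c$,
\[
e_{\ga,\vark}(a,b) \le e_{\ga,0}(a,c)\, e_{\ga,\vark}(c,b).
\]
Note also that the weight is visibly symmetric in its arguments ($[a-b]=[b-a]$ and $\min(\langle a\rangle,\langle b\rangle)=\min(\langle b\rangle,\langle a\rangle)$), so the same inequality holds with the roles of the $\vark$-factor and the $0$-factor interchanged, i.e.\ $e_{\ga,\vark}(a,b)\le e_{\ga,\vark}(a,c)\,e_{\ga,0}(c,b)$. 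This symmetry will handle $BA$ versus $AB$ uniformly.

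For the row-sum bound on $AB$, I would estimate, using the triangle inequality on the matrix operator norm and Fubini for non-negative sums,
\[
\sum_b \|(AB)_a^b\|\,e_{\ga,\vark}(a,b)
\le \sum_c \|A_a^c\|\, e_{\ga,0}(a,c)\Bigl(\sum_b \|B_c^b\|\, e_{\ga,\vark}(c,b)\Bigr)
\le |B|_{\ga,\vark}\sum_c \|A_a^c\|\,e_{\ga,0}(a,c)\le |A|_{\ga,0}\,|B|_{\ga,\vark},
\]
where the last two inequalities use the row-sum bounds from the definition \eqref{matrixnorm} for $B$ and for $A$ respectively. Taking the sup over $a$ is harmless because both bounds used are uniform in the remaining free index.

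The column-sum bound on $AB$ is symmetric: we fix $b$, sum over $a$, and extract $\sum_a\|A_a^c\|\,e_{\ga,0}(a,c)\le |A|_{\ga,0}$ via the column-sum half of the definition of $|A|_{\ga,0}$, then bound the remaining sum over $c$ by $|B|_{\ga,\vark}$ using its column-sum half. Taking the maximum of the two bounds yields $|AB|_{\ga,\vark}\le |A|_{\ga,0}|B|_{\ga,\vark}$. For $BA$ the argument is identical but with the alternative factorization $e_{\ga,\vark}(a,b)\le e_{\ga,\vark}(a,c)\,e_{\ga,0}(c,b)$ so that the $\vark$-decay stays attached to the $B$-factor.

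Almost nothing in this is hard; the only real point is making sure that in the bookkeeping the weight $e_{\ga,\vark}$ is always split so that its $\vark$-part is paired with $B$ (which has finite $|\cdot|_{\ga,\vark}$-norm) and its $0$-part with $A$ (which only has finite $|\cdot|_{\ga,0}$-norm). The symmetry of the weight under swapping its two arguments is what allows the same proof to work for both $AB$ and $BA$ without any extra condition.
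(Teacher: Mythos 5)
Your proof is correct and follows essentially the same route as the paper: split the weight using Lemma~\ref{lWeights}(i), pull out the inner sum, and bound by the row/column sums defining the norms. Your explicit observation that $BA$ requires the symmetric factorization $e_{\ga,\vark}(a,b)\le e_{\ga,\vark}(a,c)\,e_{\ga,0}(c,b)$ (so that the $\vark$-weight stays with $B$) is a nice bit of care that the paper glosses over with ``the estimate of $BA$ is the same.''
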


\begin{proof}
(i) We have, by Lemma~\ref{lWeights}(i),
$$\sum_{b} \aa{( AB)_a^b} e_{\ga,\vark}(a,b)\le
\sum_{b,c} \aa{ A_a^c} \aa{ B_c^b} e_{\ga,\vark}(a,b)\le $$
$$\le\sum_{b,c}\aa{ A_a^c} \aa{ B_c^b} 
e_{\ga,0}(a,c)e_{\ga,\vark}(c,b)  $$
which is $\le \aa{A}_{\ga,0} \aa{B}_{\ga,\vark}$.
This implies in particular the existence of $(AB)_a^b$.

The sum over $a$ is shown to be  $\le \ab{A}_{\ga,0} \ab{B}_{\ga,\vark}$ in a similar way.
The estimate of $BA$ is the same.
\end{proof}

Hence $\cM_{\ga,0}$ is a Banach algebra, and $\cM_{\ga,\vark}$ is an ideal in $\cM_{\ga,0}$ when $\vark\le\ga_2$.
 
\subsubsection{The space $\cM_{\ga,\vark}^b$}
We define (formally) on $Y_\ga$ 
$$(A\zeta )_a=\sum_{b} A_a^b \zeta _b.$$

\begin{proposition}\label{pMatrixBddOp}
Let $-\ga\le\tilde \ga\le\ga$.
If $A\in\cM_{\ga,\vark}$ and 
$\zeta \in Y_{\tilde \ga}$,  then $A\zeta \in Y _{\tilde \ga}$ and
$$
\aa{A\zeta }_{\tilde \ga}\le \ab{A}_{\ga,\vark}\aa{\zeta }_{\tilde\ga}.$$
\end{proposition}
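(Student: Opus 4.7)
My plan is to establish the bound by a Schur-type estimate built on Lemma~\ref{lWeights}(ii). Writing $w_a = e^{\tilde\ga_1|a|}\langle a\rangle^{\tilde\ga_2}$, one has $\|\zeta\|_{\tilde\ga}^2 = \sum_{a\in\cZ}|\zeta_a|^2 w_a^2$. The key observation is that $\min(\langle a\rangle,\langle 0\rangle)^{\vark}=1$, so $e_{\tilde\ga,\vark}(a,0)=e_{\tilde\ga,0}(a,0)$ and this quantity agrees with $w_a$ up to the constant $C$ of \eqref{weight}. Applying Lemma~\ref{lWeights}(ii) with the second argument equal to $0$ and the first to $a$, $b$ respectively, I obtain the comparison
\[
w_a \;\le\; C'\,e_{\ga,\vark}(a,b)\,w_b\qquad \forall\,a,b\in\cZ.
\]

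Next, I estimate $|(A\zeta)_a|$ blockwise. Since $\aa{A_a^b}$ is the operator norm on $\C^2$,
\[
|(A\zeta)_a|\,w_a \;\le\; \sum_{b\in\cZ}\aa{A_a^b}\,|\zeta_b|\,w_a \;\le\; C'\sum_{b\in\cZ}\aa{A_a^b}\,e_{\ga,\vark}(a,b)\,|\zeta_b|\,w_b.
\]
Apply Cauchy--Schwarz to the last sum, splitting each term as $\bigl(\aa{A_a^b}e_{\ga,\vark}(a,b)\bigr)^{1/2}\cdot\bigl(\aa{A_a^b}e_{\ga,\vark}(a,b)\bigr)^{1/2}|\zeta_b|w_b$:
\[
|(A\zeta)_a|^2\,w_a^2 \;\le\; (C')^2 \Bigl(\sum_b \aa{A_a^b}\,e_{\ga,\vark}(a,b)\Bigr)\Bigl(\sum_b \aa{A_a^b}\,e_{\ga,\vark}(a,b)\,|\zeta_b|^2\,w_b^2\Bigr).
\]
By the definition \eqref{matrixnorm}, the first parenthesis is bounded by $|A|_{\ga,\vark}$. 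Summing over $a$ and swapping the order of summation, the remaining inner sum $\sum_a\aa{A_a^b}e_{\ga,\vark}(a,b)$ is again bounded by $|A|_{\ga,\vark}$, yielding
\[
\|A\zeta\|_{\tilde\ga}^2 \;\le\; (C')^2\,|A|_{\ga,\vark}^2\,\|\zeta\|_{\tilde\ga}^2.
\]
The computation also shows that the series defining $(A\zeta)_a$ converges absolutely, so $A\zeta$ is a well-defined element of $Y_{\tilde\ga}$. The constant $C'$ is absorbed by taking the universal constant $C$ in the definition of $e_{\ga,\vark}$ in \eqref{weight} sufficiently large (which is consistent with the earlier convention used in Lemma~\ref{lWeights}), giving the stated inequality without an extra factor.

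I do not expect any serious obstruction: both the "rows" and "columns" bounds built into the norm \eqref{matrixnorm} are exactly what the Schur test requires, and the weight inequality is already packaged in Lemma~\ref{lWeights}(ii). The only delicate point is noticing that $e_{\tilde\ga,\vark}(\cdot,0)$ coincides with $e_{\tilde\ga,0}(\cdot,0)$, so that Lemma~\ref{lWeights}(ii) applies whether or not $\vark\le\tilde\ga_2$; this is what allows the proposition to be stated under the single hypothesis $-\ga\le\tilde\ga\le\ga$, with no monotonicity condition on $\vark$.
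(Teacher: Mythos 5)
Your argument is the same Schur-test proof the paper uses: the weight-transfer inequality from Lemma~\ref{lWeights}(ii), the $\tfrac12$-$\tfrac12$ split of $\aa{A_a^b}e_{\ga,\vark}(a,b)$ inside Cauchy--Schwarz, and bounding first over $b$ then over $a$ via the two suprema in \eqref{matrixnorm}. The observation that $e_{\tilde\ga,\vark}(a,0)=e_{\tilde\ga,0}(a,0)$ is exactly what the paper uses to show its quantity $J_{a,b}\le1$; in fact your $C'$ can be taken equal to $1$ for that reason, so no absorption step is needed.
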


\begin{proof}
Let $\zeta '=A\zeta $. We have
$$
\sum_{a} \ab{\zeta '_a}^2e_{\tilde \ga,0}(a,0)^2\le\sum_{a} 
\big(\sum_{b} \aa{A_a^b}\ab{\zeta _{b}} e_{\tilde \ga,0 }(a,0)\big)^2.$$
Write
$$\aa{A_a^b}\ab{\zeta _{b}}e_{\tilde \ga,0 }(a,0)
= I\times (I\ab{ \zeta _{b}}e_{\tilde \ga,0}(b,0))\times  J,$$
where
$$I=I_{a,b}=\sqrt{\aa{A_a^b}    e_{\ga,\vark}(a,b)}$$
and
$$J=J_{a,b}=
\frac{e_{\tilde \ga,0}(a,0)}{e_{\ga,\vark}(a,b)e_{\tilde\ga,0}(b,0)}.$$

Since,  by Lemma~\ref{lWeights}(ii), $J=\le 1$ we get,  by H\"older,
\begin{multline*}
\sum_{a} \ab{\zeta' _a}^2e_{\tilde \ga,0}(a,0)^2\le  
\sum_{a} (\sum_{b} I_{a,b}^2)( \sum_{b} I_{a,b}^2\ab{ \zeta _{b}}^2e_{\tilde\ga,0}(b,0)^2)\\
\le \ab{A}_{\ga,\vark}    \sum_{a,b} I_{a,b}^2\ab{\zeta _{b}}^2e_{\tilde\ga,0}(b,0)^2
\le  \ab{A}_{\ga,\vark}    \sum_{b} \ab{ \zeta _{b}}^2e_{\tilde \ga,0}(b,0)^2\sum_{a}I_{a,b}^2\le
\end{multline*}
$$\le \ab{A}_{\ga,\vark}^2      \aa{\zeta }_{\tilde \ga}^2. $$
This shows that $y_a$ exists  for all $a$, and it also proves the estimate. 
\end{proof}

We have thus, for any $-\ga\le \tilde\ga\le\ga$, a continuous embedding of $\cM_{\ga,\vark}$,
$$\cM_{\ga,\vark}\hookrightarrow \cM_{\ga,0}\to
 \cB(Y_{\tilde \ga},Y_{\tilde \ga}),$$
into the space of bounded linear operators on $Y_{\tilde \ga}$. Matrix multiplication in $\cM_{\ga,\vark}$
corresponds to composition of operators.

\smallskip

For our applications (see Lemma~\ref{lemP}) we shall consider a somewhat  larger sub algebra of $\cB(Y_\ga, Y_\ga)$ 
 with somewhat weaker decay properties.
Let
\be\label{b-space}
\cM_{\ga,\vark}^b= \cB(Y_{\ga},Y_{\ga})\cap \cM_{(\ga_1,\ga_2-m_*),\vark}\ee
which we provide with the norm
\be\label{b-matrixnorm}
\aa{A}_{\ga,\vark }=\aa{A}_{ \cB(Y_{\ga};Y_{\ga})}+ \ab{A}_{(\ga_1,\ga_2-m_*),\vark }.\ee
When $\ga=(\ga_1,\ga_2)\ge \ga_*=(0,m_*+\vark)$,
 Proposition \ref{pMatrixProduct} shows that this 
norm makes $\cM_{\ga,0}^b$ into a Banach sub-algebra of $\cB(Y_{\ga};Y_{\ga})$
and $\cM_{\ga,\vark}^b$ becomes an  ideal in $\cM_{\ga,0}^b$.

\subsection{Functions}

For $\sigma,\mu\in (0,1]$  let 
\be\label{domain}
\begin{split}
\O_\ga(\s,\mu)=&\\
\{x=(r_\A ,\theta_\A,w)\in \bA^{\cA} \times\pi_{\L}  Y_{\ga}: &
|r_\A|<\mu,\  \mid |\Im \theta_\A|<\sigma,\ \mid \|w\|_\ga <\mu \}.
\end{split}\ee
It is often useful to scale the action variables $r$ by $\mu^2$ and not by $\mu$, but in our case  $\mu$ will
be $\approx 1$, and then there is no difference (on the contrary, in Section~\ref{s_4.2} we scale $r_\A$ 
as $\mu^2$ to simplify the calculations we perform there). 
 The advantage with our scaling is that the Cauchy estimates becomes simpler.

Let
\be\label{gamma}\ga=(\ga_1,\ga_2)\ge \ga_*=(0,m_*+\vark),\ee
We shall consider perturbations 
$$f:\O_{\ga_*}(\s, \mu)\to \C$$
that are  {\it real holomorphic  and continuous up to the boundary} (rhcb).
This means that it gives real values to real arguments and extends continuously
to the closure of  $\O_{\ga_*}(\s, \mu)$. 
$f$ is clearly also rhcb on $\O_{\ga}(\s, \mu)$ for any $\ga\ge\ga_*$, and
$$d f:\O_{\ga}(\s, \mu)\to Y_{\ga}^* $$
and
$$
J_\Om d^2f:\O_\ga(\s, \mu)\to \cB(Y_\ga,Y_{-\ga}) $$
are rhcb. 

\begin{remark}
Identifying $Y_\ga^*$ with $Y_{-\ga}$ via the paring $\langle\cdot,\cdot\rangle$ we will interpret the 
differential $df(\zeta)$ as a gradient $\nabla f(\zeta)\in Y_{-\ga}$,
$$
df(\zeta) (\zeta') =\langle \nabla f(\zeta), \zeta'\rangle\qquad \forall\, \zeta'\in Y_\ga\,.
$$
As classically,  $\nabla f(\zeta)$ is the vector  $\nabla f(\zeta)=( \nabla_a f(\zeta), a\in\cZ)$, 
where for $ \zeta=\big(  \zeta_a = (p_a,q_a), a\in \cZ\big)$, 
$\nabla_a f$ is the 2-vector $(\p f/\p p_a, \p f/\p q_a)$. 

Similar we will interpret $d^2f$ as the Hessian $\nabla^2 f$, which is an operator 
 the matrix
$
((\nabla^2 f)_a^b, a,b\in \cZ),
$
formed by the $2\times2$-blocks $(\nabla^2 f)_a^b = \nabla_a\nabla_b f$. The Hessian defines bounded linear 
operators $\nabla^2 f(\zeta) : Y_\ga\to Y_{-\ga}$, and
$$
d^2 f(\zeta) (\zeta^1, \zeta^2) =\langle \nabla^2 f(\zeta)  \zeta^1, \zeta^2\rangle \quad \forall\ \zeta^1, \zeta^2 \in Y_\ga\,. 
$$
\end{remark}

We shall require that the mappings $df$ and $d^2f$ posses some extra smoothness: 

\begin{itemize}

\item[R1] {\it  -- first differential}.
There exists a $\ga\ge\ga_*$ such that
$$
Jd f=J\nabla f
:\O_{\ga'}(\s, \mu)\to Y_{\ga'} 
$$
is rhcb for any $\ga_*\le \ga'\le\ga$.

\end{itemize}
This is a natural smoothness condition on the space of holomorphic functions on $\O_{\ga_*}(\s, \mu)$, and it  implies, 
in particular, that   $Jd^2 f(x)=J\nabla^2 f(x)
\in\cB(Y_{\ga'},Y_{\ga'})$ for any $x\in \O_{\ga'}(\s, \mu)$.  So 
$$
(\nabla^2 f(x))_a^b 
  \le\Cte e^{-\ga'_1\ab{ \ab{a}-\ab{b}}}\min(\frac{\langle a\rangle}{\langle b\rangle},
\frac{\langle b\rangle}{\langle a\rangle})^{\ga'_2}\qquad \forall\, a,b\in\cZ\,.
$$
But many Hamiltonian PDE's verify other, and  stronger, 
decay conditions  in terms of
$[a-b]=\min(\ab{a-b},\ab{a+b}).$

Indeed we shall assume

\begin{itemize}

\item[R2] {\it  -- second differential}.
$$Jd^2f= J\nabla^2 f
:\O_{\ga'}(\s, \mu)\to  \cM_{\ga',\vark}^b$$
is rhcb for any $\ga_*\le \ga'\le\ga$.

\end{itemize}

Such decay conditions do not seem to be  naturally related to any smoothness condition of $f$, but they
 are instrumental in the KAM-theory for multidimensional PDE's:  see for example \cite{EK10} where
  such conditions were used to build a KAM-theory for some multidimensional non-linear Schr\"odinger equations.

\subsubsection{The function space  $\cT_{\ga,\vark} $ }\label{s_2.3.1}
Consider the space of  functions
$f:\O_{\ga_*}(\s, \mu)\to \C$
which are  real holomorphic  and continuous  up to the boundary (rhcb) 
of $\O_{\ga_*}(\s, \mu)$. We define  $\cT_{\ga,\vark}(\s,\mu) $ to be the space of all such functions which verify $R1$ and $R2$.

We provide  $\cT_{\ga,\vark} (\s, \mu)$ with the norm 
\be\label{norm}
|f|_{\begin{subarray}{c}\s,\mu\\ \ga, \vark  \end{subarray}}=
\max\left\{\begin{array}{l}
\sup_{x\in \O_{\ga_*}(\s, \mu)}|f(x)|\\ 
\sup_{\ga_*\le \ga'\le\ga}\sup_{x\in \O_{\ga'}(\s, \mu)} ||Jd f(x)||_{\ga'}=\|\nabla f(x)\|_{\ga'}   \\
\sup_{\ga_*\le  \ga'\le\ga}\sup_{x\in \O_{\ga'}(\s, \mu)}||Jd^2f(x)||_{\ga',\vark} =||\nabla^2f(x)||_{\ga',\vark}
\end{array}\right.
\ee
making it  into a Banach space. (It is even a Banach algebra with the constant function
$f=1$ as unit, but we shall be concerned with Poisson products rather than with products.)

This space is relevant for our application because

\begin{lemma}\label{lemP}
Let $\cZ=\L_\infty=\Z^d$ and  $\vark=2$. Then the Hamiltonian function $h_{\ge4}$, 
 defined in \eqref{H1}, belongs to $\cT_{\ga_g,2}(1, \mu_g) $
for suitable $\mu_g\in(0,1]$ and $\ga_g>\ga_*$. 
\end{lemma}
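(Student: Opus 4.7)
The plan is to rewrite $h_{\ge 4}(\zeta)=\int_{\T^d}G(x,L\zeta(x))\,dx$, where $L$ is the bounded linear ``synthesis'' map read off from \eqref{H1},
\[
L\zeta(x)=\sum_{a\in\Z^d}\frac{(p_a-{\bf i}q_a)\,e_a(x)+(p_a+{\bf i}q_a)\,e_{-a}(x)}{2\sqrt{\lambda_a}},
\]
and then to verify rhcb, R1, and R2 by reducing everything to estimates on the analytic Sobolev space $H^{s}_{\sigma}(\T^d,\C)=\{u:\sum_c|\hat u(c)|^2 e^{2\sigma|c|}\langle c\rangle^{2s}<\infty\}$. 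Since $\sqrt{\lambda_a}\gsim\langle a\rangle$, a direct Fourier-coefficient calculation gives at once $\|L\zeta\|_{H^{\ga_2+1}_{\ga_1}}\lsim\|\zeta\|_\ga$ and, dually, $\|L^* g\|_\ga\lsim\|g\|_{H^{\ga_2-1}_{\ga_1}}$. I take $\ga_g=(\ga_{g,1},m_*+2)$ with $\ga_{g,1}>0$ strictly less than the $x$-analyticity width of $G$, and $\mu_g>0$ small enough so that $\sup_{|\Im x|<\ga_{g,1}}|L\zeta(x)|<r_0$ (the radius of analyticity of $G$ in $u$ at $0$) whenever $\|\zeta\|_{\ga_g}\le\mu_g$. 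Because $\ga_{g,2}+1>d/2$, each $H^{\ga'_2+1}_{\ga'_1}$, for $\ga_*\le\ga'\le\ga_g$, is a Banach algebra, so $G(\cdot,L\zeta(\cdot))$ is rhcb from $\O_{\ga'}(1,\mu_g)$ into that algebra; integrating in $x$ gives the rhcb property of $h_{\ge 4}$.

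For R1, differentiating under the integral yields $\nabla h_{\ge 4}(\zeta)=L^*\big(G_u(\cdot,L\zeta)\big)$. Analyticity of $G_u$ together with the algebra property places $G_u(\cdot,L\zeta)$ in $H^{\ga'_2+1}_{\ga'_1}\subset H^{\ga'_2-1}_{\ga'_1}$, and the bound on $L^*$ gives $\|\nabla h_{\ge 4}(\zeta)\|_{\ga'}<\infty$ for every $\ga_*\le\ga'\le\ga_g$; the analytic and continuous dependences propagate through $L$, $L^*$ and composition with $G_u$.

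For R2 the chain rule gives $\nabla^2 h_{\ge 4}(\zeta)=L^* M_f L$ with $f=G_{uu}(\cdot,L\zeta)$ and $M_f$ multiplication by $f$. The bounded-operator part of the norm on $\cM^b_{\ga',2}$ is immediate from the chain $Y_{\ga'}\xrightarrow{L}H^{\ga'_2+1}_{\ga'_1}\xrightarrow{M_f}H^{\ga'_2+1}_{\ga'_1}\xrightarrow{L^*}Y_{\ga'}$. The matrix-norm part follows because each column of $L$ has support on only the two Fourier modes $\pm b$: a direct block calculation yields
\[
\big\|(\nabla^2 h_{\ge 4}(\zeta))_a^b\big\|\le\frac{C}{\sqrt{\lambda_a\lambda_b}}\big(|\hat f(a-b)|+|\hat f(a+b)|\big).
\]
Using $[a-b]\le|a\pm b|$ and $\min(\langle a\rangle,\langle b\rangle)^2/\sqrt{\lambda_a\lambda_b}\lsim \min(\langle a\rangle,\langle b\rangle)/\max(\langle a\rangle,\langle b\rangle)\le 1$, the change of variables $c=a\pm b$ reduces the finiteness of $|\nabla^2 h_{\ge 4}(\zeta)|_{(\ga'_1,\ga'_2-m_*),2}$ to the single bound
\[
\sum_{c\in\Z^d}|\hat f(c)|\,e^{\ga'_1|c|}\langle c\rangle^{\ga'_2-m_*}\ \le\ \|f\|_{H^{\ga'_2+1}_{\ga'_1}}\Big(\sum_{c\in\Z^d}\langle c\rangle^{-2(m_*+1)}\Big)^{1/2},
\]
which follows by Cauchy--Schwarz; the second factor converges because $m_*>d/2$. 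The symmetric supremum in $a$ is identical by the same argument.

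The main obstacle is this last step. Since $L\zeta$ extends analytically only to $|\Im x|<\ga'_1$ and no wider, the Fourier coefficients $\hat f(c)$ cannot decay exponentially faster than $e^{-\ga'_1|c|}$; the exponential factor in the weighted sum therefore collapses to~$1$ and convergence has to be driven entirely by polynomial Sobolev regularity of $f=G_{uu}(\cdot,L\zeta)$, obtained from the algebra property. This is where the assumption $m_*>d/2$ (encoded in $\ga_*=(0,m_*+\vark)$ with $\vark=2$) becomes indispensable, and it is what forces the choice $\ga_{g,2}\ge m_*+2$ made at the outset.
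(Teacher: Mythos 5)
Your proof is correct and follows essentially the same Fourier-analytic route as the paper's Appendix~A: the paper factors $h_{\ge4}=\mathbf G\circ\Upsilon\circ D^-$ (with $D^-=\diag(\lambda_a^{-1/2}I)$ and $\Upsilon$ the $\pm$-rewrapping) and then uses the chain rule to write the Hessian as $D^-{}^t\Upsilon\,\nabla^2\mathbf G\,\Upsilon D^-$, which is exactly your $L^*M_fL$ in slightly different bookkeeping. Your block estimate via $\hat f(a\pm b)$, the absorption of the $\vark=2$ weight by $\min(\langle a\rangle,\langle b\rangle)^2/\sqrt{\lambda_a\lambda_b}\le1$, and the Cauchy--Schwarz step with $m_*>d/2$ coincide step-for-step with the paper's computation (which packages the $\ell^1$ bound through the algebra Lemma~1.1 of~\cite{EK08} instead, but these are interchangeable).
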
 

The lemma in proven in Appendix A. 
Notice that we would not have been to prove this if we had used the matrix norm \eqref{matrixnorm} instead of \eqref{b-matrixnorm}. 

\medskip

The higher differentials $d^{k+2}f$  can be estimated by Cauchy estimates on some smaller domain in terms of this norm.

\begin{remark*}
The higher order differential  $d^{k+2}f(x)$, $x\in \O_{\ga}(\s, \mu)$, is canonically identified with three bounded symmetric
multi-linear maps
$$\begin{array}{l}
  (Y_{\ga})^{k+2}
  \longrightarrow \C\,,\\
 (Y_{\ga})^{k+1} \longrightarrow  Y_{\ga}^*\,,\\
(Y_{\ga})^{k} \longrightarrow \cB(Y_\ga,Y_\ga^*).
\end{array}$$
Due to the smoothing condition R1 the second one takes its values in the subspace $Y_{-\ga}^*$.
Due  to the smoothing condition R2 $Jd^{k+2}f(x)$ is a bounded symmetric multi-linear map
\be\label{dk+2}
(Y_{\ga})^{k}  \longrightarrow  \cM_{\ga,\vark}^b.
\ee

Alternatively, identifying  $d^2f$ with the hessian $\nabla^2 f$, we may identify $d^{k+2}f$ with a continuous
symmetric multilinear mapping of the form \eqref{dk+2}. 

\end{remark*}

 \subsubsection{The function space  $\cT_{\ga,\vark,\D} $ }
 Let $\D$ be  an open set in $\R^{\P}$.  We shall consider functions
$$f:\O_{\ga^*}(\s, \mu)\times \D\to \C$$
which are of class $\cC^{{s_*}}$ for some integer 
$s_*\ge0$.  We say that $f\in \cT_{\ga,\vark,\D}(\s,\mu)  $ if, and only if,  
$$\frac{\p^j f}{\p \r^j}(\cdot,\r)\in \cT_{\ga,\vark}(\s,\mu) $$
for any $\r\in\D$ and any $\ab{j}\le {s_*}$. We provide this space by the norm
\be\label{normwithparameter}
|f|_{\begin{subarray}{c}\s,\mu\ \ \\ \ga, \vark,\D  \end{subarray}}=
\max_{\ab{j}\le {s_*}}\sup_{\r\in\D}
|\frac{\p^j f}{\p \r^j}(\cdot,\r)|_{\begin{subarray}{c}\s,\mu\\ \ga, \vark  \end{subarray}}.\ee
This norm makes $\cT_{\ga,\vark,\D}(\s,\mu)  $  a Banach space.

\subsubsection{ Jets of functions.} \label{ss5.1}

For any function $f\in \cT_{\ga,\vark,\D}(\s,\mu)$ we shall consider the following 
 Taylor polynomial of $f$ at $r=0$ and $w=0$
\be
\label{jet}
f^T(x)=f(0,\theta,0)+d_r f(0,\theta,0)[r] +d_w f(0,\theta,0)[w]+\frac 1 2 d^2_wf(0,\theta,0)[w,w] 
\ee
Functions of the form  $f^T$ will be called {\it jet-functions.}

\begin{proposition}\label{lemma:jet}
Let $f\in \cT_{\ga,\vark,\D}(\s,\mu)$. Then
$f^T\in \cT_{\ga,\vark,\D}(\s,\mu)$ and 
$$
|f^T|_{\begin{subarray}{c}\s,\mu \  \\ \ga, \vark, \D \end{subarray}}
\leq C |f|_{\begin{subarray}{c}\s,\mu\ \ \\ \ga, \vark,\D  \end{subarray}}.$$
($C$ is an absolute constant.)
\end{proposition}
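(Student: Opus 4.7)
The plan is to represent each piece of the jet $f^T=g_0+g_1+g_2+g_3$, where
\begin{align*}
g_0(\theta)&=f(0,\theta,0),\qquad g_1(r,\theta)=\langle\nabla_r f(0,\theta,0),r\rangle,\\
g_2(\theta,w)&=\langle\nabla_w f(0,\theta,0),w\rangle,\quad g_3(\theta,w)=\tfrac12\langle\nabla_w^2 f(0,\theta,0)w,w\rangle,
\end{align*}
through Cauchy integrals in the $r$ and $w$ directions only. Explicitly, on $\O_{\ga_*}(\s,\mu)$,
\begin{align*}
g_1(r,\theta)&=\frac{1}{2\pi i}\oint_{|z|=1}\frac{f(zr,\theta,0)}{z^2}\,dz,\\
g_2(\theta,w)&=\frac{1}{2\pi i}\oint_{|z|=1}\frac{f(0,\theta,zw)}{z^2}\,dz,\\
g_3(\theta,w)&=\frac{1}{2\pi i}\oint_{|z|=1}\frac{f(0,\theta,zw)}{z^3}\,dz,
\end{align*}
which is legitimate because for $(r,\theta,w)\in\O_{\ga_*}(\s,\mu)$ and $|z|=1$ the points $(zr,\theta,0)$ and $(0,\theta,zw)$ remain in $\overline{\O_{\ga_*}(\s,\mu)}$, where $f$ is rhcb.

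Having this representation, I would estimate the three constituents of the norm \eqref{norm} as follows. For the sup of $|f^T|$ on $\O_{\ga_*}(\s,\mu)$ the direct coefficient bounds $|f(0,\theta,0)|$, $|\nabla_r f(0,\theta,0)|$, $\|\nabla_w f(0,\theta,0)\|_{\ga_*}$ and $\|\nabla^2_w f(0,\theta,0)\|_{\ga_*,\vark}\le|f|_{\s,\mu;\ga,\vark}$ (obtained from R1, R2, finiteness of $\cA$, and $(0,\theta,0)\in\O_{\ga_*}(\s,\mu)$), together with $|r|,\|w\|_{\ga_*}<\mu\le 1$ and the pairing estimate $|\langle Aw,w\rangle|\le C\|A\|\,\|w\|_{\ga_*}^2$, suffice. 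For the gradient and the Hessian I would differentiate under the Cauchy integrals: $\partial_\theta$ of the integrand is simply $\partial_\theta f$, a block of $\nabla f$; $\partial_r$ (respectively $\partial_w$) generates one power of $z$ through the chain rule, thereby lowering the kernel from $1/z^k$ to $1/z^{k-1}$. Hence every integrand appearing in $\nabla f^T$ or $\nabla^2 f^T$ is of the form (a block of $\nabla f$ or of $\nabla^2 f$ evaluated on $\O_{\ga'}(\s,\mu)$)$/z^j$ with $j\ge 1$, and is controlled on $|z|=1$ by $|f|_{\s,\mu;\ga,\vark}$. Using Proposition~\ref{pMatrixBddOp} to treat the $w$-component of $\nabla g_3$ as $\nabla^2_w f(0,\theta,0)w\in Y_{\ga'}$, and exchanging the contour sup with the matrix-norm suprema defining \eqref{matrixnorm} and \eqref{b-matrixnorm} block by block, one obtains $\|\nabla f^T\|_{\ga'}$ and $\|\nabla^2 f^T\|_{\ga',\vark}$ bounded by $C|f|_{\s,\mu;\ga,\vark}$.

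The parameter-dependence is handled by noticing that the Cauchy representation commutes with every $\partial_\r^j$, so $\partial_\r^j(f^T)=(\partial_\r^j f)^T$; applying the preceding estimates to $\partial_\r^j f$ for each $|j|\le s_*$ yields the full $\cT_{\ga,\vark,\D}(\s,\mu)$-bound with an absolute constant.

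The main obstacle I expect is that a naive computation of $\nabla^2_{\theta\theta}g_3=\tfrac12\langle\nabla^2_\theta\nabla^2_w f(0,\theta,0)w,w\rangle$ would require fourth-order derivatives of $f$ (and $\nabla^2_{\theta\theta}g_1,\nabla^2_{\theta\theta}g_2$ third-order ones), which are \emph{not} controlled by $|f|_{\s,\mu;\ga,\vark}$ without losing factors of $\s^{-1}$ or $\mu^{-1}$. The Cauchy representation above is designed precisely to circumvent this: $\theta$-differentiation passes freely under the integral sign, while $r$- and $w$-differentiation is absorbed by the $1/z^k$ kernel, yielding the bound with the promised absolute constant $C$.
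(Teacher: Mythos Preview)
Your proof is correct. It is essentially a repackaging of the paper's argument, but a cleaner one: the paper computes $Jdg$ and $Jd^2g$ for $g(x)=d^2_wf(p(x))[w,w]$ via the chain rule, producing terms such as $Jd^2_w\,df(p(x))[w,w]$ and $Jd^2\,d_w^2f(p(x))[w,w]$, and then bounds these higher differentials at $p(x)=(0,\theta,0)$ by Cauchy estimates on $Jdf$ and $Jd^2f$ over the full $w$-ball of radius $\mu$; the resulting factors $\mu^{-1}$ or $\mu^{-2}$ cancel against the explicit $\|w\|_{\ga'}$ or $\|w\|_{\ga'}^2$. Your Cauchy--integral representation performs precisely this in one stroke: differentiation in $r$ or $w$ lowers the power of $z$, so the cancellation is automatic and no intermediate third- or fourth-order differentials need to be named. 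The two arguments are therefore the same idea in different dress; yours makes the ``absolute constant'' more transparent.

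One minor point of presentation: rather than ``exchanging the contour sup with the matrix-norm suprema block by block'', it is safer and cleaner simply to observe that $\|\cdot\|_{\ga'}$ and $\|\cdot\|_{\ga',\vark}$ are \emph{norms} on Banach spaces, whence for any such norm
\[
\Big\|\tfrac{1}{2\pi i}\oint_{|z|=1}F(z)\,z^{-k}\,dz\Big\|\;\le\;\frac{1}{2\pi}\oint_{|z|=1}\|F(z)\|\,|dz|\;\le\;\sup_{|z|=1}\|F(z)\|.
\]
Arguing ``block by block'' would lead you to $\sum_b\sup_z\|B(z)_a^b\|$, which does \emph{not} in general dominate $\sup_z\sum_b\|B(z)_a^b\|$; the norm inequality above sidesteps this.
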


\begin{proof} The first part follows by general arguments. Look for example on
$$g(x)=   d^2_wf\circ p(x)[ w,w],\quad x=(r,\theta,w),$$
where $p(x)$ is the projection onto $(0,\theta,0)$.
This function $g$  is rhcb on $\O_{\ga_*}(\s, \mu)$, 
being a composition of such functions.  A bound for its sup-norm is obtained by a Cauchy estimate of $f$:
$$\aa{d^2_wf(p(x))}_{ \cB(Y_{\ga_*},Y_{-\ga_*}^*)}\aa{w}^2_{\ga'}
\le\Cte\frac1{\mu^2} \sup_{\O_{\ga_*}(\s, \mu)}\ab{f(y)}\aa{w}_{\ga_*}^2\le 
\Cte \sup_{y\in \O_{\ga_*}(\s, \mu)}\ab{f(y)}.$$

Since $Jd g(x)[\cdot]$ equals
$$\big(J dd^2_w f\circ p(x)[w,w]\big)[dp[\cdot]]+ 2\big(Jd^2_w f\circ p(x)[ w]\big)[\cdot], $$
and
$$Jd^2_w f:\O_{\ga'}(\s, \mu)\to \cB(Y_{\ga'},Y_{\ga'})$$
and
$$Jd d^2_w  f=J d^2_w  df:\O_{\ga'}(\s, \mu)\to \cB(Y_{\ga'},\cB( Y_{\ga'},Y_{\ga'}))$$
are rhcb, it follows that $d g$ verifies R1 and is rhcb.  The norm $\aa{Jd g(x)}_{\ga'}$ is less than
$$\aa{J   d^2_w d f(p(x))}_{\cB(Y_{\ga'},\cB( Y_{\ga'},Y_{\ga'}))}\aa{w}^2_{\ga'}
+2\aa{Jd^2_wf(p(x))}_{ \cB(Y_{\ga'},Y_{\ga'})}\aa{w}_{\ga'},$$
which is 
$\le \Cte \sup_{y\in \O_{\ga'}(\s, \mu)}\aa{Jd f(y)}_{\ga'}$ --  this follows by  Cauchy estimates of 
derivatives of $Jd f$.

Since $Jd^2 g(x)[\cdot,\cdot]$ equals
$$\big(J d^2 d_w^2f\circ p(x)[w,w]\big)[dp[\cdot],dp[\cdot]]+ 
2J\big(  d d^2_w f\circ p(x)[w]\big)[\cdot,dp[\cdot]+ 2Jd_w^2 f\circ p(x)[\cdot,\cdot],$$
and
$$Jd^2_w f:\O_{\ga'}(\s, \mu)\to \cM_{\ga',\vark}^b,$$
$$J d d^2_w f=J d^2_w df:\O_{\ga'}(\s, \mu)\to \cB(Y_{\ga'},\cM_{\ga',\vark}^b)$$
and
$$J d^2 d_w^2f=J d_w^2 d^2f :\O_{\ga'}(\s, \mu)\to \cB(Y_{\ga'},\cB(Y_{\ga'},\cM_{\ga',\vark}^b))$$
are rhcb,  it follows that $Jd g^2$ verifies R2   and is rhcb. The norm $\aa{Jd^2 g}_{\ga',\vark}$ is less than
$$\aa{J d_w^2 d^2f(p(x))}_{ \cB(Y_{\ga'},\cB(Y_{\ga'},\cM_{\ga',\vark}^b))}\aa{w}^2_{\ga'}
+2\aa{J d_w d^2f(p(x))}_{\cB(Y_{\ga'},\cM_{\ga',\vark}^b)}\aa{w}_{\ga'}+$$
$$
+2\aa{Jd^2f(x)}_{\ga',\vark},$$
which is 
$\le \Cte \sup_{y\in \O_{\ga'}(\s, \mu)}\aa{Jd^2 f(y)}_{\ga',\vark}$ --  this follows by a Cauchy estimate of $Jd^2 f$.

The derivatives with respect to $\r$ are treated alike. 
\end{proof}

\subsection{Flows}
\subsubsection{ Poisson brackets.}  \label{ss5.2}
The Poisson bracket $\{f,g\}$  of two $\cC^1$-functions $f$ and $g$
is (formally) defined by
$$
\{f,g\}=
\Om(Jdf,Jdg) =\langle J\nabla f, \nabla g\rangle
=-df[Jdg]=dg[Jdf]$$
If one of the two functions verify condition R1, this product is well-defined.
Moreover, if both $f$ and $g$ are jet-functions, then $\{f,g\}$ is also a jet-function.

\begin{proposition}\label{lemma:poisson}
Let $f,g\in  \cT_{\ga,\vark,\D}(\s,\mu)$, and let $\s'<\s$ and $\mu'<\mu\le1$. Then
\begin{itemize}
\item[(i)] $\{g,f\}\in  \cT_{\ga,\vark,\D}(\s',\mu')$ and
$$
\ab{\{g,f\}}_{\begin{subarray}{c}\s',\mu' \  \\ \ga, \vark, \D \end{subarray}}\leq 
C_{\s-\s'}^{\mu-\mu'}\ab{g}_{\begin{subarray}{c}\s,\mu\ \  \\ \ga, \vark, \D \end{subarray}}
\ab{f}_{\begin{subarray}{c}\s,\mu \ \ \\ \ga, \vark, \D \end{subarray}}$$
for 
$$C_{\s-\s'}^{\mu-\mu'}=C \big(\frac1{(\sigma-\sigma')}  +   \frac1{ (\mu-\mu') }\big).$$

\item[(ii)] the n-fold Poisson bracket
$ P_g^n f \in  \cT_{\ga,\vark,\D}(\s,\mu)$ and
$$\ab{ P_g^n f }_{\begin{subarray}{c}\s',\mu' \ \\ \ga, \vark, \D \end{subarray}}\leq
\big(C_{\s-\s'}^{\mu-\mu'}\ab{g}_{\begin{subarray}{c}\s,\mu \ \ \\ \ga, \vark, \D \end{subarray}}\big)^n
\ab{f}_{\begin{subarray}{c}\s,\mu \ \ \\ \ga,  \vark, \D \end{subarray}}$$
where $P_g f=\{g,f\}$.
\end{itemize}
($C$ is an absolute constant.)

\end{proposition}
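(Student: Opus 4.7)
The plan is to write the Poisson bracket explicitly in coordinates $x=(r_\A,\theta_\A,w)$ as
$$
\{g,f\}=\sum_{a\in\A}\Bigl(\frac{\p g}{\p r_a}\frac{\p f}{\p\theta_a}-\frac{\p g}{\p\theta_a}\frac{\p f}{\p r_a}\Bigr)+\langle J\nabla_w g,\nabla_w f\rangle,
$$
and then bound, in turn, each of the three quantities entering the definition \eqref{norm} of $|\cdot|_{\s',\mu';\ga,\vark}$. The finite-dimensional part is controlled by Cauchy estimates in the variables $r_\A$ and $\theta_\A$, which on any subdomain of $\O_{\ga'}(\s',\mu')$ produce the announced prefactors $1/(\sigma-\sigma')$ and $1/(\mu-\mu')$. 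For the infinite-dimensional part, condition R1 applied to $g$ guarantees $J\nabla_w g\in Y_{\ga'}$, while $\nabla_w f\in Y_{-\ga'}$; the complex-bilinear pairing $Y_{\ga'}\times Y_{-\ga'}\to\C$ then yields $|\langle J\nabla_w g,\nabla_w f\rangle|\le\|J\nabla_w g\|_{\ga'}\|\nabla_w f\|_{-\ga'}$, which gives the sup-norm estimate.

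To verify R1 for $\{g,f\}$, I would differentiate once. In the $w$-direction the gradient is, up to signs and factors of $J$, a sum of $(\nabla^2_w g)(\nabla_w f)$ and its $(g,f)$-symmetric counterpart. By R2 applied to $g$ the Hessian $J\nabla^2_w g$ sits in $\cM^b_{\ga',\vark}$, and Proposition~\ref{pMatrixBddOp} shows that its action on $\nabla_w f\in Y_{\ga'}$ (which is in $Y_{\ga'}$ by R1 for $f$) lies in $Y_{\ga'}$ with the expected bound; the $(r_\A,\theta_\A)$-components of the gradient are treated by further Cauchy estimates. To verify R2, I would differentiate once more and collect, schematically, three kinds of terms in the Hessian of $\{g,f\}$: (a) products of two Hessians such as $(\nabla^2_w g)(\nabla^2_w f)$, which stay in $\cM^b_{\ga',\vark}$ by Proposition~\ref{pMatrixProduct} and the fact that $\cM^b_{\ga',\vark}$ is an ideal in $\cM^b_{\ga',0}$; (b) third-derivative terms $(\nabla^3_w g)[\nabla_w f]$, which by the identification \eqref{dk+2} are multilinear maps $Y_{\ga'}\to\cM^b_{\ga',\vark}$, so contraction with $\nabla_w f\in Y_{\ga'}$ lands in $\cM^b_{\ga',\vark}$ after absorbing a factor $1/(\mu-\mu')$ from a Cauchy estimate of $\nabla^2_w g$ in $w$; and (c) mixed $r,\theta$-contributions, again handled by Cauchy estimates.

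The parameter dependence on $\r\in\D$ follows from the Leibniz rule $\p_\r^j\{g,f\}=\sum_{j_1+j_2=j}\binom{j}{j_1}\{\p_\r^{j_1}g,\p_\r^{j_2}f\}$, which reduces everything to the above bounds modulo a combinatorial factor $2^{s_*}$ absorbed in $C$; this completes (i). Part (ii) is obtained by iterating (i) along a sequence of shrinking domains $\sigma_k=\sigma'+k(\sigma-\sigma')/n$, $\mu_k=\mu'+k(\mu-\mu')/n$: at the $k$-th step one applies (i) to $\{g,P_g^{k-1}f\}$ on $\O_{\ga'}(\sigma_{k-1},\mu_{k-1})\subset\O_{\ga'}(\sigma_k,\mu_k)$ and multiplies the resulting bounds. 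The main technical obstacle lies in the second-differential estimate, where the contracted expressions must be kept inside $\cM^b_{\ga',\vark}$ by simultaneously exploiting its Banach-algebra structure, its ideal property in $\cM^b_{\ga',0}$, the multilinear representation \eqref{dk+2} of higher differentials, and R1--R2 for both $f$ and $g$: it is precisely the good multiplicative behaviour (Proposition~\ref{pMatrixProduct}) of the $l^1$-type matrix norm \eqref{matrixnorm} that makes these estimates close.
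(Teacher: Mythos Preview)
Your treatment of part~(i) is essentially the paper's argument: the sup-bound via the pairing, the first differential via $J\nabla^2 g\cdot J\nabla f$, and the second differential split into a product of Hessians (handled by Proposition~\ref{pMatrixProduct}) and a third-derivative term contracted against a first derivative (handled by a Cauchy estimate through~\eqref{dk+2}). That part is fine.

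The gap is in part~(ii). If you iterate~(i) along $n$ equal steps $\sigma_k=\sigma'+k(\sigma-\sigma')/n$, $\mu_k=\mu'+k(\mu-\mu')/n$, then at each step the constant from~(i) is
\[
C\Bigl(\tfrac{n}{\sigma-\sigma'}+\tfrac{n}{\mu-\mu'}\Bigr)=n\,C_{\s-\s'}^{\mu-\mu'},
\]
so after $n$ steps you obtain $\bigl(n\,C_{\s-\s'}^{\mu-\mu'}|g|\bigr)^n|f|$, i.e.\ an extra factor $n^n$ that cannot be absorbed into an \emph{absolute} constant $C$ inside the parentheses. The paper explicitly flags this: membership $P_g^n f\in\cT_{\ga,\vark,\D}(\s',\mu')$ does follow from iterating~(i), ``but the estimate does not follow from the estimate'' there. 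Instead the paper expands $d^k(P_g^n f)(x)$ directly as a sum of at most $2^{nk}$ products $|d^{m_1}g|\cdots|d^{m_n}g|\,|d^{m_{n+1}}f|$ with $\sum m_j=n+1+k$ and each $m_j\ge1$, replaces each $m_j$ by $m_j'=\min(m_j,2)$ via a single Cauchy estimate costing $(C_{\s-\s'}^{\mu-\mu'})^{m_j-m_j'}$, and uses the combinatorial bound $\sum(m_j-m_j')\le\max(n+k-2,0)$. Taking $k=2$ gives exactly $n$ Cauchy losses, and the term count $4^n$ is of the right shape $(\text{const})^n$. This direct counting is what circumvents the $n^n$ blow-up; your telescoping argument does not.
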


\begin{proof} (i)
We must first consider the function $h=\Om(Jdg,Jdf)$ on $\O_{\ga_*}(\s, \mu)$
Since $J dg,\ J df:\O_{\ga_*}(\s, \mu)\to Y_{\ga_*}$ are rhcb, it follows that
$h:\O_{\ga_*}(\s, \mu)\to \C$ is rhcb, and 
$$\ab{h(x)}\le \aa{J dg (x)}_{\ga_*}\aa{J df(x)}_{\ga_*}.$$

The vector $J d h(x)$ is a sum of
$$J \Om(Jd^2g(x),Jdf(x))=Jd^2g(x)[Jdf(x)]$$
and another term with $g$ and $f$ interchanged.
Since $Jd^2g:\O_{\ga'}(\s, \mu)\to \cB(Y_{\ga'},Y_{\ga'})$ and 
$Jdg,\ Jdf:\O_{\ga'}(\s, \mu)\to Y_{\ga'}$ are rhcb, it follows that $Jd h$
verifies R1 and is rhcb. Moreover 
$$\aa{Jd^2g(x)[Jdf(x),\cdot]}_{\ga'}\le
\aa{J d^2g (x)}_{\cB(Y_{\ga'},Y_{\ga'}) }\aa{J df(x)}_{\ga'}$$
and, by definition of $ \cM_{\ga,\vark}^b$ ,
$$\aa{J d^2g (x)}_{\cB(Y_{\ga'},Y_{\ga'}) }\le \aa{J d^2g (x)}_{\ga',0}.$$

The operator $Jd^2h(x)=d(Jdh) (x)$ is a sum of 
$$Jd^3g(x)[Jdf(x)]$$
and
$$Jd^2g(x)[Jd^2f(x)]$$
and two  other terms with $g$ and $f$ interchanged.

Since $Jd^3g:\O_{\ga'}(\s, \mu)\to \cB(Y_{\ga'},\cM_{\ga',\vark}^b)$ and 
$Jdf:\O_{\ga'}(\s, \mu)\to Y_{\ga'}$ are holomorphic functions, it follows that the first function
$\O_{\ga'}(\s, \mu)\to \cB(Y_{\ga'},\cM_{\ga',\vark}^b)$ also is holomorphic. It can be estimated
on a smaller domain using a Cauchy estimate for $Jd^3g(x)$.

The second term is treated  differently. Since
$$Jd^2f,\ Jd^2g:\O_{\ga'}(\s, \mu)\to \cM_{\ga,\vark}^b$$ 
are rhcb, and since, by Proposition \ref{pMatrixProduct}, taking products
is a bounded bi-linear maps with norm $\le 1$, it follows that the second function
$\O_{\ga'}(\s, \mu)\to \cM_{\ga',\vark}^b$ is rhcb and
$$\aa{Jd^2g(x)[Jd^2f(x)]}_{\ga',\vark}\le  \aa{Jd^2g(x)}_{\ga',\vark}\aa{Jd^2f(x)}_{\ga',\vark}.$$

The derivatives with respect to $\r$ are treated alike.

(ii) That $ g_n=P_g^n f \in  \cT_{\ga,\vark,\D}(\s',\mu')$ follows from (ii), but the estimate does not follow from
the estimate in (ii). The estimate follows instead from Cauchy estimates of  $n$-fold product $P_g^n f $ and from
the following statement:

for any $n\ge1$ and any $k\ge0$, $\ab{d^k g_n(x)}$, $x\in\O_{\ga'}(\s, \mu)$, is bounded by a sum of terms of the form
$$\ab{d^{m_1} g(x)}\dots\ab{d^{m_n} g(x)}\ab{d^{m_{n+1}}f(x)}
\quad\footnote{\ in the norms of the appropriate Banach spaces}
$$
with
$\sum m_j=n+1+k$
and each $m_j\ge1$. The number of terms in the sum is $\le 2^{nk}$.
[This is proven above for $n=1$ and $k\le2$. It follows for $k\ge 3$ by the product formula for derivatives. It follows then for
all $n\ge2$ and any $k\ge0$ by an easy induction.]

Let now $m'_j=2$ if $m_j\ge3$ and $=m_j$ if $m_j\le2$. Then the term above can be estimated by Cauchy estimates:
$$
\le (C_{\s-\s'}^{\mu-\mu'})^{\sum (m_j-m'_j)}
\ab{d^{m'_1} g(x)}\dots\ab{d^{m'_n} g(x)}\ab{d^{m'_{n_1}}f (x)}\le
$$
$$\le (C_{\s-\s'}^{\mu-\mu'})^{\sum (m_j-m'_j)}
(\ab{g}_{\begin{subarray}{c}\s,\mu\ \  \\ \ga, \vark, \D \end{subarray}})^n
\ab{f}_{\begin{subarray}{c}\s,\mu \ \ \\ \ga, \vark, \D \end{subarray}}$$
The result now follows by observing that $\sum (m_j-m'_j)\le \max(n+k-2,0)$ and taking $k=2$.
[Indeed, if $\sum (m_j-m'_j)$ were $\ge n+k-1$, then $\sum m'_j\le \sum m_j-(n+k-1)=2$. Since $m'_j\ge1$ this forces
$n$ to be $=1$ and all $m'_j$ to be $=1$. Hence $m_j=m'_j$ and $\sum( m_j- m'_j)=0$.]
\end{proof}

\begin{remark}
The proof shows that the assumptions can be relaxed when $g$ is a jet function: it suffices then to assume that
 $g\in  \cT_{\ga,0,\D}(\s,\mu)$ and
$g-\hat g(\cdot,0,\cdot)\in  \cT_{\ga,\vark,\D}(\s,\mu)$.
\footnote{\ $\hat g(\cdot,0,\cdot)$ this is the $0$:th Fourier coefficient of the function $\theta\mapsto g(\cdot,\theta,\cdot)$}

Then $\{g,f\}$ will still be in $\cT_{\ga,\vark,\D}(\s,\mu)$ but with the bound
$$
\ab{\{g,f\}}_{\begin{subarray}{c}\s',\mu' \  \\  \ga, \vark, \D \end{subarray}}\leq 
C_{\s-\s'}^{\mu-\mu'} 
\big(\ab{g}_{\begin{subarray}{c}\s,\mu \ \ \\ \ga, 0, \D \end{subarray}}
+\ab{g-\hat g(\cdot,0,\cdot)}_{\begin{subarray}{c}\s,\mu \ \ \\ \ga, \vark, \D \end{subarray}}\big)
\ab{f}_{\begin{subarray}{c}\s,\mu \ \ \\ \ga, \vark, \D \end{subarray}}.$$

To see this it is enough to consider a jet-function $g$ which does not depend on $\theta$. The only difference
with respect to case  (i) is for the second differential. The second term is fine
since, by Proposition \ref{pMatrixProduct},  $\cM_{\ga',\vark}^b$ is a two-sided ideal in  $\cM_{\ga',0}^b$ and
$$\aa{Jd^2g(x)[Jd^2f(x)]}_{\ga',\vark}\le  \aa{Jd^2g(x)}_{\ga',0}\aa{Jd^2f(x)}_{\ga',\vark}.$$
For the first term we must consider $Jd^3g(x)[Jdf(x)]$ which, a priori, takes its values in $\cM_{\ga',0}^b$ and not in
$\cM_{\ga',\vark}^b$. But since $g$ is a jet-function independent of $\theta$ this term is $=0$.

\end{remark}

 \subsubsection {Hamiltonian flows} \label{ss5.3}
 
 The Hamiltonian vector field of a $\cC^1$-function $g$ on (some open set in) $Y_\ga$  is $Jdg$. 
 Without further assumptions it is an element in $Y_{-\ga}$, but if $g\in\cT_{\ga,\vark}$, then it is an element
 in $Y_\ga$ and has a well-defined local flow $\{\Phi_g^t\}$. Clearly
 $(d/dt) f(\Phi^t_g) = \{f,g\}\circ \Phi^t_g$ for a $C^1$-smooth function $f$.

\begin{proposition}\label{Summarize}
Let $g\in  \cT_{\ga,\vark,\D}(\s,\mu)$, and let $\s'<\s$ and $\mu'<\mu\le1$.
If 
$$
\ab{g}_{\begin{subarray}{c}\s,\mu\ \ \\ \ga, \vark, \D  \end{subarray}}\leq\\
 \frac1{C}\min( \s-\s',\mu-\mu'),$$
 then
\begin{itemize}
\item[(i)]  the Hamiltonian flow map  $\Phi^t=\Phi^t_g$ is,
  for  all $\ab{t}\le1$ and all $\ga_*\le \ga'\le\ga$,   a $\cC^{{s_*}}$-map
$$\O_{\ga'}(\s',\mu')\times\D \to\O_{\ga'}(\s,\mu)$$
which is real holomorphic and symplectic for any fixed  $\rho\in \D$. 

Moreover,
$$\aa{ \p_\r^j (\Phi^t(x,\r)-x)}_{\ga'}\le 
C\ab{g}_{\begin{subarray}{c}\s,\mu\ \ \\ \ga, \vark, \D  \end{subarray}},$$
and
$$
\aa{ \p_\r^j(d\Phi^t(x)-I)}_{\ga',\vark}\le
C\ab{g}_{\begin{subarray}{c}\s,\mu\ \ \\ \ga, \vark, \D  \end{subarray}},$$
for any $x\in \O_{\ga'}(\s',\mu')$, $\ga_*\le \ga'\le\ga$,  and $0\le \ab{j}\le {s_*}$.

\item[(ii)] $f\circ \Phi_g^t\in \cT_{\ga,\vark}(\s',\mu',\D)$ for $\ab{t}\le1$ and
$$
\ab{ f\circ \Phi_g^t }_{\begin{subarray}{c}\s',\mu' \ \ \\ \ga, \vark, \D \end{subarray}}\leq C
\ab{f}_{\begin{subarray}{c}\s,\mu \ \ \\ \ga, \vark, \D \end{subarray}}$$
for any $f\in \cT_{\ga,\vark}(\s,\mu,\D)$.
\end{itemize}

($C$ is an absolute constant.)
\end{proposition}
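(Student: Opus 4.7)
The strategy is to construct the Hamiltonian flow by Cauchy--Lipschitz in the complex Banach space $Y_{\ga'}$ (using R1 and R2 to get linear and Lipschitz bounds on $X_g=J\nabla g$), then to derive the estimate on $d\Phi^t-I$ from the variational equation together with the ideal structure of $\cM^b_{\ga',\vark}$ in $\cM^b_{\ga',0}$ (Proposition \ref{pMatrixProduct}), and finally to obtain (ii) from the Lie series $f\circ \Phi^t_g=\sum_{n\ge0}\frac{t^n}{n!}P_g^n f$ via Proposition \ref{lemma:poisson}(ii). Throughout, abbreviate $\ab{g}=\ab{g}_{\begin{subarray}{c}\s,\mu\\ \ga,\vark,\D\end{subarray}}$ and similarly for $f$.

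For (i), R1 gives that $X_g:\O_{\ga'}(\s,\mu)\to Y_{\ga'}$ is rhcb with $\aa{X_g(x)}_{\ga'}\le \ab{g}$, and R2 yields $\aa{dX_g(x)}_{\ga',\vark}\le \ab{g}$, which provides the Lipschitz estimate needed for local existence and uniqueness. The smallness assumption $\ab{g}\le C^{-1}\min(\s-\s',\mu-\mu')$ then forces
$$\aa{\Phi^t(x)-x}_{\ga'}\le|t|\cdot\ab{g}\le \min(\s-\s',\mu-\mu')\qquad(|t|\le 1),$$
so any trajectory starting in $\O_{\ga'}(\s',\mu')$ stays in $\O_{\ga'}(\s,\mu)$ throughout $|t|\le 1$. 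Real-holomorphicity and the symplectic character of $\Phi^t_g$ come from the holomorphicity of $X_g$ and from the Hamiltonian nature of the ODE; $\cC^{s_*}$-dependence on $\r$, together with the bounds on $\p_\r^j(\Phi^t-x)$, follows by differentiating the integral form $\Phi^t=\mathrm{id}+\int_0^tX_g\circ\Phi^s\,ds$ in $\r$ and applying Gronwall.

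The main technical point, and the main potential obstacle, is the matrix estimate on $d\Phi^t-I$ in the algebra $\cM^b_{\ga',\vark}$. The variational equation gives
$$d\Phi^t-I=\int_0^tdX_g(\Phi^s)\,ds+\int_0^tdX_g(\Phi^s)\cdot(d\Phi^s-I)\,ds,$$
and the integrand $dX_g(\Phi^s)=J\nabla^2g(\Phi^s)$ lies in $\cM^b_{\ga',\vark}$ with norm $\le\ab{g}$ by R2. Because $\cM^b_{\ga',\vark}$ is a two-sided ideal in the Banach algebra $\cM^b_{\ga',0}$ by Proposition \ref{pMatrixProduct} (the multiplicativity of the $\cM^b$-norm being precisely what makes the choice \eqref{b-matrixnorm} superior to \eqref{matrixnorm}), the matrix product $dX_g(\Phi^s)\cdot(d\Phi^s-I)$ stays in $\cM^b_{\ga',\vark}$, and a Gronwall-type argument in this algebra yields $\aa{d\Phi^t-I}_{\ga',\vark}\le C\ab{g}$ for $|t|\le 1$. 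The corresponding bounds on $\p_\r^j(d\Phi^t-I)$ are obtained by differentiating in $\r$ and repeating the same Gronwall estimate in $\cM^b_{\ga',\vark}$.

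For (ii), I would invoke the Lie series $f\circ \Phi^t_g=\sum_{n\ge0}\frac{t^n}{n!}P_g^n f$, whose validity in this analytic setting is classical and can also be seen by differentiating $(d/dt)(f\circ\Phi^t)=\{f,g\}\circ\Phi^t$ repeatedly at $t=0$. Proposition \ref{lemma:poisson}(ii) provides
$$\ab{P_g^n f}_{\begin{subarray}{c}\s',\mu'\\ \ga,\vark,\D\end{subarray}}\le\bigl(C_{\s-\s'}^{\mu-\mu'}\ab{g}\bigr)^n\ab{f},$$
and the smallness hypothesis ensures that, after adjusting the absolute constant, $C_{\s-\s'}^{\mu-\mu'}\ab{g}\le 1/2$; summing the resulting geometric series in $\cT_{\ga,\vark,\D}(\s',\mu')$ gives $\ab{f\circ\Phi^t_g}_{\begin{subarray}{c}\s',\mu'\\ \ga,\vark,\D\end{subarray}}\le C\ab{f}$, as claimed. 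Once the ideal structure of Proposition \ref{pMatrixProduct} is in hand, every other step is classical ODE and Lie-series analysis.
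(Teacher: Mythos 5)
Your proposal is correct and follows essentially the same route as the paper: local ODE theory in $Y_{\ga'}$ gives the flow and the bound on $\Phi^t-x$; the variational equation together with the ideal property of $\cM^b_{\ga',\vark}$ inside $\cM^b_{\ga',0}$ (Proposition \ref{pMatrixProduct}) gives the bound on $d\Phi^t-I$ (you use Gronwall where the paper sums the explicit iterated-integral series $B^\infty(t)$, but these are equivalent); and the Lie series with Proposition \ref{lemma:poisson}(ii) handles (ii).
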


\begin{proof} 
It follows by general arguments that  the local flow $\Phi=\Phi_g: U\to \O_{\ga}(\s,\mu)$
is real holomorphic in $(t,\zeta)$ in some $U\subset \C\times \O_{\ga}(\s,\mu)$, and that it depends smoothly on
any smooth parameter in the vector field.
Clearly,  for $\ab{t}\le 1$ and $x\in \O_{\ga}(\s',\mu')$
$$\aa{\Phi^t(x,\r)-x}_{\ga}\le  \sup_{x\in \O_{\ga}(\s,\mu)}\aa{Jdg(x)}_\ga 
\le \ab{g}_{\begin{subarray}{c}\s,\mu\ \ \\ \ga, 0, \D  \end{subarray}}$$
as long as $\Phi^t(x)$ stays in the domain $ \O_{\ga}(\s,\mu)$. It follows by classical arguments that this is the case if
$$\ab{g}_{\begin{subarray}{c}\s,\mu\ \ \\ \ga, 0, \D  \end{subarray}}
\le\cte \min(\s-\s',\mu-\mu').$$

{\it The differential}. We have
$$\frac{d}{dt} d\Phi^t(x)=-Jd^2g(\Phi^t(x))d\Phi^t(x)=B(t)d\Phi^t(x),$$
where $B(t)\in \cM_{\ga,\vark}^b$.
By re-writing this equation in  the integral form 
 $d\Phi^t(x)=\Id+\int_0^t B(s) d\Phi^s(x)\dd s$ and iterating this relation, we get that 
$ d\Phi^t(x)-\Id= B^{\infty}(t)$  with
 $$B^{\infty}(t)
 =\sum_{k\geq 1}\int_{0}^{t}\int_{0}^{t_{1}}\cdots \int_{0}^{t_{k-1}} \prod_{j=1}^{k}B(t_{j})\text{d}t_{k}  \cdots \text{d}t_{2}\,\text{d}t_{1}.$$

We get, by Proposition \ref{pMatrixProduct}, that $d\Phi^t(x)-\Id\in\cM_{\ga,\vark}^b$ and, for $\ab{t}\le1$ ,
$$
\aa{ d\Phi^t(x)-\Id}_{\ga,\vark}\le \sum_{k\geq 1} \aa{Jd^2g(\Phi^t(x))}^k_{\ga,\vark}\frac{t^k}{k!}\le
\aa{Jd^2g(\Phi^t(x))}_{\ga,\vark}.$$

In particular, $A=d\Phi^t(x)$ is a bounded bijective operator
on $Y_\ga$.  Since $Jd^2g$ is a Hamiltonian vector field we clearly have that
$$\Om(A\zeta,A\zeta')=\Om(\zeta,\zeta'),\quad\forall  \zeta,\zeta'\in Y_\ga,$$
so $A$ is symplectic.
 
 {\it Parameter dependence}. For $\ab{j}=1$, we have
$$\frac{d}{dt} Z(t)=\frac{d}{dt} \frac{\p^j\Phi^t(x,\r)}{\p\r^j}=
B(t,\r)Z(t) -\frac{\p^j Jdg(\Phi^t(x,\r),\r)}{\p\r^j}  =B(t)Z(t)+A(t).$$

Since
$$\aa{A(t)}_\ga+ \aa{B(t)}_{\ga,\vark}\le \Cte \ab{g}_{\begin{subarray}{c}\s,\mu\ \ \\ \ga, \vark, \D  \end{subarray}},$$
it  follows by classical arguments, using Gronwall, that
$$\aa{Z(t)}_{\ga,0}\le \Cte \ab{g}_{\begin{subarray}{c}\s,\mu\ \ \\ \ga, \vark, \D  \end{subarray}}\ab{t}.$$
The higher order derivatives (with respect to $\rho$) of   $\Phi^t(x,\r )$,  and the derivatives of $d\Phi^t(x,\r )$  are treated in the same way.

The same argument applies to any $\ga_*\le\ga'\le\ga$.

Since
$$
 f\circ \Phi_g^t =
\sum_{n\ge0}\frac1{n!}t^nP^n_{-g}f  ,$$
(ii) is a consequence of Proposition \ref{lemma:poisson}(ii).
\end{proof}

\begin{remark}\label{r_sigma}
If the set $\cZ$ is such that $\A=\F=\emptyset$ and $\L_\infty =\Z^d$ (so
$\cZ=\Z^d$), then the domains $\O_\ga(\sigma,\mu)$ and the functional spaces on these domains 
which we  introduced   do not depend on $\sigma$. In this case in our notation  we will chose the dumb parameter
$\sigma$ to be 1. The assertions of the Propositions~\ref{lemma:poisson} and \ref{Summarize} remain true if we 
there take $\s=\s'=1$ and drop the assumptions, related to $\s$ and $\s'$ (in particular, replace there 
$\min(\s-\s', \mu-\mu')$ by $\mu-\mu'$, and replace $1/(\s-\s')$ by 0). 
\end{remark}

\bigskip
\centerline{PART II.  A BIRKHOFF NORMAL FORM}
\section{Small divisors} \label{s_2} 
\subsection{Non resonance of  basic frequencies}

In this subsection we assume that the set $\A\subset\Z^d$ is admissible, i.e. it only 
contains  integer vectors with different norms (see Definition \ref{adm}).\\
We consider the  vector of basic frequencies 
\be\label{-om}
\om\equiv\om(m)=(\om_a(m))_{a\in\A}\,, \quad m\in [1,2]\,,
\ee
where $\om_a(m)=\lambda=
\sqrt{|a|^4+m}$.  The goal of  this section   is to prove the following result: 
\begin{proposition}
\label{NRom}
Assume that $\A$ is an admissible subset of $\Z^d$ of cardinality $n$ included in $\{a\in\Z^d\mid |a|\leq N\}$. 
Then for any $k\in\Z^\A\setminus\{0\}$, any $\ka>0$ and  any $c\in \R$ we have
\begin{equation*}
 \meas\ \left\{m\in[1,2]\ \mid \
\left|\sum_{a\in\A} k_a\omega_a(m)+c\right|\leq {\ka}\right\}\leq C_n \frac{N^{4n^2} \ka^{1/n}}{|k|^{1/n}}\,,
\end{equation*} 
where $|k|:=\sum_{a\in\A}|k_a|$ and $C_n>0$ is a constant, depending only on $n$. 
\end{proposition}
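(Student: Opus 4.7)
The plan is to reduce the measure estimate to a Russmann-type sublevel-set lemma for $C^n$ functions, the main work being to establish a uniform lower bound on $\max_{1 \leq j \leq n}|\phi^{(j)}(m)|$, where $\phi(m) = \sum_{a \in \A} k_a \om_a(m) + c$. Listing $\A = \{a_1, \ldots, a_n\}$ and setting $s_i = |a_i|^4 + m$, direct differentiation yields $\phi^{(j)}(m) = \gamma_j \sum_{i} k_{a_i}\, s_i^{1/2-j}$ with $\gamma_j = \prod_{\ell=0}^{j-1}(\tfrac12 - \ell) \neq 0$. Collecting the first $n$ derivatives into one vector gives the linear relation $(\phi^{(1)}, \ldots, \phi^{(n)})^T = A(m)^T \vec k$, where $A(m)$ is the $n \times n$ matrix with entries $A_{ij}(m) = \gamma_j\, s_i^{1/2-j}$ and $\vec k = (k_{a_1}, \ldots, k_{a_n})^T$.

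Next I would lower-bound $|\det A(m)|$ by a Vandermonde-type computation. Pulling $s_i^{-1/2}$ from each row $i$ and $\gamma_j$ from each column $j$, then substituting $x_i = 1/s_i$, reduces the determinant to $\prod_j\gamma_j \cdot \prod_i s_i^{-1/2}\cdot\prod_i x_i\cdot \prod_{i<k}(x_k - x_i)$. Since $x_k - x_i = (|a_i|^4 - |a_k|^4)/(s_i s_k)$, the admissibility of $\A$ (which makes the $|a_i|^2$ distinct non-negative integers, hence $||a_i|^4 - |a_k|^4| = \bigl||a_i|^2 - |a_k|^2\bigr|\bigl(|a_i|^2 + |a_k|^2\bigr) \geq 1$) combined with $1 \leq s_i \leq N^4 + 2$ gives
\begin{equation*}
|\det A(m)| \geq C_n\, N^{-(4n^2 - 2n)} \qquad \forall\, m \in [1,2].
\end{equation*}
Since each $|A_{ij}(m)| \leq |\gamma_j|$ (using $s_i \geq 1$), the entries of the adjugate are uniformly bounded, so $\|A(m)^{-1}\| \leq C_n N^{4n^2}$. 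Inverting the linear relation then produces the uniform non-resonance bound
\begin{equation*}
\max_{1 \leq j \leq n} |\phi^{(j)}(m)| \geq C_n\, |k|\, N^{-4n^2} \qquad \forall\, m \in [1,2].
\end{equation*}

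Finally, I would invoke a Russmann-type lemma: if $\phi \in C^n([1,2])$ satisfies $\max_{1 \leq j \leq n}|\phi^{(j)}(m)| \geq \delta$ for every $m$, then $\meas\{m \in [1,2] : |\phi(m)| \leq \ka\} \leq C_n(\ka/\delta)^{1/n}$; substituting $\delta = C_n |k| N^{-4n^2}$ produces the claimed estimate. The main technical points are the bookkeeping for the Vandermonde determinant (with the admissibility used sharply to give the integer gap) and the precise form of the sublevel-set lemma. If only the single-derivative form is accessible — $|\phi^{(j_0)}| \geq \delta$ on an interval implies $\meas\{|\phi| \leq \ka\} \leq C_{j_0}(\ka/\delta)^{1/j_0}$ — one partitions $[1,2]$ according to which $j \in \{1, \ldots, n\}$ achieves the maximum and applies the single-derivative version on each piece, absorbing the resulting loss into the constant $C_n$.
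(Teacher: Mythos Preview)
Your approach is essentially the paper's: both compute the derivatives explicitly, reduce to a Vandermonde determinant via admissibility, and feed the resulting lower bound into a sublevel-set lemma. Your matrix-inversion step is a clean substitute for the paper's geometric Lemma~\ref{m1.1} and yields the same pointwise conclusion.

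There is, however, one genuine gap. The ``R\"ussmann-type lemma'' you invoke---that the \emph{pointwise} condition $\max_{1\le j\le n}|\phi^{(j)}(m)|\ge\delta$ for all $m$ alone implies $\meas\{|\phi|\le\ka\}\le C_n(\ka/\delta)^{1/n}$---is false without an extra factor coming from a bound on higher derivatives. For instance, with $n=2$ take $\phi(m)=\epsilon\sin(Mm)$: then $\max(|\phi'|,|\phi''|)\ge \epsilon M/\sqrt2=:\delta$ everywhere, yet for $\ka=\epsilon$ the sublevel set is all of $[1,2]$ while $(\ka/\delta)^{1/2}\sim M^{-1/2}\to0$. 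Your partitioning remedy does not work because the number of pieces is not bounded by a constant depending only on $n$. The correct statement (the paper's Lemma~\ref{v.112}, taken from \cite{E98}) carries an additional factor $(\beta/\sigma+1)$ with $\beta=|\phi'|_{C^n}$; this is exactly what controls the number of intervals in the covering argument. Once that factor is present, your Vandermonde bound $|\det A|\gtrsim N^{-(4n^2-2n)}$ becomes slightly too crude---it produces a final exponent $4n^2+2n-2$, overshooting the stated $4n^2$ for $n\ge2$. The paper sharpens the Vandermonde estimate by keeping the numerator $|a_i|^2+|a_k|^2$ (rather than replacing it by $1$) and cancelling it against part of the denominator, which improves each factor from $N^{-8}$ to $N^{-6}$ and yields $|\det A|\gtrsim N^{-(3n^2-n)}$, hence final exponent $3n^2+2n-1\le 4n^2$.
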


The proof follows  closely that of Theorem 6.5 in
\cite{Bam03} (also see  \cite{BG06}); a weaker form of the result was obtained
earlier in \cite{bou95}. Non of  the constants $C_j$ etc. in this section depend on the set $\A$. 

\begin{lemma}\label{l_det} Assume that $\A\subset\{a\in\Z^d\mid |a|\leq N\}$.
For any $p\leq n= |\A|$, consider $p$ points $a_1,\cdots,a_p$ in $\A$.
Then the modulus of the  following determinant 
\begin{equation*}
D:=\left|
\begin{matrix}
\frac{d \om_{a_1}}{dm} & \der \null {a_2} & .& .&.&\der \null {a_p}
\\
\frac{d^2 \om_{a_1}}{dm^2} & \frac{d^2 \om_{a_2}}{dm^2} & .& .&.&\frac{d^2 \om_{a_p}}{dm^2}
\\
.& .& .& .& .&.
\\
.& .& .& .& .&.
\\
\der{p}{a_1}& \der{p}{a_ 2}& .& .&.&\der {p}{a_p}
\end{matrix}
\right| 
\end{equation*}
is bounded from below:
$$
|D|\geq C N^{-3p^2+p}\,,
$$
where $C=C(p)>0$ is a constant
depending  only on $p$.
\end{lemma}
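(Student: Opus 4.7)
The plan is to reduce $D$ to a Vandermonde determinant and then exploit admissibility. From $\omega_a(m) = (|a|^4+m)^{1/2}$ one computes $\omega_a^{(k)}(m) = c_k\,\omega_a(m)^{1-2k}$ with $c_k := \prod_{j=0}^{k-1}(\tfrac12 - j) \ne 0$. Setting $\omega_i := \omega_{a_i}(m)$ and $y_i := \omega_i^{-2} = 1/(|a_i|^4+m)$, I factor $c_k$ out of row $k$ and $\omega_i^{-1}$ out of column $i$; what remains is the matrix $[y_i^{k-1}]_{k,i=1}^p$, whose determinant is the Vandermonde $\prod_{i<j}(y_j - y_i)$. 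Using $y_j - y_i = (|a_i|^4 - |a_j|^4)/(\omega_i^2\omega_j^2)$ and collecting the accumulated factors $\omega_k^2$ (each appearing $p-1$ times), I arrive at the key identity
\begin{equation*}
|D| = \Bigl|\prod_{k=1}^p c_k\Bigr| \cdot \frac{\prod_{1\le i<j\le p}\bigl||a_i|^4 - |a_j|^4\bigr|}{\prod_{i=1}^p \omega_i^{\,2p-1}}.
\end{equation*}

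Next I would use admissibility to estimate the right-hand side from below. Setting $n_i := |a_i|^2 \in \Z_{\ge 0}$, admissibility forces the $n_i$ to be pairwise distinct, so after relabeling I may assume $0 \le n_1 < n_2 < \cdots < n_p \le N^2$. The crucial algebraic identity is the factorization $\bigl||a_i|^4 - |a_j|^4\bigr| = (n_j - n_i)(n_i + n_j)$. Using $n_j - n_i \ge j - i$ and $n_i + n_j \ge n_j$ for $i<j$, the numerator is bounded below by $C(p)\prod_{j=2}^p n_j^{\,j-1}$, while $\omega_i \le \sqrt{3}\max(n_i,1)$ gives the denominator bound $\prod_i \omega_i^{2p-1} \le C(p)\prod_i \max(n_i,1)^{2p-1}$. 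In the generic case $n_1 \ge 1$, these combine to $|D| \ge C(p)\,n_1^{-(2p-1)} \prod_{j=2}^p n_j^{\,j-2p}$. Since every exponent $j - 2p$ is negative, inserting the upper bound $n_i \le N^2$ and tallying
\begin{equation*}
-2(2p-1) + \sum_{j=2}^p 2(j - 2p) = -3p^2 + p
\end{equation*}
produces the claimed $|D| \ge C(p)\,N^{-3p^2 + p}$. The degenerate case $n_1 = 0$ is handled separately: $\omega_1^{2p-1}$ is then bounded by an absolute constant and the same bookkeeping gives the strictly stronger exponent $-3p^2 + 7p - 4$.

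The one delicate point is the factorization $|a_i|^4 - |a_j|^4 = (n_j - n_i)(n_i + n_j)$ \emph{before} estimating. Using only the naive bound $\bigl||a_i|^4 - |a_j|^4\bigr| \ge 1$ (from distinctness of integers) would produce just $|D| \ge C(p)\,N^{-4p^2+2p}$ and miss the correct exponent. It is the quadratic gain from $(n_i + n_j)$, paired against one factor of $\omega_j \sim n_j$ per column in the denominator, that supplies the missing $N^{p^2 - p}$ and yields the sharp $N^{-3p^2+p}$. The main obstacle is thus not conceptual but combinatorial: keeping the exponents of $n_j$ and $\omega_i$ in alignment so that the final tally is exactly $-3p^2 + p$.
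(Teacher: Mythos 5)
Your proof is correct and follows essentially the same route as the paper: both reduce $D$ to a Vandermonde determinant in the variables $\omega_{a_i}^{-2}$ by factoring out $\prod c_k$ from the rows and $\prod\omega_{a_i}^{-1}$ from the columns, and both exploit the factorization $\bigl||a_i|^4-|a_j|^4\bigr|=(|a_i|^2-|a_j|^2)(|a_i|^2+|a_j|^2)\ge |a_i|^2+|a_j|^2$ to partially cancel one power of $\omega^2$ in each Vandermonde factor. The only difference is bookkeeping: the paper bounds each Vandermonde factor individually by $\tfrac1{16}N^{-6}$, whereas you sort the $n_i=|a_i|^2$ and tally exponents globally; both give the same exponent $-3p^2+p$.
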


\proof First note that, by explicit computation, 
\be\label{-9}
\frac{d^j\omega_i}{dm^j}= (-1)^{j} \Upsilon_j\(|i|^4+m\)^{\frac12 -j}\,, \qquad \Upsilon_j=\prod_{l=0}^{j-1} \frac{2l-1}2\,.
\ee
Inserting this expression in $D$,  we deduce by factoring from each $l-th$ column the term
$(|a_\ell|^4+m)^{-1/2}=\om_\ell^{-1}$, and from each $j-th$ row the term $\Upsilon_j$
 that the determinant, up to a sign, equals
\begin{eqnarray*}
\left[\prod_{l=1}^{p}\omega_{a_\ell}^{-1}\right]
    \left[\prod_{j=1}^{p}  \Upsilon_j         
    \right]  
\times
\left|
\begin{matrix}
1& 1& 1 &. & . & . & 1
\cr
x_{a_1}& x_{a_2}& x_{a_3}&.&.&.&x_{a_p}
\cr
x_{a_1}^2& x_{ a_2}^2& x_{a_3}^2&.&.&.&x_{a_p}^2
\cr
.& .& .& .& .&.&.
\cr
.& .& .& .& .&.&.
\cr
.& .& .& .& .&.&.
\cr
x_{a_1}^{p}& x_{a_2}^{p}& x_{a_3}^{p}&.&.&.&x_{a_p}^{p}
\end{matrix}
\right|,
\end{eqnarray*}
where we denoted  $x_{a}:=(|a|^4+m)^{-1}= \omega_{a}^{-2}$.
Since $|\om_{a_k}|\le2|a_k|^2\le 2 N^2$ for every $k$, the first factor  is bigger than $(2N^2)^{-p}$. The second is a constant, 
 while the third  is the Vandermond determinant, equal to 
\begin{equation*}
\prod_{1\leq l<k\leq p}(x_{a_\ell}-x_{a_k})=\prod_{1\leq l<k\leq p}
\frac{|a_k|^4-|a_\ell|^4}{\omega_{a_\ell}^2\omega_{a_k}^2} =: V\,.
\end{equation*}
Since $\A$ is admissible, then
$$
|V| \ge \prod_{1\leq l<k\leq p}
\frac{|a_k|^2+|a_\ell|^2}{\omega_{a_\ell}^2\omega_{a_k}^2}  \ge \(\frac14\)^{p(p-1)} N^{-3p(p-1)}\,,
$$
where we used that each factor is bigger than $\frac1{16} N^{-6}$ using again that $|\om_{a_k}|\le2|a_k|^2\le 2 N^2$ for every $k$. 
This yields  the assertion.
\endproof

\begin{lemma}\label{m1.1}
 Let
$u^{(1)},...,u^{(p)}$ be $p$ independent vectors in $\R^p$  of norm at most one, and
 let $w\in\R^p$ be any non-zero vector. Then there exists  $i\in[1,...,p]$ such that
$$
| \langle u^{(i)}, w\rangle |\geq
 C_p |w|  |\det(u^{(1)},\ldots,u^{(p)})|\,.
$$
\end{lemma}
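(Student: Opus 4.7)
The plan is to linearise the statement by assembling the vectors into a matrix. Let $U$ be the $p\times p$ matrix whose columns are $u^{(1)},\ldots,u^{(p)}$. Then the vector of inner products $(\langle u^{(i)},w\rangle)_{i=1}^{p}$ is exactly ${}^tU\,w$, and the desired inequality becomes
$$
\|{}^tU\,w\|_\infty \;\ge\; C_p\, |w|\,|\det U|.
$$

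Since the $u^{(i)}$ are independent, $U$ is invertible, so $w = ({}^tU)^{-1}\,({}^tU\,w)$. Applying the Euclidean operator norm and then comparing $\ell^2$ to $\ell^\infty$ in $\R^p$ gives
$$
|w| \;\le\; \|({}^tU)^{-1}\|_{\mathrm{op}}\cdot |{}^tU\,w| \;\le\; \sqrt{p}\,\|U^{-1}\|_{\mathrm{op}}\cdot \|{}^tU\,w\|_\infty.
$$
So the whole matter reduces to an upper bound on $\|U^{-1}\|_{\mathrm{op}}$ in terms of $|\det U|$ alone, using the hypothesis $|u^{(i)}|\le 1$.

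This is where the main (and only) real work sits, and it is completely standard. I would use the cofactor formula $U^{-1} = (\det U)^{-1} \operatorname{adj}(U)$. Each entry of $\operatorname{adj}(U)$ is, up to sign, the determinant of a $(p-1)\times(p-1)$ submatrix of $U$ whose columns have Euclidean norm $\le 1$; by Hadamard's inequality each such minor is bounded in absolute value by $1$. Hence every entry of $\operatorname{adj}(U)$ is bounded by $1$, which gives $\|\operatorname{adj}(U)\|_{\mathrm{op}} \le p$ (via, e.g., the Frobenius bound), and therefore
$$
\|U^{-1}\|_{\mathrm{op}} \;\le\; \frac{p}{|\det U|}.
$$

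Plugging this back into the previous chain of inequalities yields
$$
\|{}^tU\,w\|_\infty \;\ge\; \frac{|\det U|}{p^{3/2}}\,|w|,
$$
so that the claim holds with $C_p := p^{-3/2}$, which is the choice of the lemma. The only step that deserves a sentence of justification is the Hadamard bound on the $(p-1)$-minors; everything else is bookkeeping with standard matrix norms.
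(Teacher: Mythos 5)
Your proof is correct, and it takes a genuinely different route from the paper. The paper argues geometrically: normalizing $|w|=1$ and assuming $|\langle u^{(i)},w\rangle|\le a$ for all $i$, it observes that the parallelepiped spanned by the $u^{(i)}$ lies in a slab of width $\lesssim pa$ perpendicular to $w$, with projection onto $w^\perp$ contained in a ball of radius $p$; comparing the volume of the parallelepiped (which equals $|\det(u^{(1)},\dots,u^{(p)})|$) with the volume of this cylinder gives the bound. Your argument instead packages the inner products as ${}^tU\,w$, inverts via the adjugate, and uses Hadamard's inequality to control the minors, yielding $\|U^{-1}\|_{\mathrm{op}}\le p/|\det U|$ and hence the explicit constant $C_p=p^{-3/2}$. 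Both proofs hinge on the same essential fact — unit columns force minors to be bounded — once through a volume estimate, once through cofactors; your version has the small advantage of producing a clean explicit constant without any dimensional-volume bookkeeping. One minor point worth spelling out if this were to be written up fully: when you pass from the $(p-1)\times(p-1)$ minors to Hadamard, you should note that each column of a minor is a coordinate-truncation of some $u^{(k)}$, hence still has Euclidean norm $\le 1$ — you gesture at this but it is the only place where the hypothesis $|u^{(i)}|\le 1$ is actually used.
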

\begin{proof}
Without lost of generality we may assume that $|w|=1$. 

Let $| \langle u^{(i)}, w\rangle |  \le a$ for all $i$. Consider the $p$-dimensional parallelogram $\Pi$, 
generated by the vector  $u^{(1)},...,u^{(p)}$ in $\R^p$ (i.e., the set of all linear combinations
$\sum x_j u^{(j)}$, where $0\le x_j\le 1$ for all $j$). It lies in the strip of width $2pa$, perpendicular
to the vector $w$, and its projection to to the $p-1$-dimensional space, perpendicular to $w$, lies 
in the ball around zero of radius $p$. Therefore the volume of $\Pi$ is bounded by 
$C_p p^{p-1} (2pa)=C_p' a$. Since this volume equals $|\det(u^{(1)},\ldots,u^{(p)})|$, then 
$a\ge  C_p   |\det(u^{(1)},\ldots,u^{(p)})|$. This implies the assertion. 
\end{proof}

Consider vectors $\frac{d^i\omega}{dm^i}(m)$, $1\le i\le n$, denote 
$K_i= | \frac{d^i\omega}{dm^i}(m) |$ and set
$$
u^{(i)}= K_i^{-1}\frac{d^i\omega}{dm^i}(m) , \qquad 1\le i\le n\,.
$$
From \eqref{-9} we see that\footnote{In this section $C_n$ denotes any positive constant depending only 
on $n$.} 
$\ 
K_i\le  C_n$ for all\ $1\le i\le n\,
$
(as before, the constant does not depend on the set $\A$). Combining Lemmas~\ref{l_det} and \ref{m1.1}, we find that for any vector $w$ and any  $m\in[1,2]$ there exists $r=r(m)\le n$ such that 
\be\label{m1.2}
\begin{split}
\Big|  \langle \frac{d^r\omega}{dm^r}(m), w \rangle \Big| =K_r \big| \langle u^{(r)}, w\rangle \big| \ge 
K_r C_n |w| (K_1\dots K_n)^{-1}| D|\\
\ge C_n |w| N^{-3n^2+n}\,. 
\end{split}
\ee

Now we need the following result (see Lemma B.1 in \cite{E98}):

\begin{lemma}
\label{v.112}
Let  $g(x)$ be a $C^{n+1}$-smooth function on the segment [1,2] 
such that $|g'|_{C^n} =\beta$ and $\max_{1\le k\le n}\min_x|\p^k g(x)|=\sigma$. Then 
$$
\meas\{x\mid |g(x)|\le\rho\} \le C_n \(\frac{\beta}{\sigma}+1\) \(\frac{\rho}{\sigma}\)^{1/n}\,.
$$
\end{lemma}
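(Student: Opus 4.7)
The plan is to reduce Lemma~\ref{v.112} to a classical Pyartli-type auxiliary estimate: if $f\in C^k([1,2])$ and $|f^{(k)}(x)|\ge\sigma$ throughout $[1,2]$, then
\begin{equation*}
\meas\{x\mid |f(x)|\le\rho\}\le c_k(\rho/\sigma)^{1/k},
\end{equation*}
where $c_k$ depends only on $k$. Granted this, I pick an index $k_0\in\{1,\dots,n\}$ realising the outer maximum in the hypothesis, so that $|\partial^{k_0}g(x)|\ge\sigma$ for every $x\in[1,2]$, and the auxiliary estimate immediately gives $\meas\{|g|\le\rho\}\le c_{k_0}(\rho/\sigma)^{1/k_0}$.

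The auxiliary estimate itself I would prove by induction on $k$. For $k=1$, continuity together with $|f'|\ge\sigma>0$ forces $f'$ to have constant sign, so $f$ is strictly monotone; the preimage of $[-\rho,\rho]$ is then a single interval whose $f$-image has diameter at most $2\rho$ but at least $\sigma$ times its own length, giving length $\le 2\rho/\sigma$. For the inductive step I apply the base case to $f^{(k-1)}$, whose derivative is bounded below by $\sigma$: the sub-level set $J_\eta=\{|f^{(k-1)}|\le\eta\}$ is an interval of length at most $2\eta/\sigma$, and $[1,2]\setminus J_\eta$ splits into at most two intervals on each of which $|f^{(k-1)}|\ge\eta$. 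Applying the inductive hypothesis on these two pieces and adding $|J_\eta|$ yields
\begin{equation*}
\meas\{|f|\le\rho\}\le\frac{2\eta}{\sigma}+2c_{k-1}\Bigl(\frac{\rho}{\eta}\Bigr)^{1/(k-1)},
\end{equation*}
and the choice $\eta=\sigma(\rho/\sigma)^{1/k}$ balances the two terms and produces the bound with $c_k=2+2c_{k-1}$.

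To derive Lemma~\ref{v.112} from $\meas\{|g|\le\rho\}\le c_{k_0}(\rho/\sigma)^{1/k_0}$ I would split into two regimes. Note first that, since $k_0\le n$, the sup-norm $\|\partial^{k_0}g\|_\infty$ is among the quantities appearing in $|g'|_{C^n}$, whence $\beta\ge\|\partial^{k_0}g\|_\infty\ge\sigma$ and $\beta/\sigma+1\ge 2$. If $\rho\le\sigma$, then $(\rho/\sigma)^{1/k_0}\le(\rho/\sigma)^{1/n}$ because $1/k_0\ge 1/n$ and the base does not exceed $1$, so $\meas\le c_n(\rho/\sigma)^{1/n}\le c_n(\beta/\sigma+1)(\rho/\sigma)^{1/n}$. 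If instead $\rho>\sigma$, the trivial bound $\meas\le 1$ together with $(\rho/\sigma)^{1/n}>1$ and $\beta/\sigma+1\ge 1$ already gives $\meas\le(\beta/\sigma+1)(\rho/\sigma)^{1/n}$. Choosing $C_n=\max_{1\le k_0\le n}c_{k_0}$ absorbs both cases into a single uniform constant.

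The only genuinely delicate point is the optimisation of $\eta$ in the inductive step, and that is straightforward once one notices that the exponents force $\eta^k\sim\sigma^{k-1}\rho$. The factor $\beta/\sigma+1$ plays no role in the small-divisor content of the lemma: its sole purpose is to absorb the trivial regime $\rho>\sigma$ into a uniform bound, while the substantive estimate is the $(\rho/\sigma)^{1/n}$ decay produced by the induction.
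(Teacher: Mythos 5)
Your proof is correct. Note first that the paper does not actually prove Lemma~\ref{v.112}; it cites Lemma~B.1 of \cite{E98}. The auxiliary Pyartli-type estimate you prove by induction, however, is essentially the paper's Lemma~\ref{lTransv1} (Appendix~D), and the paper proves that lemma by the same mechanism of peeling off the sub-level set of $f^{(j-k)}$ at scale $\delta_k$ at each step, organised there as a single pass through a geometric sequence $\delta_k=\eta^{k-1}\delta$ with $\eta$ chosen at the very end, rather than one balanced optimisation per inductive step as you do. Your presentation is marginally sharper: by tracking that each sub-level set $J_\eta$ of $f^{(k-1)}$ is an interval (so its removal leaves at most two pieces), you obtain constants $c_k$ depending only on $k$, whereas the paper's Lemma~\ref{lTransv1} allows its constant to depend on $\ab{f}_{\cC^j(I)}$ (although its own proof, read carefully, also needs only a $j$-dependent constant). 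The step from the auxiliary estimate to Lemma~\ref{v.112} --- select the maximising $k_0$, pass from exponent $1/k_0$ to $1/n$ via $\rho/\sigma\le 1$ when $\rho\le\sigma$, and absorb the trivial regime $\rho>\sigma$ into the factor $\beta/\sigma+1$ --- is carried out correctly; the observation $\beta\ge\sigma$ is sound but in fact unnecessary, since $\beta/\sigma+1\ge1$ already suffices in both regimes.
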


Consider the function $g(m)=|k|^{-1}\sum_{a\in\A}k_a \om_a(m) +|k|^{-1}c$.
Then $|g'|_{C^n}\le C'_n$, and  
$\max_{1\le k\le n}\min_m|\p^k g(m)|\ge C_n N^{-3n^2+n}$ in view of  \eqref{m1.2}. 
Therefore, by Lemma~\ref{v.112}, 
\begin{equation*}
\begin{split}
\meas\{m\mid |g(m)|\le \frac{\kappa}{|k|}  \} \le  C_n N^{3n^2-n} \(\frac{\kappa}{|k|} N^{3n^2-n}\)^{1/n}
=
C_n N^{3n^2+2n-1}
\(\frac{\kappa}{|k|} \)^{1/n}\,.
\end{split}
\end{equation*}
This implies the assertion of the proposition.

\subsection{Small divisors estimates}

We recall the notation \eqref{L+},    \eqref{-om}, and note the elementary  estimates 
\be\label{estimla}
\langle a\rangle^2 <   \la(m)< \langle a\rangle^2  +  \frac{m}{2 \langle a\rangle ^2}\qquad \forall\, a\in\Z^d\,,\ m\in[1,2]\,,
\ee
where $\langle a\rangle =\max(1,|a|^2)$.
  In this section we study  four type of linear combinations of the frequencies $\la(m)$:
\begin{align*}
D_0=& \langle\om, k\rangle, \quad k\in \Z^\A\setminus\{0\}\\
D_1=&\langle\om, k\rangle +\la, \quad k\in \Z^\A,\; a\in \L\\
D_2^\pm=&\langle\om, k\rangle+\la\pm\lb, \quad k\in \Z^\A,\; a, b\in \L\,.
\end{align*}
In subsequent sections they will become  divisors for our constructions, so we call these linear combinations
``divisors".

\begin{definition}\label{Res-kab} Consider independent formal variables $x_0, x_1, x_2,\dots$. Now  take
any divisor of the form $D_0$, $D_1$ or $D_2^\pm$, write 
there each $\omega_a, a\in\A$,  as $\lambda_a$, and then replace every  $\lambda_a, a\in\Z^d$, by
$x_{|a|^2}$. Then the divisor is
called resonant if the obtained algebraical sum of the variables $x_j, j\ge0$,  is zero. Resonant 
divisors are also called  trivial resonances.
\end{definition}

Note that a $D_0$-divisor cannot be resonant since $k\ne0$ and the set $\A$ is admissible;
a $D_1$-divisor $(k;a)$ is  resonant only if $a\in {\L_f}$, $|k|=1$ 
 and $\langle\om, k\rangle=-\omega_b$, 
where  $|a|=|b|$. Finally, a $D_2^+$-divisor or a $D_2^-$ divisor with $k\ne0$ 
may be   resonant  only when $(a,b)\in {\L_f}\times {\L_f}$, while the divisors $D_2^-$
of the form $\la-\lb$, $|a|=|b|$, all are resonant. 
So there are   finitely many trivial resonances of the form $D_0, D_1, D_2^+$ and of the form $D_2^-$ with 
$k\ne0$, but infinitely many of them of the form $D_2^-$ with $k=0$. 
\smallskip

Our first  aim is to remove from the segment  $[1,2]=\{m\}$ a small subset to guarantee that for the remaining $m$'s 
  moduli of all non-resonant  divisors  admit  positive lower bounds.  
  Below in this section 
\be\label{agreement}
\begin{split}
&\text{constants $C, C_1$ etc. depend on the admissible set $\A$,}\\
&\text{while the exponents $c_1, c_2$ etc depend only on $|\A|$. Borel  }\\
&\text{sets $\Cc_\ka$ etc. depend on the indicated arguments and $\A$.}
\end{split}
\ee

 We begin with the easier divisors 
 $D_0$, $D_1$ and $D_2^+$. 

\begin{proposition}\label{D1D2}
Let $1\ge\ka>0$. There exists a Borel set $\Cc_\ka \subset[1,2]$ and positive constants 
$C$ (cf. \eqref{agreement}),  satisfying 
$\ 
\meas\  \Cc_\ka  \leq C \ka^{1/(n+2)} ,
$ 
 such that for all $m\notin\Cc_\ka$,  all $k$ and  all $a,b \in \L$ we have
\be\label{D0}
 |\langle\om, k\rangle|\geq  \ka {\langle k\rangle}^{-n^2}, \qquad
 \text{ except if  } k=0,
 \ee
 \be\label{D1}
 |\langle\om, k\rangle+\la|\geq \ka {\langle k\rangle}^{-3(n+1)^3}, \quad
\text{ except if the divisor  is a trivial resonance}, 
\ee
\be \label{D2}
|\langle\om, k\rangle +\la+\lb|\geq \ka{\langle k\rangle}^{-3(n+2)^3}, 
\text{ except if 
 the divisor   is a trivial resonance}.
\ee
Here 
${\langle k\rangle}=\max(|k|,1)$. 

Besides, for each $k\ne0$ there exists a set
${\mathfrak A}^k_\ka$ whose measure is $\ \le C\ka^{1/n}$ such that for $m\notin {\mathfrak A}^k_\ka$ 
we have 
\be\label{D22}|\langle\om, k\rangle+j  |\geq \ka{\langle k\rangle}^{-(n+1)n } 
\text{for all $j\in\Z$ }.
\ee
\end{proposition}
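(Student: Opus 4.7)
My plan is to reduce every bound in the proposition to Proposition \ref{NRom} by recognising each nontrivial divisor as a non-resonant linear combination $\langle \tilde\om,\tilde k\rangle+c$ over a suitably enlarged admissible set $\tilde \A\subset\Z^d$, then to sum the resulting ``bad'' sets in $k$ (and in $a$, $b$). The key observation enabling the reduction is the following dichotomy for $a\in\L$: if $a\in\L_f$, there is $b\in\A$ with $|a|=|b|$ and therefore $\la=\om_b$, so any divisor involving $\la$ can be rewritten by transferring into $k$ the corresponding coordinate; if $a\notin\L_f$, the set $\A\cup\{a\}$ remains admissible in the sense of Definition \ref{adm}. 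Finally, whenever $|a|^2$ (or $|a|^2+|b|^2$) exceeds $2C_\A|k|+1$ with $C_\A=\sup_{a\in\A}|\om_a|$, the triangle inequality forces the divisor to be $\ge 1$ and the claim becomes trivial; so we may restrict to $|a|,|b|\le C\sqrt{\langle k\rangle}$.

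For \eqref{D0} one applies Proposition \ref{NRom} directly with $c=0$: the bad set for a given $k\ne 0$ has measure at most $C_n N^{4n^2}\,\ka^{1/n}\langle k\rangle^{-n-1/n}$, and since $n+1/n>n=|\A|$ this is summable over $k\in\Z^\A\setminus\{0\}$, producing a bad set of measure $\le C\ka^{1/n}$. For \eqref{D1} with $a\in\L_f$ the divisor becomes $\langle\om,k+e_b\rangle$, a $D_0$-divisor, resonant iff $k=-e_b$ (a trivial resonance). For $a\notin\L_f$, applying Proposition \ref{NRom} to $\tilde\A=\A\cup\{a\}$ with $\tilde n=n+1$ and $\tilde N\le C\sqrt{\langle k\rangle}$ yields
\[
\meas\bigl\{m:|\langle\om,k\rangle+\la|\le \ka\langle k\rangle^{-3(n+1)^3}\bigr\}\le C\langle k\rangle^{2(n+1)^2}\ka^{1/(n+1)}\langle k\rangle^{-3(n+1)^2}.
\]
Summing over the $\le C\langle k\rangle^{d/2}$ relevant $a$'s and then over $k$, the cubic exponent $3(n+1)^3$ is chosen precisely so that the product of $\langle k\rangle$-factors is summable over $\Z^\A$, yielding total measure $\le C\ka^{1/(n+1)}$.

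The estimate \eqref{D2} is treated by the same scheme but with more cases: depending on whether $a=b$, $|a|=|b|$, and on how many of $a,b$ lie in $\L_f$, one reduces to a $D_0$- or $D_1$-divisor (using $\la=\om_{b_0}$), or one applies Proposition \ref{NRom} to the admissible set $\tilde \A=\A\cup\{a,b\}$ of cardinality at most $n+2$, again with $\tilde N\le C\sqrt{\langle k\rangle}$. Summing over $\le C\langle k\rangle^d$ pairs $(a,b)$ and over $k\ne 0$, the exponent $3(n+2)^3$ absorbs the polynomial factors and produces measure $\le C\ka^{1/(n+2)}$. Taking $\Cc_\ka$ to be the union of the exceptional sets from \eqref{D0}--\eqref{D2} gives the global bound $\meas\Cc_\ka\le C\ka^{1/(n+2)}$, the worst of the three.

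For the last claim one fixes $k\ne 0$ and applies Proposition \ref{NRom} with $c=j$, yielding for each $j\in\Z$ a bad set of measure $\le C\ka^{1/n}\langle k\rangle^{-(n+1)}/|k|^{1/n}$; since $|\langle\om,k\rangle+j|\ge|j|-C_\A|k|\ge 1$ as soon as $|j|>C_\A|k|+1$, only $\le C\langle k\rangle$ values of $j$ contribute, and the sum is $\le C\ka^{1/n}$. The main obstacle throughout is a careful bookkeeping of the polynomial $\langle k\rangle$-losses coming from two sources--the factor $N^{4n^2}$ in Proposition \ref{NRom} (with $N$ now of order $\sqrt{\langle k\rangle}$ after the extension) and the number of lattice points $a$ (or pairs $(a,b)$) in a ball of radius $C\sqrt{\langle k\rangle}$--and checking that the cubic exponents $n^2$, $3(n+1)^3$, $3(n+2)^3$, $(n+1)n$ leave enough room for summability over $\Z^\A$; this is the only reason such seemingly oversized exponents appear.
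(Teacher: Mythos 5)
Your overall scheme---recasting each non-resonant divisor as an instance of Proposition \ref{NRom} over an enlarged admissible set $\A\cup\{a\}$ (or $\A\cup\{a,b\}$), truncating by a triangle-inequality argument to $|a|^2\lesssim\langle k\rangle$, and summing the resulting measure estimates over $a$ (or $(a,b)$) and then over $k$---is exactly the paper's, and your reduction for $a\in\L_f$ (transferring $\la=\om_b$ into $k$) also matches.

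However, there is a genuine gap in your summation over $a$. You count the ``relevant $a$'s'' as lattice points of $\Z^d$ in a ball of radius $C\sqrt{\langle k\rangle}$, producing a factor $\langle k\rangle^{d/2}$ (and $\langle k\rangle^{d}$ for pairs in \eqref{D2}). But the divisor $\langle\om,k\rangle+\la$ depends on $a$ only through $|a|^2$, so the bad set of $m$'s is the same for any two $a$'s with the same norm; the number of \emph{distinct} divisors with $|a|^2\le C\langle k\rangle$ is therefore $O(\langle k\rangle)$, independently of $d$. This is precisely the observation the paper uses: from the per-value bound $\le C\ka^{1/(n+1)}\langle k\rangle^{-(n+1)^2}$, multiplying by $O(\langle k\rangle)$ gives $C\ka^{1/(n+1)}\langle k\rangle^{-(n+1)^2+1}\le C\ka^{1/(n+1)}\langle k\rangle^{-n-1}$, summable over $\Z^{\A}\setminus\{0\}$ for every $n\ge 1$. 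Multiplying instead by your $\langle k\rangle^{d/2}$ requires $(n+1)^2-d/2>n$, which already fails for $n=1$, $d\ge 6$ (and your $\langle k\rangle^d$ count in \eqref{D2} fails for $n=1$, $d\ge 8$). Since the proposition must hold for every $d$ (cf.\ Remark~\ref{r_1}), this over-count is not harmless: the missing step that the number of distinct divisor values is $O(\langle k\rangle)$, resp.\ $O(\langle k\rangle^2)$, rather than a $d$-dependent power, is exactly what makes the exponents $3(n+1)^3$, $3(n+2)^3$ dimension-free.
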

\proof 
We begin with the   divisors \eqref{D0}.  
By Proposition \ref{NRom} for any non-zero $k$   we have 
$$
\meas \{m\in[1,2]\mid |\langle\om, k\rangle|\leq  \ka  |k|^{-n^2} \} < C {\ka^{1/n}}{|k|^{-n-1/n}}  \,.
$$
Therefore the relation \eqref{D0} holds for all non-zero $k$ if $m\notin\mathfrak A_0$, 
where $\meas\mathfrak A_0\le  C  \ka^{1/n} \sum_{k\ne0}  |k|^{-n-1/n} =C  \ka^{1/n}$.

Let us consider the divisors \eqref{D1}. For $k=0$ the required estimate holds 
trivially. If $k\ne0$, then the relation, opposite to \eqref{D1} implies that $|\la|\le C|k|$.  So
we may assume that $|a|\le C|k|^{1/2}$. If $|a|\notin\{|s|\mid s\in\A\}$, then Proposition~\ref{NRom} with 
$n:=n+1$, $\A:=\A\cup \{a\}$ and $N=C|k|^{1/2}$ implies that 
\begin{equation*}
\begin{split}
\meas& \{m\in[1,2]\mid |\langle\om, k\rangle+\la|\leq \ka|k|^{-3(n+1)^3}\}\\
\le& C 
 \ka^{1/(n+1)} |k|^{2(n+1)^2- 3(n+1)^2 -\frac1{n+1}  }
\leq C\kappa^{1/(n+1)} |k|^{-(n+1)^2}\,. 
\end{split}
\end{equation*}
This relation with $n+1$ replaced by $n$ 
 also holds if $|a|=|s|$ for some $s\in\A$, but $\langle\om, k\rangle+\la$ is not a trivial resonant. 
Since for fixed $k$  the set$\{\la\mid |a|^2\leq C|k|  \}$ has cardinality less than $2C|k|$, then the relation 
$
|\langle\om, k\rangle+\la|\le\ka |k|^{-3(n+1)^3}
$
holds for a fixed $k$ and all $a$ if we remove from [1,2] a set of measure 
$\le C\ka^{1/(n+1)}|k|^{-(n+1)^2+1}\leq C\ka^{1/(n+1)}|k|^{-n-1}$. 
So we achieve that
 the relation \eqref{D1}  holds for all $k$ if we remove from $[1,2]$ a set $\mathfrak A_1$ whose 
measure is bounded by $C\ka^{1/(n+1)} \sum_{k\ne0} |k|^{-n-1} =C\ka^{1/(n+1)}$.

For a similar reason there exist  a Borel set $\mathfrak A_2$ whose
measure is bounded by $C\ka^{1/(n+2)}$ and  such that \eqref{D2} holds
for $m\notin \mathfrak A_2$. Taking 
$\Cc_\ka = \mathfrak A_0\cup  \mathfrak A_1\cup \mathfrak A_2$ we get \eqref{D0}-\eqref{D2}.
Proof of \eqref{D22} is similar. 
\endproof

Now we  control divisors $D_2^-=\langle\om, k\rangle+\la-\lb$.

\begin{proposition}\label{prop-D3}   
There exist positive constants $C,c, c_-$ and for $0<\ka$ 
there is  a Borel set $\Cc'_\ka \subset[1,2]$  (cf. \eqref{agreement}),  satisfying 
\be\label{meas-estim2}
\meas\ \Cc'_\ka \leq C \ka^{c},
\ee
 such that  for all $m\in [1,2]\setminus  \Cc'_\ka$, 
 all $k\ne0$ and  all $a,b \in \L$  we have
\be\label{D3}
R(k;a,b):=
|\langle\om, k\rangle +\la-\lb|\geq  \ka |k|^{-c_- }, 
\ee
 except if the divisor is a trivial  resonance 
\end{proposition}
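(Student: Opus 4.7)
The plan hinges on the elementary identity $\sqrt{c^4+m}-c^2 = m/(\sqrt{c^4+m}+c^2)$, which gives the approximation
\[
\big|(\lambda_a-\lambda_b)-(\alpha-\beta)\big|\le \frac{m\,|\alpha-\beta|}{2\alpha\beta},\qquad \alpha:=|a|^2,\ \beta:=|b|^2,
\]
so $\lambda_a-\lambda_b$ is very close to the integer $j:=\alpha-\beta$ whenever $\alpha\beta$ is large. Assuming WLOG $|R|\le 1$, one also gets $|j|\le C|k|$. The preliminary reductions are: if $\alpha=\beta$ the divisor equals $\langle\om,k\rangle$, controlled by \eqref{D0}; if $|a|=|s|$ for some $s\in\cA$ then $\lambda_a=\om_s$ and the divisor is a $D_1$-divisor, controlled by \eqref{D1} (analogously for $b$). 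Henceforth $\alpha\ne\beta$ and $|a|,|b|\notin\{|s|:s\in\cA\}$, so $\cA\cup\{a,b\}$ is admissible.

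First I would apply \eqref{D22} for each $k\ne 0$ with cutoff $\tilde\kappa_k=\kappa\langle k\rangle^{-(n^2+n)}$; summed over $k$ the excluded set has measure $\le C\kappa^{1/n}$, and on its complement $|\langle\om,k\rangle+j|\ge \kappa|k|^{-M}$ for all $j\in\Z$, with $M:=2n^2+2n$. Next I split on the size of $\alpha\beta$: in the regime $\alpha\beta\ge C|k|^{M+1}/\kappa$, the approximation error $|(\lambda_a-\lambda_b)-j|\le C|k|/\alpha\beta$ is at most $\kappa/(2|k|^M)$, giving $|R|\ge\kappa/(2|k|^M)$ immediately. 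Otherwise $|a|,|b|\le N_k:=C|k|^{(M+1)/4}/\kappa^{1/4}$, and I split once more according to whether $\min(|a|,|b|)$ is below or above an absolute constant $C_0$. In the bounded case, only $\lesssim|k|^{d/2}$ admissible pairs $(a,b)$ arise (the larger norm lies in a spherical shell of width $C|k|$), each with $\max(|a|,|b|)\le C|k|^{1/2}$; Proposition~\ref{NRom} applied to $\cA\cup\{a,b\}$ with cutoff $\kappa|k|^{-M_1}$ (with $M_1$ polynomial in $n,d$) is then affordable because the $N^{4(n+2)^2}$-factor is polynomial in $|k|$ alone. In the unbounded case $\min(|a|,|b|)\ge C_0$, I apply a R\"ussmann-type pointwise version of Lemma~\ref{v.112} directly to $m\mapsto R(k;a,b)(m)$: the explicit formulas for $\lambda_c^{(r)}$ give $|\lambda_a^{(r)}-\lambda_b^{(r)}|\le C|k|/\min(\alpha,\beta)^{2r}\le C|k|/C_0^4$, at most half the lower bound $c|k|N_\cA^{-3n^2+n}$ on $|\langle\om^{(r(m))}(m),k\rangle|$ coming from Lemmas~\ref{l_det}--\ref{m1.1}. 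Thus $|R^{(r(m))}(m)|\ge c|k|/2$ for some $r(m)\in\{1,\dots,n\}$, uniformly in $m$ and \emph{independently of $|a|,|b|$}, so Pyartli gives $\meas\{|R|\le\rho\}\le C(\rho/|k|)^{1/n}$ with no $N$-factor. Choosing $\rho=\rho_{k,a,b}:=\kappa|k|^{-(n^2+n)}\max(\langle a\rangle,\langle b\rangle)^{-(2dn+1)}$, a dyadic decomposition in $\max(\langle a\rangle,\langle b\rangle)$ makes the sum over $(a,b)\in\Z^{2d}$ convergent, and summing over $k$ contributes $\le C\kappa^{1/n}$ to the total bad measure.

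The main obstacle is the last subcase: a direct application of Proposition~\ref{NRom} to $\cA\cup\{a,b\}$ is fatal, because the Vandermonde factor $N^{4(n+2)^2}$ with $N\sim N_k\sim\kappa^{-1/4}$ produces a $\kappa^{-\mathrm{large}}$ term that swamps the $\kappa^{1/(n+2)}$ gain. The essential trick is to \emph{not} enlarge $\cA$ at all: when $\min(|a|,|b|)\ge C_0$, the derivatives $\lambda_a^{(r)}-\lambda_b^{(r)}$ are a controllably small perturbation of $\langle\om^{(r)},k\rangle$, so R\"ussmann's lemma applied to $R$ itself runs with the $\cA$-only Wronskian constant $\sigma\asymp|k|$ and is free of $N$-dependence. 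Assembling all the pieces yields $\meas(\Cc'_\kappa)\le C\kappa^{c_1}$ and a lower bound of the form $|R|\ge\kappa^{c_2}|k|^{-c_-}$ with $c_1,c_2>0$ and $c_-$ polynomial in $n,d$; a reparametrization $\kappa\mapsto\kappa^{1/c_2}$ puts this in the form \eqref{D3} stated in the proposition.
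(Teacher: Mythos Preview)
Your argument is correct and reaches the same conclusion, but the route you take through the residual case differs genuinely from the paper's.

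Both proofs share the skeleton: approximate $\lambda_a-\lambda_b$ by the integer $|a|^2-|b|^2$, invoke the integer-divisor estimate \eqref{D22} to dispose of the regime where $|a|,|b|$ are both large, and then treat the remaining bounded set of pairs $(a,b)$ by a Diophantine argument. The divergence is in this last step. The paper does not split further: it applies Proposition~\ref{NRom} to the enlarged set $\A\cup\{a,b\}$ with $N\sim Y_2^{1/2}\sim \ka_0^{-1/2}|k|^{\mathrm{power}}$, swallows the resulting bad factor $N^{4(n+2)^2}\sim \ka_0^{-c_1}$ by choosing the divisor cutoff $\tilde\ka=\ka_0^{2c_1(n+2)}$ extremely small, and then balances everything by taking $\ka_0=\ka_0(k)=2\ka^{c_3}|k|^{-c_4}$. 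Your approach instead introduces a second split at $\min(|a|,|b|)=C_0$: below it, both moduli are $\lsim|k|^{1/2}$ so Proposition~\ref{NRom} runs with $N\sim|k|^{1/2}$ and no $\ka$-dependent penalty; above it, you observe that the derivatives $\lambda_a^{(r)}-\lambda_b^{(r)}$ are uniformly $\le C_r|k|/C_0^{4}$ (your bound is indeed valid---in the regime $\beta>2\alpha$ one has $\alpha<C|k|$ and the trivial triangle inequality gives $\le 2|\Upsilon_r|\alpha^{1-2r}<2|\Upsilon_r|C|k|\alpha^{-2r}$, while for $\beta\le 2\alpha$ the MVT gives it directly), hence negligible against the $\A$-only Wronskian lower bound $c|k|$ from \eqref{m1.2}, and a pointwise R\"ussmann/Pyartli argument applies with constants independent of $a,b$.

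What each buys: the paper's single balancing trick is shorter and avoids your extra case analysis and the explicit derivative estimate on $\lambda_a^{(r)}-\lambda_b^{(r)}$. Your approach is conceptually cleaner in that it never pays a $\ka$-dependent $N$-factor and hence never needs to compensate by an artificially tiny cutoff; the price is more casework and an appeal to the pointwise (rather than uniform-in-$m$) version of Lemma~\ref{v.112}, which is standard and, incidentally, is what the paper itself implicitly uses after \eqref{m1.2}. Both routes end with a bound $|R|\ge\ka^{c_2}|k|^{-c_-}$ and measure $\le C\ka^{c_1}$, which your final reparametrisation (respectively the paper's choice of $\ka_0(k)$) converts into \eqref{D3}.
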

\proof
We  may assume that 
 $|b|\geq |a|$. We get from  \eqref{estimla} that 
$$
|\la-\lb-(|a|^2-|b|^2)|\leq  {m}{|a|^{-2}}\leq 2 |a|^{-2}.
$$
Take any $\ka_0\in (0,1]$ and construct the set $\mathfrak A^k_{\ka_0}$ as in Proposition~\ref{D1D2}.
Then $\meas  {\mathfrak A}^k_{\ka_0}\le C\ka_0^{1/n}$ and  for  any 
 $m\notin {\mathfrak A}^k_{\ka_0}$  we have 
$$
R:=
R(k;a,b)\ge  \big| \langle\om, k\rangle +|a|^2-|b|^2\big|-2|a|^{-2}\ge
 \ka_0|k|^{-(n+1)n} - 2 |a|^{-2}\,.
$$
So $R \ge\tfrac12 \ka_0|k|^{-(n+1)n}$ and  \eqref{D3} holds if 
$$
|b|^2\ge |a|^2 \ge 4\ka_0^{-1}|k|^{(n+1)n}=:Y_1. 
$$

If $|a|^2\le Y_1$, then 
$$
R \ge \lb - \la -C|k| \ge |b|^2-Y_1-C|k|-1. 
$$
Therefore  \eqref{D3} also holds if $|b|^2\ge Y_1 +C|k|+2$, and 
 it remains to consider the case when $|a|^2\le Y_1 $ and 
  $|b|^2\le Y_1 +C|k|+2$. That is (for any fixed non-zero $k$), 
consider the pairs $(\la,\lb)$, satisfying 
\be\label{above}
|a|^2\le Y_1,\qquad  |b|^2\le  Y_1+2+C|k| =:Y_2 \,.
\ee
There are at most $CY_1Y_2$ pairs like that. Since the divisor $\langle\om, k\rangle +\la-\lb$
 is not  resonant, then in view of Proposition~\ref{NRom} with $N =Y_2^{1/2}$
and $|\A|\le n+2$,
for any  $\tilde\ka>0$ there exists
a set ${\mathfrak B}^k_{\tilde \ka}\subset [1,2]$, whose measure is bounded by
$$
C  \tilde\ka^{1/(n+2)} \ka_0^{-c_1} |k|^{c_2},\qquad c_j=c_j(n)>0,
$$
such that  $R \ge \tilde\ka$ if $m\notin   {\mathfrak B}^k_{\tilde \ka}\, $
for all pairs $(a,b)$ as in \eqref{above} (and $k$ fixed). 

Let us choose $\tilde\ka =\ka_0^{2c_1(n+2)}$. Then 
$
\meas {\mathfrak B}^k_{\tilde \ka}\le C \ka_0^{c_1}|k|^{c_2}
$
and $R\ge \ka_0^{2c_1(n+2)}$ for $a,b$ as in \eqref{above}. Denote
$  \mathfrak C^k_{\ka_0}= \mathfrak A^k_{\ka_0}\cup {\mathfrak B}^k_{\tilde \ka}\, $. Then
$\meas  \mathfrak C^k_{\ka_0}\le C \(\ka_0^{1/n} + \ka_0^{c_1}|k|^{c_2}\)$, and 
for $m$ outside this set and all $a,b$ (with $k$ fixed) 
 we have 
$
R\ge \min\(\tfrac12 \ka_0|k|^{-(n+1)n}, \ka_0^{2c_1(n+2)}\)\,.
$
We see that if $\ka_0=\ka_0(k)=2\ka^{c_3} |k|^{-c_4}$ with suitable $c_3,c_4>0$, then 
$$
\meas\(  \Cc'_\ka = \cup_{k\ne0}  \mathfrak C^k_{\ka_0}\) \le C\ka^{c_3}  \,,
$$
and, if $m$ is outside $\Cc'_\ka$, then 
$R(k;a,b)\ge \ka |k|^{-c_-}$ with a suitable $c_->0$. 
\endproof

It remains to consider the  divisors $D_2^-$ with $k=0$,  i.e. $D_2^-=\la-\lb$. Such a
divisor is resonant if $|a|=|b|$. 
\begin{lemma}\label{lem:D3-k=0}
Let $m\in [1,2]$ and the divisor $D_2^-=\la-\lb$ is non-resonant, i.e. $|a|\neq|b|$. 
Then
$
\left| {\la-\lb}\right|\ge \frac1 4.$
\end{lemma}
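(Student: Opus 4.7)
The plan is to exploit the algebraic identity
$$
\lambda_a - \lambda_b = \frac{\lambda_a^2 - \lambda_b^2}{\lambda_a+\lambda_b} = \frac{(|a|^2-|b|^2)(|a|^2+|b|^2)}{\lambda_a+\lambda_b},
$$
which reduces the estimate to bounding the denominator from above and the numerator from below. Since $|a|,|b|$ are norms of integer vectors, both $|a|^2$ and $|b|^2$ lie in $\Z_{\ge 0}$, so the non-resonance hypothesis $|a|\ne|b|$ gives the integer lower bound $\bigl||a|^2-|b|^2\bigr|\ge 1$. The upper bound on $\lambda_a+\lambda_b$ will come from the right inequality in \eqref{estimla}: since $m\le 2$, it yields $\lambda_c\le \langle c\rangle^2 + 1$ for every $c\in\Z^d$, with the convention of this section $\langle c\rangle^2=\max(1,|c|^2)$.

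After assuming, without loss of generality, that $|b|>|a|\ge 0$, I would split the argument into two cases according to whether $a=0$ or not. In the generic case $|a|\ge 1$, one has $\lambda_a+\lambda_b\le |a|^2+|b|^2+2$, hence
$$
\lambda_b-\lambda_a\;\ge\;\frac{|a|^2+|b|^2}{|a|^2+|b|^2+2}\;\ge\;\frac{3}{5},
$$
using $|a|^2+|b|^2\ge 1+2=3$, which is well above $\tfrac14$. In the remaining case $a=0$ (so $\lambda_a=\sqrt m\le\sqrt 2$) and $y:=|b|^2\ge 1$, the same identity gives
$$
\lambda_b-\lambda_a\;\ge\;\frac{y^2}{y+1+\sqrt 2},
$$
which is minimised at $y=1$ and equals $1/(2+\sqrt 2)=(2-\sqrt 2)/2>1/4$, so the bound $\tfrac14$ is again comfortable.

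There is no real obstacle here: the content is the integrality of $|a|^2,|b|^2$, the factorisation, and the uniform estimate \eqref{estimla}. The only mild point of care is the boundary case $a=0$, where $\langle 0\rangle^2=1$ rather than $0$, so one cannot simply substitute $|a|^2=0$ into the case-one bound; treating it as a separate subcase and checking the numerics at $|b|=1$ is what makes the constant $\tfrac14$ tight-looking but safely attained.
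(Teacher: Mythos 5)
Your proof is correct and takes essentially the same route as the paper: rationalize via $\lambda_a-\lambda_b=(\lambda_a^2-\lambda_b^2)/(\lambda_a+\lambda_b)$, use the integrality of $|a|^2,|b|^2$ to get $\bigl||a|^2-|b|^2\bigr|\ge 1$, and bound the denominator from above. The paper compresses the final numerical check of $\frac{|a|^2+|b|^2}{\sqrt{|a|^4+m}+\sqrt{|b|^4+m}}\ge\frac14$ into a single assertion, whereas you split off the boundary case $a=0$ and verify the extremal value explicitly; the case split is not logically necessary but it does make the claimed constant $\tfrac14$ transparent.
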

\proof  We have 
\begin{align*}
\left|\la-\lb\right|= \frac{\left||a|^4-|b|^4\right|}{\sqrt{|a|^4+m}+\sqrt{|b|^4+m}}\geq  
\frac{|a|^2+|b|^2}{\sqrt{|a|^4+m}+\sqrt{|b|^4+m}}\ge  \frac 1 4.
\end{align*}
\endproof

By construction the  sets $\Cc_\ka$ and $\Cc'_\ka$ decrease with $\ka$. 
Let us denote 
\be\label{setC}
\Cc = \bigcap_{\ka>0} (\Cc_\ka \cup \Cc'_\ka)\,.
\ee
From Propositions \ref{D1D2}, \ref{prop-D3} 
and Lemma~\ref{lem:D3-k=0}  we get:

\begin{proposition}\label{prop-m} 
The set $\Cc$ is a Borel subset of $[1,2]$ of zero measure. For any $m\notin\Cc$ there exists 
$\ka_0=\ka_0(m)>0$ such that the relations \eqref{D0}, \eqref{D1}, \eqref{D2} and \eqref{D3} hold
with $\ka=\ka_0$.
\end{proposition}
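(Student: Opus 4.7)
The proposition has two parts, and both are essentially bookkeeping consequences of what has already been proved in this section; the main idea is to reduce the uncountable intersection \eqref{setC} to a countable one via monotonicity.

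First I would verify the monotonicity that is asserted just above the statement: if $\ka'\le \ka$, then $\Cc_{\ka'}\subseteq \Cc_\ka$ and $\Cc'_{\ka'}\subseteq \Cc'_\ka$. This is immediate from how these sets are constructed in Propositions~\ref{D1D2} and \ref{prop-D3} (smaller $\ka$ means a weaker lower bound on the divisors, hence a smaller exceptional set). Consequently
\[
\Cc \;=\; \bigcap_{\ka>0}(\Cc_\ka\cup\Cc'_\ka) \;=\; \bigcap_{j=1}^{\infty}\bigl(\Cc_{1/j}\cup\Cc'_{1/j}\bigr),
\]
which is a countable intersection of Borel sets and therefore Borel. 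Using the measure bounds $\meas\,\Cc_\ka\le C\ka^{1/(n+2)}$ and $\meas\,\Cc'_\ka\le C\ka^{c}$ from the two propositions, together with the trivial inclusion $\Cc\subseteq \Cc_{1/j}\cup\Cc'_{1/j}$, I would conclude
\[
\meas\,\Cc \;\le\; C\bigl((1/j)^{1/(n+2)}+(1/j)^{c}\bigr) \xrightarrow[j\to\infty]{} 0,
\]
so $\meas\,\Cc=0$.

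For the second part, pick any $m\notin\Cc$. By definition of $\Cc$ there exists $\ka_*>0$ with $m\notin\Cc_{\ka_*}\cup\Cc'_{\ka_*}$, and by the monotonicity noted above the same holds for every $\ka\le \ka_*$. Setting $\ka_0:=\min(\ka_*,1/4)$, Proposition~\ref{D1D2} applied with $\ka=\ka_0$ yields \eqref{D0}, \eqref{D1} and \eqref{D2} for all non-resonant choices of $k$ and $a,b\in\L$, and Proposition~\ref{prop-D3} with $\ka=\ka_0$ yields \eqref{D3} for all non-resonant $(k;a,b)$ with $k\ne 0$. The remaining case of $D_2^-$-divisors with $k=0$ and $|a|\ne|b|$ is not included in \eqref{D3} but, by Lemma~\ref{lem:D3-k=0}, is unconditionally bounded from below by $1/4\ge \ka_0$, which easily exceeds the right-hand side $\ka_0|k|^{-c_-}$ (interpreted with $\langle k\rangle=1$). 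This gives the desired $\ka_0=\ka_0(m)$.

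There is no real obstacle here: the only point where one must be slightly careful is the reduction of the uncountable intersection to a countable one, which requires the monotonicity of $\ka\mapsto \Cc_\ka$ and $\ka\mapsto \Cc'_\ka$ and is why it is legitimate to bound $\meas\,\Cc$ by $\meas(\Cc_{1/j}\cup \Cc'_{1/j})$ for a single large $j$ rather than having to sum over $\ka$. Once this is observed, the proof is a one-line measure bound plus a direct invocation of the two preceding propositions and the lemma.
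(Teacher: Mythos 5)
Your proof is correct and is essentially the argument the paper intends, which it leaves implicit (the paper simply cites Propositions~\ref{D1D2}, \ref{prop-D3} and Lemma~\ref{lem:D3-k=0} and relies on the observation, stated just before \eqref{setC}, that $\Cc_\ka$ and $\Cc'_\ka$ decrease with $\ka$). The reduction of the uncountable intersection to the countable one $\bigcap_j(\Cc_{1/j}\cup\Cc'_{1/j})$ via monotonicity, and then the vanishing-measure bound from the two propositions, is exactly what is needed; likewise for the second part, picking $\ka_*$ from the definition of $\Cc$ and invoking the two propositions is what the paper has in mind. One small remark: capping $\ka_0$ at $1/4$ to accommodate the $k=0$ case is harmless but strictly unnecessary, since \eqref{D3} is only asserted for $k\neq 0$ and Lemma~\ref{lem:D3-k=0} supplies an unconditional lower bound there independently of $\ka_0$.
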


In particular, if $m\notin\Cc$ then any of the divisors 
$$
\langle\om, s\rangle,\;\; \langle\om, s\rangle\pm\la,\;\;\langle\om, s\rangle\pm\la \pm\lb ,\quad s\in\Z^d,\; a,b\in\L,
$$
vanishes only if this is a trivial resonance. If it is not, then its modulus 
 admits a qualified estimate from below.

 The zero-measure Borel set $\Cc$ serves a fixed admissible set $\A$, $\Cc=\Cc_\A$. But since the set of all
 admissible sets is countable, then replacing $\Cc$ by $\cup_\A\Cc_\A$ we obtain a zero-measure Borel set which 
 suits all admissible sets $\Cc$.  
 For further purposes we modify $\Cc$ as follows: \be\label{modif} \Cc=: \Cc\cup \{\tfrac43, \tfrac 53\}\,. \ee

\section{The Birkhoff normal form. I}\label{BNF}

In  Sections \ref{BNF} and \ref{s_4}  we construct a symplectic change of variable that puts the  Hamiltonian
 \eqref{H1} to a normal form.
 In Sections \ref{BNF} and  \ref{s_4} 
   constants in the estimates may depend on
   \be\label{depen}
   \text{
    $d$,  $G$, $\A$ and  constants with lower index $*$  (including $c_*$)
   }
   \ee
   without saying.   Their  dependence on other parameters will be  indicated. This does not contradicts Agreements 
    (see the end of Introduction) since in these sections the set $\F$ is defined in terms of $\A$ and $\P$ does not
   occur.

\subsection{Statement of the result}\label{s3.1}

The goal of this section is to get a normal form for the Hamiltonian $h=h_2+h_4+h_{\ge5}$ 
of the beam equation, written in the form 
\eqref{beam2},   in toroidal domains in the space 
which are complex  neighbourhoods of the $n$-dimensional  real tori $T_{I_\A}$ (see \eqref{ttorus}). We scale the parameters $I_\A$ as $\nu\r$ where
$\nu>0$ is small and $\yy=(\yy_a,a\in\A)$ belongs to the domain
\be\label{DDD}
\D=[c_*,1]^\A.
\ee
In this section  $c_*\in(0,\tfrac12]$ is regarded as a fixed parameter.

Consider the complex   vicinity of the torus $T_{\nu \rho\, \A}$ (see \eqref{ttorus})
\be\label{a-a}
\Tg(\nu,\sigma,{\mu},\gamma)=\{(p_\A,q_\A,p_\L,\zeta_\L): 
\left\{\begin{array}{lll}
| \tfrac12 (p_a^2+q_a^2) -\nu\r_a|<\nu  c_*^2 {\mu}^2 & a\in\A&\\
 |\Im\theta_a|<\sigma & a\in\A&\\
\|(p_\L, q_\L)\|_\ga<\nu^{1/2}c_*{\mu}& &,
\end{array}\right.\ee
where $\theta_a$ is related to $p_a,q_a$ through $\frac{p_a-{\bf i}q_a}{\sqrt{p_a^2+q_a^2}}=e^{{\bf i}\theta_a}$  ---  this is well-defined when $\mu\le1$ because then $p_a^2+q_a^2\not=0$ for all $a\in\A$ whenever the point belongs to this vicinity.

In this section we use the complex coordinates $(\xi_a, \eta_a), a\in\Z^d$, defined in \eqref{change}, denoting 
$(\xi_a, \eta_a) =\zeta_a$. So we will write  points of  $\Tg(\nu,\sigma,{\mu},\gamma)$ as 
$\zeta = (\zeta_\A, \zeta_\L)$. 
We recall (see \eqref{L+}) that we have split the set $\L=\Z^d\setminus \A$ into the union
$\L=\L_f\cup\L_\infty$. We will write  $\zeta_\L=(\zeta_f, \zeta_\infty)$ and will use the notation of 
Section~\ref{sThePhaseSpace} with 
$
\cZ=\Z^d,\, \Z=\A\cup \L_f\cup\L_\infty
$
(i.e. with $\F=\L_f$). 
 
\begin{proposition}\label{thm-HNF} There exists a  zero-measure Borel set $\Cc\subset[1,2]$
such that for any admissible   set  $\A$, any  $c_*\in(0,1/2]$ and   $m\notin\Cc$ we can find 
  real numbers   $\ga_g>\ga_*=(0,m_*+2)$ and $\nu_0>0$, where 
   $\nu_0$ depends  on $m$, 
   with the following property.
 
 For any $0<\nu\le\nu_0$ and  $\yy\in [c_*,1]^\A$ there exists  al holomorphic  diffeomorphism (onto its image)
\be\label{mu*}
\Phi_\yy:  \O_{\ga_*} \big({\frac 12}, {\mu_*^2} \big)\to \Tg(\nu,  1,1,\ga_*)\,,\qquad 
{\mu_*}={\tfrac{c_*}{2\sqrt2}}\,,
\ee
which defines analytic transformations 
$$
\Phi_\yy:  \O_{\ga} \big({\tfrac 12}, {\mu_*^2} \big)\to \Tg(\nu,  1,1,\ga)\,,\quad \ga_*\le\ga\le \ga_g\,,
$$
such that
$$
\Phi_\r^*\big(-{\bf i}dp \wedge dq\big)=
\nu dr_\A\wedge d\theta_\A \ -{\bf i}\ \nu d\xi_\L\wedge d\eta_\L,
$$
and such that
\be\label{HNF}
\begin{split}
\frac1{\nu} h\circ\Phi_\yy(r,\theta,\xi_\L,\eta_\L) =\langle \Omega(\yy), r\rangle  &+ \sum_{a\in\L_\infty }\Lambda_a (\yy)\xi_a\eta_a+\\
&+\frac{\nu}2\, \langle K(\yy) \zeta_f, \zeta_f\rangle
+ f( r,\theta,\zeta_\L;\rho),
\end{split}
\ee
where $h$ is the Hamiltonian \eqref{H2}$+$\eqref{H1}, satisfies:

(i) $\Phi_\yy$  depends smoothly (even analytically) on $\yy$,
and
\be\label{boundPhi}
\begin{split}
\mid\mid \Phi_\r(r,\theta,\xi_\L,\eta_\L)-
(\sqrt{\nu\r}\cos(\theta),&\sqrt{\nu\r}\sin(\theta),0,0
 ) \mid\mid_\ga\le \\
&\le C(\sqrt\nu\ab{r}+\sqrt\nu\aa{(\xi_\L,\eta_\L)}_\ga+\nu^{\frac32} )
\end{split}
\ee
for all $(r,\theta,\xi_\L,\eta_\L)\in \O_{\ga} (\frac 12, \mu_*^2)\cap\{\theta\ \textrm{real}\}$ and all $\ga_*\le \ga\le\ga_g$.

(ii)  the vector  $\Om$ and the scalars $\La, a\in \L_\infty $ are affine functions of $\rho$, explicitly defined by
 \eqref{Om} and \eqref{Lam};

(iii) $K$ is a  symmetric  real matrix. It is a
quadratic polynomial of $\sqrt\yy=(\sqrt\yy_1,\dots,\sqrt\yy_n)$, explicitly defined 
by  relation \eqref{K};

(iv) the remaining  term $f$ belongs  to $\cT_{\ga_g,\vark=2,\D}({\tfrac12}, \mu_*^2)$ and satisfies 
\be\label{est}
|f|_{\begin{subarray}{c}1/2,\mu_*^2 \  \\ \ga_g, 2, \D \end{subarray}}
 \le C\nu   \,, \qquad
 |f^T|_{\begin{subarray}{c}1/2,\mu_*^2 \  \\ \ga_g, 2, \D \end{subarray}}
  \le C
\nu^{3/2} \,.
\ee

Finally, $\Phi_\r$ is not a real diffeomorphism, but verifies the ``conjugate-reality'' condition:
$$\Phi_\yy(r,\theta,\xi_\L,\eta_\L)\quad\textrm{ is real if, and only if},\quad \eta_\L=\ov{\xi}_\L.$$

The constant $C$ depends on $m$ (we recall \eqref{depen}) but not on $\nu$. 
\end{proposition}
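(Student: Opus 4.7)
The plan is to reduce the proposition to a (rescaled) Birkhoff normal form at order four, carried out in the complex symplectic variables $(\xi,\eta)$ so that the small parameter $\nu$ appears explicitly. First I would introduce action-angle coordinates for the $\A$-modes, $\xi_a = \sqrt{\nu(\rho_a + r_a)}\,e^{i\theta_a}$ with $\eta_a$ its conjugate ($a\in\A$), together with the linear rescaling $(\xi_\L,\eta_\L) \mapsto \sqrt\nu\,(\xi_\L,\eta_\L)$. After dividing the Hamiltonian by $\nu$, the quadratic piece becomes $\langle\omega_\A,r\rangle + \sum_\L \lambda_a\xi_a\eta_a$ (up to a $\rho$-dependent additive constant), the quartic $h_4$ becomes $\nu\cdot q_4(r,\theta,\xi_\L,\eta_\L;\rho)$ with $q_4$ polynomial-trigonometric, and $h_{\ge 5}$ contributes an $O(\nu^{3/2})$ remainder in $\cT_{\ga_g,2,\D}$ (using Lemma~\ref{lemP} to control $h_{\ge 4}$).

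Next I would construct a generating Hamiltonian $\chi$ solving the homological equation $\{h_2/\nu,\chi\}=q_4 - Z_4$, where $Z_4$ is the resonant polynomial I want to keep: (a) the purely action-dependent monomials and the $\xi_a\eta_a$-diagonal monomials on $\L_\infty$, which combine with $h_2/\nu$ to give $\langle\Omega(\rho),r\rangle + \sum_{\L_\infty}\Lambda_a(\rho)\xi_a\eta_a$ with $\Omega,\Lambda_a$ affine in $\rho$ (cf.~\eqref{Om},\eqref{Lam}); and (b) the $\theta$-independent quadratic-in-$\zeta_f$ block arising from the $|a|=|a'|$ pairings in $\A$, which after admissibility ($a=a'$) collects into $(\nu/2)\langle K(\rho)\zeta_f,\zeta_f\rangle$ with $K$ polynomial of degree at most two in $\sqrt\rho$. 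Concretely, $\chi = \sum (C_{kk'}/i\langle k-k',\lambda\rangle)\xi^k\eta^{k'}$ summed over non-resonant $(k,k')$ with $|k|+|k'|=4$. Since $h_4$ is quartic and satisfies the zero-momentum condition, only divisors of type $D_0, D_1, D_2^{\pm}$ from Section~\ref{s_2} arise, with $|k|\le 4$; Proposition~\ref{prop-m} then gives a uniform lower bound $\kappa_0(m) > 0$ for each $m\notin\Cc$.

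Third, I would verify that $\chi\in\cT_{\ga_g,2,\D}$ with norm $\lesssim \nu/\kappa_0(m)$ --- the decisive point being that division by the divisors preserves the matrix-decay norm $|\cdot|_{\ga_g,2}$. Granted this, for $\nu\le\nu_0(m)$ small enough Proposition~\ref{Summarize} produces the time-one flow $\Phi_\chi^1$, real symplectic and holomorphic, translating points by $O(\nu)$; the transformation $\Phi_\rho$ is then the composition of the action-angle map with $\Phi_\chi^1$. The estimate \eqref{boundPhi} follows from the Cauchy-type bounds of Proposition~\ref{Summarize}. For the remainder $f = \nu^{-1}h\circ\Phi_\rho - \langle\Omega,r\rangle - \sum\Lambda_a\xi_a\eta_a - (\nu/2)\langle K\zeta_f,\zeta_f\rangle$, the bound $|f|\le C\nu$ is immediate from Proposition~\ref{Summarize}(ii) and Proposition~\ref{lemma:poisson}, while the sharper jet bound $|f^T|\le C\nu^{3/2}$ holds because by the very construction of $Z_4$ every jet-type component (constant, linear in $r$, quadratic in $w$) at order $\nu$ has been absorbed into $\Omega,\Lambda,K$, so what survives in $f$ at order $\nu$ is non-jet; Proposition~\ref{lemma:jet} then projects these away.

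The main obstacle is the third step: showing that the solution $\chi$ of the homological equation genuinely belongs to $\cT_{\ga_g,2,\D}$, that is, that dividing the Fourier coefficients of $q_4$ by resonant denominators bounded below (only) by $\kappa_0(m)$ does not destroy the exponential and polynomial decay encoded by the norm $|\cdot|_{\ga_g,2}$ of Section~\ref{ssMatrixAlgebra}. The zero-momentum constraint on $h_4$ and the admissibility of $\A$ restrict the index patterns $(k,k')$ so that the weight $e_{\ga_g,2}(a,b)$ remains controllable under the division, but this is where the bookkeeping has to be carried out carefully --- in particular for $D_2^{\pm}$ divisors where both normal indices range over the unbounded set $\L_\infty$. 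A secondary subtlety is the $\sqrt\rho$-dependence entering through the action-angle change: the condition $\rho\in[c_*,1]^\A$ with $c_*>0$ keeps this analytic and explains why $\Phi_\rho$, $\Omega$, $\Lambda$ end up analytic in $\rho$ while $K$ is only polynomial in $\sqrt\rho$ as asserted in item~(iii).
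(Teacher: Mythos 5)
Your plan is missing the pivotal auxiliary transformation $\Psi$ of Section~4.4, and as a result it cannot produce the claimed estimate $\ab{f^T}\lesssim\nu^{3/2}$. Here is the concrete failure point. After Birkhoff normalisation (whether done before or after passing to action-angle variables), the quartic part retains the resonant monomials from $z_4^{-2}$ classified in Lemma~\ref{lem:adm}, e.g.
$$
\xi_{\ell(a)}\xi_{\ell(b)}\,\eta_a\eta_b,\qquad a,b\in\L_f,\ \ell(a),\ell(b)\in\A,\ |a|=|\ell(a)|,\ |b|=|\ell(b)|.
$$
The divisor $\lambda_{\ell(a)}+\lambda_{\ell(b)}-\lambda_a-\lambda_b$ vanishes identically, so such a term \emph{cannot} be placed in $\chi$. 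But once you substitute $\xi_{\ell(a)}=\sqrt{\nu(\rho_{\ell(a)}+r_{\ell(a)})}\,e^{{\bf i}\theta_{\ell(a)}}$, this monomial becomes
$\nu\sqrt{\rho_{\ell(a)}\rho_{\ell(b)}}\,e^{{\bf i}(\theta_{\ell(a)}+\theta_{\ell(b)})}\eta_a\eta_b + \dots$, which is \emph{genuinely $\theta$-dependent}, quadratic in $\zeta_f$, and of size $O(\nu)$. It therefore does not belong to your ``$\theta$-independent quadratic-in-$\zeta_f$ block'' $Z_4$; it falls into $f$, and since it is a jet term (quadratic in $w$, independent of $r$), it sits in $f^T$ at size $O(\nu)$, contradicting the bound $\ab{f^T}\le C\nu^{3/2}$ in item~(iv). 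Admissibility of $\A$ only forces $a'=a$ when \emph{both} indices of an equal-norm pair lie in $\A$; it does nothing to the pairs $(\ell(a),a)\in\A\times\L_f$, which is exactly where the trouble is.

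The paper resolves this by composing with a second symplectic map $\Psi$ that is \emph{not} close to the identity: it multiplies each $\L_f$-mode by $e^{\pm{\bf i}\theta_{\ell(a)}}$ and compensates the actions by $I_\ell\mapsto I_\ell-\sum_{|a|=|\ell|,a\ne\ell}\xi_a\eta_a$. This rotation absorbs the $\theta$-factors from the resonant $z_4^{-2}$-monomials, turning them into the $\theta$-independent block $(\nu/2)\langle K(\rho)\zeta_f,\zeta_f\rangle$, at the cost of inserting the action-shift (which is why $\Phi_\rho$ ends up verifying only ``conjugate-reality,'' and why a new $\nu^{-1/2}r_5^{4'}$-type error appears). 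This is a finite-dimensional reducibility step in the spirit of \cite{E01}, not a Birkhoff normalisation, and your framework has no slot for it. The secondary concern you flag --- control of the matrix norm $\ab{\cdot}_{\ga_g,2}$ under division by small divisors --- is genuine but comparatively routine (it is exactly Lemma~\ref{XPanalytic}); the omission of $\Psi$ is the structural gap. Separately, performing the $\sqrt{\nu\rho}$-scaling and action-angle change \emph{before} the Birkhoff step, as you propose, forces the generating function to involve $\sqrt{\rho_a+r_a}$ rather than a polynomial, which complicates but does not invalidate that part; the paper's order (Birkhoff in the original complex variables, then $\iota$, then $\Psi$, then the $\chi_\rho$-scaling) is cleaner.
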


\begin{remark}\label{r_p4.1} 1)
 $\Phi_\yy$ is 
  close to the {  scaling by the factor $\nu^{1/2}$ } on the $\L_\infty $-modes 
but not on the $(\A\cup {\L_f})$-modes, where it is close to a certain affine 
  transformation, depending  on $\theta$. Moreover
  $$\Phi_\yy\big (\O_{\ga} ({\tfrac 12}, {\mu_*^2}) \big)\subset\Tg(\nu,  1,1,\ga),\qquad \ga_*\le \ga\le\ga_g.$$

2)  All the objects, involved in this proposition, except the remaining term $f$ in \eqref{HNF}, 
depend only on the  main part  $u^4$ of $G$, and not on the higher order correction.
\end{remark}

The rest of this section is devoted to the proof of Proposition \ref{thm-HNF}.  From now on we
arbitrarily enumerate the set $\A$ of excited modes, i.e.  we write $\A$ as 
\be\label{labA}
\A = \{a_1,\dots, a_n\}\,,
\ee
so that the cardinality of $\A$ is $n$,
and accordingly identify $\R^{\A}$ with $\R^n$ and identify various $\A$-valued maps with maps, valued in the set 
$\{1,\dots, n\}$.

\subsection{Resonances and the Birkhoff procedure}\label{s_4.2}

Instead of the domains $\O_\ga(\sigma,\mu)$, in this section we will use domains
\be\label{ddomain}
\O_\ga(\sigma,\mu^2,\mu) = \{(r,\theta,w): |r| <\mu^2, |\Im\theta|<\sigma, \|w\|_\ga<\mu\}\,,
\ee
more convenient for the normal form calculation. The space of functions on $\O_\ga(\sigma,\mu^2,\mu)$, defined similar to 
the space $\cT_{\ga,\vark}(\sigma,\mu)$, will be denoted $\cT_{\ga,\vark}(\sigma,\mu^2,\mu)$. The norm 
$ |f|_{\begin{subarray}{c}\s,\mu,\mu^2\\ \ga, \vark  \end{subarray}} $ in this space is defined by the relation \eqref{norm}, where the first 
line is given the weight $\mu^0=1$, the second line -- the weight $\mu^1$, and the third line -- $\mu^2$. Note that 
\be\label{O_relation}
\O_\ga(\sigma,\mu^2,\mu)\subset \O_\ga(\sigma,\mu)\subset \O_\ga(\sigma,\mu, \sqrt\mu),
\ee
and that $|\cdot   |_{\begin{subarray}{c}\s,\mu,\mu^2\\ \ga, \vark  \end{subarray}}$ 
and $|\cdot   |_{\begin{subarray}{c}\s,\mu\\ \ga, \vark  \end{subarray}}$ are equivalent if  $\mu\sim1$.

In the situation of Remark~\ref{r_sigma}, when $\cZ=\Z^d$ and  $\A=\F=\emptyset$, we have 
 $\cT_{\ga,\vark}(1,\mu^2,\mu) = \cT_{\ga,\vark}(1,\mu)$,  and 
\be\label{twonorms1}
| f   |_{\begin{subarray}{c}1,\mu,\mu^2\\ \ga, \vark  \end{subarray}}  \le 
 |f   |_{\begin{subarray}{c}1,\mu\\ \ga, \vark  \end{subarray}}  \le \mu^{-2}
 | f   |_{\begin{subarray}{c}1,\mu,\mu^2\\ \ga, \vark  \end{subarray}} 
\ee
for any  $0<\mu\le1$. 

\begin{example}[homogeneous functionals]
\label{ex_homog}
Let $\cZ=\Z^d$ and  $\A=\F=\emptyset$ 
and  let $f(w)\in \cT_{\ga,\vark}(1,1,1) = \cT_{\ga,\vark}(1,1)$ be an $r$-homogeneous
function,  $r\le 2$ integer. Then $df$ and $d^2f$ are, accordingly, $(r-1)\,$-- and $(r-2)$--homogeneous. 
So for any $0<\mu\le1$ we have 
\be\label{homog}
|f|_{\begin{subarray}{c}1,\mu,\mu^2\\ \ga, \vark  \end{subarray}} =
\mu^r
 |f|_{\begin{subarray}{c}1,1,1\\ \ga, \vark  \end{subarray}} \,.
\ee
If for $j=1,2\ $ 
$f_j(w)\in \cT_{\ga,\vark}(1,1,1)$ is an $r_j$--homogeneous functional, $r_j\ge2$, 
then the functional $ \{f_1,f_2\} $ is $r_1+r_2-2$--homogeneous. So the relation above 
and  Proposition~\ref{lemma:poisson} imply that 
\be\label{poiss_homog}
|\{f_1,f_2\} |_{\begin{subarray}{c}1,1,1\\ \ga, \vark  \end{subarray}} \le C
|f_1 |_{\begin{subarray}{c}1,1,1\\ \ga, \vark  \end{subarray}} \cdot 
|f_2|_{\begin{subarray}{c}1,1,1\\ \ga, \vark  \end{subarray}} \,.
\ee
\end{example}

Let us consider  the quartic part $h_2 + h_4$  of the Hamiltonian $h$, 
$$
h_2=
 \sum_{a\in\Z^d}\lambda_a \xi_a\eta_a,\quad
h_4= (2\pi)^{-d}\sum_{(i,j,k,\ell)\in\J}\frac{(\xi_i+\eta_{-i})(\xi_j+\eta_{-j})(\xi_k+\eta_{-k})(\xi_\ell+\eta_{-\ell})}{4\sqrt{\li\lj\lk\lel}}\,
$$
 (the variables $\xi, \eta$ are defined in \eqref{change}), where 
 $\J$ denotes the zero momentum set:$$\J:=\{(i,j,k,\ell)\subset\Z^d\mid i+j+k+\ell=0\}.$$

We decompose $h_4=h_{4,0}+h_{4,1}+h_{4,2}$ according to 
\begin{align*}
h_{4,0}=& \frac 1 4 (2\pi)^{-d}\sum_{(i,j,k,\ell)\in\J}\frac{ \xi_i\xi_j\xi_k\xi_\ell +\eta_i\eta_j\eta_k\eta_\ell}{\sqrt{\li\lj\lk\lel}},\\
h_{4,1}=& (2\pi)^{-d}\sum_{(i,j,k,-\ell)\in\J}\frac{ \xi_i\xi_j\xi_k\eta_\ell +\eta_i\eta_j\eta_k\xi_\ell}{\sqrt{\li\lj\lk\lel}},\\
h_{4,2}=& \frac 3 2 (2\pi)^{-d}\sum_{(i,j,-k,-\ell)\in\J}\frac{ \xi_i\xi_j\eta_k\eta_\ell }{\sqrt{\li\lj\lk\lel}}\,,
\end{align*}
and define
$$
\J_2= \{ (i,j,k,\ell)\subset\Z^d\mid
(i,j,-k,-\ell)\in\J, \; \sharp \{i,j,k,\ell\}\cap \A \geq 2\}\,.
$$
By Proposition~\ref{prop-m} we have 
\begin{lemma}\label{res-mon}If $m\notin\Cc$, then there exists $\ka(m)>0$ such that for all $(i,j,k,\ell)\in \J_2$
\begin{align*}
|\li+\lj+\lk-\lel|&\geq \ka(m)\, ;\\
|\li+\lj-\lk-\lel|&\geq \ka(m), \quad \text{except if } \{|i|,|j|  \}=\{|k|,|\ell| \}\, .
\end{align*}
\end{lemma}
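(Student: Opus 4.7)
The plan is to reduce each divisor to one of the canonical forms $D_0, D_1, D_2^\pm$ of Section \ref{s_2} and then invoke Proposition \ref{prop-m}. Concretely, I would fix $m\notin\Cc$ and let $\ka_0:=\ka_0(m)>0$ be the constant from Proposition \ref{prop-m}, so that bounds \eqref{D0}--\eqref{D3} all hold with $\ka=\ka_0$. For any $(i,j,k,\ell)\in \J_2$, replacing $\la_a$ by $\omega_a$ whenever $a\in\A$, each of the two divisors takes the shape
$$
\langle \omega, p\rangle + \varepsilon_1 \la_{b_1} + \varepsilon_2 \la_{b_2},\qquad p\in\Z^\A,\ |p|\le 4,\ b_1,b_2\in\L,\ \varepsilon_i\in\{-1,0,+1\},
$$
i.e.\ a divisor of type $D_0$, $D_1$, $D_2^+$ or $D_2^-$, according to whether $4,3,2$ or $2$ of the indices $i,j,k,\ell$ lie in $\A$. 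Since $|p|\le 4$ is an absolute bound, the estimates \eqref{D0}--\eqref{D3} yield a uniform lower bound $\ka(m)=\ka_0(m)\cdot C_n$ with $C_n>0$ depending only on $n=|\A|$, provided the divisor at hand is non-resonant in the sense of Definition \ref{Res-kab}.

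The key algebraic observation is how the trivial resonances match the statement. Under the substitution $\la_a\mapsto x_{|a|^2}$, the first divisor becomes the formal sum $x_{|i|^2}+x_{|j|^2}+x_{|k|^2}-x_{|\ell|^2}$, whose total coefficient sum is $+2\neq 0$; hence this formal sum is never identically zero, and the first divisor is \emph{never} a trivial resonance. For the second divisor, the same substitution yields $x_{|i|^2}+x_{|j|^2}-x_{|k|^2}-x_{|\ell|^2}$, which vanishes identically iff $\{|i|,|j|\}=\{|k|,|\ell|\}$ as multisets\,---\,exactly the exception stated in the lemma.

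Given these two observations, the remainder is a routine case analysis on $s:=\#\{i,j,k,\ell\}\cap\A\in\{2,3,4\}$. For $s=4$ we invoke \eqref{D0}; for $s=3$ we invoke \eqref{D1}, noting that its trivial-resonance condition ($|p|=1$) does not occur since in our case $|p|\ge 2$; and for $s=2$ we use \eqref{D2} or \eqref{D3} according to the signs with which the two $\L$-indices enter. The only degenerate configuration is when the $D_2^-$ reduction produces $p=0$, which happens only when a pair of oppositely signed $\A$-indices coincide as vectors in $\Z^d$. Using the momentum constraint $i+j=k+\ell$, such a coincidence forces the remaining two $\L$-indices to coincide as well: for the first divisor this gives a term $2\la_a\ge 2$, while for the second divisor it makes the divisor identically zero (already inside the exception) or yields a divisor of the form $\la_a-\la_b$ with $|a|\neq|b|$, bounded below by $1/4$ thanks to Lemma \ref{lem:D3-k=0}.

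The proof is thus essentially a bookkeeping exercise. The main obstacle, if any, is to verify exhaustively that each subcase matches either one of \eqref{D0}--\eqref{D3} or Lemma \ref{lem:D3-k=0}, and that the trivial-resonance cases coincide exactly with the exceptional clause $\{|i|,|j|\}=\{|k|,|\ell|\}$; no new ideas beyond those encoded in Proposition \ref{prop-m} are needed.
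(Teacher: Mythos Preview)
Your proposal is correct and is exactly the approach the paper takes: the paper's proof is the single sentence ``By Proposition~\ref{prop-m} we have [the lemma]'', and you have simply spelled out the bookkeeping that this sentence hides. One minor slip: the claim that $|p|\ge 2$ in the $s=3$ case is not always true (e.g.\ for the first divisor with $i=\ell\in\A$, $j\in\A$, $k\in\L$ one gets $\omega_j+\la_k$, so $|p|=1$), but this does not matter since your earlier coefficient-sum observation already guarantees non-resonance and hence the applicability of \eqref{D1}.
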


For $\ga=(\ga_1, \ga_2)$, where $0\le\ga_1\le1$, $\ga_2\ge m_*$, and for $\cZ=\Z^d$ as above consider 
the space $Y_\ga$ as in Section~\ref{sThePhaseSpace}, written in terms of the complex coordinates
$\zeta_a=(\xi_a, \eta_a), a\in\Z^d$. In these variables the symplectic from $\Omega$ reads 
$\Om = -i\sum d\xi_a\wedge d\eta_a$. For $0<\mu\le1$ consider the ball 
$\O_\ga(1,\mu^2,\mu)=\O_\ga(1,\mu) =\{|\zeta|_\ga<\mu\}$. 

For any vector $\zeta=(\zeta_a=(\xi_a,\eta_a), a\in \Z^d)$, we will write $\zeta_a^+ = \xi_a$ and 
$\zeta_a^- = \eta_a$. For  an integer $r\ge2$ we abbreviate $a=(a_1,\dots, a_r)\in (\Z^d)^r$,
$\vs=(\vs_1,\dots,\vs_r)\in\{+,-\}^r$, and consider a homogeneous polynomial 
$$
P^r(\zeta) = M\sum_{a\in (\Z^d)^r} \sum_{\vs\in \{+,-\}^r} A_a^\vs\, \zeta_{a_1}^{\vs_1}\dots \zeta_{a_r}^{\vs_r}\,.
$$
Here $M$ is a positive constant, the moduli of all coefficients $A_a^\vs$ are bounded by 1, and 
$$
A_a^\vs = 0 \quad\text{unless} \quad a_1\vs_1^0+\dots a_r\vs_r^0=0
$$
for some fixed boolean vector $\vs^0\in\{+,-\}^r$. Denote by $D^-$ the block-diagonal operator 
\be\label{D-}
D^- = \diag \{ |\lambda_a|^{-1/2} I, a\in \Z^d\}\,,\qquad I\in M(2\times2)\,, 
\ee
and set $Q^r(\zeta) = P^r(D^-\zeta)$.

\begin{lemma}\label{XPanalytic}
For any $\ga$ as above, $Q^r\in \cT_{\ga,2}(1,1,1)$ and 
\be\label{z.1}
 |Q^r|_{\begin{subarray}{c}1,1,1\\ \ga, 2  \end{subarray}} \le CM\,,\qquad C=C(r)\,.
\ee
\end{lemma}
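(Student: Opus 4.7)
The function $Q^r(\zeta)=P^r(D^-\zeta)$ is a homogeneous polynomial of degree $r$ in $\zeta$, hence entire and continuous up to the boundary of any ball in $Y_\ga$; the reality condition, if needed for membership in $\cT_{\ga,2}$, follows from a conjugation symmetry of the coefficients $A_a^\vs$ and is independent of the quantitative bound. What has to be proved are the three norm estimates entering \eqref{norm}, namely the sup-norm of $Q^r$ on $\O_{\ga_*}(1,1,1)$, the $Y_{\ga'}$-norm of $\nabla Q^r$ on $\O_{\ga'}(1,1,1)$, and the $\cM^b_{\ga',2}$-norm of $\nabla^2 Q^r$ on $\O_{\ga'}(1,1,1)$, the last two uniformly in $\ga_*\le\ga'\le\ga$. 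Two tools feed every estimate: each factor of $D^-$ carries one power of spatial decay, $|\lambda_a|^{-1/2}\le C\langle a\rangle^{-1}$; and the zero-momentum condition $\sum_k\vs_k^0 a_k=0$ forces the differentiated indices to be controlled by the remaining ones, in the sense that $|b|$ (one derivative) or the pseudo-distance $[b-c]=\min(|b-c|,|b+c|)$ (two derivatives) is bounded by $\sum|a_k'|$ over the undifferentiated slots.

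For the sup-norm I would view $Q^r(\zeta)$ as the value at $x=0\in\T^d$ of an $r$-fold pointwise product of linear combinations of inverse Fourier series of the sequences $(|\lambda_a|^{-1/2}\zeta_a)_a$. Since $m_*>d/2$, the rescaled sequence belongs to $Y_{(\ga_{*,1},\ga_{*,2}-1)}$, which is a convolution algebra by the property recalled after \eqref{Ygamma}; hence $|Q^r(\zeta)|\le CM\|\zeta\|_{\ga_*}^r\le CM$. The estimate for $\|\nabla Q^r(\zeta)\|_{\ga'}$ is essentially the same computation with one slot $i$ singled out and assigned the index $b$: the momentum relation $\vs_i^0 b=-\sum_{k\neq i}\vs_k^0 a_k'$ allows the weight $e^{\ga'_1|b|}\langle b\rangle^{\ga'_2}$ to be redistributed over the $r-1$ remaining summands with only a combinatorial loss, after which the same algebra (or Cauchy--Schwarz) bound gives $\|\nabla Q^r(\zeta)\|_{\ga'}\le CM$.

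The main work goes into the Hessian bound in $\cM^b_{\ga',2}$. Writing $(\nabla^2 Q^r)_b^c$ as a sum, indexed by ordered pairs $(i,j)$ of differentiated slots and by sign vectors $\vs$, with prefactor $|\lambda_b|^{-1/2}|\lambda_c|^{-1/2}\le\langle b\rangle^{-1}\langle c\rangle^{-1}$ and a residual polynomial of degree $r-2$ in $\zeta$, two exact weight-absorption identities drive the estimate. First, the weight $\min(\langle b\rangle,\langle c\rangle)^{\vark=2}$ appearing in the matrix norm $|\cdot|_{(\ga'_1,\ga'_2-m_*),2}$ is absorbed identically, since $\min(\langle b\rangle,\langle c\rangle)^2/(\langle b\rangle\langle c\rangle)=\min/\max\le 1$. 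Second, the residual momentum relation $\vs_i^0 b+\vs_j^0 c=-\sum_{k\neq i,j}\vs_k^0 a_k'$ yields $[b-c]\le\sum|a_k'|$, so both $e^{\ga'_1[b-c]}$ and $\max([b-c],1)^{\ga'_2-m_*}$ (note $\ga'_2-m_*\ge 2\ge 0$ since $\ga'\ge\ga_*=(0,m_*+2)$) dissolve into a product $\prod_{k\neq i,j}e^{\ga'_1|a_k'|}\langle a_k'\rangle^{\ga'_2-m_*}$ with only a combinatorial loss. The sum over $c$ for fixed $b$ collapses to a single term by the linear momentum relation, and what remains is a sum of the form $\sum_{a'_1,\dots,a'_{r-2}}\prod_k\phi_{a_k'}\langle a_k'\rangle^{-m_*-1}$ with $\phi_a=|\zeta_a|e^{\ga'_1|a|}\langle a\rangle^{\ga'_2}$ satisfying $\|\phi\|_{\ell^2}=\|\zeta\|_{\ga'}\le1$; Cauchy--Schwarz with $\sum\langle a\rangle^{-2m_*-2}<\infty$ controls each factor. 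The symmetric $\sup_c\sum_b$ bound is identical, and the operator-norm piece $\|\nabla^2Q^r\|_{\cB(Y_{\ga'},Y_{\ga'})}$ of $\aa{\cdot}_{\ga',2}$ follows from the parallel $(\ga',0)$-estimate through Proposition~\ref{pMatrixBddOp}.

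The main obstacle is purely combinatorial book-keeping: one must verify that the $O(r^2)$-fold sum over differentiated pairs of slots, the $2^r$-fold sum over sign vectors, and the multi-index sum over constrained $(r-2)$-tuples $(a_k')$ collectively contribute only a constant depending on $r$ (and on the fixed ambient parameters $d,m_*$). Once the two weight-absorption identities $[b-c]\le\sum|a_k'|$ and $\min(\langle b\rangle,\langle c\rangle)^2/(\langle b\rangle\langle c\rangle)\le 1$ are in place, no genuine cancellation is needed and the argument proceeds uniformly in $\ga_*\le\ga'\le\ga$.
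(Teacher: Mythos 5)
Your proposal follows essentially the same route as the paper: reduce to the Hessian, bound $|(\nabla^2 Q^r)_b^c|$ by $\langle b\rangle^{-1}\langle c\rangle^{-1}$ times a convolution power of the symmetrised, rescaled sequence, absorb the $\vark=2$ weight exactly via $\min(\langle b\rangle,\langle c\rangle)^2/(\langle b\rangle\langle c\rangle)\le1$, use the zero-momentum relation to bound $[b-c]$ by the undifferentiated indices, and close with the convolution-algebra property of $Y_{\tilde\ga}$ (the paper phrases the last step as $|\tilde\zeta^{\star(r-2)}|_\ga\lesssim|\tilde\zeta|_\ga^{r-2}$; your Cauchy--Schwarz against $\sum\langle a\rangle^{-2m_*-2}$ is the same thing). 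The sup-norm and gradient bounds, which the paper dispatches with ``similar to Lemma~\ref{lemP} but simpler,'' you sketch by the same convolution argument; that is fine.

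One step, however, is wrong as stated: you claim that the operator-norm component $\|\nabla^2 Q^r\|_{\cB(Y_{\ga'},Y_{\ga'})}$ of $\aa{\cdot}_{\ga',2}$ ``follows from the parallel $(\ga',0)$-estimate through Proposition~\ref{pMatrixBddOp}.'' Proposition~\ref{pMatrixBddOp} would give the bound on $\cB(Y_{\ga'},Y_{\ga'})$ only from $|\nabla^2 Q^r|_{\ga',0}<\infty$, i.e.\ from a matrix-norm bound with the \emph{full} weight $\ga'=(\ga'_1,\ga'_2)$; but the Hessian only has the reduced decay $(\ga'_1,\ga'_2-m_*)$ (that reduction is the very reason $\cM^b_{\ga,\vark}$ is defined as an intersection rather than an inclusion), and the reduced matrix norm controls the operator norm on the larger space $Y_{(\ga'_1,\ga'_2-m_*)}$, not on $Y_{\ga'}$. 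The operator-norm part requires its own argument: either a Cauchy estimate on the already-bounded map $\nabla Q^r:\O_{\ga'}(1,1,1)\to Y_{\ga'}$ (this is precisely what the paper does in the proof of Lemma~\ref{lemP}, which the present lemma cites), or a direct convolution estimate of $\|\nabla^2 Q^r(\zeta)w\|_{\ga'}$ in the spirit of your gradient bound. Either fix is short, so this is a local flaw rather than a conceptual one, but it should be corrected.
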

The lemma is proved in Appendix A. 

Note that by this lemma, \eqref{twonorms1} and \eqref{homog},
$
|Q^r  |_{\begin{subarray}{c}1,\mu\\ \ga, 2  \end{subarray}}  \le CM \mu^{r-2}.
$
  So by Lemma~\ref{Summarize} if the function $Q^r$ is real, then 
   the Hamiltonian flow-maps $\Phi^t = \Phi^t_{Q^r}$, $|t|\le1$,
  define real-holomorphic symplectic mappings 
  \be\label{homog_flow}
  \begin{split}
  \Phi^t:  \O_\ga(1,\mu^2,\mu) \to \O_\ga(1, 4\mu^2,2\mu)\quad
  \text{if $r\ge4$ and $\mu\le \mu_1,\ \mu_1=\mu_1(M)>0$,}\\
  \text{ or if $r=3$ and $M>0$ is sufficiently small
  }
  \end{split}
  \ee
  (we recall that now 
  $\O_\ga(1,\mu^2,\mu) = \O_\ga(1,\mu)$).

 \begin{proposition}\label{Thm-BNF}
For   $m\notin\Cc$  and $\mu_g>0$, $\ga_g> \ga_*$ as in Lemma~\ref{lemP} 
  there exists $\mu\in(0,\mu_g]$ and    a real holomorphic symplectomorphism 
  $$
  \tau: \O_{\ga_*} (1,\mu) =\{|\zeta|_{\ga_*}<\mu\}
  \to \O_{\ga_*} (1, 2\mu)
  $$
  which is a diffeomorphism on its image and which for $\ga_*\le\ga\le\ga_g$ 
  defines analytic mappings 
  $\tau: \O_{\ga} (1,\mu)  \to \O_{\ga} (1, 2\mu)$, such that  
 \be\label{esti1}
 \|\tau^{\pm1}(\zeta)-\zeta\|_{\ga} \le C \|\zeta\|_{\ga}^3
   \qquad \forall\,\zeta\in \O_{\ga} (1,\mu)   \,.
 \ee
 It transforms the Hamiltonian $h=h_2+h_4+ h_{\ge5}$  
  as follows:
\be\label{trans}
h \circ \tau= h_2 + z_4+ q_4^3+ r_6^0 +h_{\ge5}\circ \tau\,,
\ee
where 
\begin{align*}
z_4=&\frac 3 2(2\pi)^{-d} \sum_{\substack{(i,j,k,\ell)\in\J_2 \\ \{|i|,|j|  \}=\{|k|,|\ell| \}}}  \frac{ \xi_i\xi_j\eta_k\eta_\ell }{\li\lj},
\end{align*}
and 
$q_4^3=q_{4,1}+q_{4,2}$ with\footnote{The upper index 3 signifies that $q_4^3$ is at least cubic
in the transversal directions $\{\zeta_a, a\in \L\}$.}
\begin{align*}
q_{4,1}=&(2\pi)^{-d} \sum_{(i,j,-k,\ell)\not\in\J_2}\frac{ \xi_i\xi_j\xi_k\eta_\ell +\eta_i\eta_j\eta_k\xi_\ell}{\sqrt{\li\lj\lk\lel}},\\
q_{4,2}=& \frac 3 2(2\pi)^{-d}\sum_{(i,j,k,\ell)\not\in\J_2}\frac{ \xi_i\xi_j\eta_k\eta_\ell }{\sqrt{\li\lj\lk\lel}}\,.
\end{align*}
The functions $z_4, q_4^3, r_6^0, h_{\ge5}\circ\tau$ are real holomorphic on $\O_{\ga} (1,\mu)$
for each $\ga_*\le\ga\le\ga_g$.  Besides
 $r_6^0$ and $h_{\ge5}\circ\tau$ are, respectively, functions of order 6 and 5 at the origin. 
 For any $0<\mu'\le \mu$ the functions 
  $z_4, q_4^3, r_6^0$ and $h_{\ge5}\circ\tau$
belong to ${\Tc}_{\ga_g, 2}(1, (\mu')^2, \mu')$,  and 
\be\label{Z4}
|z_4|_{\begin{subarray}{c}1,\mu',(\mu')^2\\ \ga_g, 2  \end{subarray}} +
|q_4^3|_{\begin{subarray}{c}1,\mu',(\mu')^2\\ \ga_g, 2  \end{subarray}} \le C(\mu')^4\,,
\ee
\be\label{R6}
|r^0_6|_{\begin{subarray}{c}1,\mu',(\mu')^2\\ \ga_g, 2  \end{subarray}} 
\le C( \mu')^6\,,
\ee
\be\label{R66}
|h_{\ge5}\circ\tau|_{\begin{subarray}{c}1,\mu',(\mu')^2\\ \ga_g, 2  \end{subarray}}   \le C (\mu')^5\,.
\ee
The constants  $C$  and $\mu$ depend on $m$ (we recall \eqref{depen}). 
\end{proposition}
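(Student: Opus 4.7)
The plan is to construct a single Birkhoff transformation $\tau = \Phi^1_\chi$ that eliminates at one stroke the quartic monomials with large divisors, namely all of $h_{4,0}$ together with the $\J_2$-parts of $h_{4,1}$ and of $h_{4,2} - z_4$. More precisely, set $h_4^{\mathrm{kill}} := h_4 - z_4 - q_4^3$ and write it as a sum of quartic monomials $c_{a,\sigma}\,\zeta^{\sigma_1}_{a_1}\cdots \zeta^{\sigma_4}_{a_4}$. Lemma~\ref{res-mon} (together with the trivial lower bound $\lambda_i+\lambda_j+\lambda_k+\lambda_\ell\gtrsim1$ for the $h_{4,0}$-type terms) shows that every such monomial has its divisor $\sigma_1\lambda_{a_1}+\cdots+\sigma_4\lambda_{a_4}$ bounded below in modulus by $\kappa(m)>0$ whenever $m\notin \Cc$. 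I would then define $\chi$ monomial-by-monomial by dividing each coefficient of $h_4^{\mathrm{kill}}$ by $\mathbf{i}$ times its divisor; by inspection $\chi$ is real and satisfies, by construction, $\{h_2,\chi\}=-h_4^{\mathrm{kill}}$.

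Since $\chi$ still has the $P^4(D^-\cdot)$-structure of Lemma~\ref{XPanalytic} with coefficients of size $\lesssim 1/\kappa(m)$, that lemma gives $\chi\in \cT_{\ga_g,2}(1,1,1)$ with norm $\le C/\kappa(m)$; because $\chi$ is $4$-homogeneous in $\zeta$, Example~\ref{ex_homog} yields the scaled bound $|\chi|_{1,\mu,\mu^2,\ga_g,2}\le C\mu^4/\kappa(m)$. For $\mu$ small enough, depending on $m$ through $\kappa$, the smallness hypothesis of Proposition~\ref{Summarize} is satisfied. Applying that proposition together with Remark~\ref{r_sigma} produces the real-analytic symplectomorphism $\tau:=\Phi^1_\chi: \O_\ga(1,\mu)\to\O_\ga(1,2\mu)$ for each $\ga_*\le\ga\le\ga_g$, and the estimate \eqref{esti1} follows because $J\nabla\chi$ is $3$-homogeneous. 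The same argument applied to $-\chi$ controls $\tau^{-1}$.

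To produce the normal form I apply the second-order Lie formula
\[
f\circ \Phi^1_\chi = f + \{f,\chi\} + \int_0^1 (1-t)\{\{f,\chi\},\chi\}\circ\Phi^t_\chi\,dt
\]
separately to $f=h_2,\,h_4,\,h_{\ge 5}$. Using $\{h_2,\chi\}=-h_4^{\mathrm{kill}}$, the quartic level reassembles into $h_2+z_4+q_4^3$, and the remaining pieces at fourth order combine into
\[
r_6^0 = \int_0^1(1-t)\{-h_4^{\mathrm{kill}},\chi\}\circ\Phi^t\,dt + \int_0^1\{h_4,\chi\}\circ\Phi^t\,dt,
\]
with everything involving $h_{\ge 5}$ absorbed into $h_{\ge 5}\circ\tau$. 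Each integrand is a Poisson bracket of two $4$-homogeneous polynomials, hence a $6$-homogeneous polynomial $P_6$; writing $P_6\circ\Phi^t = P_6 + (P_6\circ\Phi^t - P_6)$ and noting that $\Phi^t(\zeta)-\zeta = O(\|\zeta\|_\ga^3)$, the correction vanishes to order $8$ at the origin. Combining Example~\ref{ex_homog}, \eqref{poiss_homog} and Proposition~\ref{Summarize}(ii) then yields \eqref{R6} on $\O_\ga(1,(\mu')^2,\mu')$ for $\mu'\le\mu$. The bound \eqref{Z4} is immediate from \eqref{homog} applied to the $4$-homogeneous polynomials $z_4$, $q_4^3$, whose coefficients are controlled by Lemma~\ref{XPanalytic}. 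Finally, $h_{\ge 5}\in \cT_{\ga_g,2}(1,\mu_g)$ by Lemma~\ref{lemP} and vanishes to order $5$ at the origin, so a Cauchy/Schwarz-type scaling together with Proposition~\ref{Summarize}(ii), used with the observation that $\tau-\mathrm{id}$ vanishes to order $3$, delivers \eqref{R66}.

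The main obstacle is to cleanly separate the mass-dependent factor $1/\kappa(m)$---which is fixed once $m\notin\Cc$ and controls the norm of $\chi$ and every derived Poisson bracket---from the $(\mu')^k$-decay demanded by the three remainder estimates \eqref{Z4}--\eqref{R66}. The key technical device is that every intermediate object is either a finite-degree polynomial with the $P^r(D^-\cdot)$-structure of Lemma~\ref{XPanalytic} or an analytic function vanishing to a definite order at the origin; the powers of $\mu'$ then come purely from Example~\ref{ex_homog} and a Schwarz-lemma scaling, while $\kappa(m)$ disappears into the final constant $C$.
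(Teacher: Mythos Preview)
Your proof is correct and follows essentially the same route as the paper: construct the quartic generator $\chi$ (the paper calls it $\chi_4$ and writes it out explicitly rather than describing it as ``divide $h_4^{\mathrm{kill}}$ by its divisors''), invoke Lemma~\ref{XPanalytic} for membership in $\cT_{\ga_g,2}(1,1,1)$, use homogeneity via Example~\ref{ex_homog} and Proposition~\ref{Summarize} to get the flow $\tau=\Phi^1_\chi$ and the cubic bound \eqref{esti1}, and then read off the remainder estimates from \eqref{poiss_homog} and \eqref{homog}. The only cosmetic difference is in the bookkeeping of the Lie expansion: the paper expands both $h_2$ and $h_4$ to second order, obtaining $r_6^0=\{\chi,h_4\}+\int_0^1(1-t)\{\chi,\{\chi,h_2+h_4\}\}\circ\Phi^t\,dt$, whereas you expand $h_2$ to second order and $h_4$ only to first order; the two expressions are of course equal, and either form makes the sixth-order vanishing and the estimate \eqref{R6} transparent via the same tools.
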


\proof
We use the classical Birkhoff normal form procedure. We construct the transformation 
$\tau$ as the time one flow $\Phi^1_{\chi_4}$ of  a Hamiltonian $\chi_4$,  given by
\begin{align}\label{chi4}\begin{split}
\chi_4=& -\frac{{\bf i} } 4 (2\pi)^{-d}\sum_{(i,j,k,\ell)\in\J}\frac{ \xi_i\xi_j\xi_k\xi_\ell -\eta_i\eta_j\eta_k\eta_\ell}{(\li+\lj+\lk+\lel)\sqrt{\li\lj\lk\lel}}\\
&-{\bf i}(2\pi)^{-d}\sum_{(i,j,-k,\ell)\in\J_2}\frac{ \xi_i\xi_j\xi_k\eta_\ell -\eta_i\eta_j\eta_k\xi_\ell}{(\li+\lj+\lk-\lel)\sqrt{\li\lj\lk\lel}}\\
&- \frac {3{\bf i}} 2 (2\pi)^{-d} \sum_{\substack{(i,j,k,\ell)\in\J_2 \\ \{|i|,|j|  \}\neq\{|k|,|\ell| \}}} \frac{ \xi_i\xi_j\eta_k\eta_\ell }{(\li+\lj-\lk-\lel)\sqrt{\li\lj\lk\lel}}
\end{split}\end{align}

The Hamiltonian $\chi_4$ is 4-homogeneous and real (its takes real values if $\xi_a = \bar \eta_a$ for each $a$). 
If $m\notin\Cc$, then by   Lemma~\ref{XPanalytic}    $\chi_4 \in \cT_{\ga,2}(1,1,1)$, 
and  by Lemma~\ref{Summarize} and \eqref{homog_flow} the time-one flow-map of this Hamiltonian, 
 $\tau=\Phi^1_{\chi_4}$ is a real holomorphic 
and  symplectic change of coordinates, defined  in  the $\mu$--neighbourhood  of the origin in $Y_{\ga}$
for any $\ga_*\le \ga\le \ga_g$ and 
 a suitable positive $\mu=\mu(m)$. The relation \eqref{homog} implies that on 
$  \O_{\ga} (1,2\mu)$ the norm of the Hamiltonian vector field is bounded by $C \mu^3$. 
This implies \eqref{esti1}. 

{ 
Since the Poisson bracket, corresponding to the symplectic form $-{\bf i}d\xi\wedge d\eta$  is
$
\{F,G\} ={\bf i}\langle \nabla_\eta F, \nabla_\xi G\rangle -{\bf i} \langle \nabla_\xi F, \nabla_\eta G\rangle,
$
and since $\nabla_{\eta_s}h_2=\lambda_s \xi_s$,   $\nabla_{\xi_s}h_2=\lambda_s \eta_s$, then} we 
 calculate
\be\label{h2chi4}
\begin{split}
\{\chi_4, h_2\}= & \frac{1 } 4 (2\pi)^{-d}\sum_{(i,j,k,\ell)\in\J}\frac{ \xi_i\xi_j\xi_k\xi_\ell +
\eta_i\eta_j\eta_k\eta_\ell}{\sqrt{\li\lj\lk\lel}}\\
+&(2\pi)^{-d}\sum_{(i,j,-k,\ell)\in\J_2}\frac{ \xi_i\xi_j\xi_k\eta_\ell +\eta_i\eta_j\eta_k\xi_\ell}{\sqrt{\li\lj\lk\lel}}\\
+& \frac {3} 2 (2\pi)^{-d}
\sum_{\substack{(i,j,k,\ell)\in\J_2 \\ \{|i|,|j|  \}\neq\{|k|,|\ell| \}}}   \frac{ \xi_i\xi_j\eta_k\eta_\ell }{\sqrt{\li\lj\lk\lel}}\,.
\end{split}
\ee
Therefore the transformed quartic part of the Hamiltonian $h$,  $(h_2+ h_4)\circ \tau$, equals 
\begin{align*}
 h_2+ \big(h_4  +  \{\chi_4, h_2\} \big) + \big(  \{\chi_4, h_4 \}
+&\int_0^1 (1-t) \{\chi_4,\{\chi_4, h_2+ h_4\}\}\circ \Phi_{\chi_4}^t \dd t \big)\\
=& h_2 +( z_4+ q_4^3)+ r_6^0
\end{align*}
with $z_4$ and $q_4^3$ as in the statement of the proposition and 
$$
r_6^0=  \{\chi_4, h_4 \} +\int_0^1 (1-t) \{\chi_4,\{\chi_4, h_2+ h_4\}\}\circ \Phi_{\chi_4}^t \dd t\,.
$$ 
 The reality of the functions $z_4$ and $q_4^3$ follow from the explicit formulas for them,
while the inclusion of these functions to ${\Tc}_{\ga_g, 2}(1, 1, 1)$ 
 and the estimate \eqref{Z4}  for any $0<\mu'\le\mu$  hold
by Lemma~\ref{XPanalytic} and \eqref{homog}. 

To verify \eqref{R6} we first note that $\{\chi_4, h_4\}$ is a 6-homogeneous function, 
belonging to ${\Tc}_{\ga_g, 2}(1, 1, 1)=:\Tc$
by \eqref{poiss_homog}. It satisfies the estimate in  \eqref{R6}  by \eqref{homog}. Next, 
 $\{\chi_4, h_2\}$ is a 4-homogeneous function, given by \eqref{h2chi4}.
By  Lemma~\ref{XPanalytic}  it belongs to  $\Tc$. The function $\{\chi_4, h_4\}$ is
6-homogeneous and belongs to $\Tc$ by \eqref{poiss_homog}. So $ \{\chi_4,\{\chi_4, h_2+ h_4\}\}$ is a sum
of  a 6\,- and 8-homogeneous functions, belonging to $\Tc$ by \eqref{poiss_homog}. Now the estimate 
 \eqref{R6} for the second component of $r^0_6$ follows from \eqref{homog_flow}, Lemma~\ref{Summarize} 
 and \eqref{homog}.

Finally,  the estimate \eqref{R66} follows by applying the argument above to homogeneous 
components of $h_{\ge5}$ and noting that the obtained sum converges, if $\mu$ is
sufficiently small. We skip the details. 
\endproof

Clearly $\Tg(\nu, 1,1,\ga)\subset \O_\ga(1,\mu)$ if $\nu\le C^{-1}\mu^2$ (see \eqref{a-a}). 
 Due to \eqref{esti1}, if $\zeta\in\Tg(\nu,1/2,1/2,{\ga})$ and $\ga_*\le\ga\le \ga_g$, 
 then $\|\tau^{\pm1}(\zeta)-\zeta\|_\ga\le C'(m)\nu^{\frac 3 2}$. Therefore
\be\label{prop1}
\tau^{\pm1} ( \Tg(\nu,1/2,1/2,\ga))\subset  \Tg(\nu,1 ,1,\ga) \subset \O_\ga(1,\mu)\,,
\ee
provided that $\nu\le C^{-1}\mu^2$, $\ga_*\le\ga\le \ga_g$ and $\rho\in [c_*, 1]^\A$. 


\subsection{Normal form, corresponding to  admissible   sets $\A$} \label{s_3.3}
Everywhere below  in  Sections \ref{BNF}--\ref{s_4}
 the set  $\A$ is assumed to be  admissible 
in the sense of Definition~\ref{adm}.

In the domains $\Tg=\Tg(\nu,\sigma,{\mu},\gamma)$ we  pass from the complex variables $(\zeta_a, a\in\A)$, 
to the corresponding complex
action-angles $(I_a, \theta_a)$, using the relations
\be\label{ac-an}
\xi_a=\sqrt I_a e^{{\bf i}\theta_a},\qquad \eta_a=\sqrt I_a e^{-{\bf i}\theta_a}\,,\quad a\in\A\,.
\ee
By   $ \Tg^{I,\theta}= \Tg^{I,\theta}(\nu,\sigma, \mu, \gamma) $ we will denote a domain
$\Tg(\nu,\sigma, \mu, \gamma)$, written in the variables $(I,\theta, \xi_\L\L, \eta_\L)$, and will denote
by $\iota$ the corresponding  change of variables,
\be\label{iota}
\iota: \Tg^{I,\theta} \to \Tg,\qquad (I,\theta, \xi_\L, \eta_\L) \mapsto \zeta. 
\ee
Thus, 
$
\iota^{-1} T_{\nu\rho\, \A} =\{(I,\theta, 0,0) : I=\nu\rho, \theta\in\T^n\}\,.
$

The Hamiltonian $z_4$ contains the integrable part,
 formed by monomials of the form $ \xi_i\xi_j\eta_i\eta_j=I_iI_j$ that only
  depend on the actions $I_n=\xi_n\eta_n$, $n\in\Z^d$. Denote it $z_4^+$ and denote the rest $z_4^-$. It is not hard to see 
that 
\be\label{Z4+}
z_4^+ \circ\iota
=\frac 3 2(2\pi)^{-d} \sum_{\ell\in\A,\ k\in\Z^d} (4-3\delta_{\ell,k})\frac{I_\ell I_k}{\lambda_\ell\lambda_k}.
\ee
To calculate  $z_4^-$, we  decompose it  according to the number of indices in $\A$:
a monomial $\xi_i\xi_j\eta_k\eta_\ell $ is in $z_4^{-r}$ ($r=0,1,2,3,4$) if $(i,j,-k,-\ell)\in\J$ and $\sharp \{i,j,k,\ell\}\cap\A=r$. 
We note that, by construction, $z_4^{-0}=z_4^{-1}=\emptyset$. 

Since  $\A$ is admissible, then  in view of Lemma~\ref{res-mon}
for  $m\notin\Cc$ the set $z_4^{-4}$ is empty. The set $z_4^{-3}$ is empty as well:
 
\begin{lemma}
If $m\notin\Cc$,  then
$z_4^{-3}=\emptyset.$
\end{lemma}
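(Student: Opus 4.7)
The plan is to inspect directly the defining constraints of a monomial $\xi_i\xi_j\eta_k\eta_\ell$ appearing in $z_4^{-3}$ and rule out, by admissibility, every possible placement of the indices. From the definition of $z_4$ together with $\mathcal{J}_2$, such a monomial satisfies (i) the zero momentum condition $i+j = k+\ell$, (ii) the resonance condition $\{|i|,|j|\} = \{|k|,|\ell|\}$, and (iii) exactly three of the indices $i,j,k,\ell$ belong to $\mathcal{A}$ while the remaining one lies in $\mathcal{L}$. Note that $m\notin\mathcal{C}$ is not actually used here; the argument rests purely on the explicit resonance built into $z_4$ together with admissibility of $\mathcal{A}$.

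First I would exploit symmetries to cut down the enumeration. The monomial is symmetric in $i\leftrightarrow j$ and in $k\leftrightarrow\ell$, and the symmetry $(\xi,\eta)\leftrightarrow(\eta,\xi)$ exchanges the roles of the pair $(i,j)$ with $(k,\ell)$ while preserving (i) and (ii). Hence we may assume without loss of generality that $i,j,k\in\mathcal{A}$ and $\ell\in\mathcal{L}$.

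Next I would split on (ii). Either $|i|=|k|$ and $|j|=|\ell|$, or $|i|=|\ell|$ and $|j|=|k|$. In the first case, $i$ and $k$ are two elements of $\mathcal{A}$ with equal norm, so admissibility (Definition~\ref{adm}) forces $i=k$; substituting into (i) gives $j=\ell$, contradicting $j\in\mathcal{A}$, $\ell\in\mathcal{L}$, $\mathcal{A}\cap\mathcal{L}=\emptyset$. In the second case, $j,k\in\mathcal{A}$ with $|j|=|k|$ forces $j=k$; then (i) yields $i=\ell$, again a contradiction. Both subcases being impossible, no monomial can contribute, so $z_4^{-3}=\emptyset$.

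There is no real obstacle: the only thing to keep track of is that the resonance $\{|i|,|j|\}=\{|k|,|\ell|\}$, which is built into the summation defining $z_4$, combined with (i), forces a coincidence among the three indices placed in $\mathcal{A}$, and admissibility then collapses this to an equality across the partition $\mathcal{A}\sqcup\mathcal{L}$.
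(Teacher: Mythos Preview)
Your proof is correct and follows essentially the same approach as the paper's: reduce by symmetry to $i,j,k\in\mathcal{A}$, $\ell\in\mathcal{L}$, then use admissibility on the pair in $\mathcal{A}$ forced to have equal norm by $\{|i|,|j|\}=\{|k|,|\ell|\}$, and derive a contradiction from zero momentum. Your observation that the hypothesis $m\notin\mathcal{C}$ is not actually used is also correct; the paper's proof does not invoke it either.
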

\proof
Consider any term $\xi_i\xi_j\eta_k\eta_\ell \in z_4^{-3}$, i.e.  $\{i,j,k,\ell\}\cap \A=3$. Without lost of generality we can assume that $i,j,k\in\A$ and $\ell\in\L$. 
Furthermore we know that $i+j-k-\ell=0$ and
$\{|i|,|j|\}=\{|k|,|\ell|\}$.
In particular  we must have $|i|=|k|$ or $|j|=|k|$ and thus, since $\A$ is admissible, $i=k$ or $j=k$.
Let for example, $i=k$. Then  $|j|=|\ell|$.
Since  $i+j=k+\ell$  we conclude that $\ell=j$ which contradicts our hypotheses.
\endproof

Recall that the finite set $ {\L_f}\subset\L$ was defined in \eqref{L+}.  The mapping
\be\label{lmap}
\ell: \L_f \to \A,\quad a\mapsto \ell(a)\in\A \text{ if } \ |a|=|\ell(a)|,
\ee
is well defined since the set $\A$ is admissible.  Now we define two subsets 
of $ {\L_f}\times {\L_f}$:
\begin{align}
\label{L++} ( {\L_f}\times {\L_f})_+=&\{(a,b)\in  {\L_f}\times {\L_f}\mid  \ell(a)+\ell(b)=a+b\}\\
\label{L+-} ( {\L_f}\times {\L_f})_-=&\{(a,b)\in  {\L_f}\times {\L_f}\mid a\neq b \text{ and }\ell(a)-\ell(b)=a-b\}.
\end{align}
\begin{example}\label{Ex39}
If $d=1$, then  $\ell(a)=-a$ and  the sets $( {\L_f}\times {\L_f})_\pm$ are empty.
If $d$ is any, but $\A$ is a one-point set $\A=\{b\}$, then $\L_f$ is the punched discrete sphere $\{a\in\Z^d\mid |a|=|b|, a\ne b\}$,
$\ell(a)= b$ for each $a$, and the sets $( {\L_f}\times {\L_f})_\pm$ again are empty. If $d\ge2$ and $|\A|\ge2$, then in general
the sets $( {\L_f}\times {\L_f})_\pm$ are non-trivial. See in Appendix~B.
\end{example}
  
   Obviously
 \be\label{obv}
 ( {\L_f}\times {\L_f})_+\cap ( {\L_f}\times {\L_f})_-=\emptyset\,.
 \ee
For further reference we note that
\begin{lemma}\label{L++-}
If $(a,b)\in ( {\L_f}\times {\L_f})_+\cup  ( {\L_f}\times {\L_f})_-$ then $|a|\neq|b|$.
\end{lemma}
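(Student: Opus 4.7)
The plan is to derive a contradiction in each of the two cases, using admissibility of $\A$ as the central ingredient. Suppose $(a,b)\in(\L_f\times\L_f)_+\cup(\L_f\times\L_f)_-$ and, for contradiction, that $|a|=|b|$. By the very definition of the map $\ell$ in \eqref{lmap} we have $|\ell(a)|=|a|$ and $|\ell(b)|=|b|$, so $|\ell(a)|=|\ell(b)|$. Since $\ell(a),\ell(b)\in\A$ and $\A$ is admissible, this forces $\ell(a)=\ell(b)=:c\in\A$.

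In the minus case, the defining relation $\ell(a)-\ell(b)=a-b$ then reads $0=a-b$, i.e.\ $a=b$, which directly contradicts the clause $a\ne b$ built into the definition \eqref{L+-} of $(\L_f\times\L_f)_-$.

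In the plus case, the defining relation $\ell(a)+\ell(b)=a+b$ becomes $2c=a+b$. Taking squared norms and using $|c|=|a|=|b|$ gives
\[
4|a|^2 \;=\; 4|c|^2 \;=\; |a+b|^2 \;=\; |a|^2+2\langle a,b\rangle+|b|^2 \;=\; 2|a|^2+2\langle a,b\rangle,
\]
so $\langle a,b\rangle=|a|^2=|a|\,|b|$; the equality case of Cauchy--Schwarz then forces $a=b$. Substituting back yields $a=c\in\A$, contradicting $a\in\L_f\subset\L=\Z^d\setminus\A$.

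There is really no main obstacle here: the statement is a short algebraic consequence of admissibility together with the equality case of Cauchy--Schwarz. The only point worth flagging is that the plus case, unlike the minus case, does not exclude $a=b$ in its definition \eqref{L++}, so one cannot close the argument purely by inspection of the relation $\ell(a)+\ell(b)=a+b$; the Euclidean geometry step is needed to rule out the diagonal, and then admissibility supplies the final contradiction via $a=\ell(a)\in\A\cap\L=\emptyset$.
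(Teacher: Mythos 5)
Your proof is correct and takes essentially the same route as the paper: first use admissibility to force $\ell(a)=\ell(b)$, dispose of the minus case immediately, and in the plus case pass from $\ell(a)+\ell(b)=a+b$ to an equality case of a Euclidean inequality. The paper phrases the plus case via the triangle inequality ($|a+b|=2|\ell(a)|=|a|+|b|$, "impossible since $b$ is not proportional to $a$"), which is equivalent to your Cauchy--Schwarz equality step; your write-up is in fact a bit more careful, since you explicitly rule out the diagonal $a=b$ via $a=\ell(a)\in\A\cap\L_f=\emptyset$, a point the paper leaves implicit in the word "proportional."
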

\proof If $(a,b)\in ( {\L_f}\times {\L_f})_+$ and $|a|=|b|$ then $\ell(a)=\ell(b)$ and we have 
$$|a+b|=|2\ell(a)|=2|a|=|a|+|b|$$
which is impossible since $b$ is not proportional to $a$. 
 If $(a,b)\in ( {\L_f}\times {\L_f})_-$ and $|a|=|b|$ then $\ell(a)=\ell(b)$ and we get $a-b=0$ which is impossible in $ ( {\L_f}\times {\L_f})_-$. 
\endproof

Our notation now agrees with that of Section \ref{sThePhaseSpace}, where $\cZ=\Z^d$ is the disjoint union 
$\Z^d =\cA\cup \L_f\cup \L_\infty$. Accordingly, the space $Y_\ga=Y_{\ga \Z^d}$ decomposes as 
\be\label{YY}
Y_\ga = Y_{\A}\oplus Y_{\L_f}\oplus Y_{\ga \L_\infty},\qquad  Y_\ga=\{ \zeta = (\zeta_\A, \zeta_f, \zeta_\infty)\}\,,
\ee
where $Y_{\ga \A} = \, \text{span}\, \{\zeta_s, s\in\A\}$, etc. Below in this Section and in Section~\ref{s_4}, the domains
$\O_\ga(\s, \mu^2, \mu)$ and $\O_\ga(\s,  \mu)$, as well as the corresponding function spaces, refer the $\cZ$
as above.

\begin{lemma}\label{lem:adm}
For $m\notin\Cc$ the part $z_4^{-2}$ of the Hamiltonian $z_4$ equals 
\be\label{Z421}
\begin{split}
3{(2\pi)^{-d}} \Big(& \sum_{(a,b)\in  ( {\L_f}\times {\L_f})_+} \frac{ \xi_{\ell(a)}\xi_{\ell(b)}\eta_a\eta_b+ \eta_{\ell(a)}\eta_{\ell(b)}\xi_a\xi_b}{\la\lb}\\
+ 2&\sum_{(a,b)\in  ( {\L_f}\times {\L_f})_-}  \frac{ \xi_{a}\xi_{\ell(b)}\eta_{\ell(a)}\eta_b }{\la\lb}\Big)\,.
\end{split}
\ee
\end{lemma}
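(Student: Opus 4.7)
The plan is to unpack the definition of $z_4^{-2}$ by classifying ordered quadruples $(i,j,k,\ell)$ according to which positions lie in $\A$, then use admissibility together with the constraints $i+j=k+\ell$ and $\{|i|,|j|\}=\{|k|,|\ell|\}$ to identify the non-integrable monomials with elements of $({\L_f}\times {\L_f})_+$ or $({\L_f}\times {\L_f})_-$. Recall that $z_4$ is the part of $h_{4,2}\circ\tau$ surviving the Birkhoff step, given by $\tfrac32(2\pi)^{-d}$ times the sum over ordered quadruples $(i,j,k,\ell)$ with $i+j=k+\ell$, $\{|i|,|j|\}=\{|k|,|\ell|\}$, and at least two indices in $\A$, with denominator $\lambda_i\lambda_j=\sqrt{\lambda_i\lambda_j\lambda_k\lambda_\ell}$. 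The preceding lemma rules out three or four $\A$-indices, so $z_4^-$ consists entirely of $z_4^{-2}$ terms.

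The enumeration splits into three cases. Case (a): $i,j\in\A$ and $k,\ell\in\L$. The condition $\{|k|,|\ell|\}=\{|i|,|j|\}$ forces $k,\ell\in{\L_f}$ with $\{\ell(k),\ell(\ell)\}=\{i,j\}$, and $i+j=k+\ell$ gives $\ell(k)+\ell(\ell)=k+\ell$, i.e.\ $(k,\ell)\in({\L_f}\times {\L_f})_+$. Admissibility combined with $i\ne j$ (forced by $k\ne\ell$, itself forced by the exclusion of integrable terms) makes $\ell(k)\ne\ell(\ell)$, so for each ordered pair $(k,\ell)\in({\L_f}\times {\L_f})_+$ there are exactly two orderings $(i,j)$ of $\{\ell(k),\ell(\ell)\}$. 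Since $\lambda_{\ell(k)}\lambda_{\ell(\ell)}=\lambda_k\lambda_\ell$, the contribution from case (a) is
\begin{equation*}
\tfrac{3}{2}(2\pi)^{-d}\cdot 2\sum_{(a,b)\in ({\L_f}\times {\L_f})_+}\frac{\xi_{\ell(a)}\xi_{\ell(b)}\eta_a\eta_b}{\lambda_a\lambda_b}.
\end{equation*}
Case (b), where $k,\ell\in\A$ and $i,j\in\L$, is the complex-conjugate mirror of (a) and produces the companion term $\eta_{\ell(a)}\eta_{\ell(b)}\xi_a\xi_b/(\lambda_a\lambda_b)$. Together (a) and (b) give the first sum in \eqref{Z421} with coefficient $3(2\pi)^{-d}$.

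Case (c), where exactly one of $\{i,j\}$ and exactly one of $\{k,\ell\}$ lies in $\A$, has four sub-configurations $(j,\ell)$, $(j,k)$, $(i,\ell)$, $(i,k)\in\A^2$. In each sub-configuration the matching condition $\{|i|,|j|\}=\{|k|,|\ell|\}$ either pairs the two $\A$-indices (forcing them equal by admissibility, which together with $i+j=k+\ell$ makes the surviving term an action monomial $I_iI_j$ in $z_4^+$, hence excluded) or pairs each $\A$-index with an $\L$-index of the same norm, placing both $\L$-indices in ${\L_f}$. The momentum constraint then reduces to $\ell(a)-\ell(b)=a-b$ for the two ${\L_f}$-indices $a,b$, which with $a\ne b$ is exactly the defining relation of $({\L_f}\times {\L_f})_-$. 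Each of the four sub-configurations yields the same monomial $\xi_a\xi_{\ell(b)}\eta_{\ell(a)}\eta_b/(\lambda_a\lambda_b)$ (after appropriate renaming using the $(a,b)\leftrightarrow(b,a)$ symmetry of $({\L_f}\times {\L_f})_-$), contributing a factor of $4$ and producing the coefficient $\tfrac32\cdot 4\cdot(2\pi)^{-d}=6(2\pi)^{-d}$, which matches the factor $3\cdot 2$ in \eqref{Z421}.

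The main obstacle is the combinatorial bookkeeping of ordered tuples: correctly identifying, in each case and sub-case, when admissibility collapses two matching norms into an integrable contribution versus forcing a non-trivial pair in $({\L_f}\times {\L_f})_\pm$, and then tracking how many ordered quadruples produce each surviving monomial so that the coefficients $3$ and $6$ come out exactly right.
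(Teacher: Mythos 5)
Your proof is correct and follows essentially the same route as the paper's own argument: classify ordered quadruples $(i,j,k,\ell)$ by which positions lie in $\A$, then use the momentum constraint $i+j=k+\ell$ together with the norm-matching $\{|i|,|j|\}=\{|k|,|\ell|\}$ and admissibility to identify the surviving monomials with elements of $(\L_f\times\L_f)_+$ or $(\L_f\times\L_f)_-$, discarding the configurations that collapse to integrable terms. Your version makes explicit the combinatorial bookkeeping — the two orderings of $(i,j)$ in cases (a) and (b), and the four equivalent sub-configurations in case (c) — that the paper suppresses by writing out only the representative sub-case $(i,\ell)\in\A^2$ and asserting the coefficients $3$ and $6$ without tracking multiplicities.
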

\proof
Let $\xi_i\xi_j\eta_k\eta_\ell $ be a monomial in $z_4^{-2}$. We know that  $(i,j,-k,-\ell)\in\J$ and  $\{|i|,|j|\}=\{|k|,|\ell|\}$. 
 If $i,j\in\A$ or $k,\ell\in\A$ then we obtain the finitely many monomials as in the first sum in  \eqref{Z421}.
 Now we assume that 
 $i,\ell\in\A$ and $ j,k\in\L.$
Then we have that, 
{ either }   $|i|=|k|$ and 
$|j|=|\ell|$  which leads to finitely many monomials as in the second sum in  \eqref{Z421}.
{ Or} $i=\ell$ and $|j|=|k|$. 
In this last case, the zero momentum condition implies that $j=k$ which is not possible in $z_4^{-}$.
\endproof

\subsection{Eliminating the non integrable terms}
For $\ell\in\A$ we introduce the variables $(I_a, \theta_a, \zeta_\L)$ as in \eqref{ac-an}, \eqref{iota}. 
Now the symplectic structure $-{\bf i}d\xi\wedge d\eta$ reads
\be\label{nsympl}
-\sum_{a\in\A} dI_a\wedge d\theta_a  -{\bf i}  d\xi_\L\wedge d\eta_\L\,.
\ee
In view of \eqref{Z4+}, \eqref{trans} and 
Lemma~\ref{lem:adm},  for $m\notin\Cc$ 
the  Hamiltonian  $h$, transformed by $\tau\circ\iota$, 
may be written as 
\begin{align*} 
h  \circ\tau\circ\iotaÂ  =&\langle \om, I\rangle  +\sum_{s\in\L}\ls \xi_s\eta_s+
\frac 3 2(2\pi)^{-d} \sum_{\ell\in\A,\ k\in\Z^d} (4-3\delta_{\ell,k})\frac{I_\ell \xi_k\eta_k}{\lambda_\ell\lambda_k}\\
+&3(2\pi)^{-d} \Big(\sum_{(a,b)\in  ( {\L_f}\times {\L_f})_+} \frac{ \xi_{\ell(a)}\xi_{\ell(b)}\eta_a\eta_b+ \eta_{\ell(a)}\eta_{\ell(b)}\xi_a\xi_b}{\la\lb}\\
+&2 \sum_{(a,b)\in  ( {\L_f}\times {\L_f})_-}  \frac{ \xi_{a}\xi_{\ell(b)}\eta_{\ell(a)}\eta_b }{\la\lb}\Big)
+ q_4^3 \circ\iota+r^0_5\,,
\,
\end{align*}
where $r^0_5=h_{\ge5}\circ \tau\circ\iota+r^0_6\circ\iota$
(recall that $\omega=(\lambda_a, a\in\A)$). 
The first line contains the integrable terms. The second and third lines  contain  the lower-order non integrable terms,
depending on the angles $\theta$;  there are finitely many of them. 
The last line contains the remaining high order  terms, where $q^3_4$ is of total order (at least) 4
and of order 3 in the {normal directions $\zeta$, while $r^0_5$ is of total order at least 5. 
 The latter is the sum of $r^0_6\circ\iota$ which comes from the Birkhoff normal form procedure
 (and is of order 6) and $h_{\ge5}\circ \tau\circ\iota$ which comes from the term of order 5  in the nonlinearity \eqref{g}. Here $I$ is regarded 
as a variable of order 2, while $\theta$  has zero order. The terms $q_4^3 \circ\iota$ and $r^0_5$ 
 should  be regarded as a perturbation. 

To deal with the non integrable terms in the second and third lines,
following the works on the finite-dimensional reducibility (see \cite{E01}),
we introduce a change of variables  
$$\Psi: (\tilde I, \tilde\theta,    \tilde\xi,\tilde\eta)  \mapsto( I, \theta,  \xi,  \eta)\,,$$
 symplectic with respect to 
\eqref{nsympl},  but such that its differential at the origin is 
not close to the identity. It is    defined by the following relations:
\begin{equation*}
\begin{split}
&  I_\ell=\tilde I_\ell-\sum_{\substack{|a|=|\ell| ,\  a\neq \ell}}{\tilde\xi}_a \tilde\eta_a,\quad   \tl=\tilde \tl\quad \ell\in\A\,;\\
&  \xi_a={\tilde\xi}_a e^{{\bf i} \tilde \theta_{\ell(a)}},\quad  \eta_a=\tilde\eta_a e^{-{\bf i} \tilde \theta_{\ell(a)}} \quad a\in {\L_f}\,;
\qquad \xi_a={\tilde\xi}_a , \quad  \eta_a=\tilde\eta_a  \quad a\in\L_\infty.
\end{split}
\end{equation*}

For any 
$(\tilde I, \tilde\theta, \tilde\zeta)\in \Tg^{I,\theta}(\nu,\sigma,\mu,\ga)$ denote by
$y=\{y_l, l\in\A\}$ the vector, whose $l$-th component equals 
$y_l=\sum_{|a|=|l|\,, a\ne l}\tilde\xi_a\tilde\eta_a$. Then
\be\label{Inu}
|I-   \nu\rho|\le |\tilde I-  \nu\rho |+|y|\le c_*^2 \nu\mu^2 +\sum_{a\in\L_f}|\tilde\xi_a\tilde\eta_a|\le 
2c_*^2\nu\mu^2\,.
\ee
This implies that 
\be\label{prop11}
\Psi^{\pm1}(  \Tg^{I,\theta}\big(\nu, \tfrac12, \cc ,\ga\big)  ) \subset \Tg^{I,\theta}\big(\nu,\tfrac12, \tfrac12,\ga\big)
=:  \Tg^{I,\theta}\,.
\ee
The transformation $\Psi$ is identity on each torus $\{ (I, \theta, \zeta_\L): I=\,$const$, \theta\in\T^n,\zeta_\L=0\}$. 
Writing it as $(I, \theta, \zeta_\L)\mapsto (\tilde I,\tilde\theta, \tilde\zeta_\L)$ we see that 
 \be\label{TheRem}
|\tilde I_a-I_a|\leq \| \zeta_\L\|_\ga^2\,,\ a\in\A ,\  \tilde\theta= \theta \text{ and } \| \tilde\zeta_\L\|_\ga=\|\zeta_\L\|_\ga\,,
\ee
and that $(\xi, \eta) = \iota  (\tilde I,\tilde\theta, \tilde\zeta_\L)$ satisfies 
\be\label{newxi}
\xi_l=\sqrt{I_l}\,e^{{\bf i}\theta_l} = \sqrt{\tilde I_l}\,e^{{\bf i} \tilde \theta_l} +O (\nu^{-1/2} )\,O(|\zeta_\L|^2)\,, 
\quad
 l\in\A\,.
\ee

Accordingly, dropping the tildes, we write the restriction to $\Tg^{I,\theta}$ of the 
 transformed Hamiltonian $h^1=
h\circ\tau\circ\iota\circ \Psi$   as
\begin{align*} 
h^1=&
\langle \om, I\rangle  +\sum_{a\in\L_\infty}\la {\xi}_a\eta_a
+6(2\pi)^{-d} \sum_{\ell\in\A,\ k\in\L} \frac{1}{\lambda_\ell\lambda_k}(I_\ell-\sum_{\substack{|a|=|\ell| \\ a\in {\L_f}}}\xi_a\eta_a) \xi_k\eta_k\\
&+\frac 3 2(2\pi)^{-d} \sum_{\ell,k\in\A} \frac{4-3\delta_{\ell,k}}{\lambda_\ell\lambda_k}(I_\ell-\sum_{\substack{|a|=|\ell| \\ a\in {\L_f}}}\xi_a\eta_a) (I_k-\sum_{\substack{|a|=|k| \\ a\in {\L_f}}}\xi_a\eta_a)\\
&+3(2\pi)^{-d}\sum_{(a,b)\in  ( {\L_f}\times {\L_f})_+} \frac{ \sqrt{I_{\ell(a)}I_{\ell(b)}}}{\la\lb}(\eta_a\eta_b+
 \xi_a\xi_b)\\
&+6(2\pi)^{-d}\sum_{(a,b)\in  ( {\L_f}\times {\L_f})_-} \frac{ \sqrt{I_{\ell(a)}I_{\ell(b)}}}{\la\lb}\xi_a\eta_b+
q_4^{3'}  +r^{0'}_5 +\nu^{-1/2}r^{4'}_5
\,.
\end{align*}
Here $q^{3'}_4$ and $r^{0'}_5$ are the function $q^{3}_4$ and $r^{0}_5$, transformed by
$\Psi$, so the former  satisfy the same estimates as the latter, while $r^{4'}_5$ is a function 
of forth order in the normal variables. The latter comes from  re-writing terms like $\xi_{\ell(a)} \xi_{\ell(b)}\eta_a\eta_b$,
using \eqref{newxi} and expressing $\eta_a, \eta_b$ via the tilde-variables. 
Or, after a  simplification:
\begin{align} \begin{split} \label{H-fin}
h^1= &\langle \om, I\rangle  +\sum_{a\in\L_\infty}\la \xi_a\eta_a
+\frac 3 2(2\pi)^{-d} \sum_{\ell,k\in\A} \frac{4-3\delta_{\ell,k}}{\lambda_\ell\lambda_k}I_\ell I_k\\
&+3(2\pi)^{-d} \Big( 2\sum_{\ell\in\A,\ a\in\L_\infty} \frac{1}{\lambda_\ell\lambda_a}I_\ell \xi_a\eta_a-  \sum_{\ell\in\A,\ a\in {\L_f}} \frac{(2-3\delta_{\ell,|a|})}{\lambda_\ell\lambda_a}I_\ell \xi_a\eta_a\Big)ÃÂÃÂÃÂÃÂ±ÃÂÃÂÃÂÃÂ \\
&+3(2\pi)^{-d} \sum_{(a,b)\in  ( {\L_f}\times {\L_f})_+} \frac{ \sqrt{I_{\ell(a)}I_{\ell(b)}}}{\la\lb}(\eta_a\eta_b+ \xi_a\xi_b)\\ 
&+6(2\pi)^{-d}\sum_{(a,b)\in  ( {\L_f}\times {\L_f})_-} \frac{ \sqrt{I_{\ell(a)}I_{\ell(b)}}}{\la\lb}\xi_a\eta_b
+ q_4^{3'}  +r^{0'}_5 +\nu^{-1/2}r^{4'}_5
\,.
\end{split}\end{align}

We see that the transformation $\Psi$ removed from $h\circ\tau\circ\iota$ the non-integrable lower-order terms on the price 
of introducing ``half-integrable" terms which do not depend on the angles $\theta$, but depend on the actions $I$ and 
quadratically depend   on the  finitely many variables $\xi_a,\eta_a$ with $a\in {\L_f}$.

The Hamiltonian $h\circ\tau\circ \Psi$ should be regarded as a function of the variables 
$(I,\theta,\zeta_\L )$. Abusing notation, below we often drop the lower-index $\L$ and  write 
$\zeta_\L= (\xi_\L,\eta_\L)$ as $\zeta= ( \xi,\eta)$.

\subsection{Rescaling the variables and defining  the transformation
 $\Phi$}
 Our aim is to study the transformed   Hamiltonian $h^1$ on the domains
  $ \Tg^{I,\theta} =\Tg^{I,\theta}(\nu, \frac12, \cc ,\ga) $, $0\le\ga\le\ga_g$
 (see \eqref{prop11}). 
 To do this we re-parametrise points of $\Tg^{I,\theta}$ by mean of the scaling 
 \be\label{scaling}
 \chi_\yy :  (\tilde r,\tilde\theta,\tilde\xi, \tilde\eta) \mapsto (I,\theta,\xi,\eta)\,,
 \ee
 where
 $\ 
I=\nu\rho+\nu \tilde r,\quad \theta=\tilde\theta,\quad
\xi=\sqrt\nu\, \tilde \xi,\quad \eta=\sqrt\nu\, \tilde\eta\,.
$
Clearly,
$$
\chi_\yy: \O_{\ga}(\tfrac 12, \mu_*^2, \mu_*  )   \to  \Tg^{I,\theta}\,
$$
for $0\le\ga\le\ga_g$, where $\mu_*$ is defined in \eqref{mu*}, 
 and in the new variables the symplectic structure reads 
$$
-\nu\sum_{\ell\in\A}\tilde dr_\ell\wedge d\tilde\tl \ -{\bf i}\ \nu\sum_{a\in\L}d\tilde\xi_a\wedge d\tilde\eta_a.
$$
Denoting
$$
\Phi =\Phi_\yy=\tau\circ\iota
\circ\Psi\circ\chi_\yy,
$$ 
we see that this transformation is analytic in $\yy\in\D$. In view of \eqref{TheRem},
$\zeta=(\xi,\eta) = \Phi(\tilde r, \tilde\theta, \tilde\zeta)$ satisfies
$$
\| \zeta- \zeta' \|_\ga \le C(\sqrt\nu\, (|\tilde r| +\|\zeta\|_\ga))\,,\qquad 
\zeta'=\big( \sqrt{\nu\rho} \,e^{{\bf i}\tilde\theta},  \sqrt{\nu\rho} \,e^{{\bf i}\tilde\theta}, 0\big)\,.
$$
This relation and  \eqref{esti1} imply \eqref{boundPhi}, so the assertion (i) of the proposition holds.

Dropping the tildes and forgetting the irrelevant   constant $\nu\langle \om, \rho\rangle$, we have 
\begin{align} \begin{split} \label{H-rescall}
h \circ \Phi(r,\theta,\zeta) &=\nu\Big[ \langle \om, r\rangle + \sum_{a\in\L_\infty}\la  \xi_a\eta_a
+(2\pi)^{-d}\nu\,
\Big(\,
\frac 3 2
  \sum_{\ell,k\in\A} \frac{4-3\delta_{\ell,k}}{\lambda_\ell\lambda_k}\rho_\ell r_k\\
+6   &\sum_{\ell\in\A,\ a\in\L_\infty} \frac{1}{\lambda_\ell\lambda_a}\rho_\ell \xi_a\eta_a-3  \sum_{\ell\in\A,\ a\in {\L_f}} \frac{(2-3\delta_{\ell,|a|})}{\lambda_\ell\lambda_a}\rho_\ell \xi_a\eta_a\\
+3  &\sum_{(a,b)\in  ( {\L_f}\times {\L_f})_+} \frac{  {\sqrt{\rho_{\ell(a)}}\sqrt{\rho_{\ell(b)}}}}{\la\lb}(\eta_a\eta_b+ \xi_a\xi_b)\\
+6 & \sum_{(a,b)\in  ( {\L_f}\times {\L_f})_-} \frac{  {\sqrt{\rho_{\ell(a)}}\sqrt{\rho_{\ell(b)}}}}{\la\lb}\xi_a\eta_b\Big) \Big]  \\
&+\Big(\big(q_4^{3'}  +r^{0'}_5+\nu^{-1/2}r^{4'}_5\big)(I,\theta,\sqrt\nu\zeta)\Big)\mid_{I=\nu\rho+\nu r}\,,
\end{split}\end{align}
where
$\zeta=\zeta_\L=(\zeta_a)_{a\in\L},\ \zeta_a=(\xi_a,\eta_a)$, and  $\zeta_f=(\zeta_a)_{a\in {\L_f}}$.
So, 
\be\label{f} {\nu}^{-1} h \circ \Phi = \tilde h_2+ f\,,
\ee
 where $f$ is the perturbation, given by the last line in \eqref{H-rescall},
 \be\label{ff}
 f=\nu^{-1}\Big(\big(q_4^{3'}  +r^{0'}_5 +\nu^{-1/2}r^{4'}_5\big)(I,\theta,\nu^{1/2}\zeta)\Big)\mid_{I=\nu\rho+\nu r}\,,
 \ee
and 
 $\tilde h_2= \tilde h_2( I, \xi,\eta;\yy,\nu)$ is the quadratic part of the Hamiltonian, which is 
 independent from  the angles $\theta$:
$$
\tilde h_2=\langle \Omega, r\rangle  + \sum_{a\in\L_\infty}\Lambda_a \xi_a\eta_a+\nu \langle K(\yy)\zeta_f,\zeta_f\rangle\,.
$$
Here $\Omega=(\Omega_k)_{k\in\A}$ with 
\begin{align}\label{Om}\Omega_k=\Omega_k(\yy,\nu)&=\om_k+ \nu\sum_{\ell\in\A} M^\ell_k \rho_l,\quad 
M^\ell_k=\frac{3(4-3\delta_{\ell,k})}{(2\pi)^d\lk\lambda_\ell} \,,
\\
\label{Lam}\Lambda_a=\Lambda_a(\yy,\nu)&= \la+6\nu(2\pi)^{-d}\sum_{\ell\in\A}\frac{\rho_\ell}{\lambda_\ell \la}\,,
\end{align}
and  $K(\yy) $ is a symmetric complex matrix,  acting in the space 
\be\label{Yf}
Y_{ \L_f}=\{\zeta_f\}\simeq \C^{2|\L_f|}\,,
\ee
such that the corresponding quadratic form is 
\begin{align} \begin{split} \label{K}
\langle K(\yy)\zeta_f,\zeta_f\rangle=\,&3(2\pi)^{-d} 
\Big(\sum_{\ell\in\A,\ a\in {\L_f}} \frac{(
3\delta_{\ell,|a|}-2
)}{\lambda_\ell\lambda_a}
\rho_\ell \xi_a\eta_a\\
+ \sum_{(a,b)\in  ( {\L_f}\times {\L_f})_+}& \frac{ {\sqrt{\rho_{\ell(a)}}\sqrt{\rho_{\ell(b)}}}}{\la\lb}(\eta_a\eta_b
+ \xi_a\xi_b)+ \\
2 \sum_{(a,b)\in  ( {\L_f}\times {\L_f})_-} &\frac{  {\sqrt{\rho_{\ell(a)}}\sqrt{\rho_{\ell(b)}}}}{\la\lb}\xi_a\eta_b \Big).
\end{split}\end{align}
Note that the matrix $M$ in \eqref{Om} is invertible since 
$$
\det M={3^n}{(2\pi)^{-dn}}\big(\Pi_{k\in\A}\lk\big)^{-2}\det \left(4-3\delta_{\ell,k}\right)_{\ell,k\in\A}\ne0\,.
$$
The explicit formulas \eqref{Om}-\eqref{K} imply the assertions (ii) and (iii). 

The transformations $\Psi\circ\chi_\rho$ and $\tau\circ\iota$ both are real if we use in the spaces $Y_\ga$ 
and $Y_{\ga\,\L}$ the real coordinates $(p_a, q_a)$, see \eqref{change}. This implies the stated ``conjugate-reality"
of $\Phi_\rho$.

  It remains to verify (iv). By Proposition \ref{Thm-BNF}  the function $f$ belongs to the class 
  $ {\Tc}_{\ga_g, 2} ({\tfrac 12},\mu_*^2,  \mu_*) $. 
 Since the reminding term $f$ has the form \eqref{ff} then in view of \eqref{Z4}-\eqref{R66} 
for $(r,\theta,\zeta)\in\O_{\ga}({\tfrac 12}, \mu_*^2, \mu_* )$ it satisfies the estimates
\begin{equation*}
\begin{split}
 |f|\le C  \nu  \,,\quad
\| \nabla_\zeta f\|_{\ga} \le C  \nu  \,,\quad
\|\nabla^2_\zeta f\|_{\ga, 2}^b \le C  \nu \,.
\end{split}
\end{equation*}
Now consider the $f^T$-component of $f$. Only the second term in \eqref{ff} contributes to it
and we have that 
$$
|f^T| + \| \nabla_\zeta f^T\|_\ga +\|\nabla_\zeta^2 f^T\|^b_{\ga,2}\le C\nu^{3/2}\,.
$$
 This implies the assertion (iv) of the proposition in view of \eqref{O_relation} and  \eqref{twonorms1}. 
\endproof
\medskip

We will provide the domains 
$ \O_{\ga} \big({\frac 12},  \mu_*^2 \big)\subset \O_{\ga} \big({\frac 12}, \mu_*^2, \mu_* \big) = \{(r,\theta,\xi,\eta)\}$ with the 
symplectic structure $- \sum_{\ell\in\A}dr_\ell\wedge d\tl \ -{\bf i} \sum_{a\in\L}d\xi_a\wedge d\eta_a$. Then
the transformed Hamiltonian system, constructed in Proposition~\ref{thm-HNF} has the 
Hamiltonian, given by the r.h.s. of \eqref{HNF}.


\section{The Birkhoff normal form. II} \label{s_4}

 In this section we  shall refine the normal form \eqref{HNF} further. We shall construct a $\rho$-dependent transformation which diagonalises the Hamiltonian  operator (modulo the term $f$) and  shall examine its  
 smoothness in $\yy$. So here we are concerned with 
 analysis of the finite-dimensional linear Hamiltonian operator ${\mathbf i}JK(\r)$ defined by the Hamiltonian 
  \eqref{K}. To do this  we will have to restrict $\r$ to some (large) subset  $Q\subset \D= [c_*,1]^\A$. 
  In this section and below $c_*$ is regarded as a parameter of the construction, belonging to an
   interval $(0,\tfrac12 c_0]$, where $c_0>0$ depends on $m$ and on the constants in \eqref{depen}.
This $c_0$   is introduced in  Lemma~\ref{laK} and is fixed after it.
    The parameter $c_*$ will be fixed till Section~\ref{s_10.2} (the last in our work),
    where we will vary it.

  In this section we  shall also shift from the conjugate-reality to the ordinary reality, 
  thus restoring the original real character of the system.

\begin{theorem}\label{NFT} There exists a zero-measure Borel set $\Ca \subset[1,2]$ such that for any 
admissible set  $\A$ and any  $m\notin \Cc$ 
  there exist 
 real numbers $\ga_g>\ga_*=(0,m_*+2)$ and
  $\beta_0,  \nu_0,c_0 >0$, 
   where $c_0$, $\beta_0$, $\nu_0$ 
depend  on $ m$,  such that,
for any $0<c_*\le c_0$, $0<\nu\le\nu_0$ and $0<\bb \le\beta_0$
 there exists an open  set 
$Q=Q(c_*,\bb,\nu)  \subset [c_*,1]^\A$, increasing as  $\nu\to0$ and satisfying
\be\label{mesmes}
\Leb ([c_*,1]^\A \setminus  Q
)
\le C\nu^{\bb}\,,
\ee
with the following property.

For any $\r\in Q$ there exists  a real holomorphic  diffeomorphism (onto its image)
\be\label{mu*bis}
\Phi_\yy:  \O_{\ga_*} \big({\tfrac 12}, {\mu_*^2} \big)\to \Tg(\nu,  1,1,\ga_*)\,,\qquad 
{\mu_*}={\tfrac{c_*}{2\sqrt2}},
\ee
which defines analytic diffeomorphisms 
$
\Phi_\yy:  \O_{\ga} \big({\frac 12}, {\mu_*^2} \big)\to \Tg(\nu,  1,1,\ga)$, $ \ga_*\le\ga\le\ga_g\,,
$
such that
\be\label{new_symplec}
\Phi_\yy^*\big(d\xi\wedge d\eta\big)=
\nu dr_\A\wedge d\theta_\A \  +\ \nu d p_\L\wedge d q_\L,
\ee
and 
\be\label{HNFbis}
\begin{split}
\frac1{\nu} \,h\circ&\Phi_\yy(r,\theta,p_\L,q_\L) =\langle \Omega(\yy), r\rangle  + \frac12 \sum_{a\in\L_\infty }\Lambda_a (\yy)
( p_{a}^2 + q_{a}^2)+\\
&+ \frac12\sum_{b\in\L_f\setminus \F} \Lambda_b(\yy)      ( p_{b}^2 + q_{b}^2)
 +  \nu \langle K(\yy) _\F,  \zeta_\F\rangle
+ f(r,\theta, \zeta_\L; \yy),
\end{split}
\ee
where $\F=\F_\r\subset \L_f$ (only depending on the connected component of $Q$ containing $\yy$),
and $h$ is the Hamiltonian \eqref{H2}$+$\eqref{H1}. $\Phi_\yy$ satisfies:

(i) $\Phi_\yy$  depends smoothly on $\yy$ and 
\be\label{boundPhibis}
\begin{split}
\mid\mid \Phi_\r(r,\theta,\xi_\L,\eta_\L)-(\sqrt{\nu\r}\cos(\theta),&\sqrt{\nu\r}\sin(\theta),\sqrt{\nu\r}\xi_\L,\sqrt{\nu\r}\eta_\L    ) \mid\mid_\ga\le \\
&\le C(\sqrt\nu\ab{r}+\sqrt\nu\aa{(\xi_\L,\eta_\L)}_\ga+\nu^{\frac32} )\nu^{-\hat c\bb}
\end{split}
\ee
for all $(r,\theta,\xi_\L,\eta_\L)\in \O_{\ga} (\frac 12, \mu_*^2)\cap\{\theta\ \textrm{real}\}$ and all $\ga_*\le \ga\le\ga_g$.

(ii)  the vector  $\Om$ and the scalars 
$\La, a\in\L_\infty $, are affine functions of $\rho$, explicitly 
defined  \eqref{Om}, \eqref{Lam};
 
 (iii)  the functions $\Lambda_b(\yy)$, $b\in\L_f\setminus \F$, are smooth in $Q $,
\be\label{Lambdab}
\|\Lambda_b\|_{C^j(Q )}\le C_j\nu^{- \bb\beta(j)}\nu ,\qquad \forall j\ge0,
\ee
where $0<\beta(1)\le\beta(2)\le\dots$, 
and  satisfy \eqref{K4}. In some open subset of $[c_*,1]^\A$ they
also satisfy \eqref{aaa}. 

(iv) $ K$ is a  symmetric  real matrix that depends smoothly on $\yy\in Q $, and
 \be\label{normK}
 \sup_{\yy\in Q }
  \|\p^j_\yy   K(\yy)\|  \le C_j\nu^{-\bb\beta(j) },\qquad  \forall j\ge0\,.
\ee
 The set $\F=\F_\r$ is void for some $\r$ 
 (in which case the operator $K(\yy)$ is trivial).
 
 (v) the eigenvalues $\{ \pm{\bf i}\Lambda_{a},  a\in \F\}$ of $J K$ are smooth in $Q $,
 satisfy \eqref{Lambdab} and 
\be\label{hyperb}
 \inf_{\yy\in Q }  | \Im \Lambda_a(\yy) | \ge C^{-1} \nu^{ \bar c \bb},\qquad \forall  a\in\F\,.
\ee

(vi) There exists a complex symplectic operator $U(\yy)$ such that
$$
U(\yy)^{-1} J K(\yy) U(\yy) ={\bf i}\diag \{\pm \Lambda_{a}(\yy),  a\in \F\}\,.
$$
The operator $U(\yy)$ smoothly depends on $\yy$ and satisfies 
\be\label{Ubound}
\sup_{\yy\in  Q }
 \big( \|\p^j_\yy U (\yy)\| + \|\p^j_\yy U  (\yy)^{-1} \|) \le C_j\nu^{-\bb\beta(j) },
 \qquad \forall j\ge0\,.
 \ee

vii)    $ f$ belongs  to $\cT_{\ga,\vark=2,Q}({\tfrac12}, \mu_*^2)$   and satisfies 
\be\label{estbis}
| f|_{\begin{subarray}{c}1/2,\mu_*^2 \  \\ \ga_g, 2, Q \end{subarray}}
 \le C\nu^{- \hat c \bb}\nu  \,, \qquad
 | f^T|_{\begin{subarray}{c}1/2,\mu_*^2 \  \\ \ga_g, 2, Q \end{subarray}}
  \le C\nu^{- \hat c \bb}\nu^{3/2} \,.
\ee

The set $Q$ and the matrix $K(\rho)$ do not depend on the function $G$
(having the form \eqref{g}). 
The constants $C, C_j$ are as in \eqref{depen}, while 
 the exponents $\bar c, \hat c$ and $\beta(j)$ depend on $m$ (we recall 
\eqref{depen}). 
\end{theorem}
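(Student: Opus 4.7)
The plan is to refine the normal form of Proposition~\ref{thm-HNF} by diagonalising, symplectically and $\rho$-smoothly, the finite-dimensional Hamiltonian matrix $JK(\rho)$ acting on $Y_{\L_f}\simeq\C^{2|\L_f|}$, splitting $\L_f$ into an elliptic part $\L_f\setminus\F$ (purely imaginary eigenvalues $\pm{\bf i}\Lambda_b$, $\Lambda_b\in\R$) and a hyperbolic part $\F$ (eigenvalues $\pm{\bf i}\Lambda_a$ with $\Im\Lambda_a\ne0$), and to excise the small set of parameters $\rho$ where this splitting fails to be uniform. The transformation of the full Hamiltonian is then obtained by composing $\Phi_\rho$ (from Proposition~\ref{thm-HNF}) with an extension to $\O_\ga$ of the resulting symplectic diagonaliser $U(\rho)$ that acts only on the $Y_{\L_f}$ coordinates, and by finally switching from the complex symplectic coordinates $(\xi,\eta)$ to real ones $(p,q)$ on $\L$, which accounts for the new symplectic form \eqref{new_symplec}.

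The core step is the spectral analysis of $JK(\rho)$. From \eqref{K}, $K(\rho)$ is a symmetric real matrix that is a quadratic polynomial in $\sqrt{\rho}$, so $JK(\rho)$ is a Hamiltonian matrix whose characteristic polynomial is an even polynomial in the spectral variable whose coefficients are polynomials in $\sqrt{\rho}$ of bounded degree. By Williamson's normal form, generically $JK(\rho)$ is symplectically conjugate to a direct sum of two-dimensional elliptic blocks with eigenvalues $\pm{\bf i}\Lambda_b$ ($\Lambda_b\in\R$) and of hyperbolic blocks (real-pair $\pm\alpha$ or complex-quadruple $\pm\alpha\pm{\bf i}\beta$) collected into $\F$. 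One defines $Q$ by demanding quantitative separation between the distinct eigenvalues and quantitative lower bounds for $|\Lambda_b|$ on the elliptic part and for $|\Im\Lambda_a|$ on the hyperbolic part, with thresholds of the form $\nu^{c'\bb}$. Because the relevant resultants and discriminants of the characteristic polynomial of $JK(\rho)$ are polynomials of bounded degree in $\sqrt\rho$ (and not identically zero off the trivial coincidences, by Lemma~\ref{lem:D3-k=0} and the non-resonance hypotheses on $m\notin\Cc$ used already to set up $K$), a standard Łojasiewicz / sublevel-set estimate for polynomials gives $\meas\{[c_*,1]^\A\setminus Q\}\le C\nu^{\bb}$, proving \eqref{mesmes}. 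On each connected component of $Q$ the combinatorial type of the spectrum is constant, so $\F=\F_\rho$ is locally constant, which is exactly the statement recorded in the theorem.

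On a fixed connected component, the spectral projectors of $JK(\rho)$ corresponding to each isolated eigenvalue (or eigenvalue pair) are smooth in $\rho$, with Cauchy-type bounds on derivatives controlled by negative powers of the spectral gaps. Since these gaps are by construction bounded below by $\nu^{c'\bb}$ on $Q$, one obtains $\|\partial^j_\rho U^{\pm1}(\rho)\|\le C_j\nu^{-\bb\beta(j)}$, where $\beta(j)$ grows with the order of the derivative to accommodate the increasing polynomial loss from the resolvent estimates; this is exactly \eqref{Ubound}, and it implies \eqref{Lambdab} and \eqref{normK} at once. The passage to real coordinates on the elliptic part produces the diagonal form $\tfrac12\sum_{b\in\L_f\setminus\F}\Lambda_b(p_b^2+q_b^2)$, while on $\F$ the block $\nu\langle K\zeta_\F,\zeta_\F\rangle$ is retained in its (diagonalised) form but preserving the symplectic conjugate-reality; this gives \eqref{HNFbis}.

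Finally, the composition with $U(\rho)$ (extended by the identity on the $\A$ and $\L_\infty$ directions) is still a real symplectic transformation; combined with the earlier $\Phi_\rho$ from Proposition~\ref{thm-HNF}, it delivers a $\rho$-smooth map whose distance estimate \eqref{boundPhibis} is the one of \eqref{boundPhi} amplified by the operator norm of $U(\rho)$, producing the $\nu^{-\hat c\bb}$ loss. The perturbation $f$ of Proposition~\ref{thm-HNF} is pulled back through $U(\rho)$; since $U(\rho)$ acts linearly on finitely many coordinates with bounds $\|\partial^j_\rho U\|\le C_j\nu^{-\bb\beta(j)}$, the function-space framework of Section~\ref{s_2.3.1} and Proposition~\ref{Summarize} yield a new perturbation in $\cT_{\ga_g,2,Q}(\tfrac12,\mu_*^2)$ with norm loss $\nu^{-\hat c\bb}$, giving \eqref{estbis} for $f$ and, by the same argument applied to the jet $f^T$, the corresponding estimate in the theorem. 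The main obstacle is the simultaneous control of (a) the measure of the excluded set, (b) the smoothness of the spectral decomposition, and (c) the quantitative lower bounds on the hyperbolic imaginary parts \eqref{hyperb}: all three must be expressed in terms of a single exponent $\bb$ to give consistent polynomial losses, and this is why the exponents $\beta(j)$ and the set $Q=Q(c_*,\bb,\nu)$ must be defined carefully via polynomial resultant estimates for the characteristic polynomial of $JK(\rho)$.
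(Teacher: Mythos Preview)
Your overall strategy is right and matches the paper's: start from Proposition~\ref{thm-HNF}, analyse the finite-dimensional Hamiltonian matrix $JK(\rho)$, excise a small set of parameters where its spectrum degenerates, symplectically block-diagonalise on the remaining set, and compose. But there is a genuine gap at the step you describe as ``not identically zero off the trivial coincidences, by Lemma~\ref{lem:D3-k=0} and the non-resonance hypotheses on $m\notin\Cc$''. That lemma concerns the \emph{linear} frequencies $\lambda_a=\sqrt{|a|^4+m}$ and says nothing about the eigenvalues $\{\pm\Lambda^j_k(\rho)\}$ of $JK(\rho)$, which are algebraic functions of $\sqrt\rho$ defined only implicitly through the characteristic polynomial. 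The non-vanishing of the relevant discriminant/resultant polynomials is exactly the hard part of the proof, and it does not follow from the small-divisor estimates on the $\lambda_a$.

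What the paper actually does here is: (a) decompose $JK(\rho)$ into commuting blocks $\H^j(\rho)$ according to the equivalence relation $\sim$ on $\L_f$ generated by $(\L_f\times\L_f)_\pm$ (Section~\ref{s_3.6}); (b) study the spectrum at the corner point $\rho_*=(0,\dots,1,\dots,0)$, where $K^{n/d}(\rho_*)=0$ and the eigenvalues reduce to the explicit $\mu(b,\rho_*)$ of \eqref{.1}, giving Lemma~\ref{laK} (simple real spectrum within each block on a neighbourhood $\D_0^{j_*}$); and (c) prove the non-degeneracy Lemma~\ref{l_nond}, showing $\Lambda^{r_1}_j\not\equiv\pm\Lambda^{r_2}_k$ for $(r_1,j)\ne(r_2,k)$ under the admissibility hypotheses, via a second-order perturbation calculation along curves through $\rho_*$. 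Only then can one define the poly-discriminant $D(\rho)$ (constructed differently for admissible versus strongly admissible $\A$), know it is a non-trivial polynomial in $\sqrt\rho$, and set $Q=\{|\mathcal R(\rho)|>\delta(\nu)\}$ with $\mathcal R=P\cdot D\cdot M\cdot R_1\cdot R_2$. The block decomposition is also what makes the diagonaliser $U(\rho)$ and the hyperbolic lower bound \eqref{hyperb} work: the latter follows because for $a\in\F$ the conjugate $\bar\Lambda_a$ is another eigenvalue of the same real block, so $2|\Im\Lambda_a|=|\Lambda_a-\bar\Lambda_a|\ge C^{-1}\delta$ by \eqref{K04}. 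Your proposal bypasses all of (a)--(c), and without them the Łojasiewicz argument has no input.
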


\begin{remark*}  1) By \eqref{new_symplec} the transformation $\Phi_\yy$ transforms the beam
equation, written in the form \eqref{beam2}, to a system, which has the Hamiltonian \eqref{HNFbis} 
with respect to the symplectic structure $dr_\A\wedge d\theta_\A  +\nu d p_\L\wedge d q_\L$. 

2) We also have
$\ 
\Phi_\yy\big (\O_{\ga} ({\frac 12}, {\mu_*^2}) \big)\subset\Tg(\nu,  1,1,\ga)$ for 
$ \ga_*\le \ga\le\ga_g.
$
\end{remark*}

The remaining part of this section is devoted to the proof of this result.
 
\subsection{Matrix $K(\yy)$} \label{s_3.6}
Recalling \eqref{labA} and \eqref{DDD}, we write the 
 symmetric  matrix $K(\yy)$, defined by  relation \eqref{K}, as  a block-matrix, polynomial in  
$
\sqrt\rho = (\sqrt\rho_1, \dots, \sqrt\rho_n)  \,.
$
We write it as  $ K(\yy) =  K^d(\yy) + \tkn(\yy)$. 
Here $\tkd$ is the block-diagonal matrix 
\be\label{.1}
\begin{split}
\tkd (\yy) &=\text{diag}\,\Big( \left(\begin{array}{ll}
0& \mu(a,\rho)\\ \mu(a,\rho)& 0 \end{array}\right), \ a\in \L_f\Big), \\
\mu(a,\rho) &= C_*  \big(\frac32\,\rho_{\ell(a)}\la^{-2}- \la^{-1} \sum_{l\in\A} \rho_l\lambda_l^{-1}\big)\,,\quad C_*=3(2\pi)^{-d}. 
\end{split}
\ee
Note that\footnote{Here and in similar situations below we do not mention the obvious dependence 
on the parameter $m\in[1,2]$.}
\be\label{munew}
\mu(a,\rho)\quad \text{is a function of $|a|$ and $\rho$\,.
}
\ee

The non-diagonal 
matrix $\tkn$ has zero diagonal blocks, while for $a\ne b$ its block $\tkn (\yy)_a^b $ equals 
$$
 C_*\frac{\sqrt{ {\yy_{l(a)} \yy_{l(b)}}}}{\la \lb}  \, \left(
 \left(\begin{array}{ll}
1& 0\\
0& 1
\end{array}\right) \chi^+(a,b)+
 \left(\begin{array}{ll}
0& 1\\
1& 0
\end{array}\right)  \chi^-(a,b)
\right)\,,
$$
where
\ben
\chi^+(a,b)=
 \left\{\begin{array}{ll}
1, \;\; (a,b)\in (\L_f\times\L_f)_+,
\\
0,  \;\; \text {otherwise},
\end{array}\right.
\een
and $\chi^-$ is defined similar in terms of the set $ (\L_f\times\L_f)_-$. In view of \eqref{obv},
$$
\chi^+(a,b)\cdot \chi^-(a,b)\equiv0. 
$$

Accordingly, the Hamiltonian matrix 
 $\H(\yy) =={\bf i}JK(\yy)$ equals
$ \big( \H^d(\yy) + \H^{n/d}(\yy)\big)$,  where
\begin{align}\begin{split}\label{diag}
\H^d(\yy) &={\bf i}\,\text{diag}\, \left( 
 \left(\begin{array}{ll}
\mu(a,\rho)& 0\\
0& -\mu(a,\rho)
\end{array}\right) ,\; a\in\L_f
\right),\\
 \H^{n/d}(\yy) _a^b &={\bf i}C_* \frac{{\sqrt{\rho_{\ell(a)}} \sqrt{\rho_{\ell(b)}}}}{\la \lb}\Big[
 J
 \chi^+{(a,b)}
+\left(\begin{array}{ll}
1& 0\\
0& -1
\end{array}
\right) \chi^-{(a,b)}
 \Big] \,.
\end{split}\end{align}
Note that   all elements of  the matrix  $\H(\yy)$ are pure imaginary, and 
\be\label{Lf+=0}
\text{ if 
 $( {\L_f}\times {\L_f})_+=\emptyset$, then  $-{\bf i}\H(\yy)$ is real symmetric},
 \ee
 in which case   all 
eigenvalues of  $\H(\yy)$  are pure imaginary. In Appendix~B we show that if $d\ge2$,
then, in general, the set 
$( {\L_f}\times {\L_f})_+ $ is not empty and  the matrix $\H(\yy)$ may have hyperbolic eigenvalues. 
\smallskip

\begin{example}\label{Ex41}
In view of Example \ref{Ex39}, if $d=1$ then the operator $\H^{n/d}$ vanishes. We see immediately
that in this case $\H^{d}$ is a diagonal operator with simple spectrum. 
\end{example}

Let us introduce in $\L_f$ the relation $\sim$, where 
\be\label{class}
a\sim b\;\; \text{if and only if }\;\; a=b\;\;\text{or}\;\; (a,b)\in (\L_f\times \L_f)_+\cup  (\L_f\times \L_f)_-\,.
\ee
It is easy to see that this is an equivalence relation. By Lemma~\ref{L++-}
\be\label{.2}
a\sim b,\ a\ne b \;\Rightarrow \; |a| \ne |b|\,\,.
\ee
The equivalence $\sim\,$, as well as the sets $ (\L_f\times \L_f)_\pm$, depends only on the lattice $\Z^d$
and the set $\A$, not on the eigenvalues $\la$ and the vector $\rho$. It
 is trivial if $d=1$ or  $|\A|=1$ (see Example~\ref{Ex39})) and, in general,  is non-trivial otherwise. If  $d\ge2$ and $|\A|\ge2$
 it is rather complicated.

The equivalence relation divides   $\L_f$ into equivalence classes,
$
\L_f = \L_f^1\cup\dots \cup \L_f^M\,.
$
The set $\L_f$ is a union of the punched spheres $\Sigma_a=\{b\in\Z^d\mid |b|=|a|, b\ne a\}$, $a\in\A$, 
and by \eqref{.2}  each equivalence class $\L_f^j$ intersects every punched sphere $\Sigma_a$ 
 at at most one point.

Let us order the sets $\L_f^j$ in such a way that for a suitable $0\le M_0\le M$ we have

-- $\L_f^j = \{b_j\}$ (for a suitable point $b_j\in\Z^d)$ if $j\le M_0$;

-- $|\L_j^f|=n_j\ge2$ if $j>M_0$.\\

Accordingly the complex  space $Y_{\L_f}$ (see \eqref{YY}) decomposes as
\be\label{deco}
Y_{\L_f}= Y^{f1}\oplus\dots \oplus Y^{fM},\quad Y^{fj}= \,\text{span}\, \{\zeta_s, 
s\in\L_f^j\}\,.
\ee
Since each $\zeta_s, s\in\L_f$, is a 2-vector, then 
$$
\dim Y^{f j}=2|\L_j^j|:= 2n_j\,,\qquad \dim Y_{\L_f}=2|\L_f| = 2\sum_{j=1}^M n_j
:= 2\mathbf N\,.
$$
So dim$\,Y^{f j}=2$ for $j\le M_0$ and  dim$\,Y^{f j}\ge4$ for $j> M_0$.
In view of \eqref{.2},
\be\label{.0}
|\L^j_f  | = n_j \le |\A|\qquad \forall\, j\,.
\ee

We readily see from the formula for the matrix $\H(\yy)={\bf i}J K(\yy)$ that the spaces $ Y^{fj}$ are 
invariant for the operator  $\H(\yy)$. So
\be\label{decomp}
\H(\yy)= 
\H^1(\yy)\oplus\dots \oplus\H^M(\yy)
\,,\qquad 
\H^j = \H^{j\,d} + \H^{j\,n/d},
\ee
where $\H^j$ operates in the space $Y^{fj}$, so this is a block of the matrix $\H(\yy)$.
The operators  $ \H^{j\,d}$ and $ \H^{j\,n/d}$ are given by the formulas \eqref{diag}
with  $a,b\in\L_f^j$. The Hamiltonian operator $\H^j(\yy)$ polynomially depends on $\sqrt\yy$, so
its eigenvalues form an algebraic function of $\sqrt\yy$. Since the spectrum of $\H^j(\yy)$ is an even set,
then we can write branches of  this algebraic function as 
$\{ \pm{\bf i}\Lambda_1^j(\yy),  \dots,  \pm{\bf i}\Lambda_{n_j}^j(\yy)\}$ (the factor ${\bf i}$ is convenient for further purposes). 
The eigenvalues of $\H(\rho)$ are given by another algebraic function and we write  its branches  as 
$\{\pm{\bf i}\Am_m(\yy), 1\le m\le \mathbf N=|\L_f|\}$. Accordingly, 
\be\label{spectrum}
\{\pm\Am_1(\yy),\dots,\pm \Am_{\mathbf N}(\yy)
 \} = \cup_{j\le M} \{\pm\Am_k^j(\yy),  k\le n_j\}\,,
 \ee
 and $\Am_j=\Am^j_1$ for $j\le M_0$. 
 
 The functions $\Am_k$ and $\Am^j_k$ are defined up to multiplication by $\pm1$.\footnote{More precisely, if $\Am_k$
 is not real, then well defined is the quadruple $\{\pm\Am_k, \pm\bar\Am_k\}$; see below Section~\ref{s_bd}.
 }
  But if $j\le M_0$, then 
  $\L^j_f = \{b_j\}$  and  $\H^j = \H^{jd}$,  so the spectrum of this operator is $\{\pm{\bf i}\mu(b_j,\rho)\}$, 
 where $\mu(b_j,\rho)$ is a well defined analytic function of $\rho$, given by the explicit formula \eqref{.1}. 
  In this case we specify the choice of
 $\Am^j_1$:
 \be\label{normaliz}
 \text{ if  $\ \L^j_f=\{b_j\}$,  we choose $\Am^j_1(\rho) = \mu(b_j,\rho)$. }
 \ee
 So for $j\le M_0$, 
 $ \Am_j(\yy) = \mu(b_j,\rho)$  is a polynomial of $\sqrt\yy$, which depends only on $|b_j|$ and $\yy$.

 Since the norm of the operator $K(\yy)$ satisfies \eqref{esti1}, then 
 \be
\label{N1} 
 | \Lambda^j_r(\yy)|\le C_2 \quad \forall\,\yy, \ \forall\,r\,,\; \forall\,j\,.
\ee

 \begin{example}\label{n=1}
 In view of \eqref{.0}, 
 if  
 $\A=\{a_*\}$,  then all sets $|\L^j_f|$ are one-point. So $M_0=M=\mathbf N$ and 
 $$
 \{\pm\Am_1(\yy),\dots,\pm \Am_{\mathbf N}(\yy)\} = 
 \{ \pm\mu(a,\rho)\mid a\in\Z^d, |a| = |a_*| , a\ne a_*\}.
 $$
 In  this case the spectrum of the Hamiltonian operator $\H(\rho)$ is pure imaginary and multiple. It analytically 
 depends on $\rho$. 
 \end{example}

   Let $1\le j_*\le n$ and 
 $\D^{j_*}_0$ be the set 
\be\label{DD}
  \D^{j_*}_0 = \{\rho=(\rho_1,\dots, \rho_n)\mid c_* \le \rho_l\le c_0  \;\;\text{if}\;\;
  l\ne j_* \;\;\text{and}\;\;  1-c_0  \le \rho_{j_*}\le 1\}\,,
  \ee
where $0<c_*\le\tfrac12 
c_0<1/4$. Its measure satisfies
$$
\meas \D_0^{j_*} \ge \tfrac12 c_0^n\,.
$$
This is a subset of $\D=[c_*,1]^n$ which  lies in the (Const$\,c_0$)-vicinity of the point 
 $\yy_*=(0,\dots,1,\dots,0)$ in $[0,1]^n$, where 1 stands on the $j_*$-th place. Since 
 $\tkn(\yy_*)=0$, then $K(\yy_*) = K^d(\yy_*)$. 
 Consider any equivalence class $\L_f^j$ and  enumerate its 
elements   as $b^j_1,\dots,b^j_{n_j}$ $(n_j\le n)$. 
 For $\yy=\yy_*$ the  matrix $\H^j(\yy_*)$ is diagonal with the eigenvalues 
 $\pm{\bf i}\mu(b^j_r,\yy_*), 1\le r\le n_j$. This suggests that for $c_0$ sufficiently small we may uniquely 
  numerate the 
 eigenvalues $\{\pm{\bf i}\Lambda^j_r(\yy)\} \ (\yy\in \D^{j_*}_0)$ of the matrix $\H^j(\yy)$ in such a way that 
 $\Lambda^j_r(\yy)$ is close to  $\mu(b^j_r,\yy_*)$. Below we justify this possibility.

 Take any $b\in\L_f$ and denote $\ell(b)=a_b\in\A$. 
If $a_b = a_{j_*}$, then 
\be\label{mu1}
\mu(b, \yy_*) = C_*(\frac32  \lambda_{a_{j_*}}^{-2}-\lambda_{a_{j_*}}^{-2})= \tfrac12C_*  \lambda_{a_{j_*}}^{-2}\,.
\ee
If $a_b \ne a_{j_*}$, then
\be\label{mu2}
\mu(b, \yy_*) = -C_* \lambda_{a({b})}^{-1} \lambda_{a_{j_*}}^{-1}.  %
\ee
If $m\in[1,2]$ is different from $4/3$ and $ 5/3$, then   it is easy  to see that $2\la\ne \pm \lambda_{a'}$ 
 for any $a, a'\in\A$. By \eqref{modif} this 
  implies that for $m\in[1,2]\setminus\Cc$ and for $ b, b'\in \L_f$ such that 
 $|b|\ne |b'|$ we have 
$$
|\mu(b, \yy_*)  | \ge 2c^{\#}(m)>0\,,\quad
|\mu(b, \yy_*) \pm \mu(b', \yy_*) | \ge 2c^{\#}(m)\,,\quad
$$
and
\be\label{N110}
|\mu(b,\yy)|\ge c^{\#}(m)>0\,,\qquad |\mu(b,\yy) \pm \mu(b',\yy)  |\ge c^{\#}(m)\;\;\;\text{for}\;\;
\yy\in \D_0^{j_*}\,,
\ee
if  $c_0$ is small. In particular, for each $j$
the spectrum $\pm{\bf i}\mu(b^j_r,\yy_*), 1\le r\le n_j$ of the matrix $\H^j(\yy_*)$
is simple.

\begin{lemma}\label{laK} 
If $c_0 \in(0,1/2)$ is sufficiently small,\footnote{Its smallness only depends on  $\A, m$ and $g(\cdot)$.}
 then there exists $c^o=c^o(m)>0$ 
such that for each $r$ and $j$, 
 $\Lambda^j_r(\yy)$ is a real analytic function of $\yy\in  \D^{j_*}_0$, satisfying 
\begin{equation}\label{N111}
\begin{split}
|\Lambda^j_r(\rho)  - \mu(b^j_r,\yy)|
 \le C\sqrt{c_0}\qquad \forall\, \rho\in D^{j_*}_0\,,
\end{split}
\end{equation}
and
\begin{equation}\label{N11}
|\Lambda^j_r(\rho)|\ge c^o(m)>0
\;\;\text{and}\;\; |\Lambda^j_r(\rho)\pm\Lambda^j_l(\rho)
|\ge c^o(m)\;\; \forall \, r\ne l, \forall\, j, \forall\, \rho\in D^{j_*}_0\,,
\end{equation}
\be\label{aaa}
 |  \Lambda^{j_1}_{r_1}(\yy) + \Lambda^{j_2}_{r_2}(\yy)  | \ge  c^{0}(m)\quad 
\forall \, j_1, j_2, r_1, r_2 \quad \text{and}\;\;
\rho\in D^{j_*}_0\,.
\ee
In particular, 
\begin{equation}
\label{N3}
\Lambda^j_r \not\equiv 0\quad \forall r;\quad \Lambda^j_r \not\equiv \pm\Lambda^j_l \quad
\forall\, r\ne l\,.
\end{equation}
\end{lemma}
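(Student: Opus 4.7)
The plan is to treat $K(\rho) = K^d(\rho) + K^{n/d}(\rho)$ on the small domain $\D^{j_*}_0$ as a perturbation, of size $O(\sqrt{c_0})$, of its block-diagonal part $K^d(\rho)$, and then to extract the eigenvalues $\Lambda^j_r$ block-by-block along the decomposition \eqref{decomp}. Once $c_0$ is small, standard analytic perturbation theory for simple eigenvalues should deliver all three claims \eqref{N111}, \eqref{N11}, \eqref{aaa}.

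First I would check that $\|K^{n/d}(\rho)\| \le C\sqrt{c_0}$ uniformly on $\D^{j_*}_0$. Every nontrivial off-diagonal block of $K^{n/d}$ carries the factor $\sqrt{\rho_{\ell(a)}\rho_{\ell(b)}}$ with $(a,b) \in (\L_f \times \L_f)_+ \cup (\L_f \times \L_f)_-$. By Lemma~\ref{L++-} one has $|a| \ne |b|$, hence $|\ell(a)| \ne |\ell(b)|$, and since $\A$ is admissible this forces $\ell(a) \ne \ell(b)$. Therefore at most one of $\ell(a), \ell(b)$ equals $a_{j_*}$, so at least one of $\rho_{\ell(a)}, \rho_{\ell(b)}$ is bounded by $c_0$ on $\D^{j_*}_0$, giving $\sqrt{\rho_{\ell(a)}\rho_{\ell(b)}} \le \sqrt{c_0}$. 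This combinatorial step is really the only non-bookkeeping input, but it is a direct consequence of admissibility together with the definition \eqref{class} of $\sim$.

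Next I would apply analytic perturbation theory to each block $\H^j(\rho)$. The unperturbed block $\H^{jd}(\rho) = {\bf i}J K^{jd}(\rho)$ is diagonal with eigenvalues $\pm{\bf i}\mu(b^j_r,\rho)$, $r = 1,\dots,n_j$. Since distinct elements $b^j_r, b^j_l$ of the same equivalence class $\L_f^j$ have distinct norms by \eqref{.2}, estimate \eqref{N110} (valid because $m \notin \Cc$ and \eqref{modif} excludes $m \in \{4/3,5/3\}$) produces uniform lower bounds $|\mu(b^j_r,\rho)| \ge c^\#(m)$ and $|\mu(b^j_r,\rho) \pm \mu(b^j_l,\rho)| \ge c^\#(m)$ on $\D^{j_*}_0$, so the spectrum of $\H^{jd}(\rho)$ is simple with a uniform spectral gap. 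Because $\rho \mapsto K(\rho)$ is real analytic on the interior of $\D^{j_*}_0$ (all $\rho_l > 0$ there), once $C\sqrt{c_0} \ll c^\#(m)$ the Kato theory of analytic perturbation of simple eigenvalues yields real analytic branches $\Lambda^j_r(\rho)$ on all of $\D^{j_*}_0$, together with the estimate $|\Lambda^j_r(\rho) - \mu(b^j_r,\rho)| \le C\sqrt{c_0}$, which is precisely \eqref{N111}. Reality of $\Lambda^j_r$ follows by continuity: at $\rho_*$ the branches are real, and simplicity blocks collision with the conjugate eigenvalues along $\D^{j_*}_0$.

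The lower bounds \eqref{N11} and \eqref{aaa} then follow by combining \eqref{N111} with lower bounds on sums of $\mu$'s. Inequalities $|\mu(b^j_r,\rho)| \ge c^\#(m)$ and $|\mu(b^j_r,\rho) \pm \mu(b^j_l,\rho)| \ge c^\#(m)$ from \eqref{N110} directly yield \eqref{N11} after absorbing the $O(\sqrt{c_0})$ correction. For \eqref{aaa} one needs that $\mu(b^{j_1}_{r_1},\rho) + \mu(b^{j_2}_{r_2},\rho)$ is bounded away from zero on $\D^{j_*}_0$; at $\rho_*$ the explicit formulas \eqref{mu1}--\eqref{mu2} exhibit this sum as one of $C_*\lambda_{a_{j_*}}^{-2}$, $C_*\lambda_{a_{j_*}}^{-1}(\tfrac12\lambda_{a_{j_*}}^{-1} - \lambda_{a(b)}^{-1})$, or $-C_*\lambda_{a_{j_*}}^{-1}(\lambda_{a(b)}^{-1} + \lambda_{a(b')}^{-1})$, each nonvanishing since $m \notin \{4/3,5/3\}$ rules out $2\lambda_a = \pm\lambda_{a'}$ for $a,a' \in \A$. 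Continuity extends the lower bound to all of $\D^{j_*}_0$ after possibly shrinking $c_0$, and a final application of \eqref{N111} absorbs the perturbation. The main obstacle is thus step one — the combinatorial verification that $\ell(a) \ne \ell(b)$ for off-diagonal blocks — since everything downstream is routine analytic perturbation theory.
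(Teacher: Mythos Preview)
Your proposal is correct and follows essentially the same strategy as the paper: use \eqref{N110} to get a uniform spectral gap for the diagonal part, then apply analytic perturbation theory for simple eigenvalues to obtain \eqref{N111}, and derive \eqref{N11}--\eqref{aaa} from \eqref{N111} together with the lower bounds on $\mu$-sums. The only difference is cosmetic: the paper perturbs the parameter $\sqrt{\rho}$ around $\sqrt{\rho_*}$ (where $K^{n/d}(\rho_*)=0$) and combines $|\Lambda^j_r(\rho)-\Lambda^j_r(\rho_*)|\le C\sqrt{c_0}$ with $|\mu(b^j_r,\rho)-\mu(b^j_r,\rho_*)|\le C\sqrt{c_0}$, whereas you perturb the matrix $\H^j(\rho)$ around its diagonal part $\H^{jd}(\rho)$ for each $\rho$, using your observation (via Lemma~\ref{L++-} and admissibility) that $\ell(a)\ne\ell(b)$ on off-diagonal blocks to bound $\|K^{n/d}(\rho)\|\le C\sqrt{c_0}$ directly; both routes land on \eqref{N111} with the same constant.
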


The estimate \eqref{N111} assumes that for $\yy\in\D_0^{j_*}$ we fix the sign of the function $\Lambda_r^j$ by
the following agreement:
\be\label{agreem}
\Lambda_j^r(\yy) \in\R\;\;\text{and \ \ sign}\, \Lambda_r^j(\yy) =  \ \text{sign}\,\mu(b_r^j,\yy)\;\;
\forall \yy\in \D_0^{j_*}\,,\; \forall 1\le j_*\le n\,,\; \forall\, r, j\,,
\ee
see \eqref{mu1}, \eqref{mu2}. 

Below we fix any $c_0 =c_0(\A, m, g(\cdot))\in(0,1/2)$ such that the lemma's assertion holds, but the parameter 
$c_*\in(0, \tfrac12 c_0]$ will vary at the last stage of our proof, in Section~\ref{s_10.2}.

\begin{proof}
Since the 
 spectrum of $\H^j(\rho_*)$ is 
simple and the matrix $\H^j(\rho)$ and the numbers  $\mu(b^j_r,\yy)$ are polynomials of $\sqrt\yy$, then the basic perturbation theory 
implies that the functions $\Lambda^j_r(\yy)$ are real analytic in  $\sqrt\yy$ in the vicinity of
$\rho_*$ and we have
$$
|\mu(b^j_r,\yy_*) - \mu(b^j_r,\yy)|\le C\sqrt{c_0}\,,\quad
|\Lambda^j_r(\yy_*) - \Lambda^j_r(\yy)| \le C\sqrt{c_0}\,.
$$
So \eqref{N111} holds. It is also clear that the functions $\Lambda^j_r(\yy)$ are analytic in 
$\yy\in\D_0^{j_*}$. Relations   \eqref{N111} and \eqref{N110} (and the fact that $\mu(b,\yy)$ depends only on
$|b|$ and $\yy$) imply \eqref{N11} and \eqref{aaa}
 if $c_0>0$ is sufficiently small. 
\end{proof}

\begin{remark}\label{r_m}
The differences $|2\la - \lb|$ can be estimated from below uniformly in $a,b$ in terms of the distance from $m\in[1,2]$
to the points $4/3$ and $5/3$. So the  constants $c^{\#}$ and $c^o$ depend only on this distance, and they  can be chosen independent from $m$ if the latter   belongs to the smaller segment $[1, 5/4]$. 
\end{remark}

Contrary to \eqref{aaa}, in general a difference of two eigenvalues $\Lambda^{j_1}_{r_1}- \Lambda^{j_2}_{r_2}$ may vanish identically.
Indeed, if $j,k\le M_0$, then   $\L^k_f$ and $\L^j_f$ are 
one-point sets, $\L^k_f=\{b_k\}$ and $\L^j_f=\{b_j\}$, and $\Am^j_1=\mu(b_j,\cdot)$, $\Am^k_1=\mu(b_k,\cdot)$. 
 So if $|b_j|=|b_k|$, then $\Lambda^j_1 \equiv  \Lambda^k_1$ due to  \eqref{munew}. In particular, in 
  view of Example~\ref{n=1}, if $n=1$ then each $\L^j_f$ is a  one-point set, corresponding to some point
$b_j$ of the same length. In this case all functions $\Lambda_k(\rho)$ 
coincide identically. But if $j\le M_0 <k$, or if $\max{j,k}>M_0$ and the set $\A$ is\sa\ (recall that everywhere in this section 
it is assumed to be admissible), then $\Lambda^{j_1}_{r_1}- \Lambda^{j_2}_{r_2}\not\equiv0$.
This is the assertion of the  non-degeneracy lemma below, proved  in Section~\ref{s_2d}. 

\begin{lemma}\label{l_nond}
Consider any two spaces $Y^{f\,r_1}$ and $Y^{f\,r_2}$ such that $r_1\le r_2$ and 
$r_2>M_0$. Then 
\be\label{single}
 \Lambda_j^{r_1} \not\equiv \pm\Lambda_k^{r_2} \qquad \forall\, (r_1,j)\ne (r_2,k)\,,
\ee
provided  that either $r_1\le M_0$, or the set $\A$ is strongly admissible.
\end{lemma}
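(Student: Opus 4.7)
Both sides of the claimed equality are algebraic functions of $\sqrt{\rho}$, so it suffices to exhibit a curve $\rho=\rho(u)$ along which they disagree at some finite order in $u$. The argument splits along the two hypotheses. In Case~A ($r_1\le M_0$), $\Lambda^{r_1}_1(\rho)=\mu(b_{r_1},\rho)$ is linear in $\rho$ by~\eqref{.1}. The key observation is that $\ell$ is injective on each equivalence class $\L_f^{r_2}$ (by Lemma~\ref{L++-} plus admissibility of $\A$), so on every coordinate axis $\rho=te_{l_*}$ the factor $\sqrt{\rho_{\ell(a)}\rho_{\ell(b)}}$ appearing in $\H^{n/d}$ vanishes for all $a\ne b\in\L_f^{r_2}$, making $\H^{r_2}(te_{l_*})$ block-diagonal with spectrum $\{\pm{\bf i}\mu(b^{r_2}_s,te_{l_*})\}$. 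The assumed identity $\Lambda^{r_2}_k\equiv\pm\mu(b_{r_1},\cdot)$ then forces, for every $l_*\in\A$, an equality $\mu(b_{r_1},te_{l_*})^2=\mu(b^{r_2}_{s(l_*)},te_{l_*})^2$. After enlarging $\Cc$ to exclude the countably many masses $m$ for which $\lambda_a=2\lambda_b$ is solvable in $\Z^d$, a direct case analysis of~\eqref{.1} (by whether $l_*=\ell(b_{r_1})$ and whether $\ell(b^{r_2}_{s(l_*)})=l_*$), together with admissibility of $\A$, reduces the surviving possibility to: some $b^{r_2}_{s_0}\in\L_f^{r_2}$ satisfies $\ell(b^{r_2}_{s_0})=\ell(b_{r_1})$, and then $\mu(b^{r_2}_{s_0},\cdot)\equiv\mu(b_{r_1},\cdot)$ as polynomials.

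To eliminate this last case the plan is to perturb off the axis $\rho=e_{l_0}$, $l_0:=\ell(b_{r_1})$. Since $\L_f^{r_2}$ is a connected equivalence class of size $n_{r_2}\ge 2$, the vertex $b^{r_2}_{s_0}$ has some neighbour $b^{r_2}_{s_1}$ in the graph on $\L_f^{r_2}$ with edges $(\L_f\times\L_f)_{\pm}$; put $l_1:=\ell(b^{r_2}_{s_1})\ne l_0$ and consider $\rho(u)=e_{l_0}+u^2 e_{l_1}$. The spectrum of $\H^{r_2}(e_{l_0})$ is simple by~\eqref{N11}, so the eigenvalue of $\H^{r_2}(\rho(u))$ near ${\bf i}\mu(b^{r_2}_{s_0},e_{l_0})$ is analytic in $u$. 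Injectivity of $\ell$ guarantees that the off-diagonal coupling of $\H^{n/d}(\rho(u))$ emerging from the $s_0$-block has exactly one $O(u)$ entry -- the one connecting $s_0$ and $s_1$ -- so second-order perturbation theory produces an extra $u^2$-term of the form $|c_{s_0,s_1}|^2/({\bf i}\Delta)$, with $c_{s_0,s_1}\ne 0$ and gap $\Delta:=\mu(b^{r_2}_{s_0},e_{l_0})-\mu(b^{r_2}_{s_1},e_{l_0})\ne 0$ (by~\eqref{N110}). Since ${\bf i}\mu(b_{r_1},\rho(u))$ is affine in $u^2$ without any such correction, the identity fails on this curve.

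In Case~B ($r_1>M_0$ and $\A$ strongly admissible) the plan begins by observing that strong admissibility eliminates $(\L_f\times\L_f)_+$ entirely: for $(a,b)\in(\L_f\times\L_f)_+$ Lemma~\ref{L++-} forces $\ell(a)\ne\ell(b)$, and $a$ must lie in the intersection of the spheres $\{|x|=|\ell(a)|\}$ and $\{|x-(\ell(a)+\ell(b))|=|\ell(b)|\}$; strong admissibility $\ell(a)\angle(\ell(a)+\ell(b))$ bounds this intersection by $\{\ell(a),\ell(b)\}\subset\A$, contradicting $a\in\L_f$. By~\eqref{Lf+=0} each $\H^j$ then equals ${\bf i}$ times a real symmetric operator, and all $\Lambda^j_r$ are real analytic algebraic. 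The scheme of Case~A is now applied simultaneously to the two blocks: one specialises $\rho$ to a coordinate axis on which both $\H^{r_1}$ and $\H^{r_2}$ are diagonal, matches diagonal entries realising the supposed-equal eigenvalues, and perturbs along a second direction $l_1$. The $u^2$-coefficient of each $\Lambda^{r_i}$ then records a specific $(\L_f\times\L_f)_-$-edge inside $\L_f^{r_i}$; since $\L_f^{r_1}$ and $\L_f^{r_2}$ are disjoint classes, $l_1$ can be chosen so that this edge exists in only one of them, producing the desired inequality.

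The difficult step is Case~B: both eigenvalue functions are genuinely algebraic, and the perturbation must be tracked on two distinct blocks simultaneously. Strong admissibility is essential precisely because without it the $(\L_f\times\L_f)_+$-couplings would mix the $\xi$ and $\eta$ sectors, create potentially hyperbolic branches of the eigenvalue functions, and destroy the rigidity of the coupling graphs needed to separate the two blocks by a single perturbation direction.
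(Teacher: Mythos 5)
Your Case~B argument contains a fatal error at its very first step. You claim that strong admissibility of $\A$ forces $(\L_f\times\L_f)_+=\emptyset$, arguing that strong admissibility ``bounds this intersection by $\{\ell(a),\ell(b)\}\subset\A$.'' But $\ell(b)$ is \emph{not} in the intersection $\{|x|=|\ell(a)|\}\cap\{|x-(\ell(a)+\ell(b))|=|\ell(b)|\}$, since $|\ell(b)|=|b|\ne|a|=|\ell(a)|$ by Lemma~\ref{L++-}. What that intersection actually contains is $\ell(a)$ and $a$ itself (one checks $|a-(\ell(a)+\ell(b))|=|a-(a+b)|=|b|=|\ell(b)|$), which are two distinct points; strong admissibility caps the intersection at two points and is perfectly consistent with this. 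No contradiction arises, and $a\in\L_f$ is allowed. Concretely, the paper's Appendix~B exhibits the $2$-dimensional set $\A=\{(0,1),(1,-1)\}$, which is strongly admissible (every admissible set is for $d\le 2$), yet has $((0,-1),(1,1))\in(\L_f\times\L_f)_+$. If your claim were true, then by~\eqref{Lf+=0} every $\H^j$ would be $i$ times a real symmetric matrix for every strongly admissible $\A$, all eigenvalues would be real, and no hyperbolicity could ever appear --- directly contradicting the central result of the paper (Theorem~\ref{t72}(iii), Appendix~B). Everything you build in Case~B on top of this claim (that the $\Lambda^j_r$ are all real analytic, that only $(\L_f\times\L_f)_-$-edges appear in the coupling graphs) therefore collapses.

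The role of strong admissibility in the paper is more delicate and appears in Lemma~\ref{l_triv}: if $|a|=|b|$ with $a\ne b$, and $\chi^+(a,a')\ne0$, $\chi^+(b,b')\ne0$ (or both with $\chi^-$), then $|a'|\ne|b'|$. The proof of \emph{that} lemma uses strong admissibility to say the three distinct points $a$, $b$, $\ell(a)$ cannot all lie in the intersection of the same two integer spheres. This is what lets the second-order perturbation coefficient $k_2(r)$ (formula~\eqref{k2}) distinguish the two blocks: for a suitable choice of the variation direction $x$, the coefficient in front of $z_{j_*}$ in $k_2(r_1)$ is nonzero in only one of the two blocks, because the relevant $\chi^\pm$-edge exists in only one of the two coupling graphs. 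The paper's proof carries out a single second-order perturbation at $\rho_*=(1,0,\dots,0)$ (after renumbering via~\eqref{achieve}) and computes $\ddot\Lambda=k_1+k_2(r)$ with $k_1$ block-independent; it does not iterate over coordinate axes nor separate your Cases~A and B so sharply --- Case~A is the degenerate situation $n_{r_1}=1$ where $k_2(r_1)=0$ automatically, handled in passing inside the same computation.

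Your Case~A outline --- restrict to coordinate axes where $\H^{n/d}$ vanishes, pin down which diagonal entry matches, then perturb in a second direction --- is plausible and in spirit close to the paper's perturbation-at-$\rho_*$ argument, though the multi-axis case analysis ("for every $l_*\in\A$'') is not needed there and introduces delicacies (the selection function $s(l_*)$ need not be constant in $l_*$). But since Case~B is where strong admissibility must do real work and where your proposal fails, the proof as a whole does not go through.
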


We recall that for $d\le2$ all admissible sets are\sa. For $d\ge3$ non-\!\!\sa\ sets exist. 
In Appendix~B we give an example \eqref{AAA} of such a set  for $d=3$ 
and show that for it the relation \eqref{single} does not hold.

\subsection{Removing singular values of the parameter $\rho$}\label{rho_sing}
We recall that the Hamiltonian operator $\H(\yy)$ equals ${\bf i}JK(\yy)$; so $\{\Lambda^j_l(\yy)\}$ are the 
eigenvalues of the real matrix $JK(\yy)$. Accordingly, the numbers $\{\Lambda^j_l(\yy), 1\le l\le n_j\}$,
are  eigenvalues of the real matrix $\tfrac1{{\bf i}} \H^j(\yy)=:L^j(\yy)$. 
Due to Lemma~\ref{laK} we know that for each $j$  the eigenvalues $\{\pm \Am_k^j(\yy)$,  $k\le n_j\}$,
  do not vanish identically
in $\yy$ and do not identically coincide. Now our goal is to quantify these statements by removing certain 
singular values 
of the parameter $\yy$. To do this let us first  denote   
$P^j(\yy)=(\prod_l\Lambda^j_l(\yy))^2 = \pm \det L^j(\yy)$  and consider the determinant 
$$
 P(\yy)=\prod _jP^j(\yy) =\pm \det JK(\yy)\,. 
 $$
 
 Recall that for an $R\times R$-matrix with eigenvalues $\ka_1,\dots,\ka_R$ (counted with their multiplicities) 
 the discriminant of the determinant of this matrix 
 equals the product $\prod_{i\ne j}(\ka_1-\ka_j)$. This is a polynomial of the matrix' elements. 
 
 Next we define a ``poly-discriminant" $D(\rho)$, which is another   polynomial 
 of the matrix elements of  $JK(\rho)$. Its definition is motivated by Lemma~\ref{l_nond}, and it is 
 different for the admissible and \sa\ sets $\A$.  Namely, if $\A$ is \sa, then 
 \smallskip

 -- for $r=1,\dots, M_0$  define  $D^r(\rho)$ as the discriminant of the determinant of the matrix
 $ L^r(\yy)\oplus L^{M_0+1}(\yy)\oplus\dots\oplus  L^M(\yy)$; 
 
 -- set $D(\rho) =
 D^1(\rho)\cdot\dots\cdot D^{M_0}(\rho)$.
 
 \noindent 
 This is a polynomial in the matrix coefficients of $JK(\yy)$, so a polynomial of  $\sqrt\rho$.
It  vanishes if and only if 
 $\Am^r_m(\rho)$ equals $\pm \Am^l_k(\rho)$ for some $r, l, m$ and $k$, where either $r,l \ge M_0+1$ and
 $m\ne k$ if $r=l$, or $r\le M_0$ and $m=1$.
  \medskip
 
  If $\A$ is admissible, then  we:
 
 
 -- for $l\le M_0, r\ge M_0+1$ define $D^{l,r}(\rho)$ as  the discriminant of the determinant of the matrix
 $ L^l(\yy)\oplus  L^r(\yy)$; 
 
 -- set $D(\rho) = 
 \prod_{l\le M_0,   r\ge M_0+1} D^{l,r}(\rho) $.

 \noindent 
 This is a polynomial in the matrix coefficients of $JK(\yy)$, so a polynomial in $\sqrt\rho$. It 
 vanishes if and only if 
 $\Am^r_1(\rho)$ equals $\pm \Am^l_k(\rho)$ for some $r\le M_0$, some $l\ge M_0+1$ and some $k$,
 or if $\Am^l_k(\rho)$ equals $\pm\Am^l_m(\rho)$ for some $l\ge M_0+1$ and some $k\ne m$.
 \medskip
 
 Finally, in the both cases we set
 $$
 M(\rho) =  \prod_{b\in \L_f }  \mu(b,\yy) 
  \prod_{\substack{b,b'\in \L_f \\ |b| \ne |b'| }} \big( \mu(b,\yy) - \mu(b',\yy)\big).
 $$
 This also is a polynomial in $\sqrt\rho$ which does not vanish identically due to \eqref{N110}.

 The set
 $$
 X=\{\yy\mid  P(\yy)\,D(\yy)\, M(\yy)
 =0\}
 $$
 is an algebraic variety, if written in the variable $\sqrt\rho$ (analytically diffeomorphic to the variable 
 $\rho\in[c_*,1]^\A$), and is  non-trivial by Lemma~\ref{laK}. 
 The open set $\D\setminus X$ is dense in $\D$ 
 and is formed by finitely many connected components. Denote them $Q_1,\dots, Q_L$.
 For any component $Q_{l}$ its boundary is a stratified  analytic manifold with finitely many smooth analytic components
 of dimension $<n$, see  \cite{ KrP}. The eigenvalues $\Lambda_j(\yy)$ and the corresponding eigenvectors are locally 
 analytic functions on the domains $Q_l$, but since some of these domains 
   may be not simply connected, then the functions may have 
 non-trivial monodromy, which would be inconvenient for us. But since each $Q_l$ is a domain with a regular boundary, then 
 by removing from it  finitely many  smooth closed hyper-surfaces we cut $Q_l$
  to a finite system of simply connected domains $Q^1_l, \dots, Q_l^{\hat n_l}$ 
 such that their union has the same measure as 
  $Q_l$ and  each domain  $Q_l^\mu$ lies on one side of its boundary.\footnote{For
 example, if $n=2$ and $\DD$ 
  is the annulus $A=\{1<\yy_1^2+\yy_2^2<2\}$, then we remove
from $A$ not the interval $\{\yy_2=0, 1<\yy_1<2\}=:J$ (this would lead to a simply connected domain which lies on both 
parts of the boundary $J$), but two intervals, $J$ and $-J$. 
}
We may realise these cuts (i.e. the
 hyper-surfaces)  as the zero-sets of certain 
 polynomial functions of $\yy$. Denote by $R_1(\yy)$ the 
product of the polynomials, corresponding to the cuts made,  and remove from $\DD\setminus X$ 
the zero-set of $R_1$. This zero-set contains all the cuts we made  (it may be bigger than the union of the cuts), 
 and still has zero measure. Again, $(\tilde Q_l\setminus X)\setminus \{\text{zero-set of} \ R_1\}$ 
  is a finite union of 
domains, where each one lies in some domain $Q^r_l$. 

Intersections of these new domains with the sets $\D^{j_*}_0$ (see \eqref{DD}) will be important for us by virtue of Lemma~\ref{laK},
and any fixed set $\D^{j_*}_0$, say $\D^{1}_0$, will be sufficient for out analysis. To agree the domains with $\D^1_0$ we note that 
the boundary of $\D^1_0$ in $\D$ is the zero-set of the polynomial
$$
R_2(\yy) = (\yy_1-(1-c_0))(\yy_2-c_0)\dots(\yy_n-c_0)\,,
$$
and  modify the set $X$ above to the set $\tilde X$, 
$$
\tilde X = \{\yy\in\D\mid \Rc(\yy)=0\}\,,\qquad \Rc (\yy) = P(\yy) D(\yy) M(\yy) R_1(\yy) R_2(\yy)\,. 
$$
As before, $\D\setminus \tilde X$ is a finite union of open domains with regular boundary. We still denote them $Q_l$:
   \be\label{components}
 \D\setminus \tilde X = Q_1\cup\dots\cup Q_{\mathbb J}\,,\qquad  \bJ<\infty\,.
 \ee
 A domain $Q_j$ in \eqref{components} may be non simply connected, but since each $Q_j$
 belongs to some domain $Q^r_l$, then the eigenvalues $\La(\yy)$ and the corresponding
 eigenvectors define in these domains single-valued analytic functions. Since every domain 
 $Q_l$ lies either in $\D^1_0$ or in its complement, we may enumerate the domains $Q_l$ in
 such a way that 
 \be\label{newJ}
\D_0^1 \setminus \tilde X = Q_1\cup\dots\cup Q_{\bJ_1}\,,\quad 1\le \bJ_1\le \bJ\,.
\ee
The domains $Q_l$  with  $l\le \bJ_1$ will play a special role in our argument. 
\smallskip
 
 Let us take $c_1 = \tfrac12 c_*$ and consider the complex vicinity $\Da$ of $\D$, 
 \be\label{Dset}
\Da=\{ \yy\in \C^\A\mid |\Im\yy_j|<c_1, \, c_*-c_1<\Re\yy_j<1+c_1\ \forall j\in\A\}
 \,.
\ee
 We naturally extend $\tilde X$ to a complex-analytic subset $\tilde X^c$ of $\D_{c_1}$
 (so  $\tilde X=\tilde X^c\cap \D$),  consider the set $\D_{c_1}\setminus \tilde X^c$, and for 
 any $\delta>0$ consider its open sub-domain $\Da(\delta)$, 
$$
\Da(\delta) = \{ \yy\in\Da\mid |\Rc(\yy)| >   \delta
\}\subset \Da\setminus \tilde X^c\,.
$$
Since the factors, forming $\Rc$, are polynomials with bounded coefficients, then they are bounded in $\D_{c_1}$:
 \be\label{det}
 \|P\|_{C^1(\D_{c_1})} \le C_1\,, \dots,   \|R_2\|_{C^1(\D_{c_1})} \le C_1\,.
 \ee
So in the domain  $ \Da(\delta)$ the norms of the factors $P,\dots, R_2$, making $\Rc$, are bounded from 
below by $C_2\delta$, and similar estimates hold for the factors, making $P$, $D$ and $M$. 
Therefore, by the Kramer rule
\be\label{Kram}
\|(JK)^{-1}(\rho)\| \le C_1 \delta^{-1}\qquad \forall \rho\in \Da(\delta)\,.
\ee
Similar  for   $\yy \in \Da(\delta)$ we have 
\be\label{K4}
  |\Am^j_{k}(\yy)  |\ge C^{-1} \delta \qquad \forall j,k\,,
\ee
\be\label{K44}
 |\mu(b,\rho) |
  \ge C^{-1} \delta\,, \quad
 |\mu(b,\rho) - \mu(b', \rho)|
  \ge C^{-1} \delta \quad \text{if $\ b,b'\in\L_f\ $ and $|b|\ne |b'|$\,,
  }
\ee
and 
\be\label{K04}
   |\Am^j_{k_1}(\yy) \pm\Am^r_{k_2} (\yy)|\ge C^{-1} \delta \quad \text{where}\;\; (j,k_1)\ne (r,k_2)\,.
\ee
In \eqref{K04} if the set $\A$ is\sa, then 
 the index $j$ is any and $r\ge M_0+1$, while if $\A$ is admissible, then  either $j\le M_0$ (and so 
 $k_1=1$)  and  $r\ge M_0+1$,
or $j=r\ge M_0+1$. The functions $\Am^j_{k}(\yy)$ are algebraic  functions on the complex
domain $\Da(\delta)$, but their restrictions to the real parts of these domains
split to branches which are well defined analytic functions. 

We have
\be\label{K1}
\meas (\D\setminus \Da(\delta))\le C \delta^{\beta_{4}},
\ee
for some positive $C$ and $\beta_{4}$  --   this follows easily from  Lemma~\ref{lTransv1} and Fubini since
$\Rc$ is a polynomial in $\sqrt\r$ (also see Lemma~D.1 in \cite{EGK1}). 
Denote $c_2=c_1/2$, define set $\Db$  as
in \eqref{Dset} but replacing there $c_1$ with  $c_2$, and denote $\Db(\delta) =\Da(\delta) \cap \Db$. 
Obviously,
\be\label{two_sets}
\text{the set $\D_{c_2}(2\delta)$ lies in $\Da(\delta)$ with its $C^{-1}\delta$-vicinity\,.
}
\ee

Consider the eigenvalues $\pm{\bf i}\Am_k(\yy)$. They analytically depend on $\yy\in\Da(\delta)$, where $|\Am_k|\le C_2$ for
each $k\le \mathbf N$ by \eqref{N1}. In view of \eqref{two_sets}, 
\be\label{K2}
|\frac{\p^l}{\p\yy^l} \Am_k (\yy)| \le C_l\delta^{-l}\qquad \forall\,\yy\in\Db(2\delta)\,,\ l\ge0\,,\  k\le \mathbf N\,,
\ee
by the Cauchy estimate.

\subsection{Block-diagonalising and the end of the proof of Theorem \ref{NFT}} \label{s_bd}

We shall block-diagonalise the operator ${\mathbf i}JK(\yy)$ for $\yy \in \Da(\delta)$. By \eqref{decomp} this operator is a direct sum
of operators, each of which has a simple spectrum with eigenvalues that are separated by $\ge C^{-1}\delta$. Let us denote one of these blocks   by ${\mathbf i}JK_1(\yy)$. Let its dimension be $2N$ and let
$I(\xi,\eta)=(\bar\eta,\bar\xi)$.
Notice that since ${\mathbf i}JK_1(\yy)$  is ``conjugate-real'' we have
$${\mathbf i}JK_1(\yy)I( z)=I({{\mathbf i}JK_1(\yy)z}).$$

Fix now a $\r_0 \in \Da(\delta)$.
Then, by \eqref{two_sets} with $\delta$ replaced by $\delta/2$, for $\ab{\r-\r_0}\le C^{-1}\delta^{4N}$ 
the operator  ${\mathbf i}JK_1(\yy)$ has a single spectrum. Consider a  (complex) matrix
$$
U(\r)=\big(z_1(\yy), \dots, z_{2N}(\yy)\big),
$$
whose  column vectors 
$
\aa{z_j(\r)}=1
$
are eigenvectors of  ${\mathbf i}JK_1(\yy)$. It  diagonalises ${\mathbf i}JK_1$:
\be\label{diago}
U(\yy)^{-1}\big( {\mathbf i}JK_1(\yy) \big)U(\yy)={\bf i}\,\text{diag}\,\{\pm\Am_1(\yy),\dots,\pm\Am_{{N}}(\yy)\}. 
\ee
The operator $U$ is  smooth in $\r$ with estimates
\be\label{K8}
\sup_{\yy} 
 \big( \|\p^j_\yy U(\yy)\| + \|\p^j_\yy U(\yy)^{-1} \|) \le C_j\delta^{-\beta(j) }
 \qquad \forall\, j\ge0\,,
 \ee
 and
 \be\label{K8bis}
\inf_{\yy} 
\ab{\det(U(\yy))}\ge \frac1{C_0}\delta^{\beta(0) }\,,
 \ee
for some $0<\beta(0)\le\beta(1)\le\dots$. See Lemma A.6 in \cite{E98} and Lemma~C.1 in \cite{EGK1}. 

Since the spectrum is simple, then   the pairing $\langle {\mathbf i}Jz_{k}(\r), z_{l}(\r)\rangle =
{}^t\!z_{l}(\r)\big({\mathbf i}J\big)z_{k}(\r)$ 
is zero unless the eigenvalues of $z_{k}(\r)$ and $z_{l}(\r)$ are equal but of opposite sign. We therefore enumerate the eigenvectors so that $z_{2j-1}(\r)$ and $z_{2j}(\r)$ correspond to eigenvalues of opposite sign. If now $\pi_j(\yy)=\langle {\mathbf i}Jz_{2j-1}(\yy), z_{2j}(\yy)\rangle $, then, for each $j$,
$$
 \frac1{C_0}\delta^{\beta(0) }\le \ab{\det(U)}=\sqrt{\ab{\det({}^t\!U{\mathbf i}JU)}}=\prod_l \ab{\pi_l}\le \ab{\pi_j}\le 1\,,
 $$
since the matrix elements of ${}^t\!U{\mathbf i}JU$ are $\langle {\mathbf i}Jz_{k}(\r), z_{l}(\r)\rangle $.

Replacing each eigenvector
$z_{2j}$ by $\frac{1}{\pi_{2j}}z_{2j}$, we can assume without restriction that $U$ verifies
\be\label{eigenvectorsbis}
 \frac1{C_0}\delta^{\beta(0) }\le \aa{z_j(\r)}\le C_0\delta^{-\beta(0) }\ee
 and \eqref{diago}-\eqref{K8bis} (for some choice of constants) and, moreover,
\be\label{matrixU}
{}^t\!U\big({\mathbf i}J\big)U=J.\ee

Suppose now that some $\Lambda_j$,   $\Lambda_1$ say, is real. Then $z_2$ and $I(z_1)$ are parallel, so $z_2={\bf i}\al I(z_{1})$ for some complex number 
$\alpha\in\C^*$ satisfying the bound \eqref{eigenvectorsbis} (for some choice of constants). Since 
$\langle {\mathbf i}Jz_{1}, z_{2}\rangle =1$, we have that $\al=  \langle Jz_{1}, I(z_{1})\rangle^{-1} $ is real, and, by  eventually interchanging
$z_1$ and $z_2$, we can assume that $\alpha=\beta^2>0$. Replacing now $z_1,z_2$ by $\beta z_1, \frac1{\beta}z_{2}$
we can assume without restriction that $U$ verifies
\eqref{diago}-\eqref{matrixU} (for some choice of constants), and $z_2={\bf i} I({z_{1}})$.

Suppose then that some $\Lambda_j$,   $\Lambda_1$ again say, is purely imaginary. Then $z_1$ and $I(z_1)$ are parallel, so
$z_1=\alpha I( z_1)$ for some unit $\alpha$. 
Similarly, $z_2=\beta I( z_2)$ for some unit $\beta$. 
Since
$\langle {\mathbf i}Jz_{1}, z_{2}\rangle =1$, we have that $1= \alpha\beta \langle {\mathbf i}JI(z_{1}), I(z_{2})\rangle=\alpha\beta$.
Let now $\al=\ga^2$,  and by replacing
$z_1,z_2$ by $\bar \ga z_1, \frac1{\bar\ga}z_{2}$
we can assume without restriction that $U$ verifies
\eqref{diago}-\eqref{matrixU} (for some choice of constants), and $z_1= I({z_{1}})$ and $z_2= I({z_{2}})$.


Suppose finally that some $\Lambda_j$, $\Lambda_1$ say,  is neither real nor purely imaginary. Then $-{\mathbf i}\ov{\Lambda_1}$  also is  an eigenvalue,\footnote{An example, considered in Appendix B, shows that
quadruples of eigenvalues $\{\pm {\bf i}\Lambda, \pm {\bf i} \bar\Lambda\}$ indeed may occur in the spectra
of operators ${\bf i}JK$.}
 and, hence, equals to $\pm {\mathbf i}\Lambda_2$ say.  Let us assume it is ${\mathbf i}\Lambda_2$, the other case being similar. Then 
$z_3=\al I({z_{1}})$ for some unit $\al$, and $z_2=\beta I({z_{4}})$ for some  $\beta\in \C^*$, both satisfying the bound \eqref{eigenvectorsbis} (for some choice of constants). Since $\langle {\mathbf i}Jz_{1}, z_{2}\rangle =
\langle {\mathbf i}Jz_{3}, z_{4}\rangle =1$, $\al\beta$ must be $=1$.  
Let now $\al=\ga^2$,  and by replacing
$z_1,z_3$ by $\bar \ga z_1, \bar\ga z_{3}$ and $z_2,z_4$ by $\frac1{\bar \ga} z_2, \frac1{\bar\ga}z_{4}$
we can assume without restriction that $U$ verifies
\eqref{diago}-\eqref{matrixU} (for some choice of constants), and $z_3= I({z_{1}})$ and $z_4= I({z_{2}})$.

Now we define a new matrix
$$\tilde U(\r)=\big(p_1(\yy)\ q_1(\r)\dots p_{N}(\yy)\ p_{N}(\yy)\big)$$
in the following way. If $\Lambda_1$ is real, then we take
$$p_1=-\frac{{\bf i}}{\sqrt2}(z_{1}+{\bf i}z_2),\quad q_1=-\frac{1}{\sqrt2}(z_{1}-{\bf i}z_2),$$
so that $I(p_1)=p_1$, $I(q_1)=q_1$ and $\langle {\mathbf i}Jp_{1}, q_{1}\rangle=  1$.
We do similarly for all $\Lambda_j$  real.
 If $\Lambda_1$ is purely imaginary, then we take $p_1=z_1$ and $q_1=z_2$,
and similarly for all $\Lambda_j$  purely imaginary. If $\Lambda_1$ is neither real nor purely imaginary, 
and $z_1=I({z_{3}})$ and $z_2=I({z_{4}})$, then
$$p_1=-\frac{{\bf i}}{\sqrt2}(z_{1}+{\bf i}z_3),\quad p_2=-\frac{1}{\sqrt2}(z_{1}-{\bf i}z_3)$$
and
$$q_1=-\frac{{\bf i}}{\sqrt2}(z_{2}+{\bf i}z_4),\quad q_2=-\frac{1}{\sqrt2}(z_{2}-{\bf i}z_4),$$
similarly for all $\Lambda_j$ neither real nor purely imaginary.

Then the matrix $\tilde U(\r)$ verifies
 \eqref{K8}-\eqref{matrixU} (for some choice of constants) and the mapping
 $$w\mapsto \tilde U(\r)w$$
 takes any real vector $w$ into the subspace $\{I(w)=w\}$. By doing this for each ``component'' ${\mathbf i}JK_1(\r)$ of the operator \eqref{decomp} and taking the direct sum we find a matrix $\hat U(\r)$ which 
  transforms the Hamiltonian of ${\mathbf i}JK(\r)$ to the form
\be\label{rham}
\frac12
 \sum_{j=1}^{M_0}  \mu(b_j,\rho)  \Big( p_{b_j}^2 + q_{b_j}^2\Big)+ 
 \frac12
 \sum_{j=M_0+1}^{M_{00}}  \Lambda_j(\yy)  \Big( p_{b_j}^2+ q_{b_j}^2\Big) + \frac12
  \langle \widehat  K(\yy) \zeta_h,  \zeta_h\rangle\, ,
 \ee
 where $\zeta_h$ denotes the the remaining $\{(p_{b_j},q_{b_j}):  M_{00}+1\le j\le \mathbf{N}\}$.
The Hamiltonian operator $J \widehat  K(\yy)$ is formed by the hyperbolic eigenvalues of
the operator ${\bf i} J\widetilde K(\yy)$.

Since $\Lambda_a (\yy)\xi_a\eta_a$ is transformed to $\frac12 \Lambda_a(\yy)  \Big( p_{a}^2+ q_{a}^2\Big)$ by a 
matrix $\hat U_a$, independent of $\r$,  that verifies ${}^t\tilde U_a(iJ_a)\tilde U_a=J_a=J$ (see \eqref{change}) ,
the full Hamiltonian \eqref{HNF} gets transformed to
\be\label{hhak}
\begin{split}
\langle \Omega(\yy), r\rangle  +\frac12 \sum_{a\in\L_\infty}\Lambda_a (\yy)\Big( p_{a}^2 + q_{a}^2\Big)
+\frac12
 \sum_{j=1}^{M_0}  \mu(b_j,\rho)  \Big( p_{b_j}^2 + q_{b_j}^2\Big)+ \\
+ \frac12
 \sum_{j=M_0+1}^{M_{00}}  \Lambda_j(\yy)  \Big( p_{b_j}^2+ q_{b_j}^2\Big) + \frac12
  \langle \widehat  K(\yy) \zeta_h,  \zeta_h\rangle
\end{split}
\ee
plus the error term $\tilde f(r,\theta, p_\L,q_\L; \yy)=f(r,\theta, \xi_\L,\eta_\L; \yy)$.

Note that in difference with the normal form \eqref{HNF}, the variable $\zeta_h$ belongs to a subspace of the linear space,
formed by the vectors $\{(p_a, q_a), a\in\L_f\}$, with the usual reality condition.

 We choose any subset $\F\subset\L_f$ 
of cardinality $|\F|={\bf N}- M_{00}$, and identify the space, where acts the operator $\hat K(\yy)$, with the space 
$\L_\F =\big\{ \zeta_\F =\{(p_a,q_a), a\in\F\} \big\}
$. We denote the operator $\hat K(\yy)$, re-interpreted as an operator in $\L_\F$, as $K(\yy)$. Finally, we identify the set
of nodes $\{1,\dots, M_{00}\}$ with $\L_\F\setminus\F$, and write the collection of frequencies
$
\{\mu(b_j,\yy), 1\le j\le M_{0}\} \cup \{\Lambda_j(\yy),   M_0+1\le\yy \le M_{00}\}
$
as $\{\Lb(\yy), b\in \L_\F\setminus \F\}$. 
After that the Hamiltonian \eqref{hhak} takes the form \eqref{HNFbis}, required by Theorem~\ref{NFT}. 
We denote by $\bf\hat U_\yy$ the constructed linear symplectic change of variables which transforms 
the Hamiltonian \eqref{HNF} to  \eqref{HNFbis}

 For convenience we denote 
\be\label{barc}\bar c= 1/{\beta_{4}}\quad \text{and} \quad \hat c=\beta(0)   \bar c .
\ee
With an eye on the relation \eqref{K1}, for 
  $\bb>0$ and any $\nu>0$ we denote  $\delta(\nu) = C^{\bar c} \nu^{\bar c\bb}$. Then 
\be\label{delta}
C\delta^{\beta_{4}} = \nu^{\bb}\,.
\ee
For any $\nu>0$ we set
$$
Q(c_*,\bb,\nu) = \D\cap \D_{c_1} (\delta(\nu))\,. 
$$
This is a monotone in $\nu$ 
 system of subdomains of $\D$, and 
$
 Q(c_*,\bb,\nu) 
 \nearrow( \D\setminus \tilde X)$ as  $ \nu\to0$.
In view of \eqref{K1} the measures of these domains satisfy \eqref{mesmes}. 

For $\yy\in Q(c_*,\bb,\nu) $ the operator $\tilde \Phi_\yy = \Phi_\yy \circ{\bf\hat U}_\yy$ 
transforms the Hamiltonian $\nu^{-1} h$ to \eqref{HNFbis}. Re-denoting this transformation back to 
$\Phi_\yy$, we see that the constructed objects satisfy the assertions (i)-(v) and (vii) of the theorem. 
To prove (vi) we recall (see \eqref{diago}) that the operator $U(\yy)$ (complex-)diagonalises one block of 
those, forming the operator ${\bf i}JK(\yy)$. Denote by ${\bf U}(\yy)$ the direct sum of the operators $U(\yy)$,
corresponding to all blocks of  ${\bf i}JK(\yy)$.  It diagonalises the whole operator  ${\bf i}JK(\yy)$. Accordingly, the
operator 
$
{\bf U}(\yy) \circ {\bf\hat U}^{-1}(\yy)
$
diagonalises $JK(\yy)$. Denoting it $U(\yy)$ we see that this operator satisfies the assertion (vi)

\subsection{Proof of the non-degeneracy Lemma~\ref{l_nond}}\label{s_2d}

Consider the decomposition  \eqref{decomp} of  the  Hamiltonian operator $\H(\rho)$.
To simplify notation, in this section we suspend the agreement that $|L^r_f|=1$ for $r\le M_0$,
and changing the order of 
the direct summands  achieve that the indices $r_1$ and $r_2$, involved in \eqref{single}, are $r_1=1$ and
$r_2=2$. For $r=1,2$ we  will write elements 
of the set $\L^r_f$ as $a^r_j, 1\le j\le n_r$, and vectors of the space $Y^{fr}$ as
\be\label{vectors}
\zeta=
\big(\zeta_{a^r_j}=(\xi_{a^r_j} ,  \eta_{a^r_j} ), 1\le j\le n_r\big) = 
\big( ( \xi_{a^r_1} , \eta_{a^r_1}),\dots, ( \xi_{a^r_{n_r}} , \eta_{a^r_{n_r}})\big)\,.
\ee

Using \eqref{labA} and abusing notation, we will regard the mapping 
$\ell: \L_f\to \A$ also as a mapping $\ell: \L_f\to \{1,\dots,n\}$. 
Consider the points $ \ell(a^1_1),\dots, \ell(a^1_{n_1}) $ (they are different by \eqref{.2}). 
Changing if needed the
labelling \eqref{labA}  we achieve that
\be\label{achieve}
\{ \ell(a^1_1),\dots, \ell(a^1_{n_1}) \ni 1\,. 
\ee

 We  write  the operator $\H^r$ as  $\H^r={\bf i}M^r$,  where 
$$
M^r (\rho)= JK^r(\rho) = JK^{r\,d}(\rho) +JK^{r\,n/d}(\rho) =: M^{r\,d}(\rho) +M^{r\,n/d}(\rho)\,,
$$
and  the real block-matrices $M^{r\, d}={\bf i}^{-1}\H^{r,\,d}$, $\ M^{r\, n/d}={\bf i}^{-1}\H^{r,\,n/d}$ are given
by \eqref{diag}. Then $\{\pm\Lambda^r_j(\rho)\}$ are the eigenvalues of $M^r(\rho)$, and
$$
M^{r\,d}(\rho) = \text{diag}\ \left( 
 \left(\begin{array}{ll}
 \mu(a^r_j,\rho) & 0\\
0& - \mu(a^r_j,\rho)
\end{array}\right) ,\;  1\le j\le n_r
\right),
$$
where 
$
  \mu(a^r_j, \rho)
$
is given by \eqref{.1}. 

 Renumerating the eigenvalues we achieve that in \eqref{single} (with $r_1=1, r_2=2$) we have 
 $\Lambda^1_j=\Lambda^1_1$ and $\Lambda^2_k=\Lambda^2_1$.
  As in the proof of Lemma~\ref{laK},  consider the vector $\rho_*= (1,0,\dots,0)$.  Let us abbreviate 
$$
\mu(a, \rho_*)= \mu(a)\qquad \forall\, a\,,
$$
where $\mu(a)$ depends only on $|a|$ by  \eqref{munew}.
   In view of \eqref{diag} $M^r(\rho_*)=M^{r\,d}(\rho_*)$ and thus
 $\Lambda^1_1(\rho_*)=\mu(a^1_1)$ and
$\Lambda^2_1(\rho_*)= \mu(a^2_1)$, if we numerate the elements of 
 $\L^1_f$ and $\L^2_f$ accordingly. As in the proof of  Lemma~\ref{laK}, $\mu(|a^r_1|)$ equals
 $\tfrac12 C_*\lambda^{-2}_{a^r_1}$ or $- C_* \lambda^{-1}_{\ell(a^r_1)} \lambda^{-1}_{a^r_1}$. Therefore
 the relation 
 $\mu(a^1_1) =\pm \mu(a^2_1) $
 is possible only if the sign is ``+" and $|a^1_1| = |a^2_1|$. So it remains to verify that under the lemma's assumption
 \be\label{sin}
 \Lambda^1_1(\rho)\not\equiv \Lambda^2_1(\rho)\quad\text{if}\quad |a^1_1| = |a^2_1|\,.
 \ee
  Since $ |a^1_1| = |a^2_1|$, then 
$$
\ell(a^1_1) = \ell(a^2_1)=:{a_{j_{\#}}}\in\A \;\; 
\text{ and } \Lambda^1_1(\rho_*)=\Lambda^2_1(\rho_*)=: \Lambda\,.
$$

To prove that $\Lambda^1_1(\rho)\not\equiv \Lambda^2_1(\rho)$
 we compare  variations of the two functions   around $\rho=\rho_*$. To do this it 
   is convenient to pass from $\yy$ to the  new parameter $y=(y_j)_1^n$, defined by
 $$
 y_j=\sqrt{\rho_{j}}, \quad j=1,\cdots ,n.
 $$
 Abusing notation we will sometime write $y_{a_j}$ instead of $y_j$.
 Take  any vector $x=(x_1, \dots,x_n)\in\R^n $, where $x_1=0$ and $x_j > 0$ if $j\ge2$,
 and consider  the following variation $y(\eps)$ of $y_*=(1,0,\cdots,0)$:
\be\label{yx}
 y_j(\eps) =
\begin{cases}
1& \text{ if} \ \  j=1,\\
 \eps x_j &  \text{ if} \ \  j\ge2.
        \end{cases} 
\ee
By \eqref{N11}, for  small $\eps$ the real matrix  $M^r(\eps):=
M^r(\rho(\eps))$ $(r=1,2)$ has a simple eigenvalue
$\Lambda^r_1(\eps)$, close to $\Lambda$. 
 We will show that for a suitable choice of vector $x$ the  functions  $ \Lambda^1_1(\eps)$ and
$ \Lambda^2_1(\eps)$ are different. More specifically, that  their jets at zero  of sufficiently high order are different.

Let $r$ be 1 or 2. We denote $\Lambda(\eps) = \Lambda^r_1(\rho(\eps))$,
 $M(\eps)= M^r(\rho(\eps))$ and denote by $M^d(\eps)$ and $M^{n/d}(\eps)$ the diagonal and 
non-diagonal parts of $M(\eps)$. The matrix
 $M^{n/d}(\eps)$   is formed by $2\times2$-blocks
\be\label{M}
\Big( M^{n/d}(\eps)\Big)^{a^r_j}_{a^r_k}  = 
 C_*
 \frac{{ {y_{\ell(a^r_k)} y_{\ell(a^r_j)}}}}{ \lambda_{ a^r_k}  \lambda_{a^r_j }}  \,\left(
 \left(\begin{array}{ll}
0& 1\\
-1& 0
\end{array}\right) \chi^+( a^r_k,a^r_j )+
 \left(\begin{array}{ll}
1& 0\\
0& -1
\end{array}\right)  \chi^-( a^r_k,a^r_j )
\right)\,,
\ee
(note that if $j=k$, then the block  vanishes). 

For $\eps=0$, $M(0) = M^{rd}(0)$ is a matrix with the single eigenvalue 
$\Lambda(0) = \mu(a^r_1, \rho_*)$, corresponding
to the eigen-vector  $\zeta(0) = (1,0,\dots,0)$. For small $\eps$ they analytically 
extend to a real eigenvector $\zeta(\eps)$ 
 of $M(\eps)$ with the eigenvalue $\Lambda(\eps)$, i.e. 
$$
M(\eps)\zeta(\eps) = \Lambda(\eps) \zeta(\eps)\,,\qquad |\zeta(\eps)|\equiv 1\,.
$$

We abbreviate
$\zeta= \zeta(0), 
M=M(0)  
$
and define similar $ \dot \zeta, \ddot\zeta, \Lambda,\dot\Lambda\dots$ etc,
where  the upper dot  stands for $d/d\eps$. 
We have
\be\label{y1}
M=M^d
=\text{diag}\big(\mu(a^r_1), -\mu(a^r_1), \dots, -\mu(a^r_{n_r}) \big)\,,
\ee
\be\label{y01}
 \dot M^d=0\,.
\ee

Since $ (M(\eps) - \Lambda(\eps))\zeta(\eps)\equiv 0$, then
\be\label{y3}
(M(\eps) - \Lambda(\eps)) \dot\zeta(\eps) = -\dot M(\eps) \zeta(\eps) +\dot\Lambda(\eps) \zeta(\eps). 
\ee
Jointly with \eqref{y1} and  \eqref{y01} this relation with $\eps=0$  implies that 
\be\label{y4}
(M^d-\Lambda)\dot \zeta =-\dot M^{n/d} \zeta +\dot\Lambda \zeta.
\ee
In view of \eqref{y1}  we have $\langle(M^d-\Am)\dot\zeta, \zeta\rangle =0$. 
We derive from here and from \eqref{y4} that 
\be\label{y5}
\dot\Am=\langle \dot M^{n/d}\zeta,\zeta\rangle=0\,.
\ee
Let us denote by $\pi$ the linear projection 
$ \ 
\pi: \R^{2n_r} \to \R^{2n_r}  
$
which makes zero the first component of a vector to which it applies. Then $M^d-\Lambda$ is an isomorphism of the space 
$\pi  \R^{2n_r} $, and the vectors $\dot\zeta$ and $-\dot M \zeta+ \dot\Lambda\zeta = \dot M^{n/d}\zeta$ belong to
$\pi  \R^{2n_r} $. So we get from \eqref{y4} that 
\be\label{y6}
\dot\zeta= -(M^d-\Am)^{-1} \dot M^{n/d}\zeta\,,
\ee
where the equality holds in the space $\pi  \R^{2n_r} $.  
 Differentiating \eqref{y3} we find that 
\be\label{y7}
(M(\eps)-\Am(\eps))\ddot\zeta(\eps)= -\ddot M(\eps) \zeta(\eps) -2\dot M(\eps)\dot\zeta(\eps)
+\ddot\Am(\eps)\zeta(\eps)  +2\dot\Am(\eps)\dot\zeta(\eps)\,.
\ee
Similar to the derivation of \eqref{y5} (and using that $\langle\zeta, \dot\zeta\rangle=0$ since $|\zeta(\eps)|\equiv1$), 
we get from \eqref{y7} and \eqref{y5} that 
\be\label{y9}
\begin{split}
\ddot\Am =\langle\ddot M\zeta,\zeta\rangle+2\langle\dot M\dot\zeta,\zeta\rangle
=\langle\ddot M\zeta,\zeta\rangle +2\langle(M-\Am)^{-1}\dot M^{n/d}\zeta, {}^t(\dot M)\zeta\rangle\,.
\end{split}
\ee
Since for each $\eps$ and every $j$
$$
\frac{d^2}{d\eps^2}\rho_j(\eps)=\frac{d^2}{d\eps^2} y^2_{j} (\eps) =2 x^2_{j}\,,
\qquad
\frac{d^2}{d\eps^2} y_{1}(\eps) y_j(\eps)=0\,,
$$
and since $ \langle \ddot M \zeta, \zeta\rangle =\langle \ddot M^d \zeta, \zeta\rangle $, then 
\be\label{ddot}
\langle \ddot M \zeta, \zeta\rangle =
\frac{d^2}{d\eps^2}  \mu(a^r_1, \rho(\eps))\!\mid_{\eps=0} \,= 
C_* \lambda^{-1}_{a_{j_{\#}}}  \big( 3  \lambda^{-1}_{a_{j_{\#}}} x_{j_{\#}}^2
 -2 \sum^{n}_{j=2} x_{j}^2\lambda^{-1}_{a_j}\big)=:k_1\,. 
\ee
Note that $k_1$ does not depend on  $r$.

Now consider the second term in the r.h.s. 
 \eqref{y9}. For any $a,b \in\L^r_f$ we see that 
$\ 
\frac{d}{d\eps} (y_{\ell(a)}(\eps)y_{\ell(b)}(\eps))\mid_{\eps=0} \ 
$
  is non-zero if exactly one  of the numbers $\ell(a), \ell(b )$ is $a_1$, and this derivative 
  equals $x_{\ell (c)}$, 
where $c\in\{a,b\}$, $\ell (c)\ne a_1$. 
Therefore,  by  \eqref{M}, 
\be\label{y100}
\begin{split}
&(\dot M^{n/d}\zeta )_{{a^r_j}} =
\frac{C_*}{\lambda_{a_{j_{\#}}}}  (\xi^o_{{a^r_j}}, -\eta^o_{{a^r_j}}),\qquad {a^r_j}\in\L^r_f\,,
\\ 
& \xi^o_{{a^r_j}}= \frac{  \phi(a_1^r, a_j^r)  
}{\lambda_{{a^r_j}}}\chi^-(a^r_1, {a^r_j}), \quad   
\eta^o_{{a^r_j}}= \frac{   \phi(a_1^r, a_j^r)   }{\lambda_{{a^r_j}}}\chi^+(a^r_1, {a^r_j})\,,
\end{split}
\end{equation}
where $ \phi(a_1^r, a_1^r) =0$ and for $j\ne1$ 
$$
 \phi(a_1^r, a_j^r) =
\begin{cases}
x_{\ell(a^r_j)} & \text{ if} \ \ {j_{\#}}=1\,, \\
 x_{{{j_{\#}}}}  &  \text{ if} \ \  \ell(a^r_j)=a_1   \,,\\
         0 &  \text{ if} \ \  {j_{\#}}\ne1,\ \ell(a^r_j)\ne a_1\,. 
        \end{cases} \qquad 
$$
Since $\chi^\pm(a^r_1, a^r_1)=0$, then $\xi^o_{a^r_1} = \eta^o_{a^r_1}=0$.

In view of \eqref{obv}, at most one of the numbers $\xi^o_{{a^r_j}}, \eta^o_{{a^r_j}}$ is non-zero. 
By \eqref{y100}, 
\be\label{y11}
((M-\Lambda)^{-1}\dot M^{n/d}\zeta)_{{a^r_j}} =
\frac{C_*}{\lambda_{a_{j_{\#}}}} (\xi^{oo}_{{a^r_j}}, \,\eta^{oo}_{{a^r_j}}), 
\ee
where $\xi^{oo}_{{a^r_j}}= \eta^{oo}_{{a^r_j}}=0$ if $j=1$, 
and otherwise 
$$
 \xi^{oo}_{{a^r_j}}= 
\frac{ \phi(a_1^r, a_j^r) \chi^-(a^r_1, {a^r_j})  }{  \lambda_{{a^r_j}} (  \mu({a^r_j})-\mu(a^r_1)  )} , \;\;\;\;
\eta^{oo}_{{a^r_j}}=
\frac{ \phi(a_1^r, a_j^r)  \chi^+(a^r_1, {a^r_j})  }{  \lambda_{{a^r_j}} ( \mu({a^r_j})+\mu(a^r_1))} \,.
$$
Here 
$\ 
\mu({a^r_j}) =\frac12 C_*\lambda^{-2}_{a_1}$ if $\ell(a^r_j)=a_1$ and 
$\ \mu(a^r_j) = -C_*\lambda^{-1}_{a^r_l} \lambda^{-1}_{a_1}$
 if $\ell(a^r_j)\ne a_1$.

Similar,
$$
({}^t\dot M\zeta  )_{{a^r_j}} =
\frac{C_*}{\lambda_{a_{j_{\#}}}}  (\xi^o_{{a^r_j}}, \eta^o_{{a^r_j}}),
$$ 
so  the second term in the r.h.s. of \eqref{y9} equals 
\be\label{k2}
 \frac{C_*^2}{\lambda^2_{a_{j_{\#}}}}\sum_{j=2}^{n_r}   \frac{   \phi(a_1^r, a_j^r)^2 }{ \lambda^2_{a^r_j}}
 \Big(\frac{\chi^-(a_1^r, a^r_j)}{\mu(a^r_j)-\mu(a^r_1)  } +
 \frac{\chi^+(a_1^r, a^r_j)}{  \mu(a^r_j)+\mu(a^r_1)  }
 \Big)=: k_2(r)\ .
\ee

Finally, we  have seen that 
$$
\Am^r_1(\rho(\eps)) = \Lambda^1_1(\rho_*)+\tfrac12 \eps^2 k_1+ \tfrac12 \eps^2k_2(r) +O(\eps^3),\quad r=1,2,
$$
where $k_1$  does not depend on $r$. Since $a_1^r\sim a^r_j$ for each $r$ and each  $j$ (see \eqref{class}), 
 then for $j>1$ 
  at least one of the coefficients
$\chi^\pm(a_1^r,a^r_j)$ is non-zero. As  $\chi^+\cdot\chi^-\equiv0$, then
\be\label{non-zer}
  \frac{\chi^-(a_1^r, a^r_j)}{\mu(a^r_j)-\mu(a^r_1)  } +
 \frac{\chi^+(a_1^r, a^r_j)}{  \mu(a^r_j)+\mu(a^r_1)  }
  \ne0 \qquad \forall\,r,\;\; \forall\,j >1  \,.
\ee
We see that  the sum, defining $k_2(r)$, is a  non-trivial quadratic polynomial of the quantities 
$\phi(a_1^r, a_j^r)$ if  $n_r\ge2$, and vanishes if $n_r=1$. 

The following lemma is crucial for the proof. 
\begin{lemma}\label{l_triv}
If the set $\A$ is\sa \  and $|a|=|b|$, $a\ne b$, 
and $\chi^+(a,a')\ne0$,  $\chi^+(b,b')\ne0$, 
or  $\chi^-(a,a')\ne0$,  $\chi^-(b,b')\ne0$, 
then
$|a'|\ne |b'|$. 
\end{lemma}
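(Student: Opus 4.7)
My plan is to argue both cases by contradiction. Suppose $|a'|=|b'|$. Since $\A$ is admissible and $|a|=|b|$, necessarily $\ell(a)=\ell(b)=:s$, and the extra assumption gives $\ell(a')=\ell(b')=:t$. Because Lemma~\ref{L++-} yields $|a|\neq|a'|$ (whether $(a,a')$ is in $(\L_f\times\L_f)_+$ or $(\L_f\times\L_f)_-$), we have $|s|\neq|t|$, so $s\neq t$ are distinct elements of the admissible set $\A$.

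In the $\chi^+$ case, the defining relations $\ell(a)+\ell(a')=a+a'$ and $\ell(b)+\ell(b')=b+b'$ give $a+a'=b+b'=s+t$, hence $a'=s+t-a$ and $b'=s+t-b$. I would then verify that the three integer points $a,b,s$ all lie in
\[
\Sigma\;:=\;\{x\in\Z^d : |x|=|s|,\ |x-(s+t)|=|t|\}.
\]
Indeed $|a|=|s|$ and $|a-(s+t)|=|{-}a'|=|t|$, similarly for $b$; and $s$ trivially lies in $\Sigma$. The three points are pairwise distinct because $a\neq b$ by hypothesis while $s\in\A$ and $a,b\in\L_f\subset\Z^d\setminus\A$. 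But strong admissibility applied to the distinct pair $s,t\in\A$ is precisely $s\an s+t$, i.e.\ $|\Sigma|\le 2$, the desired contradiction.

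The $\chi^-$ case proceeds by the same template but is combinatorially more subtle. Now $a-a'=b-b'=s-t$, and setting $v_a:=a-s=a'-t$ and $v_b:=b-s=b'-t$, both are nonzero integer vectors perpendicular to $s-t$ and satisfying $\langle s,v\rangle=-|v|^2/2$ (equivalently $\langle s+t,v\rangle=-|v|^2$). As before, the three distinct points $a,b,s$ all lie in the ``twin'' set
\[
\Sigma'\;:=\;\{x\in\Z^d : |x|=|s|,\ |x-(s-t)|=|t|\}\;=\;\{x:|x|=|s|,\ \langle x-s,s-t\rangle=0\}.
\]
The main obstacle is that strong admissibility controls $\Sigma$ (with $s+t$), not $\Sigma'$ (with $s-t$), and since $-t\notin\A$ by admissibility, one cannot simply invoke strong admissibility for the pair $(s,-t)$.

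To finish the $\chi^-$ case I plan to convert the $\Sigma'$-configuration into a violation of a \emph{genuine} instance of strong admissibility. Observe that in complete symmetry the three points $a',b',t$ are also distinct integer points in the analogous set $\{x:|x|=|t|,\ \langle x-t,s-t\rangle=0\}$. Combining these two triples via the parallelogram identity $a-b=a'-b'$ and the involution that swaps the roles of $s$ and $t$ (sending $v_a\leftrightarrow-v_a$ in the plane perpendicular to $s-t$), one should be able to produce a triple of distinct integer points inside the strong-admissibility set for one of the pairs $s\an s+t$ or $t\an t+s$, which is a genuine hypothesis. Carrying out this translation from the $(s-t)$-setting to the $(s+t)$-setting — exploiting that the perpendicular space to $s-t$ carries enough integer structure in dimension $d\ge 3$ and that $v_a,v_b$ satisfy \emph{both} the constraint on $\langle s-t,\cdot\rangle$ and on $\langle s+t,\cdot\rangle$ — is the technical heart of the proof and the step I expect to be the main obstacle.
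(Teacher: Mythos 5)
Your $\chi^+$ argument is exactly the paper's: with $a_{j_\#}=s$, $a_{j_\flat}=t$ and $c=s+t$, the three distinct points $a,b,s$ lie in $\{x\in\Z^d:|x|=|s|,\ |x-c|=|t|\}$, contradicting $s\an c$. You are also right to flag the $\chi^-$ case as genuinely different --- the paper's ``the case $\chi^-\ne0$ is similar'' hides that the same computation puts $a,b,s$ on the slice of the sphere $|x|=|s|$ perpendicular to $s-t$, which is \emph{not} the slice controlled by $s\an s+t$. But the repair you sketch (passing from the $(s-t)$-configuration to a genuine instance of $s\an s+t$ or $t\an s+t$) cannot succeed, because the $\chi^-$ half of the lemma is in fact false under Definition~\ref{adm} as written. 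Take $d=3$, $\A=\{s,t\}$, $s=(1,1,1)$, $t=(3,1,1)$. Checking the lattice points of norms $\sqrt3$ and $\sqrt{11}$ gives $\{x\in\Z^3:|x|^2=3,\ |x-(4,2,2)|^2=11\}=\{s\}$ and $\{x:|x|^2=11,\ |x-(4,2,2)|^2=3\}=\{t\}$, so both $s\an s+t$ and $t\an s+t$ hold and $\A$ is strongly admissible. Nevertheless $a=(1,1,-1)$, $a'=(3,1,-1)$, $b=(1,-1,1)$, $b'=(3,-1,1)$ all lie in $\L_f$ with $\ell(a)=\ell(b)=s$, $\ell(a')=\ell(b')=t$ and $a-a'=b-b'=(-2,0,0)=s-t$, hence $\chi^-(a,a')=\chi^-(b,b')=1$, while $|a|=|b|=\sqrt3$, $a\ne b$, and $|a'|=|b'|=\sqrt{11}$: this violates the claimed conclusion. (The geometric reason is that $|x|^2=3$ and $|x-(s-t)|^2=11$ force $x_1=1$, so the $(s-t)$-slice $\{(1,\pm1,\pm1)\}$ has four points and the relation $s\an s-t$ fails.)

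So the obstacle you spotted is not a technicality; the hypothesis must be strengthened. The statement does become true --- by your $\chi^+$ argument run verbatim, now with $c=s-t$ --- once ``strongly admissible'' is strengthened to require $s\an s+t$ \emph{and} $s\an s-t$ for all distinct $s,t\in\A$. This stronger condition is still generic in the sense of Appendix~E: the counting estimate there bounds the number of pairs $(a,c)$ for which $a\an c$ fails, and nowhere uses that $c=a+b$ rather than $c=a-b$, so the same bound controls both requirements.
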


\begin{proof}
 Let first consider the case when $\chi^+\ne0$.  \\
We know that $\ell(a)=\ell(b)=:{a_{j_{\#}}}$.  Assume that $|a'|=|b'|$. Then $\ell(a')=\ell(b')=: {{a_{j_\flat}}} \in\A$. 
Denote ${a_{j_{\#}}}+{{a_{j_\flat}}}=c$. Then $c\ne0$ since the set $\A$ is admissible.  As
$(a,a'), (b,b') \in (\L_f\times \L_f)_+$, then we have 
$\ 
|{a_{j_{\#}}}-c| = |a-c| = |b-c|\,. 
$
As $|{a_{j_{\#}}}| = |a| = |b|$, then the three points ${a_{j_{\#}}}, a$ and $b$ lie  in the intersection of two circles, one centred in
the origin and another centred in $c ={a_{j_{\#}}}+{{a_{j_\flat}}}$. Since $\A$ is\sa, then  ${a_{j_{\#}}}\an c$ (see 
\eqref{ddd}). So among the 
three point  two are equal, which is a contradiction. Hence,   $|a'|\ne|b'|$ as stated.

The case  $\chi^-\ne0$ is similar.  
\end{proof}

We claim that this lemma implies that 
\be\label{hi}
\Am_1^1(\rho(\eps))\not\equiv \Am^2_1(\rho(\eps)) \quad\text{for a suitable choice of the vector $x$ in
} \eqref{yx}  \,,
\ee
so \eqref{sin} is valid and Lemma~\ref{l_nond} holds. To prove \eqref{hi} 
 we  consider two cases. 
\smallskip

\noindent 
{\it Case 1:  ${j_{\#}}=1$.}   Then $ \phi(a_1^r, a_j^r) = x_{\ell(a_j^r)}$. Denoting 
$\ 
 \frac{C_*^2}{\lambda^2_{a_1}}  \,  \frac{x^2_{\ell(a^r_j)}}{ \lambda^2_{a^r_j}} =: z_{\ell(a^r_j)}
$
we see that 
 $k_2(1)$ and $k_2(2)$  are linear functions of the variables 
 $z_{a_1},\dots, z_{\ell_n}$. 

i) Assume that $\chi^-(a_1^r, a_j^r)=1$  for some $r\in \{1,2\}$ and some $j>1$.  Denote $\ell(a_j^r)=a_{{j_*}}$.  Then 
${j_*}\ne j_{\#}$ and 
$$
k_2(r) =  \frac{z_{a_{{j_*}}}}{\mu(a_j^r) -\mu(a_1^r)} +\dots\,, 
$$
where $\dots$ is independent from $ z_{{j_*}} $.  Now let $r'= \{1,2\}\setminus \{r\}$, and find $j'$ such that 
$\ell (a^{r'}_{j'}) =a_{{j_*}}$.  If such $j'$ does not exist, then $k_2(r')$ does not depend on $z_{{j_*}}$. 
Accordingly, for a suitable  $x$ we have $k_2(r)\ne k_2(r')$, and \eqref{hi} holds. If $n_2=1$, then $r=1$ and $r'=2$. So
$j'$ 
does not exists and \eqref{hi} is established. 

If $j'$ exists, then  $n_1, n_2\ge2$, so the set $\A$ is\sa. 
By Lemma~\ref{l_triv}
 $\chi^-  (a^{r'}_{1} ,  a^{r'}_{j'}   ) =0 $  since $\chi^-(a_1^r, a_j^r)=1$ and 
\be\label{kkk}
|a_1^r| = |a_1^{r'}|, \qquad |a_j^r| = |a_{j'}^{r'}|\,.
\ee
 So
 $$
k_2(r') =  z_{{j_*}}\,\frac{ \chi^+  (a^{r'}_{1} ,  a^{r'}_{j'}   )  }{\mu(a_j^{r'}) +\mu(a_1^{r'})} +\dots\,.
$$
 Since $\chi^+$ equals 1 or 0, then using again \eqref{kkk} and the fact that $\mu(a)$ only depends on $|a|$, 
we see that $k_2(r)\ne k_2(r')$ for a suitable $x$, so  \eqref{hi}  again  holds. 
\smallskip

ii) If $\chi^-(a_1^r, a_j^r)=0$ for all $j$ and $r$,  then $\chi^+(a_1^r, a_j^r)=1$ for some $r$ and $j$. Define $z_{{j_*}}$ as above. 
Then the coefficient in $k_2(r)$ in 
 front of $z_{{j_*}}$ is non-zero, while for $k_2(r')$ it vanishes. This is obvious if $n_{r'}=1$. Otherwise $\A$ is\sa\ 
 and it holds   by Lemma~\ref{l_triv}
(and since $\chi^-\equiv0$). So \eqref{hi} again holds. 

\medskip

\noindent 
{\it Case 2:  ${j_{\#}}\neq 1$. Then by \eqref{achieve} there exists $a^1_j\in \L^r_f$ such that   $\ell(a^r_j)=a_1$. So 
$\chi^+(a ^1_1, a ^1_j)\ne 0$ or $\chi^-(a ^1_1, a ^1_j)\ne 0$. }
Then $ \phi(a_1 ^1, a_j ^1) = x_{a_{j_{\#}} }$, 
 the sum in \eqref{k2} is non-trivial and 
 for the same reason as in Case~1  \eqref{hi} holds.
 
   This completes the proof of   Lemma~\ref{l_nond}.

\bigskip\bigskip
\centerline{PART III. A KAM THEOREM}
\section{KAM normal form Hamiltonians}
\subsection{Block decomposition, normal form matrices.}
In this subsection we recall two notions introduced in 
\cite{EK10} for the nonlinear Schr\"odinger equation. 
They are essential  to overcome the problems of small divisors 
in a multidimensional context. Since the  structure of the spectrum for the beam equation,
 $\{\sqrt{|a|^4+m},\ a\in \Z^{d}\}$, is similar to that  for the NLS equation,
 $\{|a|^2+\hat{V}_a,\ a\in \Z^{d}\}$, then to study the beam equation we will use 
 tools, similar to those used to study the NLS equation.
\medskip

\subsubsection{Partitions} \label{blockdecomp}
 For any $\Delta\in\N\cup \{\infty\}$ 
we define an equivalence relation on $\Z^{d}$, generated by the pre-equivalence relation
$$ a\sim b \Longleftrightarrow \left\{\begin{array}{l} |a|=|b| \\   {[a-b]}
 \leq \Delta. \end{array}\right.$$
(see \eqref{pdist}). 
Let $[a]_\Delta$ denote the equivalence class of $a$  -- the {\it block} of $a$. 
For further references we note that 
\be\label{a-b} 
|a|=|b| \text{ and }   [a]_{\Delta}\neq [b]_{\Delta}
\Rightarrow [a-b]\geq \Delta
\ee
The crucial fact is that the blocks have a finite maximal ``diameter''
$$d_\Delta=\max_{[a]=[b]} [a-b]$$
which do not depend on $a$ but only on $\Delta$. This is the content of

\begin{proposition}\label{blocks}
\be\label{block}
d_\Delta\leq C \Delta^{\frac{(d+1)!}2}. 
\ee
The constant $C$ only depends on $d$.
\end{proposition}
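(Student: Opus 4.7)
The plan is to prove the bound by strong induction on the affine dimension $\delta$ of the block, after a preliminary symmetrization that replaces the pseudo-metric $[\cdot]$ by the Euclidean one. Since $[a-b]=\min(|a-b|,|a+b|)$, a pre-equivalence step $a\sim b$ means $|a|=|b|=R$ together with either $|a-b|\le \Delta$ or $|a+b|\le\Delta$; by flipping signs consecutively along a chain and working with the symmetrized set $K\cup(-K)$, it suffices to bound the Euclidean diameter of a chain $b_0,b_1,\ldots,b_k\in\Z^d$ with all $|b_i|=R$ and $|b_i-b_{i-1}|\le \Delta$.

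The induction statement will be: for any such Euclidean chain whose affine hull has dimension $\delta$, the Euclidean diameter is at most $C_\delta\Delta^{(\delta+1)!/2}$. The base case $\delta=0$ is trivial, and since $\delta\le d$ this yields the assertion. For the inductive step, I would first extract a ``frame'' $b_{i_0},\ldots,b_{i_\delta}$ of $\delta+1$ affinely independent points from the chain, by letting $b_{i_j}$ be the first chain point outside $\mathrm{aff}\{b_{i_0},\ldots,b_{i_{j-1}}\}$. The sub-chain between $b_{i_{j-1}}$ and $b_{i_j-1}$ then lies in a fixed affine subspace of dimension $j-1$, so it is itself a Euclidean chain on the sphere of strictly smaller affine dimension; applying the induction hypothesis and adding the final step of size $\le\Delta$ produces $|b_{i_j}-b_{i_{j-1}}|\le C\Delta^{j!/2}+\Delta$, and summing gives $|b_{i_l}-b_{i_k}|\le L:=C\Delta^{\delta!/2}$ for all $l,k$.

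The second step closes the induction via the sphere constraint. Writing $v_l=b_{i_l}-b_{i_0}$ for $l=1,\ldots,\delta$, these are $\delta$ linearly independent integer vectors with $|v_l|\le L$, so their wedge is a nonzero integer element of $\bigwedge^\delta\Z^d$ with $|v_1\wedge\cdots\wedge v_\delta|\ge 1$. The relations $|b_{i_l}|^2=R^2=|b_{i_0}|^2$ give the linear system $\langle v_l,b_{i_0}\rangle=-|v_l|^2/2$, which, expressed in an orthonormal basis of $V=\mathrm{span}(v_1,\ldots,v_\delta)$, takes the form $A\,p=-\tfrac12(|v_1|^2,\ldots,|v_\delta|^2)^T$ with $\|A\|\le L$ and $|\det A|\ge 1$. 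Cramer's rule, combined with the cofactor bound $\|A^{-1}\|\le C\|A\|^{\delta-1}/|\det A|$, yields $|p|=|P_V b_{i_0}|\le C L^{\delta+1}$. Since $K$ lies in the affine hull $b_{i_0}+V$, any $a\in K$ is of the form $a=b_{i_0}+w$ with $w\in V$, and $|a|^2=R^2$ then forces $|w+P_V b_{i_0}|^2=|P_V b_{i_0}|^2$, whence $|w|\le 2|P_V b_{i_0}|\le C'L^{\delta+1}=C'\Delta^{(\delta+1)!/2}$.

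The main technical hurdle will be the sub-induction used to extract the frame: the sub-chains between consecutive frame points must be rigorously identified as valid Euclidean chains on the sphere of strictly lower affine dimension, and the constants $C_\delta$ must be tracked so that they depend only on $d$ and not on $R$ or on the specific chain. A secondary subtlety is justifying $|v_1\wedge\cdots\wedge v_\delta|\ge 1$ when $\delta<d$; this requires the integrality of Pl\"ucker coordinates rather than a naive determinant identity, but becomes routine once coordinates in $V$ are chosen carefully.
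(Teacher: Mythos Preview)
Your proposal is correct and follows the same overall route as the paper: reduce the pseudo-metric $[\,\cdot\,]$ to the Euclidean metric via consecutive sign-flips (equivalently, pass to $[a]^o_\Delta\cup(-[a]^o_\Delta)$), and then bound the Euclidean block diameter $d^o_\Delta$. The paper, however, does not prove the Euclidean bound $d^o_\Delta\le C\Delta^{(d+1)!/2}$; it simply cites \cite{EK10} for it and finishes with the trivial case split $|a|\ge D_\Delta$ versus $|a|<D_\Delta$. What you have written is precisely (a clean rendition of) the \cite{EK10} argument: induction on the affine dimension of the chain, greedy extraction of an affinely independent frame whose mutual distances are controlled by the inductive hypothesis, and then the integrality bound $|v_1\wedge\cdots\wedge v_\delta|\ge 1$ combined with the sphere relations $\langle v_l,b_{i_0}\rangle=-\tfrac12|v_l|^2$ and Cramer's rule to get $|P_V b_{i_0}|\le C L^{\delta+1}$, hence $\mathrm{diam}\le C\Delta^{(\delta+1)!/2}$. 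So you are not taking a different route; you are supplying the black-boxed step the paper omits. The two technical points you flag (that the sub-chains are legitimate lower-dimensional inputs for the induction, and that the Pl\"ucker-coordinate integrality replaces a full-rank determinant when $\delta<d$) are the right ones to watch, and both are straightforward.
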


\begin{proof} In \cite{EK10} it was considered the equivalence relation on $\Z^{d}$, generated by the 
pre-equivalence 
$$
a\approx b\quad\text{if}\quad |a|=|b|\quad \text{and}\quad |a-b|\le\Delta. 
$$
Denote by $[a]^o_\delta$ and $d^o_\Delta$ the corresponding equivalence class and its diameter (with respect to the 
usual distance). Since $a\sim b$ if and only if $a\approx b$ or $a\approx -b$, then 
\be\label{union}
[a]_\Delta = [a]^o_\Delta \cup -[a]^o_\Delta,
\ee 
provided that the union in the r.h.s. is disjoint. It is proved in \cite{EK10} that 
$d_\Delta^o\le D_\Delta=:C \Delta^{\frac{(d+1)!}2}$. Accordingly,
if $|a|\ge D_\Delta$, then the union above is disjoint, \eqref{union} holds and diameter of $[a]_\Delta$ satisfies 
\eqref{block}. If  $|a|< D_\Delta$, then $[a]_\Delta$ is contained in a sphere of radius $< D_\Delta$. So the block's
  diameter is at most 
$2D_\Delta$. This proves \eqref{block} if we replace there $C_{d}$ by $2C_{d}$.
\end{proof}

If $\Delta=\infty$ then the block of $a$ is the sphere $\{b: |b|=|a|\}$. 
Each block decomposition is a sub-decomposition of the trivial decomposition formed by the spheres $\{|a|=\text{const}\}$.

\medskip

\subsubsection{Normal form matrices} \label{normalformmatrices} 
On $\L_{\infty}\subset\Z^{d} $ we define the partition
$$[a]_\Delta=
\left\{\begin{array}{ll}
 [a]_\Delta\cap\L_{\infty} & \ \textrm{if}\ a\in \L_{\infty}\ \textrm{and}\ \ab{a}>c\\
 \{ b\in \L_{\infty}: \ab{b}\le c\} &  \ \textrm{if}\  a\in \L_{\infty}  \ \textrm{and}\ \ab{a}\le c.\end{array}\right.$$
On $\L=\F\sqcup\L_{\infty}$ we
define the partition, denoted $\E_\Delta$,
\be\label{Fcluster}
[a]=[a]_\Delta=
\left\{\begin{array}{ll}
 [a]_\Delta\cap\L_{\infty} & a\in \L_{\infty}\\
\F & a\in \F.\end{array}\right.
\ee

\begin{remark}\label{remark-blocks}
Now the diameter of each block $[a]$ is bounded as in \eqref{a-b}
if  we just let $C\gsim\max( \#\F,c^{d})$.
\end{remark}

If $A:\ \L\times \L\to gl(2, \C)$ we define its {\it block components} 
$$
A_{[a]}^{[b]}:[a]\times[b]\to gl(2, \C)$$
to be the restriction of $A$ to $[a]\times[b]$. $A$ is {\it block diagonal} over $\E_\Delta$ if, and only if, 
$A_{[b]}^{[a]}=0$ if $[a]\neq [b]$. Then we simply write $A_{[a]}$ for $A_{[a]}^{[a]}$.

On the space of $2\times 2$ complex matrices we introduce  a projection 
$$
\Pi: gl(2, \C)\to \C I+\C J,
$$
 orthogonal with respect to the Hilbert-Schmidt  scalar product. Note that 
$\C I+\C J$ is the space  of matrices, commuting with the symplectic matrix $J$. 
\begin{definition}\label{d_31}
We say that a matrix $A:\ \L\times \L\to gl(2, \C)$ is on normal form with respect to 
$\Delta$, $\Delta\in\N\cup \{\infty\}$,  and  write  $A\in  \NF_\Delta$, if
 \begin{itemize}
 \item[(i)] $A$ is real valued,
 \item[(ii)] $A$ is symmetric, i.e. $A_b^a\equiv {}^t\hspace{-0,1cm}A_a^b$,
 \item[(iii)] $A$ is block diagonal over $\E_\Delta$, 
 \item[(iv)] $A$ satisfies $\Pi A^a_b\equiv A^a_b$ for all $a,b\in\L_{\infty}$.

 \end{itemize}
 \end{definition}
 
Any real quadratic form ${\mathbf q}(w)= \frac 1 2\langle w,Aw \rangle$, $w=(p,q)$, 
can be written as
$$
\frac 1 2\langle p,A_{11}p \rangle+\langle p,A_{12}q \rangle+\frac 1 2\langle q,A_{22}q \rangle 
+\frac 1 2\langle w_{\F}, H(\r) w_{\F}\rangle 
$$
where $A_{11},\ A_{22}$ and $H$ are real symmetric matrices and $A_{12}$ is a real matrix. We now pass from the real variables $w_a=(p_a,q_a)$ to the 
complex variables $z_a=(\xi_a,\eta_a)$ by the transformation $w=U z$ defined through
\be\label{transf}
\xi_a=\frac 1 {\sqrt 2} (p_a+{\mathbf i}q_a),\quad \eta_a =\frac 1 {\sqrt 2} (p_a-{\mathbf i}q_a),\ee
for $a\in\L_\infty$, and acting like the identity on $  (\C^2)^\F$.
Then we have
$$
{\mathbf q}(Uz)=\frac 1 2\langle \xi,P\xi\rangle+ \frac 1 2\langle \eta,{\overline P}\eta\rangle+\langle \xi,Q\eta\rangle
+\frac 1 2\langle z_{\F}, H(\r) z_{\F}\rangle ,$$
where
$$P=\frac12\Big( (A_{11}-A_{22})-{\mathbf i}(A_{12}+{}^t A_{12})\big)$$
and
$$Q=\frac12\Big( (A_{11}+A_{22})+{\mathbf i}(A_{12}-{}^t A_{12})\big).$$
Hence $P$ is a complex symmetric matrix and $Q$ is a Hermitian matrix. If $A$ is on normal form, then $P=0$. 

Notice that this change of variables is not symplectic but changes the symplectic form slightly:
$$
U^*\Omega={\mathbf i}\sum_{a\in\L}d\xi_a\wedge d\eta_a+\sum_{a\in\F}d\xi_a\wedge d\eta_a.$$

\subsection{The unperturbed Hamiltonian}\label{ssUnperturbed}

Let $h_{\textrm up}(r,w,\r)$ be a function of the form
\be\label{unperturbed}  \langle r,\Omega_{\textrm up}(\r)\rangle +\frac12\langle w,A_{\textrm up}(\r)w \rangle=
\langle r,\Omega_{\textrm up}(\r)\rangle +\frac12\langle w_{\F},H_{\textrm up}(\r)w_{ \F} \rangle
+\frac12\sum_{a\in \L_{\infty}} \Lambda_a (p_a^2+q_a^2),\ee
where $w_a=(p_a,q_a)$ and
\be\label{properties}\left\{\begin{array}{ll}
\Omega_{\textrm up}:\D\to\R^{\cA}&\\
\Lambda_a:\D\to \R,&\quad  a\in \L_\infty\\
H_{\textrm up}:\D\to gl(\R^{\F}\times \R^{\F}),&\quad {}^t\! H_{\textrm up}=H_{\textrm up}
\end{array}\right.\ee
are $\cC^{{s_*}}$-functions, ${s_*}\ge 1$. $\D$ is an open ball or a cube
of diameter at most $1$ in the space $\R^{\P}$, parametrised by  some finite subset $\P$
of $\Z^{d}$. 

We can write
$$\langle w,A_{\textrm up}(\r)w\rangle =\langle w_{\F},H_{\textrm up} (\r)w_{\F} \rangle
+\frac12\big( \langle p_{\infty},Q_{\textrm up} (\r)p_{\infty} \rangle + \langle q_{\infty},Q_{\textrm up} (\r)q_{\infty} \rangle  \big)$$
and
$$Q_{\textrm up} (\r)=\diag\{\Lambda_a(\r): a\in\L_\infty\}.$$

\begin{definition}\label{definitionup} A function $h_{\textrm {up}}$ of the form \eqref{unperturbed}+\eqref{properties} will be called un {\it unperturbed Hamiltonian} if it verifies Assumptions A1-3 (given below)
described by the positive constants 
 $$c',c, \de_0,\beta=(\beta_1,\beta_2,\beta_3),\tau.$$
 \end{definition}
To formulate these assumptions we shall use the partition $[a]=[a]_{\infty}$ of $\F\sqcup\L_{\infty}$ defined in \eqref{Fcluster}. Notice
that this partition depend on a (possibly quite large) constant $c$.

\smallskip

\subsubsection{A1 -- spectral asymptotics.} There exist  a constant  
$0< c'\le c$  and exponents $\beta_1\ge0,\beta_2>0$  such that for all $\r\in\D$:

\be\label{la-lb-ter} 
|\Lambda_a(\r)-\ab{a}^{2} |\leq c \frac1{\langle a\rangle^{\beta_1}}\quad a\in \L_{\infty};
\ee

\begin{multline}\label{la-lb}
\qquad |(\Lambda_a(\r )-\Lambda_b(\r))-(\ab{a}^{2}-\ab{b}^{2}) | 
 \le c\max( \frac1{\langle a\rangle^{\beta_2}},\frac1{\langle b\rangle^{\beta_2}}),\quad a,b\in \L_{\infty}\,;\quad\end{multline}

\be\label{laequiv}
\left\{\begin{array}{l}
| \Lambda_a(\r )|\geq  c' \ \quad a\in\L_\infty\\
||(JH_{\text{up}}(\rho))^{-1}||\leq \frac1{c'};
\end{array}\right.\ee

\be\label{laequiv-bis}
| \Lambda_a(\r )+\Lambda_b(\r)|\geq  c' \ \quad a,b\in\L_\infty\ee

\be\label{la-lb-bis}
\left\{\begin{array}{ll}
|(\Lambda_a(\r )-\Lambda_b(\r))) |\ge c' & a,b\in\L_\infty,\ [a]\not=[b]\\
||(\Lambda_a(\r)I-{\mathbf i}JH_{\text{up}}(\rho))^{-1}||\leq \frac1{c'} & a\in \L_{\infty},
\end{array}\right.
\ee
Notice that if $\beta_1\ge\beta_2$, then \eqref{la-lb-ter}  implies \eqref{la-lb} (if $c$ is large enough).

\smallskip

\subsubsection{A2 -- transversality.}
Denote by $(Q_{\textrm up})_{[a]}$  the restriction of the matrix $Q_{\textrm up}$
to $[a]\times [a]$ and let $(Q_{\textrm up})_{[\emptyset]}=0$. 
Let also $JH_{\text{up}}(\r)_{[\emptyset]}=0$.

\medskip

There exists a  $1\ge\delta_0>0$ such that 
for all $\cC^{{s_*}}$-functions
\be\label{o}
\Omega:\D\to \R^n,\quad |\Omega-\Omega_{\textrm up}|_{\cC^{{s_*}}(\D)}<\delta_0,\ee
and for all $k\in\Z^n\setminus 0$
 there exists a unit vector ${\mathfrak z}$ such that
$$\ab{\p_{\mathfrak z}\langle k,  \Omega(\r)\rangle}  \ge \delta_0, \quad \forall \r\in\D$$
\footnote{\ $\p_{\mathfrak z} $ denotes here the directional derivative in the direction ${\mathfrak z}\in\R^p$}
and the following dichotomies hold for each  $k\in\Z^n\setminus 0$:

\begin{itemize}

\item[$(i)$] for any $a,b\in\L_\infty\cup \{\emptyset\} $ let
$$L(\r):X\mapsto \langle k,\Omega(\r) \rangle X+(Q_{\textrm up})_{[a]}(\r)X\pm X(Q_{\textrm up})_{[b]}:$$ 
then either $L(\r) $ is {\it $\de_0$-invertible} for all $ \r\in\D$  , i.e.
\be\label{invert}
\aa{L(\r)^{-1}}\le\frac1{\delta_0}\qquad\forall \r\in\D,\ee
or there exists a unit vector ${\mathfrak z}$ such that
$$\ab{\langle  v,\p_{\mathfrak z}  L(\r) v\rangle}  \ge \delta_0, \quad \forall \r\in\D
$$
and for any unit-vector $v$ in the domain of $L(\r)$
\footnote{\ $L$ is a linear operator acting on $([a]\times[b])$-matrices};

\item[$(ii)$]   let 
$$L(\r,\lambda):X\mapsto \langle k,\Omega(\r) \rangle X+\lambda X+ {\mathbf i}XJH_{\textrm up}(\r)$$
and
$$P_{\textrm up}(\r,\lambda)=  \det L(\r,\lambda):$$
then  either $L(\r,\Lambda_a(\r))$ is $\de_0$-invertible for all $ \r\in\D$  and $a\in[a]_\infty$, or
there exists a unit vector ${\mathfrak z}$ such that, with $m=2\#\F$,
$$
\ab{\p_{\mathfrak z}P_{\textrm up}(\r,\Lambda_a(\r))+\p_\lambda P_{\textrm up}(\r,\Lambda_a(\r))
\langle v,\p_{\mathfrak z} Q_{\textrm up}(\r) v\rangle}
\ge
 \delta_0\aa{L(\cdot,\Lambda_a(\cdot))}_{\cC^{1}(\D)}\aa{L(\cdot,\Lambda_a(\cdot))}_{\cC^{0}(\D)}^{m-2} $$
for all $\r\in \D$,  $a\in[a]_\infty$
and for any unit-vector $v\in (\C^2)^{[a]}$
\footnote{\  $L$ is a linear operator acting on $(1\times m)$-matrices};

\item[$(iii)$] for any $a,b\in \F\cup \{\emptyset\} $
let
$$
L(\r):X\mapsto \langle k,\Omega(\r) \rangle X-{\mathbf i}JH_{\textrm up}(\r)_{[a]}X+ {\mathbf i}XJH_{\textrm up}(\r)_{[b]}:$$ 
then  either $L(\r)$ is $\de_0$-invertible for all $ \r\in\D$, or
 there exists a unit vector ${\mathfrak z}$ and an integer  $1\le j\le {s_*}$ such that
 \be\label{altern1}
\ab{ \p_{\mathfrak z}^j \det L(\r) }\ge \delta_0 \aa{L}_{\cC^{j}(\D)}\aa{L}_{\cC^{0}(\D)}^{m^2-2}, \quad \forall \r\in \D,\ee
where  $m^2=(2\#\F)^2$ if both $[a]$ and $[b]$ are $\not=\emptyset$
and  $m^2=2\#\F$ if one of $[a]$ and $[b]$  $=\emptyset$
\footnote{\  in the first case $L$ is a linear operator acting on $(m\times m)$-matrices, and in the second case
$L$ is a linear operator acting on $(1\times m)$-matrices or  $(m\times 1)$-matrices. }
\end{itemize}
\medskip

 \begin{remark}\label{remro} The dichotomy 
in A2 is imposed not only on  $\Omega_{\textrm up}$ but also on
$\cC^{s_*}$-perturbations of $\Omega_{\textrm up}$, because, in general, the 
dichotomy for $\Omega_{\textrm up}$ does not imply that for perturbations.

If, however,  any $\cC^{{s_*}}$ perturbation 
of $\Omega_{\textrm up}$ can be written as $\Omega_{\textrm up}\circ  f$
for some diffeomorphism $ f=id+\O(\delta_0)$ --   this is for example the case when $\Omega(\r)=\r$  -- 
then the dichotomy on $\Omega$ implies a dichotomy on 
$\cC^{{s_*}}$-perturbations.
\end{remark}

\subsubsection{A3 --  a Melnikov condition.}

There exist constants
$\beta_3,\tau>0$ such that
\be\label{melnikov}
|\langle k,\Omega(0)\rangle-(\Lambda_a(0)-\Lambda_b(0))) |\ge\frac{\beta_3}{|k|^\tau}\ee
for all $k\in\Z^\P\setminus 0$ and all $a,b\in \L_{\infty}\setminus [0]$.

\subsection{KAM normal form Hamiltonians}

Consider now an unperturbed Hamiltonian $h_{\textrm {up}}$ defined on the set $\D$ (see Definition \ref{definitionup}).
The essential properties of  this function  are described by  the positive constants 
 $$c',c, \de_0,\beta=(\beta_1,\beta_2,\beta_3),\tau$$
(occurring in assumptions A1-3), and by the constant
\be\label{chi}
\chi=
 |\nabla_\r \Omega_{\textrm up} |_{\cC^{ {{s_*}}-1 } (\D)}+\sup_{a\in\L_\infty} |\nabla_\r \Lambda_a|_{\cC^{ {{s_*}}-1 } (\D)}
 + ||\nabla_\r H_{\textrm up} ||_{\cC^{ {{s_*}}-1 } (\D) }.\ee
Notice that, by Assumption A2, $\chi\ge \delta_0$,
and in order to simplify the estimates a little we shall assume that  
 \be\label{Conv}0<c'\le \de_0\le\chi\le  c.\ee

\medskip

We shall consider a somewhat larger class of functions.

\begin{definition} A function  of
the form
\be\label{normform}
h(r,w,\r)=\langle \Omega(\r), r\rangle +\frac 1 2\langle w, A(\r)w\rangle\ee  
is said to be on {\it KAM normal form} with respect to the  unperturbed Hamiltonian $h_{\textrm {up}}$, satisfying \eqref{Conv},  if

\noindent 
({\bf Hypothesis $\Omega$})
$\Omega$ is of class $\cC^{{s_*}}$ on $\D$ and
\be\label{hyp-omega}
|\Omega-\Omega_{\textrm up}|_{\cC^{{s_*}}(\D)}\le \delta.
\ee

\medskip

\noindent
({\bf Hypothesis B})   
$A-A_{\textrm up}:\D\to \cM_{(0, m_*+\vark),\varkappa}^b$ is of class $\cC^{{s_*}}$, 
$A(\r)$ is on normal form $\in \NF_{\Delta}$ for all $\r\in\D$
 and
  \be\label{hypoB}
 || \p_\r^j (A(\r)-A_{\textrm up}(\r))_{[a]} || \le  \delta\frac{1}{\langle a \rangle^\varkappa} \ee
for $ |j| \le {{s_*}}$, $a\in\L$  and $\r\in \D$
\footnote{\ here it is important that $||\cdot ||$ is the matrix operator norm}. Here we require that
\be\label{varkappa}
0<\varkappa.\ee

We denote this property by
$$h\in \NF_{\varkappa}(h_{\textrm up},\Delta,\delta).$$
Since the unperturbed Hamiltonian $h_{\textrm up}$ will be fixed in Part III
we shall often suppress it,  writing simply  $h\in \NF_{\varkappa }(\Delta,\delta)$.

\end{definition}

\subsection{The KAM  theorem}
\ 
In this section we state an abstract KAM result for perturbations of a certain KAM normal form Hamiltonians.

Let 
$$h_{\textrm {up}}= h_{\textrm {up}, \chi,c',\de_0,c}$$
be a fixed unperturbed Hamiltonian 
satisfying \eqref{Conv}. ($h_{\textrm {up}}$ also depends on $\beta, \tau$ but we shall not track this dependence.)

Let $h$ be a KAM normal form Hamiltonian,
$$h\in\NF_{\varkappa}(h_{\textrm {up}, \chi,c', \de_0,c},\Delta,\de),$$
and recall \eqref{varkappa}. We shall also assume $\Delta\ge1$.

The perturbation will belong to $ \cT_{\ga,\varkappa,\D }(\s,\mu)$ with
$$0<\s,\mu,\ga_1\le 1$$
and (recall \eqref{gamma})
$$\ga=(\ga_1,m_*+\vark)> \ga_*=(0,m_*+\vark).$$

These bounds will be, often implicitly, assumed in the rest of Part III.

\begin{theorem}\label{main}
There exist positive constants $C$, $\alpha$ and  $\exp$ such that, for any
$h\in\NF_{\varkappa,h_{\textrm up}}(\Delta,\de)$ and for any 
$f\in \cT_{\ga,\varkappa,\D }(\s,\mu)$,
$$ \eps=\ab{f^T}_{\begin{subarray}{c}\s,\mu\ \ \\ \ga, \varkappa,\D  \end{subarray}}\ \textrm{and}\  
 \xi=\ab{f}_{\begin{subarray}{c}\s,\mu\ \ \\ \ga, \varkappa,\D  \end{subarray}},$$
if
$$\delta \le \frac1{2C} c'$$
and
\be\label{epsi}
\eps(\log \frac1\eps)^{\exp}\le
\frac1{C}\big( \frac {\s\mu}{\max(\ga_1^{-1} ,d_{\Delta})}\frac{c'}{\chi+\xi}\big)^{\exp}c',
\ee
then there exist a closed subset $\D'=\D'(h, f)\subset \D$,
\be\label{measure}
\Leb (\D\setminus \D')\leq 
C\big(\log\frac1{\eps} \frac{ \max(\ga_1^{-1} ,d_{\Delta})}{\s\mu} \big)^{\exp}
\frac{\chi}{\delta_0}((\chi+\xi) \frac{\eps}{\chi})^{\alpha},\ee
and a $\cC^{{s_*}}$ mapping
$$\Phi:\O_{ \ga_*}(\s/2,\mu/2)\times\D\to \O_{ \ga_*}(\s,\mu),$$
real holomorphic and symplectic for each parameter $\r\in\D$, such that 
$$(h+ f)\circ \Phi= h'+f'$$
with
\begin{itemize}
\item[(i)] $$h'\in\NF_{\vark}(\infty,\de'),\quad\de'\le \frac{c'}2,$$
  and
$$\ab{ h'- h}_{\begin{subarray}{c}\s/2,\mu/2\ \ \\ \ga_*, \varkappa,\D  \end{subarray}}\le C;$$

\item[(ii)] 
for any $x\in \O_{ \ga_*}(\s/2,\mu/2)$, $\r\in\D$ and $\ab{j}\le{s_*}$
$$|| \p_\r^j (\Phi(x,\r)-x)||_{ \ga_*}+ \aa{ \p_\r^j (d\Phi(x,\r)-I)}_{ \ga_*,\vark} \le C$$
and $\Phi(\cdot,\r)$ equals the identity for $\r$ near the boundary of $\D$;

\item[(iii)] for $\r\in\D'$ and $\zeta=r=0$
$$d_r f'=d_\theta f'=
d_{\zeta} f'=d^2_{\zeta} f'=0.$$
\end{itemize}

Moreover,
\begin{itemize}
\item[(iv)]  if  $\tilde \r=(0,\r_2,\dots,\r_p)$ and $f^T(\cdot,\tilde \r)=0$ for all $\tilde \r$,  then $h'=h$ and $\Phi(x,\cdot)=x$
 for all $\tilde \r$.
\end{itemize}

The exponent
$\alpha$ is a positive constant only depending on  $d,s_*,\vark$ and $\beta_2 $.
The exponent $\exp$ only depends  on $d$, $\#\cA$ and $\tau,\beta_2,\vark$. 
${C}$ is an absolute constant that depends on  $c,\tau,\beta_2,\beta_3$ and $\vark$. ${C}$ also depend on $\sup_\D\ab{\Omega_{\textrm up}}$ and $\sup_\D\ab{H_{\textrm up}}$, but stays bounded
when these do.
\end{theorem}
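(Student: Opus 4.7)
My approach would follow the classical KAM template, implementing the iterative scheme announced in the introduction: a \emph{finite} linear iteration followed by a super-quadratic infinite iteration. At each step I construct a symplectic change of variables $\Phi_n = \Phi^1_{S_n}$, generated by the time-$1$ map of a jet Hamiltonian $S_n$, chosen so that the jet of the new perturbation $(h_n+f_n)\circ\Phi_n - h_{n+1}$ vanishes at $r=0$, $w=0$ up to order two in $w$. Thus I only need to kill $(f_n)^T$ (see Proposition~\ref{lemma:jet}), not all of $f_n$. The normal form $h_{n+1}$ absorbs the averaged part of $(f_n)^T$: the constant term modifies the tangential frequencies $\Omega$, the linear-in-$r$ part perturbs $\Omega$ further, and the quadratic-in-$w$ diagonal blocks (over the \emph{refined} partition $\E_{\Delta_n}$) modify $A$. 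I would increase $\Delta_n \nearrow \infty$ slowly enough that $d_{\Delta_n}$ stays controlled, so that the final normal form is block-diagonal over $\E_\infty$, giving $\Delta=\infty$ in assertion (i).

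\textbf{The homological equation} is the heart of the matter. Decomposing $(f_n)^T$ in Fourier modes $e^{i\langle k,\theta\rangle}$, the equation $\{h_n,S_n\} + (f_n)^T = [f_n^T]$ splits, mode by mode, into linear systems for the unknown coefficients of $S_n$. The constant-in-$w$ and $r$-linear pieces give divisors $\langle k,\Omega(\rho)\rangle$, handled by A2 (the first dichotomy with $a=b=\emptyset$) and Proposition~\ref{prop-m}-type estimates. The $w$-linear pieces for $a\in\L_\infty$ require inverting the operators $L(\rho,\Lambda_a)$ from A2(ii), i.e. the Melnikov condition of the second kind for the (possibly hyperbolic) finite block $JH_{\text{up}}$. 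The $w$-quadratic off-block pieces, which are the hardest, require inverting the operators $L(\rho)$ from A2(i) and (iii), which act on matrix-valued unknowns indexed by pairs of blocks $([a],[b])$. Crucially, A3 guarantees that for sufficiently large $|k|$ the second-Melnikov-type divisors $\langle k,\Omega\rangle - (\Lambda_a-\Lambda_b)$ are safely bounded away from zero for $a,b\in \L_\infty\setminus[0]$, which is why only \emph{finitely} many $k$-modes need the delicate parameter excision at each step.

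\textbf{Measure estimates and parameter excision.} For each $k$ and each block pair $([a],[b])$ (or for the finite-dimensional operators involving $JH_{\text{up}}$), I would remove from $\D$ the set where the relevant $L(\rho)$-operator fails to be $\kappa_n$-invertible. The transversality hypotheses in A2 (with the dichotomy on perturbations $\Omega$ -- note Remark~\ref{remro}) together with a Cartan-type lemma (à la Lemma~\ref{v.112} and its multidimensional version from \cite{E98}) control the measure of the bad set by a power of $\kappa_n / \|L\|^{\text{exp}}$. The operator norms are polynomial in $|k|$ and $d_{\Delta_n}$, which is why the exponent $\exp$ in \eqref{measure} depends on $\tau,\beta_2,\vark,d,\#\cA$. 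Functions $\Phi$ are made to equal the identity near $\partial\D$ via a cut-off in $\rho$ on a $(\chi/\delta_0)$-thin shell.

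\textbf{Convergence and the main obstacle.} With the Cauchy estimates from Proposition~\ref{lemma:poisson} and the flow estimates from Proposition~\ref{Summarize}, the iteration converges super-quadratically once the smallness condition \eqref{epsi} is satisfied, giving a $\cC^{s_*}$ limit $\Phi$ on $\O_{\ga_*}(\s/2,\mu/2)$. The non-trivial step (iv) follows because the Taylor jet vanishes automatically along the slice $\{\rho_1=0\}$ if $f^T$ does. The \emph{main obstacle} will be the homological equation in the off-block $w$-quadratic directions: because $A$ is only block-diagonal (not diagonal) and because the $\F$-block is possibly hyperbolic, the operators in A2(i)-(iii) act on matrix spaces of dimension up to $(2\#\F)^2$ and have a complicated parameter dependence. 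Obtaining the inverse bound $\|L^{-1}\|\le 1/\kappa$ on a parameter set of good measure, while keeping the norm in the $\cM^b_{\ga,\vark}$-ideal under control (so that Proposition~\ref{pMatrixProduct} and Proposition~\ref{Summarize} can be applied to the generating function and its flow), is where the use of the $\ell^1$-type matrix norm \eqref{matrixnorm}--\eqref{b-matrixnorm}, rather than the sup-norm of \cite{EK10}, should pay off decisively.
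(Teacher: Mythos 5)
Your high-level template is right --- Fourier decomposition, the A2/A3 dichotomies, the block-diagonal matrix algebra, boundary cut-off, finite linear iteration followed by a super-quadratic infinite one --- and your remark on the role of the $\ell^1$-type norm \eqref{matrixnorm}--\eqref{b-matrixnorm} is on point. But there is a genuine gap in the central step: the equation you write down does not achieve the goal you state for it.

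You say you want the jet of $(h_n+f_n)\circ\Phi_n - h_{n+1}$ to vanish, but then you propose to solve the \emph{plain} homological equation $\{h_n,S_n\}+(f_n)^T=[\text{avg}]$. After conjugating by $\Phi^1_{S_n}$, the new jet then contains the term $\{f_n-(f_n)^T,\,S_n\}^T$, which has size of order $\xi_n\,\varepsilon_n/\kappa_n$. Here $\xi_n$ (the norm of the full perturbation, not just its jet) does not shrink along the iteration, while $\varepsilon_n$ (the jet norm) does; and in the regime this theorem is designed for --- the ``singular perturbation problem'' in which $\chi,\ \xi$ are much larger than $c',\ \delta_0$ --- the ratio $\xi/\kappa$ is large. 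So the iteration does not contract. The paper removes this obstruction by instead solving the \emph{non-linear} homological equation
\[
\{h,S\}+\{f-f^T,S\}^T+f^T=h_+ + R,
\]
in which the correction $S$ also enters inside the dangerous Poisson bracket. This is solved by a short finite bootstrap in $S=S_0+S_1+S_2$ (Proposition~\ref{thm-Eq}). With this choice the new jet is $f_+^T=R^F+\{k+f^T,S\}^T+(\text{quadratic remainder})^T$: the problematic $\{f-f^T,S\}^T$ term is gone, and what remains is proportional to $\eta\,\varepsilon$ where $\eta$ is the accumulated normal-form increment $k$. Controlling $\eta$ is exactly what the finite iteration does, and $\eta$ eventually becomes comparable to $\varepsilon$, at which point the scheme becomes quadratic. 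Without the non-linear version of the homological equation your scheme has no mechanism to decay the jet when $\xi/\kappa\gg 1$, so the proposal as written would not converge in the present application. A secondary detail you elide, which you should also be aware of, is that the error term must be split as $R=R^F+R^s$ (Fourier truncation plus spatial off-diagonal truncation), with $R^s$ re-absorbed into the next perturbation and $R^F$ killed by Fourier decay while $\gamma_n=d_{\Delta_n}^{-1}$ decreases in lockstep with $\Delta_n$ increasing.
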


The condition on $\Phi$ and $h'-h$ may look bad but it is not. 

\begin{corollary}\label{cMain} Under the assumption of Theorem~\ref{main}, let $\eps_*$ be the largest positive number such that \eqref{epsi}
holds. Then,
for any  $\r\in\D$ and $\ab{j}\le{s_*}-1$,
\begin{itemize}
\item[$(i)'$] 
$$\ab{  \p_\r^j (h'(\cdot,\r)- h(\cdot,\r))}_{\begin{subarray}{c}\s/2,\mu/2\ \ \\ \ga_*, \varkappa,\ \ \  \ \end{subarray}}\le 
\frac{C}{\eps_*}\ab{f^T}_{\begin{subarray}{c}\s,\mu\ \ \\ \ga, \varkappa,\D  \end{subarray}};$$

\item[$(ii)'$] 
$$|| \p_\r^j (\Phi(x,\r)-x)||_{\ga_*}+ \aa{ \p_\r^j (d\Phi(x,r)-I)}_{\ga^*,\vark} \le \frac{C}{\eps_*}\ab{f^T}_{\begin{subarray}{c}\s,\mu\ \ \\ \ga, \varkappa,\D  \end{subarray}},$$
for any $x\in \O_{\ga_*}(\s/2,\mu/2)$.
\end{itemize}
\end{corollary}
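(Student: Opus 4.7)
The strategy is to derive Corollary \ref{cMain} from Theorem \ref{main} via a one-parameter rescaling that exploits assertion (iv). Set $\eps = \ab{f^T}_{\s,\mu,\ga,\vark,\D}$ and $\tau = \eps/\eps_*$; by the definition of $\eps_*$ we have $\tau \le 1$, and when $\tau$ is bounded away from zero the bound $C$ of Theorem \ref{main}(i)--(ii) already yields (i)$'$ and (ii)$'$ up to a universal constant. Hence we may assume $\tau \ll 1$, enlarge the parameter set to $\tilde\D = [-1,1]\times\D$ with a new first coordinate $s$, and set
\begin{equation*}
\tilde h(r,w,s,\r) = h(r,w,\r), \qquad \tilde f(\cdot,s,\r) = (f - f^T)(\cdot,\r) + \tau^{-1} s\, f^T(\cdot,\r).
\end{equation*}
Since $f-f^T$ has, by its very definition, vanishing $T$-jet at $r=w=0$, we obtain $\tilde f^T(\cdot,s,\r) = \tau^{-1} s\, f^T(\cdot,\r)$, which vanishes identically on the slice $\{s=0\}$. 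A direct computation using \eqref{normwithparameter} gives $\ab{\tilde f^T}_{\s,\mu,\ga,\vark,\tilde\D} \le \eps_*$ and $\ab{\tilde f}_{\s,\mu,\ga,\vark,\tilde\D} \le C(\xi + \eps_*)$, with $\partial_s \tilde f^T = \tau^{-1} f^T$ of norm $\eps_*$ and higher $s$-derivatives vanishing identically.

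Next I would verify that $(\tilde h,\tilde f)$ meets the hypotheses of Theorem \ref{main} on $\tilde\D$. Since $\tilde h_{\textrm up}$ is $s$-independent, the spectral asymptotics A1 and the Melnikov condition A3 are inherited immediately; for the transversality A2 the required unit direction in $\R^{p+1}$ can always be chosen in the $\R^p$-factor where A2 holds on $\D$, so every dichotomy carries over. Hypothesis B transfers trivially with the same $\de$. As for the smallness condition \eqref{epsi}, applied to $(\tilde\eps,\tilde\xi)$ it reduces to \eqref{epsi} at $\eps=\eps_*$ with $\chi+\xi$ replaced by at most $C(\chi+\xi+\eps_*)$, so its right-hand side degrades by only a constant factor absorbable into the absolute constant. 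Theorem \ref{main} applied to $(\tilde h,\tilde f)$ then yields $\tilde\Phi$ and $\tilde h'$ with $(\tilde h+\tilde f)\circ\tilde\Phi = \tilde h'+\tilde f'$, satisfying
\begin{equation*}
\ab{\tilde h' - \tilde h}_{\s/2,\mu/2,\ga_*,\vark,\tilde\D} + \aa{\partial_{\tilde\r}^j(\tilde\Phi(x,\tilde\r)-x)}_{\ga_*} + \aa{\partial_{\tilde\r}^j(d\tilde\Phi(x,\tilde\r)-I)}_{\ga_*,\vark} \le C
\end{equation*}
uniformly in $x\in\O_{\ga_*}(\s/2,\mu/2)$ and $\tilde\r\in\tilde\D$, for all $\ab{j}\le s_*$.

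Finally, assertion (iv) of Theorem \ref{main}, applied to the extended problem with distinguished first coordinate $s$, gives $\tilde h'(\cdot,0,\r) = h(\cdot,\r)$ and $\tilde\Phi(x,0,\r) = x$ for every $\r\in\D$, while at $s=\tau$ we have $\tilde f(\cdot,\tau,\r) = f(\cdot,\r)$, so $\tilde h'|_{s=\tau}$ and $\tilde\Phi|_{s=\tau}$ constitute a legitimate output of the KAM construction for the pair $(h,f)$ and may be taken as the $h',\Phi$ appearing in the corollary. For $\ab{j}\le s_*-1$, the fundamental theorem of calculus in $s$ then yields
\begin{equation*}
\partial_\r^j(h'-h)(\r) = \int_0^\tau \partial_s \partial_\r^j \tilde h'(s,\r)\,ds,
\end{equation*}
whose integrand is bounded in the relevant norm by $C$ (since the mixed derivative has total order $\le s_*$); the $s$-integration thus contributes exactly the factor $\tau = \ab{f^T}_{s_*,\ga,\vark,\D}/\eps_*$, establishing (i)$'$, and the identical argument applied to $\tilde\Phi(x,\cdot)-x$ and $d\tilde\Phi(x,\cdot)-I$ gives (ii)$'$. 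The main technical point I expect to occupy the proof is the careful bookkeeping of \eqref{epsi} and the A2 dichotomies under the extension $\D\rightsquigarrow\tilde\D$: the added $s$-direction is harmless because all transversality and Melnikov conditions involve only $\r$-parameters, but constants must be tracked to guarantee that only universal factors are lost.
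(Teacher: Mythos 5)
Your proof is correct and follows essentially the same strategy as the paper: append a scalar parameter, exploit assertion~(iv) at the slice where the $T$-jet vanishes, bound the derivative in the new direction by~$C$ via assertion~(ii), and integrate to pick up the factor $\tau = \eps/\eps_*$. The paper's proof scales the \emph{entire} perturbation, writing (up to a suppressed normalisation) $\eps\mapsto \eps f$ on $[-1,1]\times\D$, whereas you scale only the jet, taking $(f-f^T)+\tau^{-1}sf^T$ so that the non-jet part is frozen. Both work, since~(iv) only requires $f^T(\cdot,\tilde\r)\equiv 0$ on the slice; your choice is in fact what makes the paper's own one-line inequality $C\eps\le\frac{C}{\eps_*}\ab{f^T}$ (which as written requires $\eps\le\eps/\eps_*$) legible, and arguably deserves the extra sentence you give it.

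One place where you are slightly glib is the re-verification of the smallness condition \eqref{epsi} for $(\tilde h,\tilde f)$: you compute $\tilde\eps=\eps_*$ exactly, but $\tilde\xi\ge\xi$, so the right-hand side of \eqref{epsi} strictly decreases, and, since $\eps_*$ is by definition the \emph{largest} value for which \eqref{epsi} holds at the old $\xi$, the inequality need not survive at $\tilde\eps=\eps_*$ with the worse $\tilde\xi$. Saying that the loss is ``absorbable into the absolute constant'' is circular if taken literally, because $\eps_*$ depends on that same constant. The cheap fix is to shrink the $s$-range (equivalently, replace $\eps_*$ by $\eps_*/C_1$ for a fixed $C_1>1$ when defining $\tau$); this only enlarges the constant $C$ appearing in $(i)'$, $(ii)'$ by an absolute factor and costs nothing. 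The paper does not address this either, so you are not losing rigor relative to it, but you should say it this way rather than appealing to absorbtion. Everything else — the inheritance of A1--A3 (with the new unit direction taken in the $\R^p$ factor), the transfer of Hypothesis~B, the identification of $\tilde\Phi|_{s=\tau}$, $\tilde h'|_{s=\tau}$ as a legitimate output for $(h,f)$, and the mixed-derivative bookkeeping that keeps the total order $\le s_*$ when $\ab j\le s_*-1$ — is handled correctly.
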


\begin{proof} Let us denote $\r$ here by  $ \r_1$. If  $|f^T|_{\begin{subarray}{c}\s,\mu\ \ \\ \ga, \vark,\D  \end{subarray}}\le \eps_*$, then we can apply the theorem to $\eps f$ for any $|\eps|\le 1$. Let now
$\r=(\eps,\r_1)$ and consider  $h_{\mathrm up}$, $h$ and $f$ as functions depending on 
this new parameter $\r$ --  they will still verify the assumptions of the theorem, which will provide us with a mapping $\Phi$ with a $\cC^{{s_*}}$ dependence in 
$\r=(\eps,\r_1)$ and equal to the identity when $\eps=0$. The bound on the derivative together with assertion $(iv)$ now implies that
$$|| \Phi(x,\eps,\tilde\r)-x  ||_{\ga_*}\le C\eps\le
\frac C{\eps_*}|f^T|_{\begin{subarray}{c}\s,\mu\ \ \\ \ga, \vark,\D  \end{subarray}}
$$
for any $x\in \O_{\ga_*}(\s/2,\mu/2)$. 
The same estimate holds for all derivatives with respect to 
$\tilde \r$ up to order ${{s_*}}-1$. Take now $\eps=1$ and we get $(ii)'$.

The argument for $h'-h$ is the same.
\end{proof}

A special case
that will interest us in particular is the following.

\begin{corollary}\label{cMain-bis} 
Let $h_{\textrm up}=h_{\textrm {up}, \chi,c', \de_0,c}$ be an
unperturbed Hamiltonian,  satisfying 
$$ 
\;\;a) \qquad\qquad\qquad\qquad
\de_0^{1+\aleph}\le c'\le \delta_0\le\chi \le C' \delta_0^{1-\aleph} \le  c,
\qquad\qquad
$$
and be $f\in \cT_{\ga,\varkappa,\D }(\s,\mu)$  with
$$  
\;\; b) \qquad\qquad\qquad\qquad
\xi=\ab{f}_{\begin{subarray}{c}\s,\mu\ \ \\ \ga, \varkappa,\D  \end{subarray}}\le C'\delta_0^{1-\aleph}.
\qquad\qquad\qquad\qquad\qquad\qquad\qquad\qquad
$$
 for some $1>\aleph>0$ and $C'>0$.

 Then there exist constants $\eps_0>0$, $\al$ and $\ka$ --  independent of $c',\delta_0,\chi$ and $\aleph$  -- such that
 if $ \eps=\ab{f^T}_{\begin{subarray}{c}\s,\mu\ \ \\ \ga, \varkappa,\D  \end{subarray}}$ satisfies
\be\label{epsi-bis}
\eps(\log \frac1\eps)^{\ka}\le  \eps_0\delta_0^{1+ \aleph \ka},\ee

then there exist a closed subset $\D'=\D'(h, f)\subset \D$,
\be\label{measure-bis}
\Leb (\D\setminus \D')\leq 
\frac1{\eps_0}\delta_0^{-\aleph\ka}\eps^{\alpha},\ee
and a $\cC^{{s_*}}$ mapping $\Phi$ 
$$\Phi:\O_{ \ga_*}(\s/2,\mu/2)\times\D\to \O_{ \ga_*}(\s,\mu),$$
real holomorphic and symplectic for each parameter $\r\in\D$, 
such that  
$$(h_{\text{up}}+ f)\circ \Phi(r,w,\r)= \langle \Omega'(\r), r\rangle +\frac 1 2\langle w, A'(\r)w\rangle+f'(r,w,\r)$$
with
\begin{itemize}
\item[$(i)$] the frequency vector $\Om'$ satisfies
$$|\Omega'-\Omega_{\textrm up}|_{\cC^{{s_*-1}}(\D)}\le c'$$
and, for each  $ |j| \le {{s_*}}$ and $\r\in \D$, the matrix 
$$A'(\r)=A'_\infty(\r)\oplus H'(\r)\in   \NF_{\infty}$$
and satisfies
$$ || \p_\r^j (H'(\r)-H_{\textrm up}(\r) || \le  c';$$

\item[$(ii)'$]  for any  $x\in \O_{\ga_*}(\s/2,\mu/2)$, $\r\in\D$ and $\ab{j}\le{s_*}-1$,
$$|| \p_\r^j (\Phi(x,\r)-x)||_{\ga_*}+ \aa{ \p_\r^j (d\Phi(x,r)-I)}_{\ga^*,\vark} \le 
\frac1{\eps_0}\frac{\eps}{\delta_0^{1+ \aleph \ka}}(\log \frac1{\delta_0})^{\ka} $$
and $\Phi(\cdot,\r)$ equals the identity for $\r$ near the boundary of $\D$;
\item[(iii)] for $\r\in\D'$ and $\zeta=r=0$
$$d_r f'=d_\theta f'=
d_{\zeta} f'=d^2_{\zeta} f'=0.$$
\end{itemize}

The exponent
$\alpha$ is a positive constant only depending on  $d,s_*,\vark$ and $\beta_2 $.
The exponent $\ka$ also depends  on $\#\cA$ and $\tau$. The constant
${\eps_0}$  depends on everything except, as already said, $c',\delta_0,\chi$ and $\aleph$.

\end{corollary}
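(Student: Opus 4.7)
The plan is to apply Theorem~\ref{main} directly, taking as KAM normal form Hamiltonian $h:=h_{\textrm{up}}$ itself, so that Hypothesis~$\Omega$ holds trivially with $\delta=0$ and Hypothesis~B holds with $\Delta=\infty$ and $\delta=0$; the requirement $\delta\le c'/(2C)$ is then vacuous. There remain three tasks: (a) check that the smallness \eqref{epsi-bis} implies \eqref{epsi} for a suitable choice of $\ka$; (b) translate the measure bound \eqref{measure} into \eqref{measure-bis}; (c) read off the statements $(i)$--$(iii)$, invoking Corollary~\ref{cMain} for the sharpened bound on the deviation of $\Phi$ from the identity.

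First I would verify the smallness reduction. From hypotheses (a) and (b) one has $\chi+\xi\le 2C'\delta_0^{1-\aleph}$ and $c'\ge\delta_0^{1+\aleph}$, whence
$$
\frac{c'}{\chi+\xi}\ \ge\ \frac{1}{2C'}\,\delta_0^{2\aleph},\qquad c'\ \ge\ \delta_0^{1+\aleph}.
$$
The right-hand side of \eqref{epsi} is therefore bounded below by $C^{-1}(\s\mu/\max(\ga_1^{-1},d_\Delta))^{\exp}\,\delta_0^{2\aleph \exp +1+\aleph}$. Absorbing the fixed factors into $\eps_0$ and taking $\ka:=2\exp+1$, the bound \eqref{epsi-bis} forces \eqref{epsi}. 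For the measure, $\chi\ge\delta_0$ gives $\chi/\delta_0\le C\delta_0^{-\aleph}$ and $(\chi+\xi)/\chi\le 1+\xi/\delta_0\le C\delta_0^{-\aleph}$; substituting into \eqref{measure} and then absorbing $(\log 1/\eps)^{\exp}$ into $\eps^{\alpha}$ by slightly decreasing $\alpha$ (allowed since $\eps$ is small) yields \eqref{measure-bis}, after enlarging $\ka$ so that $\aleph\ka\ge\aleph(1+\alpha)$ if necessary.

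Next I would read off the conclusions. Property $(iii)$ is inherited verbatim from Theorem~\ref{main}$(iii)$. For $(i)$: since $h=h_{\textrm{up}}$, Theorem~\ref{main}$(i)$ yields $h'\in\NF_\vark(\infty,\delta')$ with $\delta'\le c'/2\le c'$, so Hypothesis~$\Omega$ gives $|\Omega'-\Omega_{\textrm{up}}|_{\cC^{s_*}}\le c'$, and applying Hypothesis~B at any $a\in\F$ (so $[a]=\F$) produces $\|\p_\r^j(H'-H_{\textrm{up}})\|\le\delta'\le c'$ for $|j|\le s_*$. For $(ii)'$, I would invoke Corollary~\ref{cMain}$(ii)'$: its bound $(C/\eps_*)|f^T|$, combined with the lower bound $\eps_*\gsim \eps_0\,\delta_0^{1+\aleph\ka}/(\log 1/\delta_0)^{\ka}$ coming from step~(a) and with $|f^T|\le\eps$, matches the stated form of $(ii)'$ up to a constant absorbed into $\eps_0$.

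The main obstacle will be bookkeeping of exponents: one must verify that every polynomial-in-$\delta_0$ correction collapses into the single factor $\delta_0^{\pm\aleph\ka}$ with $\ka$ depending only on the ``absolute'' data $d,s_*,\#\A,\tau,\beta_2,\vark$, and that $\eps_0$ can be chosen independent of $c',\delta_0,\chi,\aleph$. Because every use of (a)--(b) contributes only factors of the form $\delta_0^{\pm\aleph\cdot c}$ with $c$ absolute, and because the dichotomy in Assumption~A2 is stable under $c'$-sized perturbations of $\Omega_{\textrm{up}}$ (cf.\ Remark~\ref{remro}), this is routine but requires care.
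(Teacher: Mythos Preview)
Your approach is essentially the paper's, but there is one genuine slip that breaks step~(a). You take $h=h_{\textrm{up}}$ with $\delta=0$ and $\Delta=\infty$. While it is true that $A_{\textrm{up}}\in\NF_\infty$, the smallness condition \eqref{epsi} in Theorem~\ref{main} has $\max(\ga_1^{-1},d_\Delta)$ in the denominator, and for $\Delta=\infty$ the block diameter $d_\Delta$ is infinite (the blocks are full integer spheres, which have unbounded diameter). Hence the right-hand side of \eqref{epsi} collapses to zero and the condition is never satisfied; moreover the measure bound \eqref{measure} also blows up through the same factor. Your inequality ``the right-hand side of \eqref{epsi} is therefore bounded below by $C^{-1}(\s\mu/\max(\ga_1^{-1},d_\Delta))^{\exp}\delta_0^{2\aleph\exp+1+\aleph}$'' is formally correct but vacuous.

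The fix is immediate and is exactly what the paper does: since $A_{\textrm{up}}$ is diagonal on $\L_\infty$, it lies in $\NF_\Delta$ for \emph{every} $\Delta\ge1$, so take $\Delta=1$. Then $d_\Delta$ is a finite constant depending only on $d$ and $\#\F,c$ (see Remark~\ref{remark-blocks}), and it can be absorbed into $\eps_0$. With this single change, your steps (a), (b), (c) go through verbatim and match the paper's proof, including your choice $\ka\ge 2\exp+1$ for the smallness and $\ka\ge 1+\alpha$ for the measure, and the appeal to Corollary~\ref{cMain}$(ii)'$ for the estimate on $\Phi-\mathrm{id}$.
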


\begin{proof} 
We apply the theorem with $h=h_{\textrm up}$, i.e. $\delta=0$ and $\Delta=1$.
The condition \eqref{epsi} is implied by
$$\eps(\log \frac1\eps)^{\exp}\le \frac1{C''}\big(\frac{c'}{\chi+\xi}\big)^{\exp}c'$$
for some $C''$ depending on $C,\ga_1,\s,\mu$. With the choice of $c',\xi,\chi$ this is now implied
by \eqref{epsi-bis} if $\ka\ge1+2\exp$.

The estimate of the measure becomes, from \eqref{measure}, 
$$\frac1{\eps_0}\big(\log\frac1{\eps})^{\exp}\delta_0^{-\aleph(1+\alpha)}\eps^{\alpha}
\le \frac1{\eps_0}\delta_0^{-\aleph(1+\alpha)}\eps^{\frac{\alpha}2},$$
which is what is claimed if we replace $\frac{\alpha}2$ by $\al$, and take $\ka\ge(1+\al)$.

(i) is just a consequence of $h'\in\NF(\infty,c')$. The bound in $(ii)$ follows from the bound $(ii)'$ in Corollary~\ref{cMain} plus an easy estimate of $\eps_*$.
\end{proof}

\section{Small divisors}\label{s4}

Control of the small divisors is essential for solving the homological equation (next section). In this section we shall
control these divisors for $k\not=0$ using Assumptions A2 and A3.

For a mapping $L:\D\to gl(\dim,\R)$ define, for any $\ka>0$,
$$\Sigma(L,\kappa)=\{\r\in \D:   ||L^{-1}(\rho)| |>\frac1\ka\}.$$
Let  
$$h(r,w,\r)=\langle r,\Omega(\r)\rangle+ \frac12\langle w,A(\r) w \rangle$$
be a  normal form Hamiltonian in $\NF_{\varkappa}(\Delta,\delta)$.
Recall the convention \eqref{Conv} and assume $\vark>0$ and
\be\label{ass} \delta \le \frac{1}{C}c' ,\ee
where $C$ is to be determined.

\begin{lemma}\label{lSmallDiv1}
Let
$$L_{k}=\langle k,\Omega(\r)\rangle.$$
There exists  a constant $C$ such that if \eqref{ass} holds, then
$$\Leb\big(\bigcup_{0<\ab{k}\le N} \Sigma(L_k,\ka)\big)
\le C N^{\exp} \frac{\ka}{\de_0}$$
and
$$\dist(\D\setminus \Sigma(L_k,\ka),\Sigma(L_k,\frac\ka2))>\frac{1}{C}\frac{\ka}{N\chi}$$
 \footnote{\ this is assumed to be fulfilled if $\Sigma_{L_k}(\frac\ka2)=\emptyset$}
for any $\ka>0$ .

(The exponent $\exp$ only depends on $\#\cA$. $C$ is an absolute constant.)
\end{lemma}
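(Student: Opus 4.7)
The statement is about the scalar-valued function $L_k(\rho)=\langle k,\Omega(\rho)\rangle$, so $\|L_k^{-1}(\rho)\|=|L_k(\rho)|^{-1}$ and thus
\[
\Sigma(L_k,\kappa)=\{\rho\in\D : |L_k(\rho)|<\kappa\}.
\]
The plan is to reduce both assertions to two classical one-variable observations: a sublevel-set estimate from the transversality in Assumption~A2, and a Lipschitz bound on $L_k$.

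\textbf{Step 1: Transversality.} The hypothesis $h\in\NF_{\vark}(\Delta,\delta)$ gives $|\Omega-\Omega_{\textrm{up}}|_{\cC^{s_*}(\D)}\le\delta$, and since $\delta\le c'/C\le\delta_0/C$ for $C$ large, the perturbed vector $\Omega$ satisfies the hypothesis \eqref{o} of Assumption~A2. Hence for every $k\ne 0$ there is a unit vector $\mathfrak{z}=\mathfrak{z}(k)$ such that
\[
|\partial_{\mathfrak{z}}L_k(\rho)|=|\partial_{\mathfrak{z}}\langle k,\Omega(\rho)\rangle|\ge\delta_0\qquad\forall\,\rho\in\D.
\]

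\textbf{Step 2: Measure of a single sublevel set.} Fix $k$ and the corresponding direction $\mathfrak{z}$. Slice $\D$ by affine lines in direction $\mathfrak{z}$: each slice $\{x+t\mathfrak{z}:t\in I_x\}\subset\D$ has $|I_x|\le\diam\D\le 1$. On such a slice the $\cC^1$ function $t\mapsto L_k(x+t\mathfrak{z})$ has derivative of absolute value $\ge\delta_0$, hence is monotone, so
\[
\Leb\{t\in I_x : |L_k(x+t\mathfrak{z})|<\kappa\}\le\frac{2\kappa}{\delta_0}.
\]
Integrating over the $(p-1)$-dimensional transverse slice and using $\diam\D\le 1$,
\[
\Leb\,\Sigma(L_k,\kappa)\le C_p\,\frac{\kappa}{\delta_0}.
\]

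\textbf{Step 3: Summing over $k$.} The number of lattice points $k\in\Z^{\cA}$ with $0<|k|\le N$ is bounded by $(2N+1)^{\#\cA}\le C N^{\#\cA}$, so summing the previous bound yields
\[
\Leb\Bigl(\bigcup_{0<|k|\le N}\Sigma(L_k,\kappa)\Bigr)\le C\,N^{\#\cA}\,\frac{\kappa}{\delta_0},
\]
which is the claimed measure estimate with $\exp=\#\cA$.

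\textbf{Step 4: The distance estimate.} Let $\rho\in\D\setminus\Sigma(L_k,\kappa)$ and $\rho'\in\Sigma(L_k,\kappa/2)$, so $|L_k(\rho)|\ge\kappa$ and $|L_k(\rho')|<\kappa/2$, whence
\[
|L_k(\rho)-L_k(\rho')|\ge\kappa/2.
\]
On the other hand, by \eqref{chi} and \eqref{hyp-omega} (and the assumption $\delta\le c'\le\chi$) we have $|\nabla_\rho\Omega|_{\cC^0(\D)}\le 2\chi$, so
\[
|L_k(\rho)-L_k(\rho')|\le |k|\cdot|\nabla_\rho\Omega|_{\cC^0(\D)}\cdot|\rho-\rho'|\le 2N\chi\,|\rho-\rho'|.
\]
Combining these gives $|\rho-\rho'|\ge\kappa/(4N\chi)$, which is the claimed distance bound after adjusting the constant $C$.

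The argument is essentially routine; the only place where content enters is Step~1, where we need to know that Assumption~A2 applies not only to $\Omega_{\textrm{up}}$ but to the perturbation $\Omega$ --- this is precisely why A2 was formulated for all $\cC^{s_*}$-perturbations within a $\delta_0$-ball (cf. Remark~\ref{remro}), and why we impose $\delta\le c'/C$.
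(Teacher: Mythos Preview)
Your proof is correct and follows essentially the same route as the paper: transversality from A2 gives a lower bound on a directional derivative, which yields the sublevel-set estimate by slicing, and the distance estimate follows from the Lipschitz bound $|\nabla_\rho L_k|\le N(\chi+\delta)$. The only cosmetic difference is that the paper invokes the dichotomy A2$(i)$ with $a=b=\emptyset$ (first alternative gives $\Sigma(L_k,\kappa)=\emptyset$, second gives the transversality), whereas you go directly to the transversality clause stated in A2 just before item $(i)$; both routes are equivalent here.
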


\begin{proof} 
We only need to consider $\ka\le\delta_0$ since otherwise the result is trivial.
Since $\delta\le\delta_0$, 
using  Assumption A2$(i)$, with $a=b=\emptyset$,  we have, for each $k\not=0$, either that
$$ |\langle\Omega(\r), k\rangle|\ge \de_0\ge \ka\quad \forall \r\in \D$$
or that
$$ \p_{\mathfrak z} \langle\Omega(\r), k\rangle  \ \geq \delta_0\quad \forall \r\in \D
$$
(for some suitable choice of a unit vector $\mathfrak z$). The first case implies  
$\Sigma(L_{k},\ka)=\emptyset$. The second case implies  that 
 $\Sigma(L_k,\kappa)$
 has Lebesgue measure  $\lsim  \frac{\ka}{\de_0}$.
 Summing up over all $0<\ab{k}\le N$ gives the first statement. The second statement
 follows from the mean value theorem and the bound
$$\ab{\nabla_\r L_k(\r)}\le N(\chi+\delta).$$
\end{proof}

\begin{lemma}\label{lSmallDiv2}
Let 
$$L_{k,[a]}=\big(  \langle k,\Omega\rangle I - {\mathbf i} J A \big)_{[a]}.$$
There exists  a constant $C$ such that if \eqref{ass} holds, then,
$$\Leb\big(\bigcup_{\begin{subarray}{c} 0<\ab{k}\le N\\  [a] \end{subarray}} \Sigma(L_{k,[a]}(\ka)\big)
\le C N^{\exp} (\frac{\ka}{\delta_0})^{\frac1{{s_*}}}$$
and
$$\dist(\D\setminus \Sigma(L_{k,[a]},\ka),\Sigma(L_{k,[a]},\frac\ka2))>\frac{1}{C}\frac\ka{N\chi},$$
for any $\ka>0$.

(The exponent $\exp$ only depends  on $d$ and $\#\cA$. $C$ is an absolute constant that depends
on  $c$. $C$ also depend on $\sup_\D\ab{\Omega_{\textrm up}}$ and $\sup_\D\ab{H_{\textrm up}}$, but stays bounded
when these do.)

\end{lemma}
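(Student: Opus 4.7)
The plan is to follow the structure of the proof of Lemma~\ref{lSmallDiv1}, upgrading the scalar divisor $L_k$ to the matrix $L_{k,[a]}$ of dimension $m=2|[a]|\le 2(d_\Delta+\#\F)$. First I would dispose of the infinite tail of blocks in $\L_\infty$. For $[a]\subset\L_\infty$ the unperturbed operator $(\langle k,\Omega\rangle I-{\mathbf i}J A_{\textrm{up}})_{[a]}$ is block-diagonal with $2\times 2$ blocks $\langle k,\Omega\rangle I-{\mathbf i}\Lambda_{a'}J$ whose eigenvalues $\langle k,\Omega\rangle\pm\Lambda_{a'}$ are $\gsim\langle a\rangle^2$ as soon as $\langle a\rangle^2\ge C(N\chi+c)$, by \eqref{la-lb-ter}. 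The perturbation $A-A_{\textrm{up}}$ is of size at most $\delta\langle a\rangle^{-\vark}$ by \eqref{hypoB}, so the same inversion bound $\|L_{k,[a]}^{-1}\|\lsim\langle a\rangle^{-2}\le\kappa$ persists under \eqref{ass} provided $\kappa\le\delta_0$. This restricts the union to $|a|\lsim N^{1/2}$, leaving $\lsim N^{d/2}$ blocks in $\L_\infty$ plus the single distinguished block $[a]=\F$, for a total of $\lsim N^{\exp}$ pairs $(k,[a])$.

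Next, for each remaining $(k,[a])$ I will invoke the matching clause of Assumption A2 at the unperturbed level. When $[a]=\F$, clause A2(iii) with $[b]=\emptyset$ describes exactly $L_{k,\F,\textrm{up}}=\langle k,\Omega\rangle I-{\mathbf i}JH_{\textrm{up}}$, yielding either $\delta_0$-invertibility on all of $\D$ (in which case $\Sigma=\emptyset$ for $\kappa\le\delta_0$) or a unit direction $\mathfrak z$ and an integer $1\le j\le s_*$ with $|\partial_{\mathfrak z}^j\det L_{k,\F,\textrm{up}}(\rho)|\ge\delta_0\|L_{k,\F,\textrm{up}}\|_{\cC^j(\D)}\|L_{k,\F,\textrm{up}}\|_{\cC^0(\D)}^{m^2-2}$. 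When $[a]\subset\L_\infty$ the unperturbed determinant factors as $\det(\langle k,\Omega\rangle I+Q_{\textrm{up},[a]})\cdot\det(\langle k,\Omega\rangle I-Q_{\textrm{up},[a]})$, and A2(i) applied to each factor with $[b]=\emptyset$ (and the appropriate sign) provides the analogous dichotomy. In both cases, because $\delta\le c'/C\le\delta_0/C$ and by Cauchy-type estimates on the first $s_*$ derivatives of $\det L_{k,[a]}$, the derivative lower bound passes from $A_{\textrm{up}}$ to $A$ after replacing $\delta_0$ by $\delta_0/2$.

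The measure estimate is then obtained from the elementary inequality $\|L^{-1}\|\le C\|L\|^{m-1}/|\det L|$ (with $m$ bounded since $|[a]|\le d_\Delta+\#\F$): the level set $\Sigma(L_{k,[a]},\kappa)$ is contained in $\{\rho:|\det L_{k,[a]}(\rho)|\le C\kappa\|L_{k,[a]}\|^{m-1}\}$, and applying the multi-derivative sublevel bound of Lemma~\ref{v.112} to the real function $\rho\mapsto\det L_{k,[a]}(\rho)$ yields $\meas\Sigma(L_{k,[a]},\kappa)\le C(\kappa/\delta_0)^{1/s_*}$; summing over the $\lsim N^{\exp}$ pairs $(k,[a])$ proves the first inequality. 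The distance estimate follows from the Lipschitz bound $\|\nabla_\rho L_{k,[a]}\|\le N(\chi+\delta)\le 2N\chi$, which via the resolvent identity $L(\rho')^{-1}-L(\rho)^{-1}=L(\rho')^{-1}(L(\rho)-L(\rho'))L(\rho)^{-1}$ forces $\|L(\rho)-L(\rho')\|\ge\kappa/2$ whenever $\rho\in\Sigma(L_{k,[a]},\kappa/2)$ and $\rho'\notin\Sigma(L_{k,[a]},\kappa)$, giving $|\rho-\rho'|\gsim\kappa/(N\chi)$.

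The main obstacle is the bookkeeping in the previous paragraph: reconciling the factor $\|L\|^{m^2-2}$ supplied by A2(iii) with the $\|L\|^{m-1}$ appearing in the determinant-norm inequality, and simultaneously tracking the finite-order Cauchy perturbations of $\det L_{k,[a]}$ that connect the perturbed operator to the unperturbed one where A2 actually lives. Both are quantitative rather than conceptual issues, since $\|L_{k,[a]}\|\lsim N\chi+c$ is uniformly bounded and the excess polynomial factors in $N,\chi$ can be absorbed into the dimension-dependent constant $C$ (allowed to depend on $c$ and on $\sup_\D|\Omega_{\textrm{up}}|,\sup_\D|H_{\textrm{up}}|$), while the $\delta$-perturbation of the derivative bound is controlled by \eqref{ass} together with \eqref{hypoB} and the bound $\vark>0$.
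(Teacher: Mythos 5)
You correctly identify the tail argument (discarding $|a|\gtrsim N^{1/2}$), the distance estimate via the resolvent identity and the Lipschitz bound on $\nabla_\rho L_{k,[a]}$, and the treatment of the $\F$-block via A2(iii), Cramer's rule and Lemma~\ref{v.112}; those parts are sound and match the paper.

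The gap is in the treatment of the blocks $[a]\subset\L_\infty$. You propose to run the same determinant route there, writing that A2(i) with $[b]=\emptyset$ ``provides the analogous dichotomy'' and then feeding $\rho\mapsto\det L_{k,[a]}(\rho)$ into Lemma~\ref{v.112}. But A2(i) does not supply, and does not in any obvious way imply, a lower bound of the form $\ab{\p_{\mathfrak z}^{\,j}\det L(\rho)}\ge\delta_0\aa{L}_{\cC^j}\aa{L}_{\cC^0}^{m-2}$. That quantitative determinant statement is what A2(iii) provides, and A2(iii) is stated only for $a,b\in\F\cup\{\emptyset\}$. A2(i) gives only the quadratic-form bound $\ab{\langle v,\p_{\mathfrak z}L_{\textrm{up}}(\rho)v\rangle}\ge\delta_0$. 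Writing $\det L_{\textrm{up}}=\prod_i\mu_i$ with eigenvalue derivatives $\ab{\p_{\mathfrak z}\mu_i}\ge\delta_0$ does not yield a uniform lower bound on any fixed-order $\rho$-derivative of $\det L_{\textrm{up}}$ when several $\mu_j$ vanish simultaneously; the higher derivatives of the $\mu_i$ are not controlled from below, so Lemma~\ref{v.112} cannot be applied directly. This is precisely why the paper does not use the determinant for $a\in\L_\infty$.

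The paper's argument for $a\in\L_\infty$ is different and more economical: since $Q_{[a]}$ is Hermitian, the Hellmann–Feynman identity
$$\p_{\mathfrak z}\big(\langle k,\Omega(\rho)\rangle+\Lambda(\rho)\big)=\langle v(\rho),\p_{\mathfrak z}L(\rho)v(\rho)\rangle$$
for any eigenvalue $\Lambda(\rho)$ with associated unit eigenvector $v(\rho)$ converts the quadratic-form bound of A2(i), after the $\O(\delta)$ correction controlled by \eqref{hypoB} and \eqref{ass}, into a first-order derivative lower bound $\ab{\p_{\mathfrak z}(\langle k,\Omega\rangle+\Lambda)}\ge\delta_0/2$ on each individual eigenvalue. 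Since $L_{k,[a]}$ is Hermitian, $\aa{L^{-1}}$ is the reciprocal of the smallest $\ab{\mu_i}$, so each eigenvalue contributes a sublevel set of measure $\lsim\kappa/\delta_0$ and the block contributes $\lsim(\dim)\,\kappa/\delta_0$; no determinant, no $s_*$-th derivative, and no Cramer-type loss $\aa{L}^{m-1}$ enter. The determinant+\,Cramer machinery is reserved solely for the $\F$-block where ${\mathbf i}JH$ is not Hermitian and the eigenvalue calculus fails. To salvage your scheme for $\L_\infty$ you would need to prove a genuine transversality statement for $\det L_{k,[a]}$ from A2(i), which is extra nontrivial work that the Hermitian route avoids.
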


\begin{proof}  
Consider first $a\in\L_\infty$. Then  $L_{k,[a]}$ is conjugate to a sum of two Hermitian operators of the form
$$L=\langle k,\Omega\rangle I + Q_{[a]},$$
where $Q_{[a]}$  is  the restriction of $Q$ to $[a]\times [a]$ (see the discussion in section \ref{normalformmatrices}) . 

If we let
$$L_{\textrm up}= \langle k,\Omega\rangle I + (Q_{\textrm up}) _{[a]},$$
where $ Q_{\textrm u p}$ comes from the unperturbed Hamiltonian,
then it follows, from \eqref{hypoB} and  \eqref{ass},   that   
$$\aa{L-L_{\textrm up}}_{\cC^{{1}}(\D)} \leq  \de\leq \cte\delta_0.$$
If now $L_{\textrm up}$ is $\delta_0$-invertible, then  this  implies that
$L$ is $\frac{\delta_0}2$-invertible. 

Otherwise,  by  assumption A2$(i)$, there exists a unit vector ${\mathfrak z}$ such that 
$$\ab{\langle v,\p_{\mathfrak z}  L_{\textrm up}(\r) v\rangle}\ge \delta_0$$
for any unit vector $v$. Since $Q_{[a]}$ is Hermitian we have, for any eigenvalue  $\Lambda(\rho)$, $\cC^1$ in the direction 
${\mathfrak z}$, and any associated unit eigenvector $v(\rho)$,
$$\p_{\mathfrak z}\big( \langle k,\Omega(\r)\rangle+\Lambda(\r)\big)=
\langle v(\r),\p_{\mathfrak z}  L(\r) v(\r)\rangle
=\langle v(\r),\p_{\mathfrak z}  L_{\textrm up}(\r) v(\r)\rangle +\O( \delta).
$$

Hence
$$\ab{\p_{\mathfrak z}\big( \langle k,\Omega(\r)\rangle+\Lambda(\r)\big)}\ge \delta_0-\Cte \delta \ge \frac{\delta_0}2,$$
which implies that $\ab{ \langle k,\Omega(\r)\rangle+\Lambda(\r)}$ is larger than $\ka$ outside a set
of  Lebesgue measure $\lsim \frac\ka{\delta_0}$. Since $L(\rho)$ is Hermitian this implies that 
$$\Leb \Sigma(L,\ka))\lsim   \ab{a}^{d}\frac\ka{\delta_0}$$
--  the dimension of $L$ is $\lsim   \ab{a}^{d}$. (This argument is valid  if $\Lambda(\r)$ is $\cC^1$ in the direction $\mathfrak z$ which can always be assumed
when $Q$ is analytic in $\r$. The non-analytic case follows by analytical approximation.)

We still have to sum up over, a priori, infinitely many $[a]$'s. However, since
$|\langle k, \Omega(\r)\rangle |\lsim \ab{k}\lsim N$, it follows,  by \eqref{la-lb-ter},  that
$$|\langle k, \Omega(\r)\rangle  + \Lambda(\r)|\geq \ab{\Lambda_a(\r)}-\delta-\Cte\ab{k}
\ge \ab{a}^{2}- c \langle a \rangle ^{-\beta2} -\delta-\Cte\ab{k}$$
 for some appropriate $a\in[a]$. Hence $\ab{ \langle k,\Omega(\r)\rangle+\Lambda(\r)}$ is larger than $\ka$
for  $ |a |\gsim N^{\frac1{2}}$. Summing up  over all $0<\ab{k}\le N$ and 
all  $ |a |\lsim N^{\frac1{2}}$ gives a set whose complement $\Sigma$ verifies the estimate.

\medskip

Consider now $a\in\F$ and let $L(\r)=\big(\langle k,\Omega\rangle I- {\mathbf i}JH\big)$. It follows, by \eqref{hypoB} and  \eqref{ass}, that
$$\aa{L-L_{\textrm up}}_{\cC^{{s_*}}}\le   \de\leq  \frac12 \delta_0,$$ 
where $L_{\textrm up}(\r)=\big(\langle k,\Omega\rangle I- {\mathbf i}JH_{\textrm up}\big)$  -- now we are not dealing with
an Hermitian operator.

If now $L_{\textrm up}$ is $\delta_0$-invertible, then
$L$ will be  $\frac{\delta_0}2$-invertible. Otherwise, by  assumption A2(iii),
there exists a unit vector ${\mathfrak z}$ and an integer  $1\le j\le {s_*}$ such that
$$\ab{ \p_{\mathfrak z}^j \det L_{\textrm up}(\r) }\ge \delta_0 
\aa{L_{\textrm up}}_{\cC^{j}(\D)}\aa{L_{\textrm up}}_{\cC^{0}(\D)}^{m-2}, \quad \forall \r\in \D.$$
Since, by convexity estimates (see \cite{Ho}),
$$\ab{ \p_{\mathfrak z}^j \det L_{\textrm up}(\r) }\le \Cte \aa{L_{\textrm up}}_{\cC^{j}(\D)}\aa{L_{\textrm up}}_{\cC^{0}(\D)}^{m-1}$$
and
$$\ab{ \p_{\mathfrak z}^j(\det L(\r)-\det L_{\textrm up}(\r))} \le \Cte  \delta\big(\aa{L_{\textrm up}}_{\cC^{j}}+\delta \big)
( \aa{ L}_{\cC^{0}(\D)}     +\delta)^{m-2},$$
this implies that 
$$\ab{ \p_{\mathfrak z}^j \det L(\r) }\ge (\delta_0-\Cte\delta)\aa{ L_{\textrm up}}_{\cC^{1}(\D)}\aa{ L_{\textrm up}}_{\cC^{0}(\D)}^{m-1}, \quad \forall \r\in \D,$$
which is $\ge\frac{\delta_0}2$ if $\delta$ is sufficiently small.

Then, by Lemma  \ref{lTransv1},
$$
{\ab{\det L(\r)}}\ge \ka\,  { \aa{ L }_{\cC^{j}}^{m-1} } \,,
$$
outside a set of Lebesgue measure
$$\le \Cte (\frac\ka{\de_0})^{\frac1j}.$$
Hence, by Cramer's rule, 
$$
\Leb \Sigma(L,\ka)\le \Cte (\frac\ka{\de_0})^{\frac1j}
\le  \Cte (\frac\ka{\de_0})^{\frac1j}.$$
Summing up  over all $\ab{k}\le N$  gives the first estimate.

\medskip

The second estimate follows from the mean value theorem and the bound
$$\ab{\nabla_\r L_{k,[a]}(\r)}\le N(\chi+\delta).$$
\end{proof}

\begin{lemma}\label{lSmallDiv3}
Let  
$$L_{k,[a],[b]}=(\langle k,\Omega\rangle I-{\mathbf i}\cAd_{JA})_{[a]}^{[b]}.$$
There exists  a constant $C$ such that if \eqref{ass} holds, then,
$$\bigcup_{\begin{subarray}{c}0<\ab{k}\le N\\ [a],[b]\end{subarray}} \Sigma(L_{k,[a],[b]},\ka)
\le C (N \Delta)^{\exp}(\frac{\ka}{\delta_0})^{\alpha}(\frac{\chi}{\delta_0})^{1-\alpha}$$
and
$$\dist(\D\setminus \Sigma(L_{k,[a],[b]},\ka),\Sigma(L_{k,[a],[b]},\frac\ka2))>
\frac{1}{C}\frac\ka{\Delta^{\exp}} N\chi,$$
for any $\ka>0$. Here
$$\alpha=\min\big(\frac{\beta_2\vark}{\beta_2\vark+2d(\beta_2+\vark)},\frac1{s_*}\big).$$

(The exponent $\exp$ only depends  on $d$, $\#\cA$ and $\tau,\beta_2,\vark$. 
$C$ is an absolute constant that depends on $c,\tau,\beta_2,\beta_3$ and $\vark$. $C$ also depend on $\sup_\D\ab{\Omega_{\textrm up}}$ and $\sup_\D\ab{H_{\textrm up}}$, but stays bounded
when these do.)

\end{lemma}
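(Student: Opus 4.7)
The argument follows the template of Lemmas~\ref{lSmallDiv1} and \ref{lSmallDiv2}: for each relevant $(k,[a],[b])$, either the ``unperturbed'' operator
$$L_{k,[a],[b]}^{\textrm{up}}=\bigl(\langle k,\Omega\rangle I-{\mathbf i}\,\cAd_{JA_{\textrm{up}}}\bigr)_{[a]}^{[b]}$$
is $\delta_0$-invertible, in which case the perturbation bound \eqref{hypoB} combined with \eqref{ass} forces $L_{k,[a],[b]}$ itself to be $\frac{\delta_0}{2}$-invertible and the ``bad'' set is empty; or Assumption~A2 supplies a unit direction $\mathfrak z$ and an order $j\le s_*$ along which $\p_{\mathfrak z}^j\det L^{\textrm{up}}$ is large relative to $\|L^{\textrm{up}}\|^{m-1}$. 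In that case a perturbation argument (as in the proof of Lemma~\ref{lSmallDiv2}, using \eqref{hypoB}, convexity estimates, and $\delta\le Cc'$) transfers the bound to $\det L_{k,[a],[b]}$, and then Lemma~\ref{lTransv1} followed by Cramer's rule gives
$$\Leb\Sigma(L_{k,[a],[b]},\ka)\lesssim (\ka/\delta_0)^{1/j}\,.$$
The Lipschitz statement on $\Sigma$ follows, as before, from the mean value theorem and the bound $\|\nabla_\r L_{k,[a],[b]}\|\lesssim (N+\Delta^{\exp})(\chi+\delta)$, using that the dimension of a block is controlled by $d_\Delta\le C\Delta^{\exp}$ (Proposition~\ref{blocks}).

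The work is then in splitting into cases and summing. \textbf{(a) $[a]=[b]=\F$:} apply A2(iii) directly with $m^2=(2\#\F)^2$, giving a contribution of size $\lesssim N^{\exp}(\ka/\delta_0)^{1/s_*}$. \textbf{(b) $[a]=\F$ and $[b]\subset\L_\infty$ (or vice versa):} apply A2(ii) at $\lambda=\Lambda_{b'}(\rho)$ for each $b'\in[b]$, and combine with the Lipschitz transfer above; the sum over the finitely many eigenvalues in $[b]$ is controlled because, by \eqref{la-lb-ter}, only $|b|\lesssim\sqrt{N}$ can contribute (for larger $|b|$ the divisor $\langle k,\Omega\rangle+\lambda+O(\vark)$ is of order $|b|^2$ and hence bounded below). \textbf{(c) $[a],[b]\subset\L_\infty$:} this is the main case. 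First use \eqref{la-lb-ter} to restrict to $|a|,|b|\lesssim\sqrt{N}$ (otherwise $|\Lambda_a-\Lambda_b|$ is too large for $|\langle k,\Omega\rangle|\le CN$). For fixed block pair, $L^{\textrm{up}}_{k,[a],[b]}$ is conjugate to the sum of two Hermitian operators whose spectrum is $\{\langle k,\Omega\rangle+\Lambda_{a'}(\r)\pm\Lambda_{b'}(\r)\}$; we apply A2(i) to each pair $(a',b')$ to get a direction $\mathfrak z$ giving transversality of each eigenvalue, hence $\Leb\Sigma\lesssim d_\Delta^{2d}(\ka/\delta_0)$ per block pair.

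The final exponent $\alpha$ arises from a truncation optimization in case (c). Let $M\ge 1$ be a cutoff: for $\min(|a|,|b|)\ge M$, the combined spectral asymptotics \eqref{la-lb} and the matrix estimate \eqref{hypoB} show that $L_{k,[a],[b]}$ differs from its integer-part-determined value by $O(M^{-\beta_2}+M^{-\vark})$, so when $[a]\ne[b]$ the divisor is bounded below unless a genuine integer-level resonance occurs, which by the Melnikov condition A3 and Lipschitz transfer in $\r$ gives a contribution summable using $\tau,\beta_3$. For $\min(|a|,|b|)\le M$ the number of block pairs is $\lesssim M^{2d}$ (times $N^{\exp}$ for $k$) and each contributes $\lesssim(\ka/\delta_0)$ to the measure. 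Optimising $M^{-\min(\beta_2,\vark)}\sim\ka$ against $M^{2d}\ka/\delta_0$ and rearranging yields the power $\alpha=\frac{\beta_2\vark}{\beta_2\vark+2d(\beta_2+\vark)}$. Taking the minimum with $1/s_*$ (from the $\F$ cases) gives the stated exponent.

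The hardest part will be case (c): carefully carrying out the two-scale truncation so that the infinitely many block pairs contribute a finite total measure, keeping track of the factor $(\chi/\delta_0)^{1-\alpha}$ that arises because the transversality direction from A2 for Hermitian blocks only gives the ``cheap'' $(\ka/\delta_0)$ bound per pair, and this must be traded against the number of pairs using the decay rates $\beta_2$ (for the symbols) and $\vark$ (for the matrix). Threading the Melnikov bound \eqref{melnikov} to cover integer-level resonances when $\Delta$ is very large — so that blocks are large and $d_\Delta$ must be absorbed into $\Delta^{\exp}$ — is the main technical nuisance, and explains the appearance of $\tau,\beta_3$ in the implicit constant.
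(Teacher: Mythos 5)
Your outline follows the paper's structure faithfully: the same three-case split (both blocks in $\F$, one in $\F$, both in $\L_\infty$), the same appeals to A2(iii), A2(ii) and A2(i) respectively, the conjugation of $L_{k,[a],[b]}$ to sum/difference operators in the infinite case, the restriction to $\ab{a},\ab{b}\lsim\sqrt N$ via \eqref{la-lb-ter}, and the optimization in a cutoff $M$ together with the decay rates $\beta_2,\vark$ that produces the exponent $\alpha$. All of that matches the paper. The Lipschitz-in-$\r$ estimate also matches.

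However, there is a genuine gap in your account of case (c), precisely at the point where one must sum over infinitely many block pairs. You attribute the control of the ``integer-level resonances'' $\langle k,\Omega\rangle+(\ab a^2-\ab b^2)$ to ``the Melnikov condition A3 and Lipschitz transfer in $\r$''. This does not work: A3 holds only at $\r=0$, and the Lipschitz transfer to general $\r\in\D$ costs an error $\sim\ab k\chi$, so A3 can only give a lower bound when $\ab k\le K\approx(\beta_3/\chi)^{1/(\tau+1)}$. That is precisely the role A3 plays in the paper --- it eliminates the small-$\ab k$ range, and the resulting lower bound $N\ge K$ (hence $N\chi^{1/\exp}\gsim 1$) is what is used to convert a residual power of $\delta_0^{-1}$ into the factor $(\chi/\delta_0)^{1-\alpha}$. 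For the remaining range $K\le\ab k\le N$, the infinitely many integer shifts are handled by a different mechanism, which your proposal omits: the $\delta_0$-transversality of $\r\mapsto\langle k,\Omega(\r)\rangle$ in the direction $\mathfrak z$ given by Assumption A2 shows that, outside a set $\Sigma_1$ of measure $\lsim N^{\exp}\ka'/\de_0$, one has $\ab{\langle k,\Omega(\r)\rangle+n}\ge 2\ka'$ uniformly over all integers $n$; then \eqref{la-lb} combined with \eqref{hypoB} upgrades this to a lower bound on $\ab{\langle k,\Omega\rangle+\alpha-\beta}$ for all blocks with $\ab a\ge M$. Without this step the a priori infinite sum over $([a],[b])$ cannot be truncated, and your ``summable using $\tau,\beta_3$'' claim has no basis. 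Everything else in your outline is sound, but this single mechanism is the technical crux of the case $a,b\in\L_\infty$, and it must be supplied.
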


\begin{proof} 
Consider first  $a,b \in\F$. This case is treated as the operator
$L(\r)=\big(\langle k,\Omega\rangle I- {\mathbf i}JH\big)$  in the previous lemma.

\medskip

Consider then  $a\in\L_\infty$ and $b \in\F$. Then  $L_{k,[a]}$ is conjugate to a sum of two operators of the form
$$X\mapsto \langle k,\Omega(\r) \rangle X + Q_{[a]}(\r)X+X{\mathbf i}JH(\r)$$
(see the discussion in section \ref{normalformmatrices}). This operator in not Hermitian, but only ``partially'' Hermitian: it decomposes as an orthogonal sum of operators of the form $L(\r,\Lambda(\rho))$, where
$$L(\r,\lambda):X\mapsto \langle k,\Omega(\r) \rangle X+\lambda X+ {\mathbf i}XJH(\r),$$
and $\Lambda(\rho) $ is an eigenvalue of $Q_{[a]}(\r)$.

If we let
$$L_{\textrm up}(\r,\lambda):X\mapsto \langle k,\Omega(\r) \rangle X + \lambda X+X{\mathbf i}JH_{\textrm up}(\r),$$
then it follows, from \eqref{hypoB} and  \eqref{ass}, that   
$$\aa{L(\cdot,\lambda)-L_{\textrm up}(\cdot,\lambda)}_{\cC^{{1}}(\D)} \leq  \de\leq \cte\delta_0.$$
If $L_{\textrm up}(\rho,\Lambda_a(\rho))$ is $\delta_0$-invertible for all $a\in[a]$, then this  implies that, for any
eigenvalue $\Lambda(\rho)$ of $Q_{[a]}(\r)$,
$L(\rho,\Lambda(\rho))$ is $\frac{\delta_0}2$-invertible.

Otherwise, by  Assumption A2$(ii)$, there exists a unit vector ${\mathfrak z}$ such that
$$
\ab{\p_{\mathfrak z}P_{\textrm up}(\r,\Lambda_a(\r))+\p_\lambda P_{\textrm up}(\r,\Lambda_a(\r))
\langle v,\p_{\mathfrak z} Q_{\textrm up}(\r) v\rangle}
\ge \delta_0\aa{L_{\textrm up}}_{\cC^{1}(\D)}\aa{L_{\textrm up}}_{\cC^{0}(\D)}^{m-2}$$
for all $\r\in \D$,  all $a\in[a]$ and for any unit-vector $v\in (\C^2)^{[a]}$.
If now
$$P(\r,\lambda)=  \det L(\r,\lambda),$$
then, for any eigenvalue  $\Lambda(\rho)$, $\cC^1$ in the direction 
${\mathfrak z}$, and any associated unit eigenvector $v(\rho)$,
$$\frac{d}{d_{\mathfrak z}}P(\r,\Lambda(\r))=
\p_{\mathfrak z}P(\r,\Lambda(\r))+\p_\lambda P(\r,\Lambda(\r))\langle v(\r),\p_{\mathfrak z}Q(\r)v(\r)\rangle=$$
$$=\p_{\mathfrak z}P_{\textrm up}(\r,\Lambda_a(\r))+\p_\lambda P_{\textrm up}(\r,\Lambda_a(\r))
\langle v(\r),\p_{\mathfrak z}Q_{\textrm up}(\r)v(\r)\rangle
+\O(\delta  \aa{L_{\textrm up}}_{\cC^{1}(\D)}\aa{L_{\textrm up}}_{\cC^{0}(\D)}^{m-1}).$$
Hence
$$
\ab{\frac{d}{d_{\mathfrak z}}P(\r,\Lambda(\r))}\ge\frac{\delta_0}2  \aa{L_{\textrm up}}_{\cC^{1}(\D)}\aa{L_{\textrm up}}_{\cC^{0}(\D)}^{m-2}.$$

Then 
$$\ab{ \frac {P(\r,\Lambda(\r))} {|| L||_{\cC^{0}(\D)}^{m-1}}}\ge\ka$$
outside a set of Lebesgue measure $\lsim\frac\ka{\de_0}$.
Hence, by Cramer's rule, 
$$
\Leb \Sigma(L,\ka)\le \Cte \frac\ka{\de_0}.$$
Since $|\langle k, \Omega(\r)\rangle |\lsim \ab{k}\lsim N$, it follows,  by \eqref{la-lb-ter},  that for any eigenvalue $\alpha(\r)$ of $JH(\r)$,
$$|\langle k, \Omega(\r)\rangle  + \Lambda(\r)+\alpha(\r)|\geq \ab{\Lambda_a(\r)}  -\delta-\Cte\ab{k}
\ge \ab{a}^{2}-c \langle a \rangle ^{-\beta_1} -\delta -\Cte\ab{k}$$
 for some appropriate $a\in[a]$. Hence, 
$\Sigma(L,\ka)=\emptyset$ for $ |a |\gsim N^{\frac1{2}}$.

Summing up  over all $0<\ab{k}\le N$ and 
all  $ |a |\lsim N^{\frac1{2}}$ gives the first estimate.

\medskip

Consider finally $a,b \in\L_\infty$. Then  $L_{k,[a],[b]}$  is conjugate to a sum of four operators of the forms
$$X\mapsto \langle k,\Omega\rangle X + Q_{[a]}X+X{}^tQ_{[b]}$$
and
$$X\mapsto \langle k,\Omega\rangle X + Q_{[a]}X-XQ_{[b]}.$$
These operators are Hermitian with respect to the Hilbert-Schmidt norm on the space of matrices $X$. Changing from the operator norm to the Hilbert-Schmidt norm  (and conversely) changes any estimate by a factor that depends on the dimension of the space of matrices $X$, which, we recall, is bounded by some power of $\Delta$.

With this modification, the first operator is treated exactly as the operator $X\mapsto \langle k,\Omega\rangle X + Q_{[a]}X$
in the previous lemma, so let us concentrate on the second one, which we shall call $L=L_{k,[a],[b]}$. 
It follows as in the previous lemma
that  the Lebesgue measure of $\Sigma(L,\ka)$ is 
$\lsim   (\ab{a}\ab{b})^{d} \frac{\ka}{\delta_0}$  --
recall that the operator is of dimension $\lsim  (\ab{a}\ab{b})^{2d} $.

The problem now is the measure estimate of $\bigcup\Sigma(L_{k,[a],[b]},\ka)$ since, a priori, there may be infinitely many 
$\Sigma(L_{k,[a],[b]},\ka)$ that are non-void. 
We can assume without restriction that $\ab{a}\le \ab{b}$.
Since $|\langle k, \Omega(\r)\rangle |\le\Cte \ab{k}\le\Cte N$, it is enough to 
consider  $\ab{b}-\ab{a} \le\Cte N$.

Suppose first that $[a]$ and $[b]$ are $\not=[0]$. Let $\alpha(\rho)$ and $\beta(\rho)$ be eigenvalues of $Q_{[a]}(\rho)$ and $Q_{[b]}(\rho)$ respectively, and chose
$a,b$ such that
$$\ab{\alpha(\rho)-\Lambda_a(\rho)}\le \de\frac1{\langle a \rangle^\varkappa},\quad
\ab{\beta(\rho)-\Lambda_b(\rho)}\le \de\frac1{\langle b \rangle^\varkappa}.$$
 Using Assumption A3 now gives
   $$|\langle k,\Omega(\r)\rangle \ +\alpha(\r)-\beta(\r)|\ge
   |\langle k,\Omega_{\textrm up}(\r)\rangle \ +\Lambda_a(\r)-\Lambda_b(\r)|-\ab{k}\de-2\de\frac1{\langle a\rangle^\vark} $$
   $$
  \ge |\langle k,\Omega_{\textrm up}(0)\rangle \ +\Lambda_a(0)-\Lambda_b(0)|- \chi(\ab{k}+2)-\de(\ab{k}+2)\ge
   \frac{\beta_4}{\ab{k}^\tau}-6\ab{k}\chi,$$
   and this is $\ge\ka$ unless
   $$\ab{k}\ge K\approx(\frac{\beta_3}{\chi})^{\frac1{\tau+1}}.$$
Recall that $\chi\ge\de_0$, by convention, and that $\ka\le\de_0$, because otherwise the lemma is trivial.

From now on we only consider  $K\le\ab{k}\le N$. By Assumption A2, there exists a unit vector ${\mathfrak z}$ such that
$$
\ab{\p_{\mathfrak z}\langle k,\Omega(\r)\rangle }\ge\de_0.$$
Since  $\ab{k}\le N$ and  $\ab{a}^2-\ab{b}^2$ are integers, it follows that (for any $\ka'$)
 $$|\langle k,\Omega(\r)\rangle \ +\ab{a}^2-\ab{b}^2|\ge 2\ka'$$
 for  all $a,b$ and all $\r$ outside a set of Lebesgue measure $\lsim N\frac{\ka'}{\de_0}$. Summing up over all 
 $K\le\ab{k}\le N$  gives a set $\Sigma_1$ of  Lebesgue measure 
 $$\lsim N^{\exp}\frac{\ka'}{\de_0}.$$
 
 By \eqref{la-lb} it follows that, for
 $\r$ outside of $\Sigma_1$,
  $$|\langle k,\Omega(\r)\rangle \ +\Lambda_a(\r)-\Lambda_b (\r)|\ge \ka',$$
  if just
  $$\ab{a}^{\beta_2}\ge2\frac{c}{\ka'}.$$
Then
 $$|\langle k,\Omega(\r)\rangle \ +\alpha(\r)-\beta(\r)|\ge
 \ka'-2\de\frac1{\langle a\rangle^\vark}$$
 which is $\ge\ka$ if $\ka'\ge 2\ka$ and
   $$\ab{a}^\vark\ge 2(\frac{\de}{\ka'}).$$
   
  Let
  $$M=2\max(   (\frac{c}{\ka'})^{\frac1{\beta_2}} ,   (\frac{\de_0}{\ka'})^{\frac1{\vark}}   ).$$
  Then it only remains to consider $[a]$ and $[b]$ with $\ab{a}\le M$ and $\ab{b}\le M+\Cte N$.
  We have seen above that the the Lebesgue measure of each $\Sigma(L_{k,[a],[b]},\ka)$ is 
$\lsim   (\ab{a}\ab{b})^{d} \frac{\ka}{\delta_0}$. Summing up over all these $a$ and $b$ gives 
a set $\Sigma_2$ of  Lebesgue measure 
$$\lsim N^{\exp}M^{2d}\frac{\ka}{\de_0}.$$ 

Suppose now that $[a]$ or $[b]$ is $=[0]$. Then $\ab{a}$ and $\ab{b}$ are $\lsim c+N\lsim N$.
Summing up over all these $a$ and $b$ gives 
a set $\Sigma_3$ of  Lebesgue measure 
$$\lsim N^{\exp}\frac{\ka}{\de_0}.$$

The union of $\Sigma_1$, $\Sigma_2$ and $\Sigma_3$ has Lebesgue measure
$$\lsim N^{\exp} \big(      \frac{\ka'}{\de_0} + M^{4d}\frac{\ka}{\de_0}\big) \lsim 
N^{\exp} \big(      \frac{\ka'}{\de_0} +  (\frac1{\ka'})^\theta\frac{\ka}{\de_0}\big)\qquad \theta=4d(\frac1{\beta_2}+\frac1{\vark}).$$ 
Take now $\ka'=\ka^{\frac1{1+\theta}}$ and observe that $N\chi^{\frac1\tau}\gsim 1$ (because $N\ge K$). Then the bound becomes
$$\lsim N^{\exp}    (\frac{\ka}{\de_0})^{\frac1{1+\theta}}  (\frac{\chi}{\de_0})^{\frac\theta{1+\theta}} $$
 ( with a new and larger exponent $\exp$).

\end{proof}

\section{Homological equation}\label{s5}

Let $h$ be a  normal form Hamiltonian \eqref{normform},
$$
h(r,w,\r)=\langle \Omega(\r), r\rangle +\frac 1 2\langle w, A(\r) w\rangle\in\NF_{\varkappa}(\Delta,\delta)$$
-- recall the convention \eqref{Conv} -- and assume $\vark>0$ and 
 \be\label{ass1}
 \delta \le \frac{1}{C}c' ,\ee
 where $C$ is to be determined. Let
$$\ga=(\ga,m_*)\ge \ga_*=(0,m_*).$$

\begin{remark}\label{rAbuse}
Notice the abuse of notations here. It will be clear from the context when $\ga$ is a two-vector, like in $\aa{\cdot}_{\ga,\vark}$,
and when it is a scalar, like in $e^{\ga d}$.
\end{remark}

Let $f\in \cT_{\ga,\varkappa,\D}(\s,\mu)$. In this section we shall construct a jet-function  $S$ that solves the 
{\it non-linear
\footnote{\ ``non-linear'' because the solution depends non-linearly on $f$}
 homological equation}
\be\label{eqNlHomEq}
\{ h,S \}+ \{ f-f^T,S \}^T+f^T=0\ee
as good as possible  -- the reason for this will be explained in the beginning of the next section. 
In order to do this we shall start by analysing  the {\it homological equation}
\be \label{eqHomEq}
\{ h,S \}+f^T=0.
\ee
We shall solve this equation modulo some ``cokernel'' and modulo an ``error''.

\medskip 

\subsection{Three components of the homological equation}\label{ssFourComponents}

Let us write 
$$f^T(\theta,r,w)=f_r(r,\theta)+\langle f_w(\theta),w\rangle+\frac 1 2 \langle f_{ww}(\theta)w,w \rangle$$
and recall that, by Proposition \ref{lemma:jet}, $f^T\in \cT_{\ga,\varkappa,\D}(\s,\mu)$.
Let
$$
S(\theta,r,w)=S_r(r,\theta)+\langle S_w(\theta),w\rangle+
\frac 1 2 \langle S_{ww}(\theta)w,w \rangle,$$
where  $f_r$ and $S_r$ are affine functions in $r$ -- here we have not indicated the dependence on $\r$.

Then the Poisson bracket $\{h, S\}$ equals
\begin{multline*}
-\big( \p_{\Omega} S_r(r, \theta)  + \langle \p_{\Omega} S_w(\theta), w\rangle
+ \frac12 \langle \p_{\Omega} S_{ww}(\theta),w\rangle + \\
+\langle AJ S_w(\theta),w\rangle + \frac12\langle AJ S_{ww}(\theta)w,w\rangle  - \frac12\langle  S_{ww}(\theta)JAw,w\rangle
\end{multline*}
where $\p_{\Omega}$ denotes the derivative of the angles $\theta$ in direction $\Omega$.
Accordingly the  homological equation \eqref{eqHomEq}  
 decomposes into three linear equations:
$$\left\{\begin{array}{l}      
\p_{\Omega} S_r(r,\theta) =f_r(r,\theta),\\  
 \p_{\Omega} S_w(\theta)   -AJ  S_w(\theta)= f_w(\theta),\\ 
 \p_{\Omega} S_{ww}(\theta)   - AJS_{ww}(\theta)  +S_{ww}(\theta)JA=f_{ww}(\theta).
\end{array}\right.$$

\subsection{The first equation}\label{homogene}

\begin{lemma}\label{prop:homo12}
There exists   constant $C$ such that if \eqref{ass1} holds, then,
 for any  $N\ge1$ and $\ka>0$,
 there exists  a closed  set $\D_1= \D_1(h,\ka,N)\subset \D$,  satisfying
$$\Leb (\D\setminus \D_1)\leq C N^{\exp} \frac{\ka}{\de_0}$$
and there exist $\cC^{{s_*}}$ functions $S_r$ and $R_r$ on $\C^{\cA}\times\T^\cA\times \D\to\C$,  
real holomorphic in $r,\theta$, such that for all $\r\in\D_1$
 \be\label{homo1}
 \p_{\Omega(\r)}S_r (r,\theta,\r)  =f_r(r,\theta,\r)-\hat f_r(r,0,\r)
 -R_r(\theta,\r)
\quad  \footnote{\ $\hat f_r(r,0,\r)$ is the $0$:th Fourier coefficient, or the mean value, of the function $\theta\mapsto  f_r(r,\theta,\r)$}
\ee
and for all $(r,\theta,\r)\in \C^{\cA}\times \T^\cA_{\s'}\times \D$, $\ab{r}<\mu$, $\s'<\s$, and $|j|\le{{s_*}}$
 \begin{align} \label{homo1S}
 |\p_\r^jS_r(r,\theta,\r)|\leq &
C \frac{1}{\ka(\s-\s')^{n}}\big(N\frac{\chi}{\ka}\big)^{|j|} 
  |f^T|_{\begin{subarray}{c}\s,\mu\ \ \\ \ga, \vark,\D  \end{subarray}} ,\\ \label{homo1R}
 |\p_\r^j R_r(r,\theta,\r)|\leq & C\frac{  e^{- (\s-\s')N}}  {  (\s-\s')^{n}}|f^T|_{\begin{subarray}{c}\s,\mu\ \ \\ \ga, \vark,\D  \end{subarray}}\,. 
\end{align}
Moreover, $S_r(\cdot,\r)=0$ for $\r$ near the boundary of $\D$.

(The  exponent $\exp$  only depends on $n=\#\cA$, and $C$ is an absolute constant.)
\end{lemma}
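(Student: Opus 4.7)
The plan is to solve the equation component-by-component in Fourier. Expanding $f_r(r,\theta,\rho)=\sum_{k\in\Z^{\cA}}\hat f_r(r,k,\rho)e^{\mathbf{i}\langle k,\theta\rangle}$ and writing $S_r$ similarly, the equation $\partial_{\Omega(\rho)}S_r=f_r-\hat f_r(r,0,\rho)-R_r$ reduces, mode by mode, to
$$
\mathbf{i}\langle k,\Omega(\rho)\rangle \hat S_r(r,k,\rho)=\hat f_r(r,k,\rho)-\hat R_r(r,k,\rho),\qquad k\neq0.
$$
The natural choice is to truncate at $|k|\le N$ and to put the high-frequency tail into $R_r$: set $\hat R_r(r,k,\rho)=\hat f_r(r,k,\rho)$ for $|k|>N$ (so the tail inherits an exponentially small factor $e^{-(\s-\s')N}$ from the analyticity of $f$ in a strip of width $\s$), and for $0<|k|\le N$ set $\hat S_r(r,k,\rho)=(\mathbf{i}\langle k,\Omega(\rho)\rangle)^{-1}\hat f_r(r,k,\rho)$ on the set where the divisor is not too small.

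The second ingredient is the small-divisor control. Applying Lemma~\ref{lSmallDiv1} to $L_k(\rho)=\langle k,\Omega(\rho)\rangle$ defines, for each $N$ and $\ka$, a closed ``good set''
$$
\D_1=\D\setminus\bigcup_{0<|k|\le N}\Sigma(L_k,\ka),
$$
of the claimed measure, together with a buffer of size $\gsim \ka/(N\chi)$ separating $\D_1$ from the ``very bad set'' $\bigcup_{|k|\le N}\Sigma(L_k,\ka/2)$. To meet the boundary-vanishing requirement and to obtain genuine $\cC^{{s_*}}$ bounds valid on all of $\D$ (not just on $\D_1$), I will multiply $\hat S_r(r,k,\cdot)$ by a smooth cutoff $\chi_k(\rho)$ equal to $1$ on the set $\{|L_k(\rho)|\ge \ka\}$, supported in $\{|L_k(\rho)|\ge \ka/2\}$, and with derivatives of order $j$ bounded by $C_j(N\chi/\ka)^{j}$; such $\chi_k$ exists by the buffer estimate from Lemma~\ref{lSmallDiv1}. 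A further global cutoff in $\rho$ near $\partial\D$ then ensures $S_r(\cdot,\rho)\equiv 0$ near the boundary, at no extra cost in the derivative bounds.

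The estimates are then routine book-keeping. For $|k|\le N$, differentiation of $\chi_k(\rho)\langle k,\Omega(\rho)\rangle^{-1}\hat f_r(r,k,\rho)$ produces at worst $|j|$ factors of $(N\chi/\ka)$ from derivatives of $L_k^{-1}$ and of $\chi_k$ (here we use $|\partial^{\ell}\Omega|\le\chi$); summing the resulting geometric series in $k$ over $|k|\le N$ against the analytic Fourier bound $|\hat f_r(r,k,\rho)|\le e^{-\s|k|}|f^T|$ yields a factor $C(\s-\s')^{-n}/\ka$ and produces \eqref{homo1S}. The tail estimate \eqref{homo1R} is the standard analytic-Paley--Wiener bound $\sum_{|k|>N}e^{-(\s-\s')|k|}|f^T|\lesssim (\s-\s')^{-n}e^{-(\s-\s')N}|f^T|$.

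The main (mild) obstacle is the derivative loss: each $\rho$-derivative costs a factor $N\chi/\ka$ because it hits a small divisor. This forces the cutoff construction described above, which is why Lemma~\ref{lSmallDiv1} is stated with both a measure estimate and a separation estimate between $\Sigma(L_k,\ka)$ and $\Sigma(L_k,\ka/2)$; without the latter one could not control $\partial^{j}_{\rho}\chi_k$. Everything else is standard Fourier analysis and summation.
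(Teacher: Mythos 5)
Your proposal is correct and takes essentially the same route as the paper: Fourier truncation at $|k|\le N$, inversion of $L_k(\rho)=\langle k,\Omega(\rho)\rangle$ on the good set from Lemma~\ref{lSmallDiv1}, smooth cutoffs built from the separation estimate (which the paper constructs via Lemma~\ref{lExtension}) to absorb the $N\chi/\ka$ loss per $\rho$-derivative, and a second cutoff near $\partial\D$ to enforce boundary vanishing. The estimates and summation are identical to the paper's.
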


\begin{proof} Written in Fourier components the 
equation \eqref{homo1} then becomes, for $k\in \Z^{\cA}$,
$$L_k(\r)\hat S(k)=:
 \langle k, \Omega(\r) \rangle \hat S(k)=-{\mathbf i}(\hat F(k)-\hat R(k))$$
where we have written $S,F$ and $ R$ for $S_r, (f_r-\hat f_r)$ and $R_r$ respectively.
Therefore \eqref{homo1} has the (formal) solution 
$$S(r,\theta,\r)=\sum\hat S (r,k,\r) e^{{\mathbf i}\langle k,\theta\rangle}\quad\textrm{and}\quad
R(r,\theta,\r)=\sum\hat F (r,k,\r) e^{{\mathbf i}\langle k,\theta\rangle}$$
with
$$\hat S(r,k,\r)= 
\left\{\begin{array}{ll}
-L_k(\r)^{-1}{\mathbf i}\hat F(r,k,\r) & \textrm{ if }  0< |k|\le N\\
0 & \textrm{ if not}  
\end{array}\right.$$
and
$$\hat R(r,k,\r)= 
\left\{\begin{array}{ll}
\hat F(r,k,\r ) & \textrm{ if }   |k|> N\\
0& \textrm{ if not}.
\end{array}\right.$$

By Lemma~\ref{lSmallDiv1}
$$ ||(L_k(\r))^{-1}||\le \frac1\ka\,
 $$
for all $\r$ outside some  set $\Sigma(L_k,\kappa)$ such that
$$\dist(\D\setminus \Sigma(L_k,\ka),\Sigma(L_k,\frac\ka2))\ge \cte\frac\ka{N\chi}$$
and
$$\D_1=\D\setminus \bigcup_{0<|k|\le N}\Sigma(L_k,\ka)$$
fulfils the estimate of the lemma.

 For $\r\notin \Sigma(L_k,\frac\kappa2)$ we get
 $$ |\hat S (r,k,\r)| \le \Cte\frac{1}{\ka}|\hat F(r,k,\r)|\,.$$
 Differentiating the formula for $\hat S(r,k,\r)$ once we obtain
 $$\p^j_\r \hat S(r,k,\r)=
 \Big(
 \ -\frac{{\mathbf i}}{ \langle\Omega, k\rangle}\p^j_\r \hat F(r,k,\r)+
 \ \frac{{\mathbf i}}{ \langle\Omega, k\rangle^2} \langle \p^j_\r\Omega, k\rangle\hat F(r,k,\r)\Big)
 $$
 which gives, for $\r\notin \Sigma(L_k,\frac\kappa2)$,
 $$
 |\p^j_\r\hat S(r,k,\r)|\le \Cte \frac1\kappa( N\frac{\chi}{\kappa})\max_{0\le l\le j}|\p^l_\r \hat F(r,k,\r)|.
 $$
(Here we used that $|\p_\r \Omega(\rho)|\le \chi+\de$. )
The higher order derivatives are estimated in the same way and this gives
 $$
 |\p_\r^j\hat S(r,k,\r)|\le \Cte \frac1\kappa(N\frac{\chi}{\kappa})^{ |j |}\max_{0\le l\le j} |\p^l_\r\hat F(r,k,\r)|
 $$
 for any $ |j |\le{{s_*}}$,  where $\Cte$ is an absolute constant. 
 
 By Lemma  \ref{lExtension},
there exists a $\cC^\infty$-function $g_k:\D\to\R$, being $=1$ outside $\Sigma(L_k,\ka)$ and $=0$ on
$\Sigma(L_k,\frac\ka2)$ and such that for all $j\ge 0$
$$| g_k |_{\cC^j(\D)}\le (\Cte\frac{N\chi}{\ka})^j.$$
 Multiplying $\hat S(r,k,\r)$ with $g_k(\r)$ gives a $\cC^{{s_*}}$-extension of $\hat S(r,k,\r)$ from 
 $\D\setminus \Sigma(L_k,\ka)$  to  $\D$ satisfying the same bound \eqref{homo1S}.

It follows now, by a classical argument,  that the formal solution converges and that
$| \p_\r^j  S(r,\theta,\r)|$ and $|\p_\r^j  R(r,\theta,\r)|$
fulfils the estimates of the lemma.
When summing up  the series for  $|\p_\r^j  R(r,\theta,\r)|$ we get a term $e^ {-\frac1C(\s-\s')N}$ (because of truncation of Fourier modes), but the factor $\frac1C$
disappears by replacing $N$ by $CN$.  

By construction $S$ and $R$ solve equation \eqref{homo1} for any $\r\in \D_1$.

If we multiply  $\hat S(r,k,\r)$ by a second $\cC^\infty$ cut-off function $h_k:\D\to\R$ --  which is $=1$ at a distance 
$\ge \frac{\ka}{N\chi}$ from the boundary of $\D$ and $=0$ near this boundary  --  then the new function will satisfy the bound \eqref{homo1S},
it will solve the  equation \eqref{homo1} on a new domain, smaller but still satisfying the measure bound of the Lemma, and it will vanish near the boundary of $\D$.
\end{proof}

\subsection{The second equation}\label{s5.3} 
Concerning the second component  of the homological equation we have

\begin{lemma}\label{prop:homo3}
There exists  an absolute constant $C$ such that if \eqref{ass1} holds, then,
for any  $N\ge1$ and 
$$0<\ka\le c',$$
there exists  a closed set $\D_2=\D_2(h,\ka,N)\subset \D$,  satisfying
 $$
 \Leb (\D\setminus \D_2)\leq   C N^{\exp} 
 (\frac{\ka}{\delta_0})^{\frac1{{s_*}}}, $$
and there exist $\cC^{{s_*}}$-functions $S_w$ and $R_w$ $:\T^\cA  \times \D\to Y_\ga$,  
real holomorphic in $\theta$, such that for $\r\in \D_2$
\be\label{homo2}\p_{\Omega(\r)} S_w(\theta,\r) -A(\r) JS_w(\theta,\r)=
f_w(\theta,\r)-R_w(\theta,\r)
\ee
and for all $(\theta,\r)\in \T^\cA_{\s'}\times \D$, $\s'<\s$, and $|j|\le {{s_*}}$
 \begin{align} \label{homo2S}
|| \p_\r^j S_w(\theta,\r)||_{\ga}\leq &
 C\frac{1}{\ka (\s-\s')^{n}}\big( N\frac{\chi  }{\ka}\big)^{|j|}
  |f^T|_{\begin{subarray}{c}\s,\mu\ \ \\ \ga, \vark,\D  \end{subarray}} \\ \label{homo2R}
   ||  \p_\r^j R_w(\theta,\r)||_{\ga}\leq & C\frac{ e^{-(\s-\s')N} } {(\s-\s')^{n}}
 |f^T|_{\begin{subarray}{c}\s,\mu\ \ \\ \ga, \vark,\D  \end{subarray}}.
\end{align}
Moreover, $S_w(\cdot,\r)=0$ for $\r$ near the boundary of $\D$.

(The exponent $\exp$ only depends  on $d$ and $\#\cA$. $C$ is an absolute constant that depends
on  $c$. $C$ also depend on $\sup_\D\ab{\Omega_{\textrm up}}$ and $\sup_\D\ab{H_{\textrm up}}$, but stays bounded
when these do.)
\end{lemma}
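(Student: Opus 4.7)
The plan is to solve \eqref{homo2} by Fourier expansion in $\theta$, reducing the equation to a family of linear algebraic equations parametrised by $k\in\Z^{\cA}$. Writing $S_w(\theta,\rho)=\sum_k \hat S_w(k,\rho)e^{\i\langle k,\theta\rangle}$ and similarly for $f_w, R_w$, the equation on the $k$-th Fourier coefficient reads
\[
L_k(\rho)\,\hat S_w(k,\rho)=\hat f_w(k,\rho)-\hat R_w(k,\rho),\qquad L_k(\rho):=\i\langle k,\Omega(\rho)\rangle I - A(\rho)J.
\]
Because $A(\rho)$ is block-diagonal over the partition $\E_\Delta$ (Hypothesis B), $L_k(\rho)$ is block-diagonal as well and splits as a direct sum of operators on $(\C^2)^{[a]}$. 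Up to a harmless overall factor these block-operators are the operators $L_{k,[a]}$ analysed in Lemma~\ref{lSmallDiv2}, whose eigenvalues differ from those of $L_{k,[a]}$ only by signs.

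First I would dispose of the mode $k=0$. There $L_0=-A(\rho)J$, which on blocks in $\L_\infty$ acts as $-\Lambda_a J$ with $|\Lambda_a|\ge c'$ by \eqref{laequiv}, and on the $\F$-block is $-H(\rho)J$ whose inverse is bounded by $2/c'$ thanks to \eqref{laequiv}, the perturbation estimate \eqref{hypoB} and the smallness hypothesis \eqref{ass1}; so the $k=0$ component is solved on all of $\D$ without excision. For $0<|k|\le N$ I would invoke Lemma~\ref{lSmallDiv2} with the given $\ka$ to obtain a closed set $\D_2\subset\D$, of complement measure $\lsim N^{\exp}(\ka/\delta_0)^{1/s_*}$, on which $\|L_{k,[a]}(\rho)^{-1}\|\le 2/\ka$ simultaneously for all $0<|k|\le N$ and all blocks $[a]$; on this set set
\[
\hat S_w(k,\rho)_{[a]}=L_{k,[a]}(\rho)^{-1}\,\hat f_w(k,\rho)_{[a]},\qquad \hat R_w(k,\rho)=0,
\]
and truncate for $|k|>N$ by $\hat S_w(k)=0$, $\hat R_w(k)=\hat f_w(k)$. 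To extend $\hat S_w(k,\cdot)$ smoothly from $\D_2$ to all of $\D$, I would multiply each Fourier coefficient by a $\cC^\infty$ cut-off $g_k$ produced by Lemma~\ref{lExtension}, supported where $L_{k,[a]}$ is $\ka/2$-invertible and with $|g_k|_{\cC^j}\lsim(N\chi/\ka)^j$; this is made possible by the distance estimate in Lemma~\ref{lSmallDiv2}. A further cut-off in $\rho$ vanishing near $\p\D$ produces the stated boundary behaviour.

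To obtain the pointwise bound on $\p_\rho^j\hat S_w(k)$, I would differentiate the identity $L_{k,[a]}\hat S_w(k)=\hat f_w(k)$ repeatedly in $\rho$; each differentiation costs a factor $\|L_{k,[a]}^{-1}\|\cdot\|\p_\rho L_{k,[a]}\|\lsim N\chi/\ka$ (using $|\p_\rho\Omega|\le\chi+\delta$ and the perturbation estimate \eqref{hypoB}), giving altogether the factor $(N\chi/\ka)^{|j|}$ in \eqref{homo2S}. Summing the resulting Fourier series by the standard analytic-loss estimate produces the factor $(\s-\s')^{-n}$, and the truncation at $|k|>N$ yields the exponential $e^{-(\s-\s')N}$ of \eqref{homo2R}, after absorbing the usual logarithmic constant into an enlarged $N$.

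The main technical subtlety will be converting the block-wise operator-norm control of $L_{k,[a]}^{-1}$ into the weighted-norm bound $\|\p_\rho^j S_w\|_\ga$ demanded by \eqref{homo2S}. The decisive observation is that blocks $[a]$ of $\E_\Delta$ have diameter $\le d_\Delta$ (Proposition~\ref{blocks}), so for $b\in[a]$ the weights $e^{\ga_1|b|}\langle b\rangle^{m_*}$ are all equivalent up to a constant depending polynomially on $\Delta$; hence a block-diagonal inverse of block-operator norm $\le 2/\ka$ maps $Y_\ga$ into itself with norm $\lsim\ka^{-1}$ times a $\Delta$-dependent factor harmlessly absorbed in $C$. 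Combined with the analyticity of $f_w$ in $\theta$ on the strip of width $\s$, which gives $\|\hat f_w(k)\|_\ga\lsim e^{-\s|k|}|f^T|_{\begin{subarray}{c}\s,\mu\\ \ga,\vark,\D\end{subarray}}$, this delivers \eqref{homo2S} and the analogous bound \eqref{homo2R} for the remainder.
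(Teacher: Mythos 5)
Your proposal is correct and follows essentially the same strategy as the paper: Fourier expansion in $\theta$, reduction to block operators, invertibility on the good parameter set via Lemma~\ref{lSmallDiv2}, separate treatment of $k=0$ via \eqref{ass1} and \eqref{laequiv}, truncation at $|k|>N$ to produce $R_w$, cut-off extension via Lemma~\ref{lExtension}, and summation of the Fourier series.

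There is, however, one place where you glide past a point the paper handles explicitly. The paper first passes to complex coordinates $z=(\xi,\eta)$ via the transformation $w=Uz$ of \eqref{transf}, so that the equation decouples into three scalar equations whose block operators are \emph{exactly} the operators $L_{k,[a]}$ of Lemma~\ref{lSmallDiv2}. You instead work directly with the real operator $L_k={\mathbf i}\langle k,\Omega\rangle I-A J$ and assert that its blocks are the operators of Lemma~\ref{lSmallDiv2} ``up to a harmless overall factor'' and a sign. That claim is true, but it deserves a line of justification: $-{\mathbf i}L_k=\langle k,\Omega\rangle I+{\mathbf i}AJ$, and $AJ=J(JA)J^{-1}$, so $-{\mathbf i}L_k$ is orthogonally conjugate to $\langle k,\Omega\rangle I+{\mathbf i}JA$; moreover $JA$ is Hamiltonian, hence similar (via ${}^t(\cdot)$ composed with conjugation by $J$, both $\ell^2$-isometries) to $-JA$, so $\|(\langle k,\Omega\rangle I+{\mathbf i}JA)^{-1}\|=\|(\langle k,\Omega\rangle I-{\mathbf i}JA)^{-1}\|=\|L_{k,[a]}^{-1}\|$ block by block. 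Without this the invocation of Lemma~\ref{lSmallDiv2} is not literal. Also, a minor inefficiency: within a block $[a]$ all modes have equal $|\cdot|$, so the weights $e^{\ga_1|b|}\langle b\rangle^{m_*}$ agree exactly for $b\in[a]$; no $\Delta$-dependent loss is needed in passing from block operator norm to $\|\cdot\|_\ga$ (that loss only matters for the matrix-norm estimates of Lemma~\ref{prop:homo4}).
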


\proof
Let us re-write \eqref{homo2} in the complex variables  $,z=(\xi\eta)$ described in section
\ref{ssUnperturbed}. The quadratic form  $(1/2)\langle w, A(\rho) w\rangle\ $ gets transformed, by $w=Uz$,  to
$$ \langle\xi, Q(\rho)\eta\rangle + \frac12 \langle z_{\F}, H'(\rho) z_{\F}\rangle,$$ 
where $Q'$ is a Hermitian matrix and  $H'$ is a real symmetric matrix. Then we make in \eqref{homo2} the 
substitution   $ S={}^t\!US_w$, $R={}^t\! UR_w$ 
and $F={}^t\!Uf_w$, where $S={}^t(S_\xi, S_\eta,S_{\F})$, etc. 
In this notation eq.~\eqref{homo2}  decouples into the equations 
\begin{align*}
 \p_{\Omega}  S_\xi + {\mathbf i} QS_\xi= F_\xi-R_\xi,\\
 \p_{\Omega}  S_\eta-  {\mathbf i}{}^t\!QS_\eta= F_\eta-R_\eta\\
\p_{\Omega} S_{\F}-  HJS_{\F}= F_{\F}-R_{\F}.
\end{align*}

Let us consider the first equation. Written in the  Fourier components it becomes
\be\label{homo2.10}
( \langle k, \Omega(\r) \rangle I  + Q) \hat S_\xi (k)=-{\mathbf i}(\hat F_\xi(k)-\hat R_\xi(k)).\ee
This equation decomposes into its ``components'' over the  blocks $[a]=[a]_\Delta$ and takes the form
\be\label{homo2.1}L_{k,[a]}(\r)\hat S_{[a]}(k)=:
( \langle k, \Omega(\r) \rangle  + Q_{[a]}) \hat S_{[a]}(k)=-{\mathbf i}(\hat F_{[a]}(k)-\hat R_{[a]}(k))\ee
--  the matrix $Q_{[a]}$ being the restriction of $Q_\xi$ to $[a]\times [a]$, the vector $F_{[a]}$ being 
the restriction of $F_\xi$ to $[a]$ etc. 

Equation  \eqref{homo2.1} has the (formal) solution
$$\hat S_{[a]}(k,\r)= 
\left\{\begin{array}{ll}
-(L_{k,[a]}(\r))^{-1}{\mathbf i}\hat F_{[a]}(k,\r) & \textrm{ if }   |k|\le N\\
0 & \textrm{ if not}  
\end{array}\right.$$
and 
$$\hat R_{a}(k,\r)= 
\left\{\begin{array}{ll}
\hat F_{a}(k,\r ) & \textrm{ if }   |k|> N\\
0& \textrm{ if not}.
\end{array}\right.$$

For $k\not=0$, by Lemma~\ref{lSmallDiv2},
$$ ||(L_{k,[a]}(\r))^{-1}||\le \frac1\ka\,
 $$
for all $\r$ outside some  set $\Sigma(L_{k,[a]},\kappa)$ such that
$$\dist(\D\setminus \Sigma(L_{k,[a]},\ka),\Sigma(L_{k,[a]},\frac\ka2))\ge \cte\frac\ka{N\chi}$$
and
$$\D_2=\D\setminus \bigcup_{\begin{subarray}{c} 0< |k|\le N\\ [a]\end{subarray}}\Sigma_{k,[a]}(\ka),$$
fulfils the required estimate. 

For $k=0$, it follows by \eqref{ass1} and \eqref{laequiv} that
$$ ||(L_{k,[a]}(\r))^{-1}||\le \frac1{c'}\le \frac2{\ka}\, . $$

We then get, as in the proof of Lemma~\ref{prop:homo12}, that $\hat S_{[a]}(k,\cdot)$ and 
$\hat R_{[a]}(k,\cdot)$ have  $\cC^{{s_*}}$-extension to $\D$ satisfying
$$|| \p_\r^j \hat S_{[a]} (k,\r)|| \leq \Cte\frac{1}{\ka}\big(N\frac{\chi}{\ka}\big)^{|j|}
\max_{0\le l\le j} || \p_\r^l \hat F_{[a]}(k,\r)||
$$
and
$$||  \p_\r^j R_{[a]}(k,\r)||\leq  \Cte || \p_\r^j \hat F_{[a]}(k,\r)||,$$
and satisfying \eqref{homo2.1} for $\r\in\D_2$.

These estimates imply that
$$|| \p_\r^j \hat S_\xi(k,\r)||_\ga\leq \Cte\frac{1}{\ka}\big(N\frac{\chi}{\ka}\big)^{|j|}
\max_{0\le l\le j} || \p_\r^l \hat F_\xi (k,\r)||_\ga
$$
and
$$||  \p_\r^j R_\xi (k,\r)||_\ga\leq  \Cte || \p_\r^j F_\xi(k,\r)||_\ga.$$
Summing up the Fourier series, as in Lemma~\ref{prop:homo12}, we get
$$|| \p_\r^j S_\xi (\theta,\r)||_\ga\leq \Cte\frac{1}{\ka (\s-\s')^{n}}\big(N\frac{\chi}{\ka}\big)^{|j|}
\max_{0\le l\le j} \sup_{|\Im\theta|<\s}|| \p_\r^l F_\xi(\cdot,\r)||_\ga
$$
and
$$||  \p_\r^j R_\xi(\theta,\r)||_\ga \leq  \Cte\frac{ e^{-\frac1{\Cte}(\s-\s')N} } {(\s-\s')^{n}}
\sup_{|\Im\theta|<\s}|| \p_\r^j F_\xi(\cdot,\r)||_\ga
$$
for $(\theta,\r)\in \T^{\cA}_{\s'}\times \D$, $0<\s'<\s$, and $|j|\le{{s_*}}$. This implies the estimates \eqref{homo2S} and \eqref{homo2R}  --  the factor $\frac1{\Cte}$
disappears by replacing $N$ by $\Cte N$. 

The other two equations are treated in exactly the same way.
\endproof

\subsection{The third equation}\label{s5.4}
 Concerning the third component of the homological equation, \eqref{eqHomEq}, we have the following result.
 
\begin{lemma}\label{prop:homo4}
There exists  an absolute constant $C$ such that if \eqref{ass1} holds, then,
for any  $N\ge1$, $\Delta'\ge \Delta\ge 1$,  and 
$$\ka\le\frac1C c',$$
there exist a closed subset $\D_3=\D_3(h, \ka,N)\subset \D$, satisfying 
$$\Leb(\D\setminus {\D_3})\le C (\Delta N)^{\exp_1}  (\frac{\ka}{\delta_0})^{\alpha}(\frac{\chi}{\delta_0})^{1-\alpha}$$
and there exist real $\cC^{{s_*}}$-functions 
$B_{ww}: \D\to \cM_{\ga,\varkappa} \cap \NF_{\Delta'} $ and 
$S_{ww}$, $R_{ww}=R_{ww}^F+R_{ww}^s:\T^{\cA}\times \D\to \cM_{\ga,\varkappa}$,
real holomorphic  in $\theta$, such that for all $\r\in\D_3$
\begin{multline}\label{homo3} 
\p_{\Omega(\r)} S_{ww}(\theta,\r) -A(\r)JS_{ww}(\theta,\r)+
S_{ww}(\theta,\r)JA(\r)=\\
f_{ww} (\theta,\r)-B_{ww}(\r)-R_{ww}(\theta,\r)
 \end{multline}
and for all $(\theta,\r)\in \T^\cA_{\s'}\times \D$, $\s'<\s$, and $|j|\le {{s_*}}$

\be\label{homo3S}
\aa{\p_\r^j S_{ww}(\theta,\r)}_{\ga,\varkappa}\leq 
C\Delta'\frac{\Delta^{\exp_2}e^{2\ga d_\Delta}}{\ka(\s-\s')^{n}}
\big(N\frac{\chi+\de}{\ka}\big)^{|j|}
\ab{f^T}_{\begin{subarray}{c}\s,\mu\ \ \\ \ga, \vark,\D  \end{subarray}},\ee

\be \label{homo3B}
\aa{\p_\r^j B_{ww}(\r)}_{\ga',\vark}\leq  C \Delta' \Delta^{\exp_2} \ab{f^T}_{\begin{subarray}{c}\s,\mu\ \ \\ \ga, \vark,\D  \end{subarray}},\ee
and 
\be\label{homo3R}
\left\{\begin{array}{l}
\aa{ \p_\r^j  R_{ww}^F(\theta,\r)}_{\ga,\vark}\leq  
C\Delta' \Delta^{\exp_2}\left(\frac{e^{-(\s-\s')N}}{ (\s-\s')^{n}}\right)
\ab{f^T}_{\begin{subarray}{c}\s,\mu\ \ \\ \ga, \vark,\D  \end{subarray}}\\
\aa{ \p_\r^j  R_{ww}^s(\theta,\r)}_{\ga',\vark}\leq  
C\Delta' \Delta^{\exp_2}e^{-(\ga-\ga')\Delta'}
\ab{f^T}_{\begin{subarray}{c}\s,\mu\ \ \\ \ga, \vark,\D  \end{subarray}}
\end{array}\right.,\ee
for any $\ga_*\le \ga'\le\ga$.

Moreover, $S_{ww}(\cdot,\r)=0$ for $\r$ near the boundary of $\D$.

The exponent
$\alpha$ is a positive constant only depending on  $d,s_*,\vark$ and $\beta_2 $.
\footnote{\ $\alpha$ is the exponent of Lemma~\ref{lSmallDiv3}}
.

(The exponent $\exp$ only depends  on $d$, $n=\#\cA$ and $\tau,\beta_2,\vark$. The exponent $\exp_2$ only depends  
on $d,m_*,s_*$. 
$C$ is an absolute constant that depends on  $c,\tau,\beta_2,\beta_3$ and $\vark$. $C$ also depend on $\sup_\D\ab{\Omega_{\textrm up}}$ and $\sup_\D\ab{H_{\textrm up}}$, but stays bounded
when these do.)

\end{lemma}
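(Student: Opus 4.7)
The plan is to follow the same three-step scheme as in Lemmas~\ref{prop:homo12} and \ref{prop:homo3}, namely: pass to complex variables so that the operator on the left becomes (block-)diagonal in a Fourier/block decomposition, solve one scalar/matrix equation per Fourier mode $k\in\Z^{\cA}$ and per pair of blocks $([a],[b])$, use Lemma~\ref{lSmallDiv3} to invert on a large set of parameters, and extend the solution to all of $\D$ via a Whitney cut-off as in Lemma~\ref{lExtension}. The new feature compared with Lemmas~\ref{prop:homo12}--\ref{prop:homo3} is that we must produce three distinct ``leftovers'': a cokernel piece $B_{ww}\in\NF_{\Delta'}$ coming from the resonant blocks, a Fourier remainder $R^F_{ww}$ coming from truncation at $|k|>N$, and a spatial remainder $R^s_{ww}$ coming from truncation of far-apart blocks $[a-b]>\Delta'$.

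Concretely, I would first apply the linear change of coordinates $w=Uz$ of Section~\ref{normalformmatrices} to replace $A$ by $Q_{\text{up}}\oplus H$ (a matrix that is Hermitian plus the $\F$-block). In the new variables the operator
\[
\Lc:X\mapsto \p_{\Omega}X - AJX + XJA
\]
splits as a direct sum of four operators on the blocks $(\xi\xi),(\eta\eta),(\xi\eta),(\F\cdot)$, each of the three types analysed by Lemma~\ref{lSmallDiv3}. Expanding in Fourier series gives, for every $k\in\Z^{\cA}$ and every pair of blocks $([a],[b])$ with respect to the partition $\E_\Delta$, a linear equation
\[
L_{k,[a],[b]}(\r)\,\widehat S(k)_{[a]}^{[b]} = \widehat f_{ww}(k)_{[a]}^{[b]} - (\text{leftovers}).
\]
For $|k|\le N$ and $(k,[a],[b])\ne (0,[a],[a])$, Lemma~\ref{lSmallDiv3} provides $\|L_{k,[a],[b]}^{-1}\|\le 1/\ka$ outside a set $\Sigma(L_{k,[a],[b]},\ka)$, and the union of all these bad sets has the Lebesgue measure displayed in the statement. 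I would define $\D_3$ as the complement of this union and set $\widehat S(k)_{[a]}^{[b]}=-L_{k,[a],[b]}^{-1}\widehat f_{ww}(k)_{[a]}^{[b]}$ there. The resonant blocks $k=0$, $[a]=[b]$ are pushed into $B_{ww}(\r)$ (which by construction is block-diagonal over $\E_{\Delta'}$ and inherits normal-form symmetries from $f_{ww}^T$, giving $B_{ww}\in\NF_{\Delta'}$); the truncation $|k|>N$ is pushed into $R^F_{ww}$; and the truncation $[a-b]>\Delta'$ (equivalently $[a]\ne[b]$ with respect to $\E_{\Delta'}$) is pushed into $R^s_{ww}$.

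The main estimates are then as follows. For the kept modes, the inverse bound, together with the Cauchy-like formula
\[
\p_\r^j (L_{k,[a],[b]}^{-1}) = \sum L_{k,[a],[b]}^{-1}\,(\p_\r L_{k,[a],[b]})\,L_{k,[a],[b]}^{-1}\cdots
\]
and $\|\p_\r L_{k,[a],[b]}\|\le |k|(\chi+\de)$, gives derivative bounds of size $\ka^{-1}(N(\chi+\de)/\ka)^{|j|}$, mirroring Lemmas~\ref{prop:homo12}--\ref{prop:homo3}. Summing over blocks in the $\cM_{\ga,\vark}$-norm \eqref{matrixnorm} costs a factor $e^{2\ga d_\Delta}\Delta^{\exp_2}$, because each equivalence class has diameter at most $d_\Delta$ (Proposition~\ref{blocks}) and an entry of $\widehat S(k)_{[a]}^{[b]}$ inherits the weight $e_{\ga,\vark}(a',b')$ for $a'\in[a]$, $b'\in[b]$ while the input data are controlled by $e_{\ga,\vark}(a,b)$; this accounts for the factor $\Delta'\Delta^{\exp_2}e^{2\ga d_\Delta}$ in \eqref{homo3S}. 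The Fourier truncation yields the exponential $e^{-(\s-\s')N}$ by standard estimates, as in Lemma~\ref{prop:homo12}. The Whitney extension from $\D_3$ to $\D$ is done via Lemma~\ref{lExtension} with a smooth cut-off supported where $L_{k,[a],[b]}$ is $\ka/2$-invertible, which introduces the $N(\chi+\de)/\ka$ factors in the derivatives; a second cut-off near $\p\D$ produces the required vanishing near the boundary.

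The main obstacle, and the reason for the extra remainder $R^s_{ww}$, is that the operator norm of $L_{k,[a],[b]}^{-1}$ does \emph{not} improve as $[a-b]$ grows, so directly inverting far-apart blocks would force the loss $e^{2\ga d_\Delta}$ to be uncontrolled for $\Delta'\gg\Delta$. To get the second bound in \eqref{homo3R}, I would therefore \emph{not} invert on blocks with $[a-b]>\Delta'$ and instead declare $\widehat f_{ww}(k)_{[a]}^{[b]}$ itself to be the remainder there; the decay factor $e^{-(\ga-\ga')\Delta'}$ comes from the inclusion $\cM_{\ga,\vark}\hookrightarrow \cM_{\ga',\vark}$ at the cost of $e^{-(\ga-\ga')[a-b]}\le e^{-(\ga-\ga')\Delta'}$. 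Balancing the two truncations gives exactly the dichotomy $R_{ww}=R^F_{ww}+R^s_{ww}$ in the statement. Once this splitting is set up, the measure estimate of $\D\setminus\D_3$ is immediate from Lemma~\ref{lSmallDiv3}, and the remaining book-keeping is the same as in the scalar cases.
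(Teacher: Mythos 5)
Your overall strategy matches the paper's: complex coordinates $w = Uz$, Fourier/block decomposition, Lemma~\ref{lSmallDiv3} for invertibility away from a small bad set, Whitney extension via Lemma~\ref{lExtension}, and the three-way split into $B_{ww}$, $R^F_{ww}$, $R^s_{ww}$. The place where your argument has a genuine gap is in establishing $B_{ww}\in\NF_{\Delta'}$, and in particular property (iv) of Definition~\ref{d_31}, namely $\Pi B_b^a = B_b^a$ for $a,b\in\L_\infty$. You assert that $B_{ww}$ ``inherits normal-form symmetries from $f_{ww}^T$'', but $f_{ww}$ is an arbitrary Hessian with no normal-form structure, so there is nothing to inherit. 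In the $(\xi,\eta)$ coordinates, property (iv) is precisely the statement that the $\xi\xi$ and $\eta\eta$ components of $B$ vanish. The paper obtains this by observing that the $\xi\xi$ sub-equation (the one of the form $\p_\Omega S + \mathbf{i}QS + \mathbf{i}S\,{}^tQ = F$) is \emph{always} invertible at $k=0$, uniformly in blocks, thanks to $\eqref{laequiv-bis}$ ($|\Lambda_a+\Lambda_b|\ge c'$): so the $\xi\xi$ part contributes nothing to $B$. Only the $\xi\eta$ sub-equation, and the $z_\F z_\F$ sub-equation, produce a cokernel at $k=0$, and these live exactly in the range of $\Pi$. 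Your formulation ``push $k=0$, $[a]=[b]$ resonant blocks into $B$'' does not distinguish the four sub-equations and, taken literally, would dump the $k=0$, $[a]=[b]$ part of the $\xi\xi$ equation into $B$ — which would destroy property (iv).

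A second, smaller imprecision: for the $\xi\eta$ sub-equation at $k=0$, the truly uninvertible set is $|a|=|b|$, not $[a]_\Delta=[b]_\Delta$ (a pair with $|a|=|b|$ but $[a]_\Delta\ne[b]_\Delta$ is still resonant; by \eqref{a-b} one then has $[a-b]\ge\Delta$). The paper handles this by further splitting on whether $[a]_{\Delta'}=[b]_{\Delta'}$: if equal, it goes into $B$; otherwise, since $[a-b]\ge\Delta'$, it goes into $R^s$. Your criterion ``$[a]=[b]$'' (meaning the $\E_\Delta$-class) misses the intermediate regime $\Delta\le [a-b]<\Delta'$, $|a|=|b|$, which should go into $B$ for $B$ to be block-diagonal over $\E_{\Delta'}$ rather than over $\E_\Delta$. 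To repair the argument, replace the single phrase ``resonant blocks $k=0$, $[a]=[b]$'' by a case analysis over the four sub-equations, using $\eqref{laequiv-bis}$ to show the $\xi\xi/\eta\eta$ parts are always solvable at $k=0$, and using $|a|=|b|$ together with the $\E_{\Delta'}$-partition (rather than $\E_\Delta$) to decide $B$ vs.\ $R^s$ for the $\xi\eta$ part.
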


\proof It is also enough to find complex solutions $S_{ww}$, $R_{ww}$
and $B_{ww}$ verifying the estimates, because then their real parts will do the job.

As in the previous section, and using the same notation, we re-write \eqref{homo3} in complex variables. 
So we introduce 
$S={}^t\! US_{\zeta,\zeta} U$, $R={}^t\!  UR_{\zeta,\zeta} U$, $B={}^t\! UB_{\zeta,\zeta} U$ and $F={}^t\! UJf_{\zeta,\zeta} U$.
In appropriate notation  \eqref{homo3}  decouples into  the equations
\begin{align*}
&\p_{\Omega}  S_{\xi\xi} +{\mathbf i} Q S_{\xi\xi}+ {\mathbf i} S_{\xi\xi}\ {}_tQ= F_{\xi\xi}-B_{\xi\xi}- R_{\xi\xi},\\ 
&\p_{\Omega}  S_{\xi\eta}  + {\mathbf i}Q S_{\xi\eta} -  {\mathbf i}S_{\xi\eta} Q= F_{\xi\eta}-B_{\xi\eta}- R_{\xi\eta},\\
&\p_{\Omega}  S_{\xi z_{\F}} +{\mathbf i} Q S_{\xi z_{\F}}+  S_{\xi z_{\F}}\ JH
= F_{\xi z_{\F}}-B_{\xi\xi}- R_{\xi z_{\F}},\\ 
&\p_{\Omega}  S_{z_{\F}z_{\F}} 
+HJ S_{z_{\F}z_{\F}}- S_{z_{\F}z_{\F}}JH= F_{z_{\F}z_{\F}}-
B_{z_{\F}z_{\F}}- R_{z_{\F}z_{\F}},
\end{align*}
and  equations for $ S_{\eta\eta},S_{\eta\xi}, S_{z_{\F}\xi }, S_{\eta z_{\F}},S_{z_{\F} \eta}$. Since those latter equations are of the same type as the first four, we shall concentrate on these first.

\smallskip

{\it First equation. } 
 Written in the  Fourier components it becomes
\be\label{homo3.10}
( \langle k, \Omega(\r) \rangle I  + Q) \hat S_{\xi\xi} (k)+\hat S_{\xi\xi} (k){}^tQ
=-{\mathbf i}(\hat F_{\xi\xi}(k)-\de_{k,0} B-\hat R_{\xi\xi}(k)).\ee
This equation decomposes into its ``components'' over the  blocks $[a]\times[b]$, $[a]=[a]_\Delta$,
and takes the form
\begin{multline}\label{homo3.1}
L(k,[a],[b],\r)\hat S_{[a]}^{[b]}(k)=:\langle k, \Omega(\r)\rangle \ \hat S_{[a]}^{[b]}(k) + Q_{[a]}(\r) \hat S_{[a]}^{[b]}(k)+ \\
\hat S_{[a]}^{[b]}(k)\  {}^tQ_{[b]}(\r)=
-{\mathbf i}(\hat F_{[a]}^{[b]}(k,\r) -\hat R_{[a]}^{[b]}(k)-\de_{k,0} B_{[a]}^{[b]})
\end{multline}
--  the matrix $Q_{[a]}$ being the restriction of $Q_{\xi\xi}$ to $[a]\times [a]$, the vector $F_{[a]}^{[b]}$ being 
the restriction of $F_{\xi\xi}$ to $[a]\times[b]$ etc. 

Equation \eqref{homo3.1} has the (formal)  solution:
$$\hat S_{[a]}^{[b]}(k,\r)=
\left\{\begin{array}{ll}
-L(k,[a],[b],\r)^{-1}{\mathbf i}\hat F_{[a]}^{[b]}(k,\r) &
\textrm{ if } \dist([a],[b])\le\Delta'\ \textrm{and}\ \  |k|\le N\\
0 &  \textrm{ if not }, 
\end{array}\right.
$$
$B_{[a]}^{[b]}=0$ and
$$\hat R_{a}^{b}(k,\r)=  
\left\{\begin{array}{ll}
\hat F_{a}^{b}(k,\r )& \textrm{ if } \dist([a],[b])\ge\Delta'\ \textrm{or}\ \ |k|>N\\
0 & \text{ if not}.
\end{array}\right.
$$
We denote $\hat R_{a}^{b}(k,\r)$ by $\widehat {(R^s)}_{a}^{b}(k,\r)$ if $\dist([a],[b])\ge\Delta'$  --  truncation off ``diagonal'' in space modes  -- 
and by $\widehat {(R^F)}_{a}^{b}(k,\r)$ if $ |k|>N$ --    truncation in Fourier modes.

For $k\not=0$, by Lemma~\ref{lSmallDiv3},
$$ ||(L_{k,[a],[b]}(\r))^{-1}||\le \frac1\ka\,
 $$
for all $\r$ outside some  set $\Sigma_{k,[a],[b]}(\kappa)$ such that
$$\dist(\D\setminus \Sigma_{k,[a],[b]}(\ka),\Sigma_{k,[a],[b]}(\frac\ka2))\ge \cte\frac\ka{N\chi},$$
and
$$\D_3=\D\setminus \bigcup_{\begin{subarray}{c} 0<|k|\le N\\ [a],[b]\end{subarray}}\Sigma_{k,[a],[b]}(\ka)$$
fulfils the required estimate.  For $k=0$, it follows by \eqref{ass1} and \eqref{laequiv-bis} that
$$ ||(L_{k,[a],[b]}(\r))^{-1}||\le \frac1{c'}\le \frac1{\ka}\, . $$

We then get, as in the proof of Lemma~\ref{prop:homo12}, that $\hat S_{[a]}^{[b]}(k,\cdot)$ and 
$\hat R_{[a]}^{[b]}(k,\cdot)$ have  $\cC^{{s_*}}$-extension to $\D$ satisfying
$$|| \p_\r^j \hat S_{[a]}^{[b]} (k,\r)|| \leq \Cte\frac{1}{\ka}\big(N\frac{\chi}{\ka}\big)^{|j|}
\max_{0\le l\le j} || \p_\r^l \hat F_{[a]}^{[b]}(k,\r)||
$$
and
$$||  \p_\r^j R_{a}^{b}(k,\r)||\leq  \Cte || \p_\r^j \hat F_{a}^b(k,\r)||,$$
and satisfying \eqref{homo3.1} for $\r\in\D_3$.

These estimates imply that, for any $\ga_*\le \ga'\le\ga$,
$$|| \p_\r^j \hat S_{\xi\xi}(k,\r)||_{\cB(Y_{\ga'},Y_{\ga'})}\leq \Cte\Delta'\frac{\Delta^{\exp}e^{2\ga d_\Delta}}{\ka}\big(N\frac{\chi}{\ka}\big)^{|j|}
\max_{0\le l\le j} || \p_\r^l \hat F_{\xi\xi} (k,\r)||_{\cB(Y_{\ga'},Y_{\ga'})} $$
and
$$|| \p_\r^j \hat R_{\xi\xi}(k,\r)||_{\cB(Y_{\ga'},Y_{\ga'})}\leq \Cte\Delta' \Delta^{\exp}  || \p_\r^j \hat F_{\xi\xi} (k,\r)||_{\cB(Y_{\ga'},Y_{\ga'})}.$$
The factor $\Delta^{\exp}e^{2\ga d_\Delta}$ occurs because the diameter of the blocks $\le d_{\Delta}$ interferes
with the exponential decay and influences the equivalence between the $l^1$-norm and the operator-norm.
The factor $\Delta' \Delta^{\exp} $ occurs because the truncation $\lsim \Delta'+ d_{\Delta}$ of diagonal 
influences the equivalence between the sup-norm and the operator-norm.

The estimates of the ``block components'' also gives estimates for the matrix norms and, for any $\ga_*\le \ga'\le\ga$,
$$|| \p_\r^j \hat S_{\xi\xi}(k,\r)||_{\ga,\vark}\leq \Cte\Delta'\frac{\Delta^{\exp}e^{2\ga d_\Delta}}{\ka}\big(N\frac{\chi}{\ka}\big)^{|j|}
\max_{0\le l\le j} || \p_\r^l \hat F_{\xi\xi} (k,\r)||_{\ga,\vark} $$
and
$$||  \p_\r^j R_{\xi\xi} (k,\r)||_{\ga,\vark}\leq  \Cte || \p_\r^j F_{\xi\xi}(k,\r)||_{\ga,\vark}.$$

Summing up the Fourier series, as in Lemma~\ref{prop:homo3}, we get that 
$S_{\xi\xi}(\theta,\r)$ satisfies the estimate \eqref{homo3S}. 
$R_{\xi\xi}(\theta,\r)$ decompose naturally into a sum of a factor $R^F_{\xi\xi}(\theta,\r)$, which is truncated in Fourier modes
and therefore satisfies the first estimate of \eqref{homo3R}, and a factor $R^s_{\xi\xi}(\theta,\r)$, which is truncated in off ``diagonal'' in space modes and therefore satisfies the second estimate of \eqref{homo3R}.
 \smallskip

{\it The third equation. } 
We  write the equation in  Fourier components and decompose it into its
 ``components'' on each product block $[a]\times[b]$, $[b]=\F$:
\begin{multline*}
L(k,[a],[b],\rho) \hat S_{[a]}^{[b]}(k) := \langle k, \Omega(\r)\rangle \ 
\hat S_{[a]}^{[b]}(k) +Q_{[a]}(\r)\hat S_{[a]}^{[b]}(k) -\\
 {\mathbf i}\hat S_{[a]}^{[b]}(k) JH(\r)= 
-{\mathbf i}( \hat F_{[a]}^{[b]}(k,\r)-\delta_{k,0}B_{[a]}^{[b]}-\hat R_{[a]}^{[b]}(k))
\end{multline*}
--  here we have suppressed the upper index ${\xi z_{\F}}$.

The formal solution is the same as in the previous case and it converges to functions verifying
\eqref{homo3S} and \eqref{homo3R}, by Lemma~\ref{lSmallDiv3}, and by \eqref{la-lb-bis}.

 \smallskip

{\it The fourth equation. } 
We  write the equation in  Fourier components:
\begin{multline*}
L(k,[a],[b],\rho) \hat S_{[a]}^{[b]}(k) := \langle k, \Omega(\r)\rangle \ 
\hat S_{[a]}^{[b]}(k) -  {\mathbf i}HJ(\r)\hat S_{[a]}^{[b]}(k) +\\
 {\mathbf i}\hat S_{[a]}^{[b]}(k) JH(\r)= 
-{\mathbf i}( \hat F_{[a]}^{[b]}(k,\r)-\delta_{k,0}B_{[a]}^{[b]}-\hat R_{[a]}^{[b]}(k)),
\end{multline*}
where $[a]=[b]=\F$  --  here we have suppressed the upper index ${z_{\F} z_{\F}}$.

The equation is solved (formally) by 
$$\hat S_{[a]}^{[b]}(k,\r)= 
\left\{\begin{array}{ll}
-L(k,[a],[b],\r)^{-1} {\mathbf i}\hat F_{[a]}^{[b]}(k,\r) & \textrm{ if }   0<|k|\le N\\
0 & \textrm{ if not} ,
\end{array}\right.
$$
$$\hat R_{[a]}^{[b]}(k,\r)= 
\left\{\begin{array}{ll}
\hat F_{[a]}^{[b]}(k,\r ) & \textrm{ if }   |k|> N\\
0& \textrm{ if not};
\end{array}\right.
$$
and 
$$B^{[b]}_{[a]}(\r)=\hat F_{[a]}^{[b]}(0,\r).
$$

The formal solution now converges to a  solution  verifying
\eqref{homo3S}, \eqref{homo3B} and \eqref{homo3R} by Lemma~\ref{lSmallDiv3}. The factor $R^s$ is here $=0$.

 \smallskip

{\it The second equation. } 
We  write the equation in  Fourier components and decompose it into its
 ``components'' on each product block $[a]\times[b]$:
\begin{multline*}
L(k,[a],[b],\r)\hat S_{[a]}^{[b]}(k)=:\langle k, \Omega(\r)\rangle \ \hat S_{[a]}^{[b]}(k) + Q_{[a]}(\r) \hat S_{[a]}^{[b]}(k)- \\
\hat S_{[a]}^{[b]}(k)Q_{[b]}(\r)=
-{\mathbf i}(\hat F_{[a]}^{[b]}(k,\r) -\hat R_{[a]}^{[b]}(k)-\de_{k,0} B_{[a]}^{[b]})
\end{multline*}
--  here we have suppressed the upper index $\xi\eta$.
This equation is now  solved (formally) by 
$$
S_{[a]}^{[b]}(\theta,\r)=\sum\hat S_{[a]}^{[b]} (k,\r) e^{{\mathbf i}k\cdot \theta}
\quad\textrm{and}\quad
R_{[a]}^{[b]}(\theta,\r) =\sum\hat R_{[a]}^{[b]} (k,\r) e^{{\mathbf i}k\cdot \theta},$$
with
$$\hat S_{[a]}^{[b]}(k,\r)=
\left\{\begin{array}{ll}
L(k,[a],[b],\r)^{-1}{\mathbf i}\hat F_{[a]}^{[b]}(k,\r) &
\textrm{ if } \dist([a],[b])\le\Delta'\ \textrm{and}\ \ 0< |k|\le N\\
0 &  \textrm{ if not }, 
\end{array}\right.
$$
$$B_{a}^{b}(\r)=  
\left\{\begin{array}{ll}
\hat F_{a}^{b}(0,\r ) & \textrm{ if } \dist([a],[b])\le\Delta'\ \textrm{and}\ \ k=0\\
0& \text{ if not}
\end{array}\right.
$$
and
$$\hat R_{a}^{b}(k,\r)=  
\left\{\begin{array}{ll}
\hat F_{a}^{b}(k,\r )& \textrm{ if } \dist([a],[b])\ge\Delta'\ \textrm{or}\ \ |k|>N\\
0 & \text{ if not}.
\end{array}\right.
$$
We denote again $\hat R_{a}^{b}(k,\r)$ by $\widehat {(R^s)}_{a}^{b}(k,\r)$ if $\dist([a],[b])\ge\Delta'$ 
and by $\widehat {(R^F)}_{a}^{b}(k,\r)$ if $ |k|>N$.

We have to distinguish two cases, depending on when $k= 0$ or not.

\smallskip

{\it The case $k\not=0$. }  

We have, by Lemma~\ref{lSmallDiv3},
$$ ||(L_{k,[a],[b]}(\r))^{-1}||\le \frac1\ka\,
 $$
for all $\r$ outside some  set $\Sigma_{k,[a],[b]}(\kappa)$ such that
$$\dist(\D\setminus \Sigma_{k,[a],[b]}(\ka),\Sigma_{k,[a],[b]}(\frac\ka2))\ge \cte\frac\ka{N\chi},
$$
and
$$\D_3=\D\setminus \bigcup_{\begin{subarray}{c} 0<|k|\le N\\ [a],[b]\end{subarray}}\Sigma_{k,[a],[b]}(\ka)$$
fulfils the required estimate.  

\smallskip

{\it The case $k=0$.}
In this case we consider the block decomposition $\E_{\Delta'}$  and  we distinguish  
whether $|a|=|b|$ or not.

If  $|a|>|b|$, we use   \eqref{ass1} and \eqref{la-lb-bis} to get 
$$|\alpha(\r)-\beta(\r)|\geq c'-\frac{\de}{\langle a\rangle^\varkappa}
-\frac{\de}{\langle b\rangle^\varkappa}\geq \frac{c'}{2}\ge \ka.$$

This estimate allows us to solve the equation by choosing 
$$B_{[a]}^{[b]}=\hat R_{[a]}^{[b]}(0) =0$$
and
$$\hat S_{[a]}^{[b]}(0,\r)= L(0,[a],[b],\r)^{-1}\hat F_{[a]}^{[b]}(0,\r)$$
with 
$$
||\p_\r^j\hat S_{[a]}^{[b]}(0,\r)||\le\Cte  \frac{1}{\ka} (N\frac{\chi}{\kappa})^{ |j |}
\max_{0\le l\le j}\aa{ \p_\r^l\hat F_{[a]}^{[b]}(0,\r)},$$
which implies \eqref{homo3S}.

If $|a|=|b|$, we cannot control $|\alpha(\r)-\beta(\r)|$ from below, 
so then we define
$$\hat S_{[a]}^{[b]}(0)=0$$
and
\begin{align*}
B_{a}^{b}(\r)=\hat F_{a}^{b}(0,\r)) ,\quad \hat R_{a}^{b}(0)  =0\quad &\text{for }  [a]_{\Delta'}= [b]_{\Delta'}\\
 \hat R_{a}^{b}(0,\r)  =\hat F_{a}^{b}(0,\r)\quad B_{a}^{b}=0,\quad& \text{for }  [a]_{\Delta'}\not= [b]_{\Delta'}.
\end{align*}
Clearly $R$ and $B$ verify the estimates \eqref{homo3R} and \eqref{homo3B}. 

\smallskip

Hence, the formal solution converges to functions verifying
\eqref{homo3S}, \eqref{homo3B} and \eqref{homo3R} by Lemma~\ref{lSmallDiv3}.
Moreover, for $\r\in \D'$, these functions are a solution of the fourth equation.
 \endproof
 
\subsection{The homological equation}

For simplicity we shall restrict ourselves here to $\s,\mu,\ga\le1$.

\begin{lemma}\label{thm-homo}
There exists  a constant $C$ such that if \eqref{ass1} holds, then,
for any  $N\ge1$, $\Delta'\ge \Delta\ge 1$  and 
$$\ka\le\frac1C c',$$
there exists a closed subset $\D'=\D(h, \ka, N)\subset \D$, satisfying 
$$\Leb(\D\setminus {\D'})\le C (\Delta N)^{\exp_1} (\frac{\ka}{\delta_0})^{\alpha}(\frac{\chi}{\delta_0})^{1-\alpha}$$
and there exist real jet-functions 
$S, R=R^F+R^s\in \cT_{\ga,\varkappa,\D}(\s,\mu)$ and $h_+$
verifying, for $\r\in\D'$,
\be\label{eqHomEqbis}
\{ h,S \}+ f^T= h_++R,\ee
and such that
$$h+h_+\in\NF_{\varkappa}(\Delta',\de_+)$$
and, for all  $0<\s'<\s$,

\be\label{estim-B}
\ab{ h_+}_{\begin{subarray}{c}\s',\mu\ \ \\ \ga, \varkappa,\D  \end{subarray}}\le  X
\ab{f^T}_{\begin{subarray}{c}\s,\mu\ \ \\ \ga, \varkappa,\D  \end{subarray}}
\ee

\begin{equation}\label{estim-S}
\ab{S}_{\begin{subarray}{c}\s',\mu\ \ \\ \ga, \varkappa,\D  \end{subarray}}\\
\leq \frac{1}\ka X (N\frac{\chi}{\ka})^{{{s_*}}}
\ab{f^T}_{\begin{subarray}{c}\s,\mu\ \ \\ \ga, \varkappa,\D  \end{subarray}}
\end{equation}
  and
\be\label{estim-R}
\left\{\begin{array}{l}
\ab{R^F}_{\begin{subarray}{c}\s',\mu\ \ \\ \ga, \varkappa,\D  \end{subarray}}\leq 
Xe^{-(\s-\s')N}\ab{f^T}_{\begin{subarray}{c}\s,\mu\ \ \\ \ga, \varkappa,\D  \end{subarray}}\\
\ab{R^s}_{\begin{subarray}{c}\s',\mu\ \ \\ \ga', \varkappa,\D  \end{subarray}}\leq 
Xe^{-(\ga-\ga')\Delta'}
\ab{f^T}_{\begin{subarray}{c}\s,\mu\ \ \\ \ga, \varkappa,\D  \end{subarray}}
\end{array}\right.,\ee
for $\ga_*\le\ga'\le \ga$, 
where
$$
X=C\Delta'\big(\frac{\Delta}{\s-\s' }\big)^{\exp_2} e^{2\ga d_\Delta}.$$

Moreover, $S_r(\cdot,\r)=0$ for $\r$ near the boundary of $\D$.

The exponent
$\alpha$ is a positive constant only depending on  $d,s_*,\vark$ and $\beta_2 $.

(The exponent $\exp_1$ only depends  on $d$, $n=\#\cA$ and $\tau,\beta_2,\vark$. The exponent $\exp_2$ only depends  
on $d,m_*,s_*$. 
$C$ is an absolute constant that depends on  $c,\tau,\beta_2,\beta_3$ and $\vark$. $C$ also depend on $\sup_\D\ab{\Omega_{\textrm up}}$ and $\sup_\D\ab{H_{\textrm up}}$, but stays bounded
when these do.)

\end{lemma}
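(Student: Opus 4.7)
The plan is to decompose the homological equation \eqref{eqHomEq} into the three uncoupled linear equations listed in Section~\ref{ssFourComponents}, solve each of them via the preceding Lemmas~\ref{prop:homo12}, \ref{prop:homo3}, \ref{prop:homo4}, and then reassemble the pieces. Concretely, I would write
\[
 f^T=f_r+\langle f_w,w\rangle +\tfrac12\langle f_{ww}w,w\rangle,\qquad
 S=S_r+\langle S_w,w\rangle +\tfrac12\langle S_{ww}w,w\rangle,
\]
apply the three lemmas with the same parameters $(\ka,N)$ (and $\Delta'\ge\Delta$ in the third), and put
\[
\D'=\D_1\cap\D_2\cap\D_3.
\]
The measure estimate for $\D\setminus\D'$ then follows by summation; the dominant contribution is that of Lemma~\ref{prop:homo4}, giving the exponent $\alpha$ of Lemma~\ref{lSmallDiv3}. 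Define the ``cokernel''
\[
h_+(r,w,\r)=\hat f_r(r,0,\r)+\tfrac12\langle B_{ww}(\r)w,w\rangle,
\]
and collect the remainders as $R=R_r+\langle R_w,w\rangle+\tfrac12\langle R_{ww}w,w\rangle$, with the splitting $R=R^F+R^s$ dictated by the Fourier--truncation and off-diagonal--truncation errors of Lemma~\ref{prop:homo4} (the two other contributions being entirely of Fourier type). By construction these functions satisfy \eqref{eqHomEqbis} on $\D'$, and the vanishing of $S$ near $\p\D$ is inherited from the three lemmas.

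The second step is to verify that $h+h_+\in\NF_\vark(\Delta',\de_+)$. The angle-average $\hat f_r(r,0,\r)$ is affine in $r$, so its $r$-linear part modifies $\Omega$ by a quantity $\lesssim \ab{f^T}_{\cdot}$, which remains within Hypothesis~$\Omega$; its $r$-independent part is a constant and may be dropped. For the matrix part one adds $B_{ww}$ to $A$: by Lemma~\ref{prop:homo4}, $B_{ww}(\r)$ is real symmetric and lies in $\NF_{\Delta'}$. Since $\E_\Delta$ refines $\E_{\Delta'}$, the original $A\in\NF_\Delta$ is a fortiori block-diagonal over $\E_{\Delta'}$, so $A+B_{ww}\in\NF_{\Delta'}$, and the bound \eqref{hypoB} for the new matrix follows from the old one together with \eqref{homo3B}, with $\de_+=\de+X\ab{f^T}_{\cdot}$.

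The third step is to convert the component-wise bounds of Lemmas~\ref{prop:homo12}--\ref{prop:homo4} into the norm \eqref{norm} on $\cT_{\ga,\vark,\D}(\s',\mu)$. Since $S$ is at most quadratic in $(r,w)$, for $(r,\theta,w)\in\O_{\ga'}(\s',\mu)$ one controls
\[
 |S|\le |S_r|+\|S_w\|_{\ga'}\,\|w\|_{\ga'}+\tfrac12\aa{S_{ww}}_{\cB(Y_{\ga'},Y_{\ga'})}\|w\|_{\ga'}^2,
\]
and similarly $Jd S$ and $Jd^2 S$ reduce to $S_w$, $S_{ww}w$ and $S_{ww}$ itself, each estimated in the appropriate norm by the corresponding lemma. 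The same computation applied to $h_+$ and to the two pieces of $R$ yields \eqref{estim-B}--\eqref{estim-R}; the factor $X=C\Delta'(\Delta/(\s-\s'))^{\exp_2}e^{2\ga d_\Delta}$ already appears in Lemma~\ref{prop:homo4}, whereas Lemmas~\ref{prop:homo12}--\ref{prop:homo3} only contribute the polynomial $(\s-\s')^{-n}$ factor which is absorbed into $X$. The derivative loss $(N\chi/\ka)^{\ab{j}}$ is controlled up to $\ab{j}\le s_*$ and, taking the maximum at $\ab{j}=s_*$, gives the power $(N\chi/\ka)^{s_*}$ in \eqref{estim-S}.

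The only genuinely non-routine point is the reassembly in the second step: one must keep track of the fact that the matrix $B_{ww}$ produced by Lemma~\ref{prop:homo4} sits in $\NF_{\Delta'}$ with $\Delta'\ge\Delta$ (and not merely in $\NF_\Delta$), which is what forces the loss $X\propto\Delta'\Delta^{\exp_2}e^{2\ga d_\Delta}$ and is responsible for the presence of both $\Delta$ and $\Delta'$ in the final estimates. Everything else is bookkeeping: the measure bound is a triangle-inequality sum of the three individual bounds, and the analytic bounds follow by linear combination of the estimates in the three preceding lemmas.
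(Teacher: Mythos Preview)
Your proposal is correct and follows essentially the same approach as the paper: take $\D'=\D_1\cap\D_2\cap\D_3$, assemble $S$, $R=R^F+R^s$ and $h_+=\hat f_r(r,0,\r)+\tfrac12\langle B_{ww}w,w\rangle$ from the three component lemmas, and then translate the component-wise bounds into the $\cT_{\ga,\vark,\D}$-norm by exploiting that each function is at most quadratic in $(r,w)$. Your discussion of why $A+B_{ww}\in\NF_{\Delta'}$ and of the $\de_+$-bound is in fact more explicit than the paper's proof (the paper relegates this to the Remark following the lemma).
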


\begin{remark}
The estimates \eqref{estim-B} provides an estimate of $\de_+$. Indeed, let $\frac 1 2 \langle w, Bw\rangle$ denote the quadratic part of $h_+$. Then, for any $a,b\in [a]_{\Delta'}$,
$$
\ab{\p_\r^j B_a^b}\le  \frac1 C||\p_\r^j  B||_{(\ga,m_*),\vark}e_{(\ga,\vark),\vark}(a,b)^{-1}\le \Cte  (\Delta')^{\vark} 
\ab{f^T}_{\begin{subarray}{c}\s,\mu\ \ \\ \ga, \varkappa,\D  \end{subarray}} \frac1{\langle a\rangle^{\vark}}
$$
-- recall the definition of the matrix norm \eqref{b-matrixnorm} and of the exponential weight \eqref{weight}. By
 \eqref{estim-B} this is
 $$\le \Cte  (\Delta')^{\vark} 
\ab{f^T}_{\begin{subarray}{c}\s,\mu\ \ \\ \ga, \varkappa,\D  \end{subarray}} \frac1{\langle a\rangle^{\vark}}.$$
Since $\# [a]_{\Delta'}\lsim (\Delta')^{\exp}$ we get
$$
 || \p_\r^j B(\r)_{[a]_{\Delta'}} || \le  \Cte(\Delta')^{\exp} 
\ab{f^T}_{\begin{subarray}{c}\s,\mu\ \ \\ \ga, \varkappa,\D  \end{subarray}} \frac1{\langle a\rangle^{\vark}}.
$$
This gives the estimate 
$$\delta_+-\delta\le \Cte(\Delta')^{\exp} 
\ab{f^T}_{\begin{subarray}{c}\s,\mu\ \ \\ \ga, \varkappa,\D  \end{subarray}}.$$
\end{remark}

\begin{proof}
The set $\D'$ will now be given by the intersection of the sets in the three previous lemmas of this section.
We set 
$$ h_+(r,w)=\hat f_r(r,0) + \frac 1 2 \langle w, Bw\rangle$$
$$
S(r,\theta,w)=S_r(\theta,r)+\langle S_w(\theta)w\rangle+
\frac 1 2 \langle S_{ww}(\theta)w,w \rangle$$ 
and
$$
R(r,\theta,w)= R_r(r,\theta)+\langle R_w(\theta),w\rangle+
\frac 1 2 \langle R_{ww}(\theta)w,w \rangle,$$
with $R_{ww}=R_{ww}^F+R_{ww}^s$.
These functions also depend on $\r\in\D$ and they verify equation \eqref{eqHomEqbis}
for $\r\in\D'$.

If $x=(r,\theta,w)\in \O_{\ga_*}(\s,\mu)$, then 
$$| h_+(x)|\le \ab{f^T}_{\begin{subarray}{c}\s,\mu\ \ \\ \ga, \varkappa,\D  \end{subarray}}
+  \frac 1 2  || Bw ||_{\ga_*}||w ||_{\ga_*}.$$
Since
$$\aa{B}_{\ga,\vark} \ge \aa{B}_{\ga_*,\vark}\ge \aa{B}_{\cB(Y_{\ga_*},Y_{\ga_*})}$$
it follows that
$$| h_+(x)|\le  \Cte \ab{f^T}_{\begin{subarray}{c}\s,\mu\ \ \\ \ga, \varkappa,\D  \end{subarray}}.$$

We also have for any $x=(r,\theta,w)\in \O_{\ga'}(\s,\mu)$, $\ga_*\le \ga'\le\ga$, 
$$||Jd  h_+(x)||_{\ga'}\le 
 \Cte \ab{f^T}_{\begin{subarray}{c}\s,\mu\ \ \\ \ga, \varkappa,\D  \end{subarray}}+ 
||Bw||_{\ga'}.$$
Since
$$\aa{B}_{\ga,\vark} \ge \aa{B}_{\ga',\vark}\ge \aa{B}_{\cB(Y_{\ga'},Y_{\ga'})}$$
it follows that
$$||Jd  h_+(x)||_{\ga'}\le
\Cte \ab{f^T}_{\begin{subarray}{c}\s,\mu\ \ \\ \ga, \varkappa,\D  \end{subarray}}.$$
Finally $Jd^2  h_+(x)$ equals $ JB$ which satisfies the required bound.

The estimates of the derivatives with respect to $\r$  are the same and obtained in the same way.

The functions $S(\theta,r,\zeta)$, $R^F(\theta,r,\zeta)$ and $R^s (\theta,r,\zeta)$ 
are estimated in the same way.
\end{proof}

\subsection{The non-linear homological equation}

The equation \eqref{eqNlHomEq} can now be solved easily. We restrict ourselves again to $\s,\mu,\ga\le1$.

\begin{proposition}\label{thm-Eq}
There exists a constant $C$ such that for any
$$h\in\NF_{\varkappa}(\Delta,\de),\quad\delta \le \frac1C c',$$
and for any 
$$N\ge 1,\quad \Delta'\ge \Delta\ge 1,\quad \ka\le\frac1C c'$$ 
there exists a closed subset $\D'=\D(h, \ka,N)\subset \D$, satisfying 
$$\Leb(\D\setminus {\D'})\le C  (\Delta N)^{\exp_1} (\frac{\ka}{\delta_0})^{\alpha}(\frac{\chi}{\delta_0})^{1-\alpha},$$
and, for any $f\in \cT_{\ga,\varkappa}(\s,\mu,\D)$ 
$$\eps=\ab{f^T}_{\begin{subarray}{c}\s,\mu\ \ \\ \ga, \vark,\D  \end{subarray}}\quad
\textrm{and}\quad 
\xi=\ab{f}_{\begin{subarray}{c}\s,\mu\ \ \\ \ga, \vark,\D  \end{subarray}},$$
there exist real jet-functions 
$S,R=R^F+R^s\in\cT_{\ga,\varkappa,\D}(\s,\mu)$ and $h_+$ 
verifying, for $\r\in\D'$,
\be\label{eqNlHomEqbis}
\{ h,S \}+\{ f-f^T,S \}^T+ f^T=  h_++R\ee
and such that
$$h+ h_+\in\NF_{\varkappa}(\Delta',\delta_+)$$
and, for all $ \s'<\s$ and $\mu'<\mu$,

\be\label{estim-B2}
\ab{ h_+}_{\begin{subarray}{c}\s',\mu'\ \ \\ \ga, \varkappa,\D  \end{subarray}}\le  CXY\eps\ee

\be\label{estim-S2}
\ab{S}_{\begin{subarray}{c}\s',\mu'\ \ \\ \ga, \varkappa,\D  \end{subarray}}\\
\leq C \frac1\ka XY\eps\ee
 and
\be\label{estim-R2}
\left\{\begin{array}{l}
\ab{R^F}_{\begin{subarray}{c}\s',\mu'\ \ \\ \ga, \varkappa,\D  \end{subarray}}\leq 
Ce^{-(\s-\s')N}XY\eps\\
\ab{R^s}_{\begin{subarray}{c}\s',\mu'\ \ \\ \ga', \varkappa,\D  \end{subarray}}\leq 
Ce^{-(\ga-\ga')\Delta'}XY\eps,
\end{array}\right.\ee
for $\ga_*\le\ga'\le \ga$, 
where
$$X=(\frac{N\Delta' e^{\ga  d_\Delta} }{(\s-\s')(\mu-\mu')})^{\exp_2}$$
and
$$
Y=(\frac{\chi+\xi}{\ka})^{4{s_*}+3}.$$

Moreover, $S_r(\cdot,\r)=0$ for $\r$ near the boundary of $\D$.

Moreover, if  $\tilde \r=(0,\r_2,\dots,\r_p)$ and $f^T(\cdot,\tilde \r)=0$ for all $\tilde \r$,  then $S=R=0$ and $h_+=h$ for all $\tilde \r$.

The exponent
$\alpha$ is a positive constant only depending on  $d,s_*,\vark$ and $\beta_2 $.

(The exponent $\exp_1$ only depends  on $d$, $n=\#\cA$ and $\tau,\beta_2,\vark$. The exponent $\exp_2$ only depends  
on $d,m_*,s_*$. 
$C$ is an absolute constant that depends on  $c,\tau,\beta_2,\beta_3$ and $\vark$. $C$ also depend on $\sup_\D\ab{\Omega_{\textrm up}}$ and $\sup_\D\ab{H_{\textrm up}}$, but stays bounded
when these do.)

\end{proposition}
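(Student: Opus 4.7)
The plan is to solve the non-linear homological equation \eqref{eqNlHomEqbis} by a fixed-point iteration that repeatedly applies the \emph{linear} homological solver of Lemma~\ref{thm-homo} to Hamiltonians in which the non-linearity $\{f-f^T,S\}^T$ is treated as a correction to the forcing $f^T$. The exceptional parameter set $\D'$ will be chosen once for all: take $\D' = \D'(h,\kappa,N)$ exactly as in Lemma~\ref{thm-homo}. Since every step of the iteration uses the same Hamiltonian $h$ and the same parameters $N,\kappa,\Delta,\Delta'$, the measure bound on $\D\setminus \D'$ is automatic.

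Choose a nested sequence of domain parameters $\s=\s_0>\s_1>\dots>\s'$, $\mu=\mu_0>\mu_1>\dots>\mu'$ whose successive gaps sum geometrically to $\s-\s'$ and $\mu-\mu'$. Set $S^{(0)}=0$, $h_+^{(0)}=R^{(0)}=0$, and define inductively
\[
g_n^T := f^T + \{f-f^T,\,S^{(n)}\}^T,
\]
then let $(S^{(n+1)},h_+^{(n+1)},R^{(n+1)})$ be the output of Lemma~\ref{thm-homo} applied with right-hand side $g_n^T$, on the pair $(\s_{n+1},\mu_{n+1})$. By construction, for $\rho\in\D'$,
\[
\{h,S^{(n+1)}\}+g_n^T=h_+^{(n+1)}+R^{(n+1)},
\]
so the residual of \eqref{eqNlHomEqbis} for $S^{(n+1)}$ equals $\{f-f^T,\,S^{(n+1)}-S^{(n)}\}^T$; thus it suffices to show geometric convergence of $S^{(n+1)}-S^{(n)}$ and to bundle the corresponding geometric series into $h_+$, $R$, $S$.

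Combining two ingredients gives the iterative estimate: by Proposition~\ref{lemma:jet} and Proposition~\ref{lemma:poisson} applied on the passage from $(\s_n,\mu_n)$ to $(\s_{n+1},\mu_{n+1})$, the jet of the Poisson bracket with $f-f^T$ (whose $\cT_{\ga,\vark,\D}$-norm is bounded by $C\xi$) contributes a factor
\[
\frac{C\xi}{(\s_n-\s_{n+1})(\mu_n-\mu_{n+1})}\, ;
\]
and Lemma~\ref{thm-homo} applied to the difference $g_{n+1}^T-g_n^T=\{f-f^T,\,S^{(n+1)}-S^{(n)}\}^T$ yields an estimate for $S^{(n+2)}-S^{(n+1)}$ with factor $\frac1\kappa X_n(N\chi/\kappa)^{s_*}$ where $X_n$ carries the geometric loss $(\s_n-\s_{n+1})^{-\exp_2}$ etc. Composing, the effective contraction ratio is proportional to
\[
\frac{X}{\kappa}\Bigl(\frac{N\chi}{\kappa}\Bigr)^{s_*}\cdot\frac{\xi}{(\s-\s')(\mu-\mu')},
\]
which, absorbed into the prefactor $X$ in the final bound (and under the smallness hypotheses on $\kappa/c'$ and $\delta/c'$), is $\le 1/2$ at each step with appropriate schedule $\s_n,\mu_n$; the resulting geometric series telescopes to \eqref{estim-S2}, \eqref{estim-B2}, \eqref{estim-R2}. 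The vanishing statement on the slice $\tilde\rho=(0,\rho_2,\dots,\rho_p)$ follows by induction: if $f^T(\cdot,\tilde\rho)\equiv 0$ then $g_n^T(\cdot,\tilde\rho)\equiv 0$ for all $n$, and Lemma~\ref{thm-homo}\,(iv)-analogue gives $S^{(n)}(\cdot,\tilde\rho)\equiv 0$ throughout.

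The main obstacle is bookkeeping the $\cC^{s_*}$-regularity in $\rho$ across the iteration. Each $\rho$-derivative of the linear solver costs an extra factor $N\chi/\kappa$ by Lemma~\ref{thm-homo}, while each $\rho$-derivative of the Poisson-bracket step costs an extra factor of $\chi+\xi$ (applied to $f$); propagating these through the composition $S^{(n)}\mapsto g_n^T\mapsto S^{(n+1)}$ and combining with the $s_*$ derivatives that appear in the definition of the norm $|\cdot|_{\begin{subarray}{c}\s',\mu'\\\ga,\vark,\D\end{subarray}}$ produces the specific exponent $4s_*+3$ in $Y=((\chi+\xi)/\kappa)^{4s_*+3}$. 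The rest is a routine accounting of Cauchy losses absorbed into the $X$-factor, plus the final decomposition $R=R^F+R^s$ inherited directly from the analogous decomposition in Lemma~\ref{thm-homo}.
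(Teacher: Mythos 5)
Your approach — solving \eqref{eqNlHomEqbis} by iterating the \emph{linear} solver of Lemma~\ref{thm-homo} and treating $\{f-f^T,S\}^T$ as an updated forcing — is in spirit the right thing to do, and the paper does exactly this. But you set it up as an \emph{infinite} fixed-point iteration with a shrinking geometric schedule $\sigma_n\downarrow\sigma'$, $\mu_n\downarrow\mu'$, and that version does not close. The paper instead does a \emph{finite} iteration of exactly three steps, and the reason it is finite is a degree-counting observation that your write-up never uses.

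Here is the point you are missing. Grade jet-monomials $r^a w^b$ by $\deg = 2a+b$ (so jets have degree $\le 2$) and note $f-f^T$ has degree $\ge 3$. Since the Poisson bracket pairs $r\leftrightarrow\theta$ and $p\leftrightarrow q$, each factor of $\{f-f^T,\cdot\}$ raises the lowest degree by at least one: if $S$ has lowest degree $\ge j$ then $\{f-f^T,S\}$ has lowest degree $\ge j+1$. Starting from $S_0$ (degree $\ge 0$), the brackets produce $f_1$ of degree $\ge 1$, $f_2$ of degree $\ge 2$, and $f_3=\{f-f^T,S_2\}$ of degree $\ge 3$, whose jet vanishes. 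Thus the paper sets $S=S_0+S_1+S_2$, solves three linear homological equations over a fixed finite arithmetic chain $\sigma=\sigma_0>\dots>\sigma_5=\sigma'$, and stops — no contraction condition is needed, no smallness of $\xi$ is required, and the factor $Y=((\chi+\xi)/\ka)^{4s_*+3}$ drops out of bookkeeping over exactly these three passes of Lemma~\ref{thm-homo} together with Propositions~\ref{lemma:jet} and~\ref{lemma:poisson}.

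Your infinite scheme, by contrast, cannot converge under the stated hypotheses. With a geometrically shrinking schedule the Cauchy-loss factor at step $n$ is $X_n\sim((\sigma_n-\sigma_{n+1})(\mu_n-\mu_{n+1}))^{-\exp_2}$, which grows without bound as $n\to\infty$; so the ``contraction ratio'' $\frac{C\xi}{\kappa}X_n\bigl(N\chi/\kappa\bigr)^{s_*}$ cannot be uniformly $\le 1/2$. The only escape would be a smallness assumption of the form $\xi \lesssim \kappa^{s_*+1}/(X N^{s_*}\chi^{s_*})$ — but Proposition~\ref{thm-Eq} is asserted for \emph{arbitrary} $f\in\cT_{\ga,\vark,\D}(\sigma,\mu)$, and neither $\delta\le c'/C$ nor $\kappa\le c'/C$ controls $\xi$. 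The finite-degree termination is not a convenience here; it is essential to getting the conclusion without any smallness condition on $\xi$.
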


\begin{remark}
Notice that the ``loss'' of $S$ with respect to $\ka$ is of ``order''  $4{{s_*}}+4$.
However, if $\chi$, $\de$  and 
$\xi= \ab{f}_{\begin{subarray}{c}\s,\mu\ \ \\ \ga', \varkappa,\D  \end{subarray}}$ are of size $\lsim \ka$, then the loss is only of ``order'' 1.
\end{remark}

\begin{proof}
Let $S=S_0+S_1+S_2$ be a jet-function such that $S_1$ starts with terms of degree $1$ in $r,w$
and $S_2$ starts with terms of degree $2$ in $r,w$  --  jet functions are polynomials in $r,w$ and we
give (as is usual) $w$ degree $1$ and $r$ degree $2$.

Let now $\s'=\s_5<\s_4<\s_3<\s_2<\s_1<\s_0=\s$ be a (finite) arithmetic progression, i.e.
$\s_j-\s_{j+1}$ do not depend on $j$,  and let
  and $\mu'=\mu_5<\mu_4<\mu_3<\mu_2<\mu_1<\mu_0=\mu$  be another  arithmetic progressions.

Then $\{ h',S\}+\{ f-f^T,S \}^T+ f^T=  h_++R$ decomposes into three homological equations
$$\{ h',S_0 \}+ f^T= ( h_+)_{0}+R_0,$$
$$\{ h',S_1 \}+f_1^T= ( h_+)_{1}+R_1,\quad f_1=\{ f-f^T,S_0 \},$$
$$\{ h',S_2 \}+ f_2^T= ( h_+)_{2}+R_2,\quad f_2=\{ f-f^T,S_1 \}.$$

By Lemma~\ref{thm-homo} we have for the first equation
$$\ab{( h_+)_0}_{\begin{subarray}{c}\s_1,\mu\ \ \\ \ga, \varkappa,\D  \end{subarray}}\le  X\eps,
\quad
\ab{S_0}_{\begin{subarray}{c}\s_1,\mu\ \ \\ \ga, \varkappa,\D  \end{subarray}}\\
\leq \frac{1}\ka X Y\eps
$$
where
$$
X=C\Delta'\big(\frac{5\Delta}{\s-\s' }\big)^{\exp} e^{2\ga_1 d_\Delta}.$$
and where $Y,Z$ are defined by the right hand sides in the estimates  \eqref{estim-S} and  \eqref{estim-R}.
 
By Proposition \ref{lemma:poisson} we have
$$
\xi_1=\ab{f_1}_{\begin{subarray}{c}\s_2,\mu_2\ \ \\ \ga, \varkappa,\D  \end{subarray}}\le
\frac{1}\ka X YW\xi \eps$$
where
$$W=C \big(\frac5{(\sigma-\sigma')}  +   \frac5{ (\mu-\mu') }\big).$$
By Proposition \ref{lemma:jet} 
$\eps_1=\ab{f_1^T}_{\begin{subarray}{c}\s_2,\mu_2\ \ \\ \ga, \varkappa,\D  \end{subarray}}$
satisfies the same bound as $\xi_1$

By Lemma~\ref{thm-homo} we have for the second equation
$$\ab{( h_+)_1}_{\begin{subarray}{c}\s_3,\mu_2\ \ \\ \ga, \varkappa,\D  \end{subarray}}\le  X\eps_1,\quad
\ab{S_1}_{\begin{subarray}{c}\s_3,\mu_2\ \ \\ \ga, \varkappa,\D  \end{subarray}}\\
\leq \frac{1}\ka X Y\eps_1.
$$

By Propositions \ref{lemma:jet} and \ref{lemma:poisson} we have
$$
\xi_2=\ab{f_2}_{\begin{subarray}{c}\s_4,\mu_4\ \ \\ \ga, \varkappa,\D  \end{subarray}}\le
\frac1\ka X YW\xi_1 \eps_1,$$
and $\eps_2=\ab{f_2^T}_{\begin{subarray}{c}\s_4,\mu_4\ \ \\ \ga, \varkappa,\D  \end{subarray}}$
satisfies the same bound.

By Lemma~\ref{thm-homo} we have for the third equation
$$\ab{( h_+)_2}_{\begin{subarray}{c}\s_5,\mu_4\ \ \\ \ga, \varkappa,\D  \end{subarray}}\le  X\eps_2,\quad
\ab{S_2}_{\begin{subarray}{c}\s_5,\mu_4\ \ \\ \ga, \varkappa,\D  \end{subarray}}\\
\leq \frac{1}\ka X Y\eps_2.
$$
 
Putting this together we find that
$$\eps+\eps_1+\eps_2 \le (1+\frac1\ka X YW\xi)^3\eps= T\eps$$
and
$$\ab{h_+}_{\begin{subarray}{c}\s',\mu'\ \ \\ \ga, \varkappa,\D  \end{subarray}}\le  
XT\eps,\quad
\ab{S}_{\begin{subarray}{c}\s',\mu'\ \ \\ \ga, \varkappa,\D  \end{subarray}}\\
\leq \frac1\ka X YT\eps.
$$

Renaming $X$ and $Y$ gives now the estimates for $h_+$ and $S$. 
$R=R_0+R_1+R_2$ and its estimates follows immediately from the homological equation.

The final statement does not follow from Lemma~\ref{thm-homo}. However, if one follows the whole construction
through the proofs of Lemmas \ref{prop:homo12} to \ref{thm-homo} one sees that it holds. For example in
Lemma~\ref{prop:homo12} it is seen immediately that this holds for $\tilde\r\notin \Sigma(L_k,\frac\kappa2)$. The only arbitrariness in the construction is the extension, but we have chosen it so that $S_r$ and $R_r$ are $=0$ on
$\Sigma(L_k,\frac\kappa2)$. The construction Lemmas \ref{prop:homo3} and \eqref{prop:homo4} displays the same feature.
\end{proof}

\section{Proof of the KAM Theorem}

Theorem~\ref{main} is proved by an infinite sequence of change of variables typical for KAM-theory.
The change of variables will be done by the classical Lie transform method
which is based on a well-known relation between composition of a function with a Hamiltonian flow
$\Phi^t_S$ and Poisson brackets:
$$\frac{d}{d t} f\circ \Phi^t_S=\{f,S\}\circ \Phi^t_S
$$
from which we derive
$$f \circ \Phi^1_S=f+\{f,S\}+\int_0^1 (1-t)\{\{f,S\},S\}\circ \Phi^t_S\ \dd t.$$
Given now three functions $ h,k$ and $f$. Then
\begin{multline*}( h+k+f )\circ \Phi^1_S=\\
 h+k+f +\{ h+k+f ,S\}+\int_0^1 (1-t)\{\{ h+k+f ,S\},S\}\circ \Phi^t_S\ \dd t.
\end{multline*}
If now $S$ is a solution of the equation
\be \label{eq-homobis}
\{  h,S \}+\{ f-f^T,S \}^T+ f^T= h_++R^F+R^s,
\ee
then 
$$
( h+k+f )\circ \Phi^1_S= h+k+h_++f_+ +R^s$$
with
\begin{multline}\label{f+}
f_+=R^F+ (f-f^T)+\{k+f^T ,S\}+\{f-f^T,S\}-\{f-f^T,S\}^T+\\ +\int_0^1 (1-t)\{\{ h+k+f ,S\},S\}\circ \Phi^t_S\ \dd t
\end{multline}
and
\be \label{f+T}
f_+^T= R^F+ \{k+f^T ,S\}^T+(\int_0^1 (1-t)\{\{h+k+f ,S\},S\}\circ \Phi^t_S\ \dd t)^T.
\ee

If we assume that $S$ and $R^F$ are ``small as''  $f^T$, then  $f_+^T$ is is ``small as'' $k f^T$ --  this is the basis
of a linear iteration scheme with (formally) linear convergence.
\footnote{\ it was first used by Poincar\'e, credited by him to the astronomer Delauney, and it has been used many times since then in
different contexts. } 
But if also $k$ is of the size $f^T$, then $f^+$ is ``small as'' the square of $f^T$  --  this is the basis
of a quadratic iteration scheme with (formally) quadratic convergence. We shall combine both of them.

First we shall give a rigorous version of the change of variables described above. We restrict ourselves to the case
when $\s,\mu,\ga\le1$.

\subsection{The basic step}
Let $h\in\NF_{\varkappa}(\Delta,\de)$ and  assume  $\vark>0$ and 
 \be\label{ass2}
 \delta \le \frac{1}{C}c' .\ee
 Let
$$\ga=(\ga,m_*)\ge \ga_*=(0,m_*)$$
and recall Remark~\ref{rAbuse} and the convention \eqref{Conv}. Let  $N\ge 1$, $\Delta'\ge \Delta\ge 1$ and
$$\ka\le\frac1C c'.$$ 
The constant  $C$ is to be determined.

Proposition \ref{thm-Eq} then gives, for any $f\in \cT_{\ga,\varkappa,\D}(\s,\mu)$,
$$\eps=\ab{f^T}_{\begin{subarray}{c}\s,\mu\ \ \\ \ga, \vark,\D  \end{subarray}}\quad
\textrm{and}\quad 
\xi=\ab{f}_{\begin{subarray}{c}\s,\mu\ \ \\ \ga, \vark,\D  \end{subarray}},$$
a set
$\D'=\D'(h, \ka,N)\subset \D$
and functions $h_+,S, R=R^F+R^s,$ satisfying \eqref{estim-B2}+\eqref{estim-S2}+\eqref{estim-R2}
and solving  the equation \eqref{eq-homobis},
$$\{h,S \}+\{ f-f^T,S \}^T+ f^T= h_++R,$$
for any $\r\in\D'$. Let now $0<\s'=\s_4<\s_3<\s_2<\s_1<\s_0=\s$  and 
$0<\mu'=\mu_4<\mu_3<\mu_2<\mu_1<\mu_0=\mu$ be  (finite) arithmetic progressions.

\medskip

\noindent{\it The flow $\Phi^t_S$.}\ 
We have, by \eqref{estim-S2},
$$\ab{S}_{\begin{subarray}{c}\s_1,\mu_1\ \ \\ \ga, \varkappa,\D  \end{subarray}}\\
\leq \Cte \frac1\ka XY\eps$$
where $X,Y$ and $\Cte$ are given in Proposition \ref{thm-Eq}, i.e. 
$$X=(\frac{\Delta' e^{\ga  d_\Delta}N}{(\s_0-\s_1)(\mu_0-\mu_1)})^{\exp_2}=
(\frac{4^2\Delta' e^{\ga  d_\Delta}N}{(\s-\s')(\mu-\mu')})^{\exp_2}\,,\qquad 
Y=(\frac{\chi+\xi}{\ka})^{4{{s_*}}+3}$$
--  we can assume without restriction that $\exp_2\ge 1$.

If 
\be\label{hyp-f1}
\eps\leq \frac1C \frac{\ka}{X^2Y}, \ee
and $C$ is sufficiently large, then we can apply Proposition \ref{Summarize}(i).
By this proposition  it follows  that for  any $0\le t\le1$ the Hamiltonian
flow map  $\Phi^t_S$ is a $\cC^{{s_*}}$-map 
$$\O_{\ga'}(\s_{i+1},\mu_{i+1})\times \D\to\O_{\ga'}(\s_i,\mu_i),\quad \forall \ga_*\le\ga'\le\ga,\quad i=1,2,3, $$
 real holomorphic and symplectic for any fixed  $\rho\in \D$.
Moreover,
$$|| \p_\r^j (\Phi^t_S(x,\cdot)-x)||_{\ga'}\le \Cte\frac1\ka XY \eps$$
and
$$\aa{ \p_\r^j (d\Phi^t_S(x,\cdot)-I)}_{\ga',\vark}\le \Cte\frac1\ka XY \eps$$
for any $x\in \O_{\ga'}(\s_2,\mu_2)$, $\ga_*\le \ga'\le\ga$,  and $0\le \ab{j}\le {s_*}$.

\medskip

\noindent{\it A transformation.} \ 
Let now $k\in \cT_{\ga,\varkappa,\D}(\s,\mu)$  and set
$$\eta=\ab{k}_{\begin{subarray}{c}\s,\mu\ \ \\ \ga, \vark,\D  \end{subarray}}.$$
 Then we have
$$(h+k+f )\circ \Phi^1_S= h+k+h_++f_+ +R$$
where $f_+$ is defined by \eqref{f+}, i.e.
\begin{multline*}
f_+= (f-f^T)+\{k+f^T ,S\}+\{f-f^T,S\}-\{f-f^T,S\}^T+\\ +\int_0^1 (1-t)\{\{ h+k+f ,S\},S\}\circ \Phi^t_S\ \dd t.
\end{multline*}
The integral term is the sum
$$
\int_0^1 (1-t)\{h_++R-f^T,S\}\circ \Phi^t_S\ \dd t
+\int_0^1 (1-t)\{\{k+f ,S\}-\{f-f^T,S\}^T,S\}\circ \Phi^t_S\ \dd t.$$

\medskip

\noindent{\it The estimates of $\{k+f^T,S \}$ and $\{f-f^T,S \}$.} \ 
By Proposition \ref{lemma:poisson}(i)
$$
\ab{\{k+f^T,S\}}_{\begin{subarray}{c} \s_2,\mu_2 \  \\  \ga, \alpha, \D \end{subarray}}
\leq  \Cte X
\ab{S}_{\begin{subarray}{c}\s_1,\mu_1\ \ \\ \ga, \varkappa,\D  \end{subarray}}
 \ab{k+f^T}_{\begin{subarray}{c} \s_1,\mu_1 \  \\  \ga, \alpha, \D \end{subarray}}.$$
 Hence
 \be
\ab{\{k+f^T,S\}}_{\begin{subarray}{c} \s_2,\mu_2 \  \\  \ga, \alpha, \D \end{subarray}}\leq
\Cte  \frac1\ka X^2Y(\eta+\eps) \eps.\ee

Similarly,
\be
 \ab{\{f-f^T,S\}}_{\begin{subarray}{c} \s_2,\mu_2 \  \\  \ga, \alpha, \D \end{subarray}}\leq
\Cte \frac1\ka X^2Y\xi\eps.\ee

\medskip

\noindent{\it The estimate of $\{h_+-f^T,S \}\circ\Phi^t_S$.} \ 
The estimate of $h_+$  is given by \eqref{estim-B2}:
$$\ab{ h_+}_{\begin{subarray}{c}\s_1,\mu_1\ \ \\ \ga, \varkappa,\D  \end{subarray}}\le  \Cte XY\eps.$$
This gives, again by Proposition \ref{lemma:poisson}(i),
$$
\ab{\{h_+-f^T,S\}}_{\begin{subarray}{c} \s_2,\mu_2 \  \\  \ga, \alpha, \D \end{subarray}}\leq 
\Cte \frac1\ka X^3Y^2\eps^2.$$

Let now $F=\{h_+-f^T,S \}$.  If $\eps$ verifies \eqref{hyp-f1} for
a  sufficiently large constant $C$, then we can apply Proposition \ref{Summarize}(ii). By this
proposition, for $\ab{t}\le1$, the function
$F\circ \Phi_S^t\in \cT_{\ga,\vark,\D}(\s_3,\mu_3)$ and
\be
\ab{\{h_+-f^T,S \}\circ \Phi_S^t}_{\begin{subarray}{c} \s_3,\mu_3 \  \\  \ga, \vark, \D \end{subarray}}\leq 
\Cte \frac1\ka X^3Y^2\eps^2.\ee

\medskip

\noindent{\it The estimate of $\{R,S \}\circ\Phi^t_S$.} \ 
The estimate of  $R$ is given by \eqref{estim-R2}. It implies that

$$\ab{R}_{\begin{subarray}{c}\s_1,\mu_1\ \ \\ \ga, \varkappa,\D  \end{subarray}}\leq 
\Cte XY\eps.$$
Then, as in the previous case,
\be
\ab{\{R,S \}\circ \Phi_S^t}_{\begin{subarray}{c} \s_3,\mu_3 \  \\  \ga, \vark, \D \end{subarray}}\leq 
\Cte \frac1\ka X^3Y^2\eps^2.\ee

\medskip

\noindent{\it The estimate of $\{\{k+f,S \}-\{f-f^T,S\}^T,S\}\circ\Phi^t_S$.} \ 
This function is estimated as above. If $F=\{\{k+f,S \}-\{f-f^T,S\}^T,S\}$, then, by 
Proposition \ref{lemma:jet} and Proposition \ref{lemma:poisson}(i),
$$
\ab{F}_{\begin{subarray}{c} \s_3,\mu_3 \  \\  \ga, \alpha, \D \end{subarray}}\leq 
\Cte (\frac1\ka X^2Y)^2(\eta+\xi)\eps^2$$
and by Proposition \ref{Summarize}(ii)
\be
\ab{\{\{k+f,S \}-\{f-f^T\}^T,S\}\circ \Phi_S^t}_{\begin{subarray}{c} \s_4,\mu_4 \  \\  \ga, \vark, \D \end{subarray}}\leq
\Cte (\frac1\ka X^2Y)^2(\eta+\xi)\eps^2 .\ee

\medskip

\noindent{\it The estimates of $R^F$ and $R^s$.}\ 
These estimates are given by \eqref{estim-R2}:
$$\ab{R^F}_{\begin{subarray}{c}\s_1,\mu_1\ \ \\ \ga, \varkappa,\D  \end{subarray}}\leq 
\Cte XY e^{-(\s-\s')N}\eps$$
and
$$\ab{R^s}_{\begin{subarray}{c}\s_1,\mu_1\ \ \\ \ga, \varkappa,\D  \end{subarray}}\leq 
\Cte XY e^{-(\ga-\ga')\Delta'}\eps.$$

Renaming now $X$ and $Y$ and denoting $R^s$ by $R_+$ gives the following lemma.

\begin{lemma}\label{basic}
There exists an absolute constant $C_1$ such that, for any
$$h\in\NF_{\varkappa}(\Delta,\de),\quad\vark>0,\quad \delta \le \frac1{C_1} c',$$
and for any 
$$N\ge 1,\quad \Delta'\ge \Delta\ge 1,\quad \ka\le\frac1{C_1} c',$$ 
there exists a closed subset $\D'=\D( h, \ka,N)\subset \D$, satisfying 
$$\Leb(\D\setminus {\D'})\le {C_1} (\Delta N)^{\exp_1}  (\frac{\ka}{\delta_0})^{\alpha}(\frac{\chi}{\delta_0})^{1-\alpha}$$
and, for any $f\in \cT_{\ga,\varkappa,\D}(\s,\mu)$,  
$$\eps=\ab{f^T}_{\begin{subarray}{c}\s,\mu\ \ \\ \ga, \vark,\D  \end{subarray}}\quad \textrm{and}\quad
\xi=[f]_{\s,\mu,\D}^{\ga,\varkappa},$$
satisfying
$$\eps \leq \frac1{C_1} \frac{\ka}{XY}, \qquad 
\left\{\begin{array}{ll}
X=(\frac{N\Delta' e^{\ga  d_\Delta}}{(\s-\s')(\mu-\mu')})^{\exp_1},& \s'<\s,\ \mu'<\mu\\
Y= (\frac{\chi+\xi}\ka)^{\exp_1},&\ \end{array}\right. 
$$
and for any  $k\in \cT_{\ga,\varkappa,\D}(\s,\mu)$,
$$\eta=\ab{k}_{\begin{subarray}{c}\s,\mu\ \ \\ \ga, \varkappa,\D  \end{subarray}},$$
there exists a  $\cC^{{s_*}}$ mapping
$$\Phi:\O_{\ga'}(\s',\mu')\times \D\to\O_{\ga'}(\s-\frac{\s-\s'}2,\mu-\frac{\mu-\mu'}2),\quad \forall \ga_*\le\ga'\le\ga,$$
real holomorphic and symplectic  for each fixed parameter $\r\in\D$, and functions
$f_+,R_+\in \cT_{\ga,\varkappa,\D}(\s',\mu')$ and
$$h+h_+\in\NF_{\varkappa}(\Delta',\delta_+),$$
such that
$$(h+k+f )\circ \Phi= h+k+ h_++f_++R_+,\quad \forall \r\in\D',$$
and
$$\ab{ h_+}_{\begin{subarray}{c}\s',\mu'\ \ \\ \ga, \varkappa,\D  \end{subarray}}\le 
 {C_1}XY\eps,$$
$$\ab{ f_+-f}_{\begin{subarray}{c}\s',\mu'\ \ \\ \ga, \varkappa,\D  \end{subarray}}\le 
 {C_1}XY(1+\eta+\xi)\eps,$$
$$\ab{ f_+^T}_{\begin{subarray}{c}\s',\mu'\ \ \\ \ga, \varkappa,\D  \end{subarray}}\le 
{C_1}\frac1\ka XY
(\eta+\ka e^{-(\s-\s')N}+\eps )\eps
$$
and
$$\ab{ R_+}_{\begin{subarray}{c}\s',\mu'\ \ \\ \ga', \varkappa,\D  \end{subarray}}\le 
{C_1} XYe^{-(\ga-\ga')\Delta'}\eps$$
for any $\ga_*\le\ga'\le\ga$.

Moreover,
$$|| \p_\r^j (\Phi(x,\r)-x)||_{\ga'}+ \aa{ \p_\r^j (d\Phi(x,\r)-I)}_{\ga',\vark} \le {C_1}\frac1\ka XY \eps$$
for any $x\in \O_{\ga'}(\s',\mu')$, $\ga_*\le\ga'\le\ga$, $\ab{j}\le{s_*}$ and  $\r\in\D$,
and $\Phi(\cdot,\r)$ equals the identity for $\r$ near the boundary of $\D$.

Finally, if  $\tilde \r=(0,\r_2,\dots,\r_p)$ and $f^T(\cdot,\tilde \r)=0$ for all $\tilde \r$,  then $f_+-f=R_+=h_+=0$ and $\Phi(x,\cdot)=x$
 for all $\tilde \r$.

\end{lemma}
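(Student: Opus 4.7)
The plan is to realize $\Phi$ as the time--one Hamiltonian flow $\Phi_S^1$ of an auxiliary function $S$ obtained by solving the nonlinear homological equation \eqref{eq-homobis}, and then to read off $f_+$ from the Lie-transform expansion. First I would apply Proposition~\ref{thm-Eq} to $h$ and $f$, with the given $N,\Delta',\ka$. Since the smallness assumption on $\eps$ is stronger than what that proposition requires (the constant $C_1$ will be chosen large enough), this provides the closed set $\D'\subset\D$ of the right measure, together with jet-functions $S,R=R^F+R^s$ and $h_+$ solving $\{h,S\}+\{f-f^T,S\}^T+f^T=h_++R$ on $\D'$, and satisfying \eqref{estim-B2}--\eqref{estim-R2} with appropriate $X,Y$. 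I take $R_+:=R^s$; the estimate for $R_+$ is then the second line of \eqref{estim-R2}, and the estimate for $h_+$ is \eqref{estim-B2} (both up to renaming of $X,Y$).

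Next I would verify that $\Phi_S^1$ is well-defined on the desired scale of domains. From \eqref{estim-S2} one gets $|S|_{\s_1,\mu_1,\ga,\vark,\D}\lesssim \tfrac1\ka XY\eps$, which under the hypothesis $\eps\le \tfrac1{C_1}\ka/(XY)$ (with $C_1$ large) is bounded by $\cte\min(\s-\s',\mu-\mu')$. Proposition~\ref{Summarize}(i) then yields a $\cC^{s_*}$--family of real-holomorphic symplectic maps $\Phi_S^t:\O_{\ga'}(\s_{i+1},\mu_{i+1})\to\O_{\ga'}(\s_i,\mu_i)$ for $|t|\le1$ and all $\ga_*\le\ga'\le\ga$, together with the required bounds on $\p_\r^j(\Phi-\id)$ and $\p_\r^j(d\Phi-I)$ in terms of $\tfrac1\ka XY\eps$. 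The fact that $\Phi(\cdot,\r)=\id$ near the boundary of $\D$ follows from the corresponding property of $S$ in Proposition~\ref{thm-Eq}.

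Then I would insert $\Phi=\Phi_S^1$ into the Lie expansion at the start of this section, which (using the homological equation to cancel $\{h,S\}+\{f-f^T,S\}^T+f^T$) gives the identity
\[(h+k+f)\circ\Phi = h+k+h_++f_++R^s, \]
with $f_+$ as in \eqref{f+}. Each summand of $f_+$ is now estimated on the progressively smaller domain $\O_\ga(\s_i,\mu_i)$ by combining Propositions~\ref{lemma:jet}, \ref{lemma:poisson}(i) and \ref{Summarize}(ii): the two Poisson brackets $\{k+f^T,S\}$ and $\{f-f^T,S\}$ contribute $\lesssim \tfrac1\ka X^2Y(\eta+\xi+\eps)\eps$; the integral terms, written using \eqref{f+} as $\int_0^1(1-t)\{h_+ +R-f^T,S\}\circ\Phi^t_S\,dt+\int_0^1(1-t)\{\{k+f,S\}-\{f-f^T,S\}^T,S\}\circ\Phi^t_S\,dt$, give $\lesssim (\tfrac1\ka X^2Y)^2(1+\eta+\xi)\eps^2$ by applying Proposition~\ref{lemma:poisson}(i) inside and Proposition~\ref{Summarize}(ii) for the composition with the flow. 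Summing and using $\eps\le\ka/(XY)$ yields the bound on $f_+-f$.

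Finally, for $f_+^T$ I would use \eqref{f+T}: the first term $R^F$ contributes $\lesssim XYe^{-(\s-\s')N}\eps$ from \eqref{estim-R2}, the bracket $\{k+f^T,S\}^T$ contributes $\lesssim\tfrac1\ka X^2Y\eta\eps$, and the integral produces $\lesssim \tfrac1\ka X^2Y\cdot\eps^2$ by the same argument as above. Combining these three pieces gives the claimed bound on $f_+^T$. The trivial statement follows from the analogous last assertion of Proposition~\ref{thm-Eq}: if $f^T$ vanishes along $\tilde\r$, then $S=R=h_+=0$ there, hence $\Phi=\id$ and, from \eqref{f+}, also $f_+-f=0$. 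The bookkeeping of domains $\s_0>\s_1>\dots>\s_4=\s'$ and $\mu_0>\dots>\mu_4=\mu'$ is the only subtle point; the rest consists in renaming $X\mapsto 4^{2\exp_1}X$ etc.\ to absorb the finitely many contractions. I do not expect any serious obstacle: the heart of the work has already been done in Proposition~\ref{thm-Eq} and Proposition~\ref{Summarize}, and what remains is purely a matter of combining their estimates.
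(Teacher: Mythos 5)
Your proposal follows the same route as the paper's proof: solve the nonlinear homological equation via Proposition~\ref{thm-Eq}, take $\Phi=\Phi_S^1$ with the flow estimates from Proposition~\ref{Summarize}, insert into the Lie expansion to cancel against the homological equation, set $R_+=R^s$, and estimate the summands of $f_+$ and $f_+^T$ term by term on a chain of intermediate domains, finally absorbing the finitely many loss factors into a renamed $X$. The decomposition of the integral term, the accounting of $\{k+f^T,S\}$, $\{f-f^T,S\}$, $\{h_++R-f^T,S\}\circ\Phi_S^t$ and $\{\{k+f,S\}-\{f-f^T,S\}^T,S\}\circ\Phi_S^t$, and the treatment of the "trivial" last assertion all match the paper.
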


\begin{remark}\label{constants}
The exponent
$\alpha$ is a positive constant only depending on  $d,s_*,\vark$ and $\beta_2 $.
The exponent $\exp_1$ only depends  on $d$, $n=\#\cA,s_*$ and $\tau,\beta_2,\vark$. 
${C_1}$ is an absolute constant that depends on  $c,\tau,\beta_2,\beta_3$ and $\vark$. ${C_1}$ also depend on $\sup_\D\ab{\Omega_{\textrm up}}$ and $\sup_\D\ab{H_{\textrm up}}$, but stays bounded
when these do.

\end{remark}

\subsection{A finite induction}
We shall first make a finite iteration without changing the normal form in order to decrease
strongly the size of the perturbation. We shall restrict ourselves to the case when $N=\Delta'$.

\begin{lemma}\label{Birkhoff}
There exists a constant ${C_2}$ such that, for any
$$h\in\NF_{\varkappa}(\Delta,\de),\quad\vark>0,\quad\delta \le \frac1{C_2} c',$$
and for any 
$$\Delta'\ge \Delta\ge 1,\quad \ka\le\frac1{C_2} c',$$ 
there exists a closed subset $\D'=\D( h, \ka,\Delta')\subset \D$, satisfying 
$$\Leb(\D\setminus {\D'})\le {C_2} (\Delta')^{\exp_2}
(\frac{\ka}{\delta_0})^{\alpha}(\frac{\chi}{\delta_0})^{1-\alpha}$$
and,  for any $f\in \cT_{\ga,\varkappa,\D}(\s,\mu)$, 
$$\eps=\ab{f^T}_{\begin{subarray}{c}\s,\mu\ \ \\ \ga, \vark,\D  \end{subarray}}\quad \textrm{and}\quad
\xi=[f]_{\s,\mu,\D}^{\ga,\varkappa},$$
satisfying
$$\eps \leq \frac1{C_2} \frac{\ka}{XY},\quad \left\{\begin{array}{ll}
X=(\frac{\Delta' e^{\ga  d_\Delta}}{(\s-\s')(\mu-\mu')}\log\frac1{\eps})^{\exp_2},& \s'<\s,\ \mu'<\mu\\
Y= (\frac{\chi+\xi}\ka)^{\exp_2},&\ \end{array}\right. $$
there exists a  $\cC^{{s_*}}$ mapping
$$\Phi:\O_{\ga'}(\s',\mu')\times \D\to\O_{\ga'}(\s-\frac{\s-\s'}{2},\mu-\frac{\mu-\mu'}{2}),
\quad \forall \ga_*\le\ga'\le\ga,$$
real holomorphic and symplectic  for each fixed parameter $\r\in\D$, and  functions
$f'\in \cT_{\ga,\varkappa,\D}(\s',\mu')$ and
$$h'\in\NF_{\varkappa}(\Delta',\de'),$$
such that  
$$(h+f )\circ \Phi= h'+f',\quad \forall \r\in\D',$$
and
$$\ab{ h'- h}_{\begin{subarray}{c}\s',\mu'\ \ \\ \ga, \varkappa,\D  \end{subarray}}\le {C_2} XY \eps,$$
$$\xi'=\ab{ f'}_{\begin{subarray}{c}\s',\mu'\ \ \\ \ga', \varkappa,\D  \end{subarray}} \le  \xi+ C_2XY(1+\xi)\eps$$
and 
$$ \eps'=\ab{ (f')^T}_{\begin{subarray}{c}\s',\mu'\ \ \\ \ga', \varkappa,\D  \end{subarray}} \le 
{C_2} XY(e^{-(\s-\s')\Delta'}+ e^{-(\ga-\ga')\Delta'})\eps,$$
for any $\ga_*\le\ga'\le \ga$.

Moreover,
$$|| \p_\r^j (\Phi(x,\r)-x)||_{\ga'}+ \aa{ \p_\r^j (d\Phi(x,\r)-I)}_{\ga',\vark} \le {C_2}\frac1\ka XY \eps$$
for any $x\in \O_{\ga'}(\s',\mu')$, $\ga_*\le\ga'\le\ga$, $\ab{j}\le{s_*}$, and  $\r\in\D$,
and $\Phi(\cdot,\r)$ equals the identity for $\r$ near the boundary of $\D$.

Finally, if  $\tilde \r=(0,\r_2,\dots,\r_p)$ and $f^T(\cdot,\tilde \r)=0$ for all $\tilde \r$,  then $f'-f=h'=0$ and $\Phi(x,\cdot)=x$
 for all $\tilde \r$.
 
( The exponents $\alpha$, $\exp_2$ and the constant ${C_2}$ have the same properties as those in Remark
 \ref{constants}.)
  
\end{lemma}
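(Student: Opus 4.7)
The plan is to iterate Lemma~\ref{basic} a number $J$ of times, with $J$ of order $\log(1/\eps)$, keeping the Fourier cutoff $N=\Delta'$, the target block partition $\Delta'$, and the small-divisor parameter $\ka$ fixed throughout, and absorbing the normal-form correction $h_+$ into $h$ at every step (so that the auxiliary function $k$ from Lemma~\ref{basic} stays equal to zero at every stage). Concretely, set $h_0=h$, $f_0=f$, pick equally spaced progressions $\s=\s_0>\cdots>\s_J=\s'$ and $\mu=\mu_0>\cdots>\mu_J=\mu'$ with steps $(\s-\s')/J$ and $(\mu-\mu')/J$ respectively, and at stage $j$ apply Lemma~\ref{basic} to the triple $(h_j,0,f_j)$ on the pair $(\s_j,\mu_j)\to(\s_{j+1},\mu_{j+1})$, producing a symplectomorphism $\Phi_j$, a correction $h_{+,j}$, and new pieces $f_{+,j}, R_{+,j}$. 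Then redefine $h_{j+1}=h_j+h_{+,j}\in\NF_\vark(\Delta',\de_{j+1})$ and $f_{j+1}=f_{+,j}+R_{+,j}$. The final output $\Phi$ is the composition $\Phi_0\circ\cdots\circ\Phi_{J-1}$, $h'$ is $h_J$, $f'$ is $f_J$, and $\D'$ is the intersection of the $J$ single-step sets $\D'(h_j,\ka,\Delta')$ coming from Lemma~\ref{basic}.

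The heart of the argument is the geometric decay of $\eps_j=\ab{f_j^T}_{\begin{subarray}{c}\s_j,\mu_j\\ \ga,\vark,\D\end{subarray}}$. With $\eta_j=0$ the output bounds of Lemma~\ref{basic} give
\[
\eps_{j+1}\le C_1 X_j Y_j\bigl(e^{-(\s_j-\s_{j+1})\Delta'}+e^{-(\ga-\ga')\Delta'}\bigr)\eps_j+C_1\frac{X_j Y_j}{\ka}\eps_j^{\,2},
\]
where $X_j\lesssim (J/(\s-\s'))^{\exp_1}(\Delta' e^{\ga d_\Delta})^{\exp_1}$ suffers only a polynomial loss in $J$, and $Y_j$ is unchanged. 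Assuming inductively that $\eps_j/\ka$ stays negligible relative to the exponential factor, the quadratic term is absorbed and one has $\eps_{j+1}\le q_j\eps_j$ with uniformly bounded $q_j$; choosing $J\sim \log(1/\eps_0)$ so that $\prod_j q_j$ compounds to exactly $C_2 XY(e^{-(\s-\s')\Delta'}+e^{-(\ga-\ga')\Delta'})$ yields the stated bound on $\eps'$. The $J^{\exp_1}$ inflation of $X_j$ together with the factor $J$ inserted in the denominators $\s_j-\s_{j+1}$ is exactly what produces the $(\log(1/\eps))^{\exp_2}$ factor appearing inside $X$ in the final statement. The estimate on $\xi'$ follows by summing $\xi_{j+1}\le\xi_j+C_1 X_j Y_j(1+\xi_j)\eps_j$ against the geometric decay of $\eps_j$, and the increments $\de_{j+1}-\de_j$ sum similarly. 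The closeness-to-identity estimate on the composed $\Phi$ follows by chain-composing the single-step bounds $\aa{\Phi_j-\Id}\lesssim C_1 X_j Y_j\eps_j/\ka$, again using geometric decay. The ``Moreover'' clause on $\tilde\r=(0,\r_2,\ldots,\r_p)$ is inherited at every step from the corresponding clause of Lemma~\ref{basic}, since $\eps_j$ and $\xi_j-\xi_0$ both vanish on the slice $\{\r_1=0\}$.

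The main obstacle is purely the book-keeping: one must verify that the smallness condition $\eps_j\le \ka/(C_1 X_j Y_j)$ persists at every step despite the polynomial $J^{\exp_1}$ inflation of $X_j$, that the cumulative normal-form shift $\de_J-\de_0\lesssim\sum_j X_j Y_j\eps_j$ remains $\le \frac1{C_1}c'$ so that Lemma~\ref{basic} can be reapplied, and that the union of the $J$ excluded sets satisfies the measure bound of the statement once the prefactor $J$ is absorbed into $\exp_2$. All three constraints close simultaneously provided $C_2$ is chosen large enough relative to $C_1$ and $J$ is tuned so that $J\log J\lesssim\log(1/\eps_0)$, which is the standard super-logarithmic restriction for this kind of finite (linearly convergent) Birkhoff-type iteration.
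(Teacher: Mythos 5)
Your broad strategy — iterate Lemma~\ref{basic} finitely many times with the cutoffs $N=\Delta'$ and $\ka$ fixed, then compose — matches the paper's. But the specific book-keeping you choose ($k\equiv 0$, $h_{j+1}=h_j+h_{+,j}$, $f_{j+1}=f_{+,j}+R_{+,j}$) is exactly opposite to the paper's ($h$ fixed, $k_{j+1}=k_j+h_{+,j}+R_{+,j}$), and this difference is not cosmetic: your version breaks at two points.

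First, absorbing $h_{+,j}$ into the normal form means that after the first step $h_1\in\NF_\vark(\Delta',\de_1)$ has the \emph{coarser} block partition $\E_{\Delta'}$. Lemma~\ref{basic} applied to $h_1$ then produces a factor $e^{\ga d_{\Delta'}}$ in $X_1$ rather than $e^{\ga d_\Delta}$, because the factor in $X$ records the diameter of the blocks of the \emph{current} normal form (it enters through Lemma~\ref{thm-homo} when converting the block-diagonal solution of the homological equation to matrix-norm estimates). Since $d_{\Delta'}\sim (\Delta')^{(d+1)!/2}$ and $\Delta'$ is typically huge, this is catastrophically larger than $e^{\ga d_\Delta}$, and you cannot recover the bound stated in the lemma, which has $e^{\ga d_\Delta}$ with the \emph{input} $\Delta$. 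The paper avoids this precisely by leaving $h$ (and its $\Delta$-partition) untouched through the whole finite iteration; the homological equation at every stage is with respect to the same fixed $h$.

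Second, the remainder $R_{+,j}=R^s$ is itself a jet function, so $R_{+,j}^T=R_{+,j}$. Its estimate $\aa{R^s}_{\ga'}\le C\,XY\,e^{-(\ga-\ga')\Delta'}\eps_j$ decays only when $\ga'<\ga$; at $\ga'=\ga$ it is simply $O(XY\eps_j)$ with no gain. The iteration, however, runs at the fixed weight $\ga$. So folding $R_{+,j}$ into $f_{j+1}$ gives $\eps_{j+1}\ge c\,\aa{R_{+,j}}_\ga\sim X_jY_j\eps_j$, which destroys any geometric decay — the factor $X_jY_j\ge 1$. Your displayed recursion for $\eps_{j+1}$ smuggles the $e^{-(\ga-\ga')\Delta'}$ gain into the per-step bound by implicitly evaluating the $R^s$-piece at the final $\ga'$, but this norm reduction is available only \emph{once}, at the very end, not at every step. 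The paper handles this by accumulating the $R_j$'s into $k_j$ (which is never differentiated nor re-jetted inside the loop) and transferring $R_1+\cdots+R_{K+1}$ to $f'$ only after the last step, at which point they are measured at $\ga'<\ga$. Relatedly, your choice $J\sim\log(1/\eps_0)$ of iteration count gives only $\eps_J\sim\eps_0^2$, far short of the target $XY\,e^{-(\s-\s')\Delta'}\eps$ when $(\s-\s')\Delta'\gg\log(1/\eps)$, which is the regime in which the lemma is actually used; the paper takes $K\sim(\s-\s')\Delta'$ steps (tuned against the arithmetic progression and the logarithm) to obtain the full exponential gain. To fix your argument you would have to follow the paper's scheme: keep $h$ fixed, let $k$ collect $h_+$ and $R_+$, feed $\eta_j=\|k_j\|$ back into the bound on $\eps_{j+1}$, and then verify that $\eta_j$ remains $O(X_1Y_1\eps_1)\le \ka$ so the per-step contraction factor is genuinely $\le e^{-1}$.
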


\begin{proof}
Let $N=\Delta'$ and $\ka\le\frac{c'}{C_1}$. Let $\s_1=\s-\frac{\s-\s'}2$, $\mu_1=\mu-\frac{\mu-\mu'}2$ and  $\s_{K+1}=\s'$, $\mu_{K+1}=\mu'$, and let 
$\{\s_j\}_1^{K+1}$ and $\{\mu_j\}_1^{K+1}$ be arithmetical progressions. Let
$$(\s-\s')\Delta'\le K\le (\s-\s')\Delta'(\log\frac\ka{\eps})^{-1}.$$
This implies that
 $$\ka e^{-(\s_{j}-\s_{j+1})N}\le \eps.$$

We let $f_1=f$ and $k_1=0$, and we let
$\eps_1= [f_1^T]_{\begin{subarray}{c}\s,\mu\ \ \\ \ga, \vark,\D  \end{subarray}}=\eps$, 
$\xi_1=  [f_1]_{\begin{subarray}{c}\s,\mu\ \ \\ \ga, \vark,\D  \end{subarray}}=\xi$, 
$\delta_1=\delta$ and
$\eta_1=[k_1]_{\begin{subarray}{c}\s,\mu\ \ \\ \ga, \vark,\D  \end{subarray}}=0$. 

Define now
$$\eps_{j+1}=C_1\frac1\ka X_jY_j(\eta_j+\eps_1+\eps_j)\eps_j,$$
$$\xi_{j+1}=\xi_j+ C_1 X_jY_j(1+\eta_j+\xi_j) \eps_j,\quad \eta_{j+1}=\eta_j+C_1X_jY_j\eps_j,$$
with
$$X_j=(\frac{N\Delta' e^{\ga d_\Delta}}  {(\s_j-\s_{j+1})(\mu_j-\mu_{j+1})})^{\exp_1},\quad Y_j=(\frac{\chi+\xi_j}{\ka})^{\exp_1},$$
where $C_1, \exp_1$  are given in Lemma~\ref{basic}. Notice that $X_j=X_1$.

\begin{sublem*}
If
$$\eps_1\le\frac 1{C_2}
 \frac\ka{ X_1^2Y_1^2},\quad C_2=3eC_1 2^{\exp_1},$$
 then, for all $j\ge1$, 
 $$\eps_j\le \frac1{C_1} \frac\ka{ X_j^2Y_j^2}\quad\textrm{and}\quad\eps_{j}\le (\frac{C_2}2 \frac{ X^2_1Y^2_1}\ka  \eps_1)^{j-1}\eps_1\le e^{-(j-1)}\eps_1 ,$$
$$
\xi_{j} - \xi_1  \le 2C_1 X_1 Y_1(1+\xi_1)\eps_1
\quad\textrm{and}\quad
\eta_{j} \le 2C_1 X_1 Y_1\eps_1.$$
\end{sublem*}

This sublemma shows that we can  apply Lemma~\ref{basic} K times  to get
a sequence of mappings
$$\Phi_j:\O_{\ga'}(\s_{j+1},\mu_{j+1})\times \D'\to
\O_{\ga'}(\s_j-\frac{\s_j-\s_{j+1}}2,\mu_j-\frac{\mu_j-\mu_{j+1}}2),\quad \ga_*\le\ga'\le\ga_{j}$$
and functions $f_{j+1}$ and $R_{j+1}$ such that, for $\r\in\D'$,
$$(h+k_j+f_j )\circ \Phi_j= h+k_{j+1}+ f_{j+1}$$
with $k_{j+1}=k_j+ h_{j+1}+R_{j+1}$. 

Let $f'=f_{K+1}+R_1+\dots+R_{K+1}$ and $h'=h_1+\dots+h_{K+1}$. Then
$$\ab{h'-h}_{\begin{subarray}{c}\s',\mu'\ \ \\ \ga, \varkappa,\D  \end{subarray}}\le
C_1\sum X_jY_j\eps_j\le\eta_{K+1}\le  2C_1X_1Y_1\eps_1,$$
$$\ab{f'-f}_{\begin{subarray}{c}\s',\mu'\ \ \\ \ga, \varkappa,\D  \end{subarray}}\le 
C_1\sum X_jY_j(1+\xi_j+\eta_j)\eps_j \le4C_1X_1Y_1(1+\xi_1)\eps_1$$
and
$$\ab{(f')^T}_{\begin{subarray}{c}\s',\mu'\ \ \\ \ga, \varkappa,\D  \end{subarray}}\le \eps_{K+1}+
C_1\sum X_jY_je^{(\ga-\ga')\Delta'}\eps_j\le $$
$$ 
e^{-K}\eps_1+2C_1X_1Y_1e^{(\ga-\ga')\Delta'}\eps_1\le e^{(\s-\s')\Delta'}\eps_1+
2C_1X_1Y_1e^{(\ga-\ga')\Delta'}\eps_1.$$

We then take $\Phi=\Phi_1\circ\dots\circ \Phi_K$.
For the estimates of $\Phi$, write $\Psi_j=\Phi_j\circ\dots\circ \Phi_K$ and  $\Psi_{K+1}=id$.
For $(x,\r)\in \O_{\ga'}(\s',\mu')\times \D$ we then have 
$$||\Phi(x,\r)-x||_{\ga'}\le 
\sum_{j=1}^K  ||\Psi_j(x,\r)-\Psi_{j+1}(x,\r)||_{\ga'}.$$
Then
$$
||\Psi_j(x,\r)-\Psi_{j+1}(x,\r)||_{\ga'}=||\Phi_j(\Psi_{j+1}(x,\r),\r)-\Psi_{j+1}(x,\r)||_{\ga'}$$
is
$$
\le C_1 \frac1\ka X_jY_j \eps_j.$$
It follows that
$$||\Phi(x,\r)-x||_{\ga'}\le   2C_1 \frac1\ka X_1Y_1 \eps_1.$$
The estimate of $||d\Phi(x,\r)-I||_{\ga'}$ is obtained in the same way. 

The derivatives with respect to $\r$ depends on higher order differentials which can be estimated by Cauchy
estimates.

The result now follows if we take $C_2$ sufficiently large and increases the exponent $\exp_1$.
\end{proof}

\noindent{\it  Proof of sublemma.}\ 
The estimates are true for $j=1$ so we proceed by induction on $j$. Let us assume the estimates 
hold up to $j$. Then, for $k\le j$,
$$Y_{k}\le (\frac{\chi+\xi_1+2C_1X_1Y_1(1+\xi_1)\eps_1}{\ka})^{\exp_1}= 2^{\exp_1} Y_1$$
and 
$$\eps_{j+1}\le   2^{\exp_1} \frac{ X_1Y_1}\ka  
[2C_1 X_1 Y_1\eps_1+\eps_1+\eps_1]\eps_j\le    C'\frac{ X^2_1Y^2_1}\ka  \eps_1\eps_j,$$
$C'=3C_1 2^{\exp_1} $. Then
$$\xi_{j+1}-\xi_1\le2^{\exp_1}  X_1Y_1(1+\xi_1+4C_1 X_1 Y_1(1+\xi_1)\eps_1   )(\eps_1+\dots+\eps_{j+1})\le $$
$$
2^{\exp_1}  X_1Y_1(1+\xi_1)(1+4C_1 X_1 Y_1\eps_1   )2\eps_1
\le 2^{\exp_1} 4X_1Y_1(1+\xi_1)\eps_1,
$$
if $4C_1 X_1 Y_1\eps_1\le1$ and $C'\frac{ X^2_1Y^2_1}\ka  \eps_1\le\frac1e\le\frac12$  --
and similarly for $\eta_{j+1}$.
\subsection{The infinite induction}

We are now in position to prove our main result, Theorem~\ref{main}. 

Let $h$ be a normal form Hamiltonian in $\NF_{\varkappa}(\Delta,\delta)$ and let 
$f\in \cT_{\ga,\varkappa,\D}(\s,\mu)$ be a perturbation such that
$$ 0<\eps= \ab{f^T}_{\begin{subarray}{c}\s,\mu\ \ \\ \ga, \vark,\D  \end{subarray}},\quad   
\xi=\ab{f}_{\begin{subarray}{c}\s,\mu\ \ \\ \ga, \vark,\D  \end{subarray}}. $$
We  construct the transformation $\Phi$ as the composition of infinitely many transformations $\Phi$ as in Lemma~\ref{Birkhoff}. We first specify the choice of all the parameters for $j\geq 1$. 

Let $C_2,\exp_2$ and $\alpha$ be the constants given in Lemma~\ref{Birkhoff}.

\subsubsection{Choice of parameters}
We have assumed  $\ga,\s,\mu\le1$ and we take $\Delta\ge1$.
By decreasing $\ga$ or increasing $\Delta$ we can also assume $\ga=(d_{\Delta})^{-1}$.

We  choose for $j\ge1$
$$ \mu_j=\big(\frac 12 +\frac 1 {2^j}\big)\mu\quad\textrm{and}\quad\s_{j}=\big(\frac 1 2 +\frac 1 {2^j}\big)\s.$$
We define inductively the sequences $\eps_j$, $\Delta_j$, $\de_j$ and $\xi_j$ by
\be\left\{\begin{array}{ll}
\eps_{j+1}= \eps^{K_j}\eps & \eps_1=\eps\\
\Delta_{j+1} =4K_j
\max(\frac {1} {\s_j-\s_{j+1}},d_{\Delta_j})\log \frac1{\eps}&\Delta_1=\Delta\\
\ga_{j+1}=(d_{\Delta_{j+1}})^{-1}& \ga_1=\ga\\
\de_{j+1}=\de_j+ C_2   X_jY_j\eps_j& \de_1=\de\ge0 \\
\xi_{j+1}= \xi_j+C_2X_jY_j(1+\xi_j)\eps_j&\xi_1=\xi,
\end{array}\right.\ee
where
$$\left\{\begin{array}{ll}
X_j=(\frac{\Delta_{j+1} e^{\ga_j  d_{\Delta_j}}}{(\s_j-\s_{j+1})(\mu_j-\mu_{j+1})}\log \frac1{\eps_j})^{\exp_2}
&=(\frac{K_j\Delta_{j+1} e4^{j+1}}{\s\mu}\log \frac1{\eps})^{\exp_2}\\
Y_j=( \frac{\chi+\xi_j}{\ka_j})^{\exp_2}&
\end{array}\right.$$
--for $d_\Delta$ see \eqref{block}.The $\ka_j$ is defined implicitly by
$$2^j\eps_j=\frac 1{C_2}
 \frac{\ka_j}{ X_jY_j},$$
 
 These sequences depend on the choice of $K_j$. We shall let $K_j$ increase like 
$$K_{j}=K^{j}$$
for some $K$ sufficiently large.

\begin{lemma}\label{numerical2} 
There exist  constants $C'$ and $\exp'$  such that, if
$$K\ge C' $$
and
$$
\eps(\log\frac1\eps)^{\exp'}\le\frac1{C'}\big( \frac{\s\mu}{(\chi+\xi)K\Delta}\big)^{\exp'},$$
then 
\begin{itemize} 
\item[(i)] 
$$\delta_j-\delta,\quad \xi_j-\xi,\quad \ka_j\ \le\  2C_2X_1Y_1\eps;$$

\item[(ii)] 

$$\eps_{j+1}\ge C_2 X_jY_j(e^{-\frac12(\s_j-\s_{j+1})\Delta_{j+1}}+ e^{-\frac12(\ga_j-\ga_{j+1})\Delta_{j+1}}  )\eps_j;$$

\item[(iii)]
 $$
  \sum_{j\ge1} \Delta_{j+1}^{\exp_2}   \ka_j^\alpha \le 2\Delta_{2}^{\exp_2}   \ka_1^\alpha\le
  C'\big(\frac{Kd_{\Delta}\log\frac1\eps}{\s\mu})^{\exp_2}((\chi+\xi)\eps)^\alpha .
 $$
    
\end{itemize}

( The exponents $\alpha$, $\exp'$ and the constant ${C'}$ has the same properties as those in Remark
 \ref{constants}.)
  \end{lemma}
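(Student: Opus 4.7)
The plan is to prove all three items by a single induction on $j$, exploiting the fact that the prescribed choice $K_j=K^j$ makes $\varepsilon_j$ decay super-exponentially (since $\varepsilon_{j+1}=\varepsilon^{K_j}\varepsilon_j$ gives $\varepsilon_j\le\varepsilon^{K^{j-1}}$), whereas all the other sequences $\Delta_j,d_{\Delta_j},X_j,Y_j,\log(1/\varepsilon_j)$ grow only at a controlled (at worst doubly exponential but much slower) rate. Thus every series $\sum X_jY_j\varepsilon_j$ is essentially geometric and dominated by its first term, which is what (i) and (iii) claim, while for (ii) the defining size of $\Delta_{j+1}$ forces the exponentials in question to be $\lesssim\varepsilon^{2K_j}$, far below $\varepsilon_{j+1}/\varepsilon_j=\varepsilon^{K_j}$.

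First I would set up the induction hypothesis at level $j$: $\delta_k-\delta,\xi_k-\xi,\kappa_k\le 2C_2X_1Y_1\varepsilon$ for all $k\le j$. Under this hypothesis $Y_k=\bigl((\chi+\xi_k)/\kappa_k\bigr)^{\exp_2}\le (2Y_1)^{\exp_2}\cdot(\text{constant})$ and $X_k$ is polynomial in $K_k,\Delta_{k+1},(\sigma\mu)^{-1},\log(1/\varepsilon)$. Using Proposition~\ref{blocks}, $d_{\Delta_{k+1}}\le C\Delta_{k+1}^{(d+1)!/2}$, and the recursion for $\Delta_{k+1}$, one checks by an easy induction that both $\log\Delta_k$ and $\log X_k,\log Y_k$ are at most $O((\tfrac{(d+1)!}{2})^k)$, i.e. grow only doubly exponentially. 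Since $\log(1/\varepsilon_k)\ge K^{k-1}\log(1/\varepsilon)$ grows much faster for $K\ge C'$ large, one obtains $X_kY_k\varepsilon_k\le \varepsilon_k^{1/2}X_1Y_1\varepsilon$ (say), whose sum over $k$ converges. This closes the induction for (i) once the constants in the smallness hypothesis of the lemma are chosen so that $2C_2X_1Y_1\varepsilon\le 1$.

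For (ii), I would use the explicit definition $\Delta_{j+1}=4K_j\max\bigl((\sigma_j-\sigma_{j+1})^{-1},d_{\Delta_j}\bigr)\log\tfrac1\varepsilon$, which immediately gives $(\sigma_j-\sigma_{j+1})\Delta_{j+1}\ge 4K_j\log\tfrac1\varepsilon$. For the second exponential one writes
\[
(\gamma_j-\gamma_{j+1})\Delta_{j+1}=\frac{\Delta_{j+1}}{d_{\Delta_j}}-\frac{\Delta_{j+1}}{d_{\Delta_{j+1}}};
\]
the first term is $\ge 4K_j\log\tfrac1\varepsilon$ by construction, and the second is bounded by $\Delta_{j+1}^{1-(d+1)!/2}$ by Proposition~\ref{blocks}, which stays bounded (indeed $\to 0$) for $\Delta_{j+1}$ large. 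Hence both exponentials are $\le \varepsilon^{2K_j}$ for $K$ large enough, and the required inequality in (ii) reduces to $2C_2X_jY_j\varepsilon^{K_j}\le 1$, which follows from the smallness of $\varepsilon$ and the polynomial bounds on $X_jY_j$ established in the previous paragraph.

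Finally, for (iii) the definition of $\kappa_j$ gives $\kappa_j=C_2\,2^j\varepsilon_j X_jY_j$, so $\Delta_{j+1}^{\exp_2}\kappa_j^\alpha$ inherits the super-exponential decay of $\varepsilon_j^\alpha$, modulated by factors that grow at most doubly exponentially in $j$. For $K\ge C'$ large, the ratio between consecutive terms is $\le\tfrac12$, so $\sum_{j\ge1}\Delta_{j+1}^{\exp_2}\kappa_j^\alpha\le 2\Delta_2^{\exp_2}\kappa_1^\alpha$; substituting the explicit values $\Delta_2\lesssim K(\sigma\mu)^{-1}d_\Delta\log(1/\varepsilon)$ and $\kappa_1\lesssim (\chi+\xi)\varepsilon$ (up to absolute constants) yields the stated bound. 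The \emph{main obstacle} is the book-keeping in the second paragraph, namely verifying that the doubly exponential growth of $d_{\Delta_j}$ coming from Proposition~\ref{blocks} is dominated by the tower $K^j$; this forces the choice $K\ge C'$ with $C'$ depending on $(d+1)!/2$ and on $\exp_2$, and is the reason why $\exp'$ in the smallness condition ultimately depends on $d,\exp_2$ and $\alpha$.
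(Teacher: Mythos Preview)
Your overall strategy matches the paper's: induction for (i), direct use of the definition of $\Delta_{j+1}$ for (ii), and geometric summation for (iii), all driven by the fact that $\eps_j$ decays like $\eps^{K^{j-1}}$ while $\Delta_j,X_j,Y_j$ grow only at a doubly-exponential rate governed by $(d+1)!/2$ and $\exp_2$. That competition, and the resulting need for $K\ge C'$ depending on these exponents, is exactly what the paper isolates.

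There is, however, a genuine error in your treatment of the second exponential in (ii). You assert that $\Delta_{j+1}/d_{\Delta_{j+1}}$ is ``bounded by $\Delta_{j+1}^{1-(d+1)!/2}$ by Proposition~\ref{blocks}''. But that proposition gives an \emph{upper} bound $d_\Delta\le C\Delta^{(d+1)!/2}$, hence a \emph{lower} bound $\Delta/d_\Delta\ge C^{-1}\Delta^{1-(d+1)!/2}$; the inequality you invoke points the wrong way. The paper's argument is instead to write
\[
(\ga_j-\ga_{j+1})\Delta_{j+1}=\frac{\ga_j-\ga_{j+1}}{\ga_j}\cdot\ga_j\Delta_{j+1}\ge\frac{\ga_j-\ga_{j+1}}{\ga_j}\cdot 4K_j\log\tfrac1\eps,
\]
and to observe that, since $\Delta_{j+1}\ge 4K_j\,d_{\Delta_j}\log(1/\eps)$ is vastly larger than $\Delta_j$, one has $d_{\Delta_{j+1}}\gg d_{\Delta_j}$ and hence $\ga_{j+1}/\ga_j$ is small; the prefactor is then close to $1$ and the whole expression exceeds $(K_j+1)\log(1/\eps)$.

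Two minor points. First, the paper's recursion is $\eps_{j+1}=\eps^{K_j}\eps$, not $\eps^{K_j}\eps_j$ as you write; the leading-order decay is the same so nothing changes. Second, the relation $\ka_j=C_2\,2^jX_jY_j\eps_j$ is \emph{implicit}, since $Y_j=((\chi+\xi_j)/\ka_j)^{\exp_2}$ contains $\ka_j$; the paper untangles this by multiplying through to get $\ka_j^{1+\exp_2}\le 2^{\exp_2}C_2Y_1\ka_1^{\exp_2}\cdot 2^jX_j\eps_j$, which you should make explicit before asserting a bound like $\ka_1\lesssim(\chi+\xi)\eps$ in part (iii).
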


\begin{proof}
$\Delta_{j+1} $ is equal to
$$4K_j\max(\frac {1} {\s_j-\s_{j+1}},d_{\Delta_j})\log \frac1{\eps}
\le
(\Cte \frac {1} {\s} \log\frac1{\eps})(2K)^{j}\Delta_j^{a},$$
where $a$ is some exponent depending on $d$. By a finite induction one sees that this is 
$$\le (\Cte \frac {1} {\s} \log\frac1{\eps})(2K)\Delta)^{a^j},$$
if, as we shall assume, $a\ge2$.
Now $X_j$ equals
$$
(\frac{K_j\Delta_{j+1} e4^{j+1}}{\s\mu}\log \frac1{\eps})^{\exp_2}\le
\big((\Cte \frac {1} {\s\mu} \log\frac1{\eps})(4K)^{j^2}\Delta_j^{a}\big)^{2\exp_2}.
$$
which, by assumption on $\eps$, is
$$\le \big((\Cte \frac {1} {\s\mu} \log\frac1{\eps})K\Delta\big)^{4\exp_2 a^j}\le (\frac1{\eps})^{4\exp_2 a^j},$$
if, as we shall assume, $a\ge3$.

(i) holds trivially for $j=1$, (i) , so assume it holds up to $j-1\ge1$. Then $\de_j\le\de+2C_2X_1Y_1\eps$ and $\xi_j\le\xi+2C_2X_1Y_1\eps$, and hence
$$
Y_j\le ( \frac{\chi+\xi+ 2C_2X_1Y_1 \eps}{\ka_j})^{\exp_2}\le
2^{\exp_2} Y_1(\frac{\ka_1}{\ka_j})^{\exp_2}.$$
By definition of $\ka_j$,
$$\ka_j^{1+\exp_2}=2^jC_2X_jY_j\eps_j \ka_j^{\exp_2} \le 2^{\exp_2} C_2Y_1 \ka_1^{\exp_2}2^jX_j\eps_j \le 2^jX_j\eps^{K_{j-1}}$$
by assumption on $\eps$. Hence
$$2^jC_2X_jY_j\eps_j=\ka_j\le 2^jX_j\eps^{2b K_{j-1}}\le \eps^{2b K_{j-1}-4\exp_2 a^j-j\log2}  ,\quad b=\frac1{2(1+\exp_2)}.$$
If $K$ is large enough   --  notice that $j\ge2$ --   this is $\le  \eps^{b K_{j-1}}$. 

Hence
$$\ka_j\le \eps^{b K_{j-1}}\le \eps^{b K}\le\eps\le 2C_2X_1Y_1\eps,$$
if $K$ is large enough. Moreover
$$\delta_j-\delta=\sum_{k=2}^j C_2 X_kY_k\eps_k  \le  \eps^{b K_{1}}\le 2C_2X_1Y_1\eps_1$$
if $K$ is large enough. From these estimates one also obtains the required bound for $\xi_j-\xi$ if $K$ is large enough.
This concludes the proof of (i).

To see (ii), notice that
$$e^{-(\s_j-\s_{j+1})\Delta_{j+1}}\le e^{-4K_j\log\frac1\eps}\le\eps^{K_j}\eps.$$
Notice also that $\Delta_{j+1}$ is much larger then $\Delta_{j}$ so that $\ga_{j+1}$ is much smaller than $\ga_j$ and,
hence,
$$e^{-(\ga_j-\ga_{j+1})\Delta_{j+1}}\le e^{-4K_j\frac{\ga_j-\ga_{j+1}}{\ga_j} \log\frac1\eps}\le\eps^{K_j}\eps.$$
This implies that
$$C_2 X_jY_j(e^{-\frac12(\s_j-\s_{j+1})\Delta_{j+1}}+ e^{-\frac12(\ga_j-\ga_{j+1})\Delta_{j+1}}  )\eps_j\le
\eps^{K_j} \eps=\eps_{j+1}.$$

To see (iii)  we have for $j\ge 2$
   $$ \Delta_{j+1}^{\exp_2}\ka_j^{\alpha}\le  X_j^{\exp_2} \ka_j^{\alpha}
   \le (\frac1{\eps})^{4\exp_2^2 a^j}\ka_j^{\alpha}\le
   e^{-4\exp_2^2 a^j\log\frac1{\eps}}e^{\alpha bK_{j-1}\log\frac1{\eps}}  $$
  which is
  $$\le \eps^{\frac12b K_{j-1}\alpha }\le 2^{-j}\eps,$$
  if $K$ is large enough (depending on $\alpha$). This implies the first inequality in (iii). The second one is a simple computation.
\end{proof}
\subsubsection{The iteration}

\begin{proposition}
There exist positive constants $C_3$, $\alpha$ and  $\exp_3$ such that, for any
$h\in\NF_{\varkappa}(\Delta,\de)$ and for any 
$f\in \cT_{\ga,\varkappa,\D}(\s,\mu)$,
$$ \eps=\ab{f^T}_{\begin{subarray}{c}\s,\mu\ \ \\ \ga, \varkappa,\D  \end{subarray}},\quad 
 \xi=\ab{f}_{\begin{subarray}{c}\s,\mu\ \ \\ \ga, \varkappa,\D  \end{subarray}},$$
if
$$\delta \le \frac1{C_3} c'$$
and
$$
\eps(\log \frac1\eps)^{\exp_3}\le\frac1{C_3}\big( \frac{\s\mu}
{(\chi+\xi)\max(\frac1\ga,d_{\Delta})}c'\big)^{\exp_3}c',$$
then there exist a closed subset $\D'=\D'(h, f)\subset \D$,
$$\Leb (\D\setminus \D')\leq 
C_3\big(\frac{\max(\frac1\ga,d_{\Delta})\log\frac1\eps}{\s\mu})^{\exp_3}\frac{\chi}{\delta_0}((\chi+\xi)\frac\eps\chi)^\alpha$$
and a $\cC^{{s_*}}$ mapping 
$$\Phi :\O_{\ga_*}(\s/2,\mu/2)\times \D \to\O_{\ga_*}(\s,\mu),$$
real holomorphic and symplectic for given parameter $\rho\in\D$,
and
$$h'\in \NF_{\varkappa}(\infty,\de'),\quad \de'\le \frac{c'}2,$$
such that
$$(h+f)\circ \Phi=h'+f'$$
verifies
$$\ab{ f'-f}_{\begin{subarray}{c}\s/2,\mu/2\ \ \\ \ga_*, \varkappa,\D  \end{subarray}} \le  C_3$$
and, for $\r\in\D '$, $(f')^T=0$.

Moreover,
$$\ab{ h'- h}_{\begin{subarray}{c}\s/2,\mu/2\ \ \\ \ga_*, \varkappa,\D  \end{subarray}}\le C_3$$
and
$$|| \p_\r^j (\Phi(x,\cdot)-x)||_{\ga_*}+ \aa{ \p_\r^j (d\Phi(x,\cdot)-I)}_{\ga_*,\vark} \le C_3$$
for any $x\in \O_{(0,m_*)}(\s',\mu')$, $\ab{j}\le{s_*}$, and $\r\in\D$, 
and $\Phi(\cdot,\r)$ equals the identity for $\r$ near the boundary of $\D$.

Finally, if  $\tilde \r=(0,\r_2,\dots,\r_p)$ and $f^T(\cdot,\tilde \r)=0$ for all $\tilde \r$,  then $h'=h$ and $\Phi(x,\cdot)=x$
for all $\tilde \r$.

( The exponents $\alpha$, $\exp_3$ and the constant $C_3$ have the same properties as those in Remark
 \ref{constants}.)
  
\end{proposition}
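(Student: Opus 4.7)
The plan is to build the transformation $\Phi$ as the infinite composition $\Phi = \lim_{N\to\infty} \Phi_1\circ\cdots\circ\Phi_N$, where each $\Phi_j$ comes from one application of Lemma~\ref{Birkhoff} (the finite linear iteration that does not change the block structure, applied with $N_j=\Delta_{j+1}$). The parameters will be those already defined in Lemma~\ref{numerical2}: the scales $\s_j,\mu_j\searrow \s/2,\mu/2$; the growing block widths $\Delta_j$ (with $\ga_j=d_{\Delta_j}^{-1}$); and the divisor thresholds $\ka_j$ defined implicitly by $2^j\eps_j = \ka_j/(C_2 X_jY_j)$. The key bookkeeping output of that lemma is the super-geometric decay $\eps_{j+1}=\eps^{K_j}\eps$ with $K_j=K^j$, together with the fact that $\delta_j,\xi_j,\ka_j$ all remain within $O(X_1Y_1\eps)$ of their starting values.

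First I would verify that the hypothesis of Lemma~\ref{Birkhoff} is satisfied at every step. The hypothesis on $\delta_j$ is inherited from the bound $\delta_j-\delta\le 2C_2X_1Y_1\eps\ll c'$ coming from Lemma~\ref{numerical2}(i), together with the global smallness $\delta\le c'/(2C_3)$. The hypothesis $\ka_j\le c'/C_2$ likewise follows from (i). The core smallness condition $\eps_j\le \ka_j/(C_2 X_jY_j)$ is precisely the definition of $\ka_j$, so the main point is that Lemma~\ref{numerical2}(ii) guarantees the estimate for $\eps_{j+1}$ furnished by Lemma~\ref{Birkhoff} (namely $C_2X_jY_j(e^{-\tfrac12(\s_j-\s_{j+1})\Delta_{j+1}}+e^{-\tfrac12(\ga_j-\ga_{j+1})\Delta_{j+1}})\eps_j$) is dominated by $\eps^{K_j}\eps$. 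With this, the inductive step produces $\Phi_j$, an increment $h_j\mapsto h_{j+1}$ keeping $h_{j+1}\in \NF_\varkappa(\Delta_{j+1},\delta_{j+1})$, and a new perturbation $f_{j+1}$ satisfying the propagated bounds on the domain $\O_{\ga_{j+1}}(\s_{j+1},\mu_{j+1})\times\D$.

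Next I would set $\D' = \bigcap_{j\ge 1}\D_j$, where $\D_j$ is the parameter set from the $j$-th application. The measure estimate $\Leb(\D\setminus\D')\le\sum_j C_2\Delta_{j+1}^{\exp_2}\ka_j^\alpha(\chi/\delta_0)^{1-\alpha}$ is controlled by Lemma~\ref{numerical2}(iii), which gives the stated bound with an extra $\chi/\delta_0$ factor. For the convergence of $\Phi_1\circ\cdots\circ\Phi_N$, the estimate
\begin{equation*}
\|\partial_\rho^j(\Phi_k-\Id)\|_{\ga_*}+\|\partial_\rho^j(d\Phi_k-I)\|_{\ga_*,\vark}\le C_2\ka_k^{-1}X_kY_k\eps_k
\end{equation*}
from Lemma~\ref{Birkhoff} combined with the super-geometric decay of $\eps_k$ (the extra polynomial loss $\ka_k^{-1}X_kY_k\le \eps^{-(\exp_2+1)a^k}$ is easily absorbed by $\eps^{K_k}$) gives a summable Cauchy sequence in each $\cC^{s_*}$-norm; this produces a limiting real holomorphic, symplectic map $\Phi$ sending $\O_{\ga_*}(\s/2,\mu/2)\times\D$ into $\O_{\ga_*}(\s,\mu)$. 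On $\D'$ every $\Phi_j$ conjugates $h+f$ to $h_{j+1}+f_{j+1}$ with $h_{j+1}\in\NF_\varkappa(\Delta_{j+1},\delta_{j+1})$ and $|(f_{j+1})^T|\to 0$, so the limits give $h'\in\NF_\varkappa(\infty,\delta')$ with $\delta'\le\delta+2C_2X_1Y_1\eps\le c'/2$, and $(f')^T=0$ on $\D'$. The final statement (triviality when $f^T(\cdot,\tilde\rho)=0$) propagates through the iteration from the corresponding statement in Lemma~\ref{Birkhoff}.

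The hard part is bookkeeping in the verification of Lemma~\ref{numerical2}: one must keep the iterative loss $\ka_j^{-1}X_jY_j$ under control while $\Delta_{j+1}$ (hence $d_{\Delta_{j+1}}$, hence $\ga_{j+1}^{-1}$, hence $X_{j+1}$) grows doubly-exponentially. The trick, already reflected in the choice $K_j=K^j$, is that $K$ is chosen large enough (depending on $d,\exp_2,\alpha$) that the exponent $bK_{j-1}$ appearing in $\ka_j\le\eps^{bK_{j-1}}$ beats the polynomial growth $4\exp_2 a^j$ in $X_j$. Once this ``super-quadratic'' beat-the-scheme estimate is in place, all other ingredients, convergence of $\Phi$, summability of the excluded sets, and preservation of the normal-form class, follow routinely from Lemma~\ref{Birkhoff} and Proposition~\ref{Summarize}.
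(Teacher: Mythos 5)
Your proposal follows the same route as the paper: apply Lemma~\ref{Birkhoff} infinitely many times with the parameter sequences from Lemma~\ref{numerical2}, take $\D'=\bigcap_j\D_j$ (the paper writes $\bigcup$ but its measure estimate $\Leb(\D\setminus\D')\le\sum_j\Leb(\D\setminus\D_j)$ shows intersection is intended), and let $h'$, $f'$, $\Phi$ be the respective limits, with convergence of $\Phi$ controlled by the super-geometric decay $\eps_j=\eps^{K_{j-1}}\eps$ dominating the polynomial loss $\ka_j^{-1}X_jY_j$. The only cosmetic omission is the opening normalisation step (``if $\ga\neq d_\Delta^{-1}$, increase $\Delta$ or decrease $\ga$''), which you implicitly assume by starting from the parameter sequences of Lemma~\ref{numerical2}.
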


\begin{proof} Assume first that $\ga=d_\Delta^{-1}$.


Choose the number $\mu_j,\s_j,\eps_j,\Delta_j,\ga_j,\de_j,\xi_j,X_j,Y_j,\ka_j$ as 
above in Lemma~\ref{numerical2} with $K= C' $.
Let $h_1=h$, $f_1=f$. 

Since
$$\ka_j,\ \delta_j-\delta\ \le 2C_2X_1Y_1\eps\le\frac1{2C_2}c'$$
by Lemma~\ref{numerical2} and by assumption on $\eps$ we can apply Lemma~\ref{Birkhoff}
iteratively. It gives, for all $j\ge1$,
a set $\D_{j}\subset\D$,
$$\Leb (\D\setminus \D_{j})\leq    {C_2} \Delta_{j+1}^{\exp_2} (\frac{\ka_j}{\delta_0})^{\alpha}(\frac{\chi}{\delta_0})^{1-\alpha},$$
and a $\cC^{{s_*}}$ mapping
$$\Phi_{j+1} :\O^{\ga'}(\s_{j+1},\mu_{j+1})\times \D_{j+1}\to
\O^{\ga'}(\s_j-\frac{\s_j-\s_{j+1}}{2},\mu_j-\frac{\mu_j-\mu_{j+1}}{2}),\quad \forall \ga_*\le\ga'\le\ga_{j+1},$$
real holomorphic and symplectic  for each fixed parameter $\r$, 
and  functions
$f_{j+1}\in \cT_{\ga,\varkappa,\D}(\s_{j+1},\mu_{j+1})$ and
$$h_{j+1}\in \NF_{\varkappa}(\Delta_{j+1},\de_{j+1})$$
such that
$$(h_j+f_j)\circ \Phi_{j+1}=h_{j+1}+f_{j+1},\quad\forall \r\in \D_{j+1},$$
with
$$\ab{ f_{j+1}^T}_{\begin{subarray}{c}\s_{j+1},\mu_{j+1}\ \ \\ \ga_{j+1}, \varkappa,\D  \end{subarray}} 
\le  \eps_{j+1}$$
and
$$\ab{ f_{j+1}}_{\begin{subarray}{c}\s_{j+1},\mu_{j+1}\ \ \\ \ga_{j+1}, \varkappa,\D  \end{subarray}} 
\le  \xi_{j+1}. $$

Moreover,
$$\ab{ h_{j+1}- h_j}_{\begin{subarray}{c}\s_{j+1},\mu_{j+1}\ \ \\ \ga_{j+1}, \varkappa,\D  \end{subarray}}
\le C_2 X_jY_{j} \eps_j$$
and
$$|| \p_\r^l (\Phi_{j+1}(x,\cdot)-x)||_{\ga'}+ \aa{ \p_\r^l (d\Phi_{j+1}(x,\cdot)-I)}_{\ga',\vark} \le C_2\frac1{\ka_j} X_jY_j \eps_j$$
for any $x\in \O_{\ga'}(\s_{j+1},\mu_{j+1})$, $\ga_*\le \ga'\le\ga_{j+1}$ and $\ab{l}\le{s_*}$.

We let $h'=\lim h_j$, $f'=\lim f_j$ and  $\Phi=\Phi_2\circ\dots\circ \Phi_3\circ\dots $.
Then $(h+f)\circ \Phi=h'+f'$ and $h'$ and $f'$ verify the statement. The convergence of $\Phi$
and its estimates follows  as in the proof of Lemma~\ref{Birkhoff}.

Let $\D'=\bigcup\D_j$. Then, by Lemma~\ref{numerical2}, 
$$\Leb (\D\setminus \D')\leq {C_2}
\frac{\chi^{1-\alpha}}{\delta_0}\sum_j \Delta_{j+1}^{\exp_2}\ka_j^{\alpha}\le
C_3\frac{\chi^{1-\alpha}}{\delta_0}
\big(\frac{d_{\Delta}\log\frac1\eps}{\s\mu})^{\exp_2}((\chi+\xi)\eps)^\alpha.$$

The last statement is obvious.

 If 
$\ga<(d_{\Delta})^{-1}$, then we increase $\Delta$ and we obtain the same result. If  $\ga>(d_{\Delta})^{-1}$, then we can just decrease $\ga$ and we obtain the same result.\end{proof}

Theorem~\ref{main} now follows from this proposition.

\bigskip
\bigskip
\begin{samepage}
\centerline{PART IV. SMALL AMPLITUDE SOLUTIONS}
\section{Proofs  of Theorems  \ref{t72}, \ref{t73}}\label{s11}
\end{samepage}
We shall now treat the beam equation by combining the Birkhoff normal form theorem \ref{NFT} 
and the KAM theorem \ref{main} or, more precisely, its Corollary~\ref{cMain-bis}.  
In order to apply
 Corollary~\ref{cMain-bis} we need to verify, first  that  the quadratic part of the Hamiltonian \eqref{HNFbis}
 is a KAM normal form Hamiltonian and, second that  the perturbation $f$  is sufficiently small. 

 We recall the agreement about constants made in the introduction.

 \subsection{A KAM normal form Hamiltonian}
 \ 
 
  Let $h$ be the Hamiltonian \eqref{H2}$+$\eqref{H1}.

 \begin{theorem}\label{p_KAM} There exists a zero-measure Borel set $\Ca \subset[1,2]$ such that for any 
strongly admissible set  $\A$ and any  $m\notin \Cc$ 
  there exist 
 real numbers $\ga_g>\ga_*=(0,m_*+2)$ and
  $\beta_0,  \nu_0,c_0 >0$,    where   $c_0$, $\beta_0$, $\nu_0$ depend  on  $ m$,  such that,
for any $0<c_*\le c_0$, $0<\bb\le\beta_0$ and $0<\nu\le\nu_0$ 
 there exists an open  set 
$Q=Q(c_*,\bb,\nu)  \subset [c_*,1]^\A$, increasing as  $\nu\to0$ and satisfying 
\be\label{}
\Leb ([c_*,1]^\A \setminus  Q
)
\le C\nu^{\bb}\,,
\ee
with the following property:

For any $\r\in Q$ there exists  a real holomorphic  diffeomorphism (onto its image)
\be\label{}
\Psi_\yy:  \O_{\ga_*} \big({\tfrac 12}, {\mu_*^2} \big)\to \Tg(\nu,  1,1,\ga_*)\,,\qquad 
{\mu_*}={\tfrac{c_*}{2\sqrt2}},
\ee
such that
$$
\Psi_\yy^*\big(dp\wedge dq\big)=
\nu dr_\A\wedge d\theta_\A \  +\ \nu d u_\L\wedge d v_\L,
$$
and such that
$$\frac1{\nu} (h\circ\Psi_\yy)=h_{\textrm{up}}+f,$$
\be\label{unperturbedbis}
h_{\textrm{up}}(r,\theta,p_\L,q_\L) =\langle \Omega(\yy), r\rangle  + \frac12 \sum_{a\in\L_\infty}\Lambda_a (\yy)
( p_{a}^2 + q_{a}^2) +  \nu\langle K(\yy) \zeta_\F,  \zeta_\F\rangle
\ee
where $\F=\F_\r\subset \L_f$, with the following properties:

(i) $\Psi_\yy$  depends smoothly on $\yy$ and 
$$
\Psi_\yy\big (\O_{\ga} ({\tfrac 12}, {\mu_*^2}) \big)\subset\Tg(\nu,  1,1,\ga),\qquad \ga_*\le \ga\le\ga_g;
$$

(ii) $h_{\textrm{up}}$ satisfies, on any ball (or cube) $\D\subset Q$, the Hypotheses~A1-A3 of Section~\ref{ssUnperturbed} for some constants $c',c,\delta_0,\beta,\tau$ satisfying
\be\label{choice1}
c' \ge   \nu^{1+ \bb}  \,, \quad
c=2 \max\{\langle a\rangle^3, a\in\A\},\quad \beta_1=\beta_2=2\,,\quad
\ee
\be\label{choice2}
\delta_0\ge \nu^{1+\bb   } \,, \quad s_*=4\, (\# \F)^2
\ee

\be\label{choice3}
\beta_3=\beta_3(m)>0  \,, \quad \tau=\tau(m)>0\,;
\ee

(iii)
$$
\chi=
 |\nabla_\r \Omega |_{\cC^{ {{s_*}}-1 } (\D)}+\sup_{a\in\L_\infty} |\nabla_\r  \Lambda_a|_{\cC^{ {{s_*}}-1 } (\D)}
 + ||\nu \nabla_\r K ||_{\cC^{ {{s_*}}-1 } (\D) }\le 
 {\color{red} C}\nu^{1-\bb};$$
 
 (iv) $ f$ belongs  to $\cT_{\ga,\vark=2,Q}({\tfrac12}, \mu_*^2)$   and satisfies 
$$
\xi=| f|_{\begin{subarray}{c}1/2,\mu_*^2 \  \\ \ga_g, 2, \D \end{subarray}}
 \le C\nu^{1- \bb}  \,, \qquad
\eps= | f^T|_{\begin{subarray}{c}1/2,\mu_*^2 \  \\ \ga_g, 2, \D \end{subarray}}
  \le C\nu^{3/2-\bb} \,.$$

 If  $\A$ is admissible but not strongly admissible, then the same thing is true with the difference that $(ii)$ only holds for 
 balls (or cubes) $\D\subset Q\cap\D_0$, where $\D_0\subset [0, 1]^\A$ is an open set,
 independent of $c_*,\bb$ and $ \nu$, such that
 \be\label{hren2}
\Leb (\D_0)\ge \tfrac12\, c_0^{\#\A}.
\ee
 
The constant $ C$  
depends on $m, c_*,\bb$, but not on $\nu$.

\end{theorem}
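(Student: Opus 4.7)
The strategy is to take $\Psi_\rho$ to be the diffeomorphism $\Phi_\rho$ produced by Theorem~\ref{NFT}, and then verify that the quadratic part of the transformed Hamiltonian \eqref{HNFbis} fits the template \eqref{unperturbedbis} of an unperturbed Hamiltonian satisfying Hypotheses A1--A3, while the remainder $f$ satisfies bounds inherited from \eqref{estbis}. For the accounting in \eqref{unperturbedbis}, I would absorb the finitely many elliptic blocks $\{\Lambda_b : b \in \L_f\setminus\F\}$ into the elliptic sum over $\L_\infty$ by enlarging the index set (this is permitted since \eqref{L+} is finite and the modification only affects finitely many spectral asymptotics, which can be absorbed into the constant $c$ in \eqref{choice1}). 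Items (i) and (iv) of the theorem are then immediate from (i) and (vii) of Theorem~\ref{NFT}, and the scaling $\ab{r},\ab{w}_\ga \lsim 1$ from the passage to the domain $\O_{\ga_*}(1/2,\mu_*^2)$ explains the factor $\nu^{1-\bb}$ in the bounds for $\xi,\eps$ versus the factor $\nu$ in \eqref{estbis}.

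For A1, I would use (ii) of Theorem~\ref{NFT}: since $\Lambda_a(\rho) = \lambda_a + O(\nu) \langle a \rangle^{-2}$ by \eqref{Lam}, estimate \eqref{la-lb-ter} holds with $\beta_1=2$ once $c$ is chosen as in \eqref{choice1}, and \eqref{la-lb} follows by subtraction with $\beta_2=2$. The invertibility bounds \eqref{laequiv}, \eqref{laequiv-bis}, \eqref{la-lb-bis} on the elliptic directions follow from \eqref{estimla} once $c$ is large; the bound on $(JH_{\text{up}})^{-1} = (\nu JK)^{-1}$ follows from \eqref{Kram} rescaled by $\nu$, giving $c' \gsim \nu^{1+\bar c \bb}$, which is compatible with \eqref{choice1} after adjusting $\bb$. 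The separation of the hyperbolic eigenvalues in \eqref{la-lb-bis} uses \eqref{K04} scaled by $\nu$.

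For A2 and A3, the key point is that by (ii) of Theorem~\ref{NFT} the frequency map is $\Omega(\rho) = \omega + \nu M \rho$ with $M$ invertible, so the directional derivative $\partial_{\mathfrak z}\langle k, \Omega\rangle$ is of order $\nu |k|$ along ${\mathfrak z} = M^{-1}k/\ab{M^{-1}k}$; this already gives a version of the transversality for $\langle k,\Omega\rangle$ alone, and by Propositions~\ref{D1D2} and \ref{prop-D3} applied at $\nu=0$ (i.e.\ to the unperturbed frequencies $\lambda_a = \omega_a$), yields the Melnikov bound \eqref{melnikov} of A3 with some $\beta_3,\tau$ depending only on $m$. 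The harder dichotomies A2(i)--(iii) are verified case by case: when the relevant index $[a]$ or $[b]$ lies in $\L_\infty$, the block $(Q_{\text{up}})_{[a]}$ is essentially diagonal with entries $\Lambda_a$, so the determinant $P_{\text{up}}(\rho,\lambda)$ factors, and its $\rho$-derivative in the direction ${\mathfrak z}=M^{-1}k/\ab{\cdot}$ is of order $\nu$; when the block is $\F$, one uses that $JH_{\text{up}} = \nu JK(\rho)$ has simple spectrum with separation \eqref{K04} (times $\nu$), and $\det L(\rho)$ is a polynomial in the matrix elements whose higher derivatives in $\rho$ can be controlled by \eqref{normK} combined with Lemma~\ref{lTransv1}-style arguments (this is why $s_*$ in \eqref{choice2} must be chosen at least $4(\#\F)^2$: enough to differentiate $\det L$ of size $\le (2\#\F)^2$). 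The constant $\delta_0$ collected from these bounds is indeed $\gsim \nu^{1+\bb}$ after absorbing logarithmic losses from the Cauchy bounds on the higher derivatives.

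Item (iii), the bound on $\chi$, follows directly: $\nabla_\rho \Omega$ and $\nabla_\rho \Lambda_a$ are of order $\nu$ by \eqref{Om}, \eqref{Lam}, and $\nu \nabla_\rho K$ is of order $\nu \cdot \nu^{-\bb\beta({s_*}-1)}$ by \eqref{normK}, which is $\le C\nu^{1-\bb}$ after relabelling $\bb$. Finally, for the admissible but not strongly admissible case, the obstruction is that Lemma~\ref{l_nond} fails: some pairs of eigenvalues of $JK(\rho)$ may coincide identically. This is avoided by restricting $\rho$ to a neighbourhood $\D_0$ of one of the vertices $\rho_*$ of $[0,1]^\A$ (as in the construction \eqref{DD} of $\D_0^{j_*}$), where by Lemma~\ref{laK} the eigenvalues $\Lambda^j_r$ are analytically separated; the measure bound \eqref{hren2} is built into the definition of $\D_0^{j_*}$.

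The main obstacle is verifying the transversality dichotomy A2(iii) for blocks $[a],[b]\subset\F$: one must show that $\det L(\rho)$, regarded as a polynomial in $\rho$ of controlled degree, has some directional derivative of order $\gsim \delta_0 \ab{L}_{C^1}\ab{L}_{C^0}^{m^2-2}$, and this requires carefully tracking how the algebraic non-degeneracy from Lemma~\ref{l_nond} (together with the poly-discriminant construction of Section~\ref{rho_sing} that defined the domains $Q_l$) transfers to a quantitative statement about derivatives of $\det L$. This is where the assumption that $\A$ is strongly admissible (or, alternatively, restriction to $\D_0$) plays an essential role.
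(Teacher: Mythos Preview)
Your overall plan---take $\Psi_\rho=\Phi_\rho$ from Theorem~\ref{NFT}, enlarge $\L_\infty$ to $\L\setminus\F=(\L\setminus\L_f)\cup(\L_f\setminus\F)$, and then verify A1--A3 for the resulting quadratic part---is exactly the paper's. Items (i), (iii), (iv), A1 for $a\in\L\setminus\L_f$, and A3 go as you describe.

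There are two points where your proposal departs from the paper and one of them is a real gap.

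\medskip
\textbf{Where strong admissibility is actually used.} You place it in A2(iii), but in the paper it enters only in A1, specifically in \eqref{laequiv-bis} for the newly adjoined modes $a,b\in\L_f\setminus\F$. For such $a,b$ one needs $|\Lambda_a+\Lambda_b|\ge c'$; writing these as $\Lambda^{j}_{r},\Lambda^{k}_{m}$ this is exactly the bound \eqref{K04}, which is available for all relevant index pairs only when $\A$ is strongly admissible (otherwise one restricts to $\D_0=\D_0^1$ and invokes \eqref{aaa}). The footnote in the paper's proof makes this explicit: \eqref{laequiv-bis} is the only hypothesis whose verification for all $\rho\in Q$ requires strong admissibility.

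\medskip
\textbf{The A2(iii) argument.} You flag this as the main obstacle and propose to extract a derivative bound on $\det L(\rho)$ from the algebraic non-degeneracy of Lemma~\ref{l_nond} and the poly-discriminant construction. That route is not what the paper does, and as stated it does not close: Lemma~\ref{l_nond} controls $\Lambda^{j}_{r}\pm\Lambda^{k}_{m}$, i.e.\ the $O(\nu)$ part of each eigenvalue factor of $L$, whereas the required lower bound on $\partial_{\mathfrak z}^{s_*}\det L$ must come from the $\langle k,\Omega\rangle$ part. The paper's mechanism is a dichotomy on size: write $L(\rho)=\lambda(\rho)I+L^0(\rho)$ with $\lambda(\rho)=\langle k,\Omega'(\rho)\rangle$ and $\|L^0\|\lsim\nu^{1-\beta(0)\bb}$. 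If $|\langle k,\omega\rangle|\ge C(\nu^{1-\beta(0)\bb}+\nu|k|)$ then $L$ is invertible and the first alternative holds. Otherwise Proposition~\ref{D1D2} forces $|k|\ge C\nu^{-1/(1+n^2)}$. Now factor
\[
\det L(\rho)=\prod_{a,b\in\F}\prod_{\sigma_1,\sigma_2=\pm}\bigl(\lambda(\rho)+\sigma_1\nu\Lambda_a(\rho)-\sigma_2\nu\Lambda_b(\rho)\bigr),
\]
choose $\mathfrak z={}^tMk/|{}^tMk|$ so that $|\partial_{\mathfrak z}\lambda|\ge C^{-1}\nu|k|$, and differentiate $s_*=m^2$ times: the Leibniz term with one derivative on each of the $m^2$ factors contributes $\ge(m^2)!\,(C^{-1}\nu|k|)^{m^2}$, and the remaining terms are smaller by a factor $\delta_0/(\nu|k|)$. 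Since $|k|$ is large this dominates the right side of \eqref{altern1}. This is why $s_*=m^2=4(\#\F)^2$ is the right choice---not because one must differentiate a determinant of that size once, but because one must hit every factor.

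\medskip
A smaller correction: for A2(ii) the paper shows that the \emph{first} alternative (uniform invertibility) always holds, since the eigenvalues of $L(\rho,\Lambda_a)$ are $\langle k,\Omega'\rangle+\Lambda_a\pm i\nu\Lambda^h_j$ and $|\Im(i\nu\Lambda^h_j)|\ge C^{-1}\nu^{1+\bar c\bb}$ by \eqref{hyperb}; no derivative alternative is needed there.
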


\begin{proof}
We apply Theorem~\ref{NFT} and denote the constructed there 
 symplectic transformation by $\Psi$. We
let $\L_\infty=\L\setminus \F= (\L\setminus \L_f)\cup (\L_f\setminus\F)$ (this is a slight abuse 
of notation since in Part~II we denoted by $\L_\infty$ the set $\L\setminus \L_f$). For $\beta_0, \nu_0$ and 
$\eps_0$ we take the same constants as in Theorem~\ref{NFT}. If  $\A$ is only admissible, we take for 
$\D_0$ the set $\D_0=\D_0^1$, see \eqref{DD}.

The assertion (i) of the theorem holds by Theorem~\ref{NFT}. 

To prove (ii) and (iii) we will first verify (ii) for a  smaller $c'$,
\be\label{choice0}
c' \ge \nu^{1+2\bb(\beta(0)+\bar c)}\,,
\ee
and in (iii) will replace  the exponent for $\nu$  by a bigger number. 

By \eqref{Om},  \eqref{Lam}, \eqref{Lambdab}  and  \eqref{normK} we have that
$$
\chi=
 |\nabla_\r \Omega |_{\cC^{ {{s_*}}-1 } (Q)}+\sup_{a\in\L_\infty} |\nabla_\r  \Lambda_a|_{\cC^{ {{s_*}}-1 } (Q)}
 + ||\nu \nabla_\r K ||_{\cC^{ {{s_*}}-1 } (Q) }\le 
 \cte \nu^{1-\bb \beta(s_*-1)},
 $$
 which implies (iii) with a modified exponent.
 Now let us  consider (ii). 
 We will check the validity of the three hypotheses A1--A3 (with $c'$ as in \eqref{choice0}).

First we note that using \eqref{Lam}, \eqref{estimla},  \eqref{N1}, \eqref{K4} and \eqref{delta} we get 
\be\label{basic1}
\tfrac12 +\tfrac12 |a|^2\le \La \le 2|a|^2 +1\,,\quad
|\La-\la|_{C^j(\D_0)} \le C_3 \nu |a|^{-2}\quad \forall\, j\ge1 \,,\;\forall\,
a\in \L\setminus \L_f\,,
\ee
\be\label{basic2}
C_1\nu^{1+\bar c \bb} \le |\La| \le C_2\nu\qquad \forall\, a\in\L_f\setminus\F\,.
\ee
It is convenient to re-denote
\be\label{rel2}
\la=:0\quad\text{if}\quad a\in\L_f\setminus\F\,;
\ee
then the second relation in \eqref{basic1} holds for all $a$. 
We recall that the numbers $\{\pm \la, a\in\F\}$ are the eigenvalues of the operator $JK$.
They satisfy the estimates \eqref{hyperb}.

 The vector--function  $\Omega(\yy)\in\R^n$ is defined in \eqref{Om}, so 
\be\label{ddet}
\Omega(\yy) = \omega +\nu M\yy,\qquad \det M\ne0\,,
\ee
and $K$ is a symmetric real 
linear operator in the space $Y_\F$.   Its norm satisfies
\be\label{basic3} 
\| {\nu K}(\rho)\|_{C^j}
 \le C_j \nu^{1-\bb \beta(j)}\,,\qquad j\ge0\,.
\ee
See Theorem~\ref{NFT}, items (ii)-(iv). 
\medskip

\noindent
{\it Hypothesis~A1}. Relations
\eqref{la-lb-ter} and \eqref{la-lb} and the first relation in \eqref{laequiv} 
  immediately follow from \eqref{basic1} and \eqref{basic2}.

  To prove the second relation in \eqref{laequiv}  note that by Theorem~\ref{NFT}
   the operator $U$ conjugates   $JK$ with the diagonal operator with the eigenvalues $\pm{\bf i}\Lambda_j^h(\rho)$. 
   So by  \eqref{basic2} and  \eqref{Ubound} the norm of $(JH)^{-1}$ is bounded by $C\nu^{-1 -\bb(\bar c +2\beta(0)}$,
   and the required estimate follows from \eqref{choice0}. The second relation in \eqref{la-lb-bis} follows by the
   same argument from \eqref{hyperb}, which implies that the norms of the eigenvalues of 
   $\La I-{\bf i}JH$ are $\ge C^{-1}\nu^{\bar c\bb}$.  The first relation in  \eqref{la-lb-bis}  is a consequence of 
   \eqref{basic1},  \eqref{basic2} and \eqref{Fcluster}.

 Now consider \eqref{laequiv-bis}.\footnote{This is the only condition of Theorem~\ref{main} 
 which we cannot verify for any  $\rho\in Q$ without assuming that the set $\A$ is\sa.
 }
  If $a\in \L_\infty$ and $b\in\L\setminus\L_f$, then again the relation follows from  \eqref{basic1} and \eqref{basic2}.
    Next, let $a,b\in\L_f\setminus\F$. Let us write $\La$ and $\Lb$ as 
 $\Lambda^j_r$ and $\Lambda^k_m$, $j\le k$. If $j=k$, then the condition follows from \eqref{K04}, \eqref{delta}
 (from \eqref{K4} if $m=r$).
  If $j\le M_0<k$, then  again it follows from \eqref{K04}. If $j,k\le M_0$,
 then $\Lambda^j_r=\Lambda^j_1=\mu(b_j,\yy)$  and $\Lambda^k_m=\mu(b_m,\yy)$, so the relation follows
 from \eqref{K44}. Finally, let $j,k> M_0$. Then if the set $\A$ is\sa,   the required 
 relation follows from \eqref{K04}, while if $\yy \in \D_0=\D_0^1$, then it follows from \eqref{aaa}. 
\smallskip

\smallskip
\noindent
{\it Hypothesis~A2}. By \eqref{ddet}, $\p_\zz\Omega(\rho) = \nu M\zz$. Choosing 
\be\label{zet}
\zz= \frac{{}^t\!M k}{ |{}^t\!M k|}
\ee
and using that $|\Omega' -\Omega|_{C^{s_*}}\le \delta_0$
we achieve that  $\p_\zz\langle k, \Omega'(\rho)\rangle \ge C\nu$, so \eqref{o} holds.  

To verify (i) we restrict ourselves to the more complicated case when $a,b\ne\emptyset$. Then
$L(\rho)$ is a diagonal operator with the eigenvalues 
$$
\lambda_{a b}^k :=\langle k,\Omega'(\rho)\rangle +\Lambda_a(\rho) \pm \Lambda_b(\rho)\,\quad
a\in[a],\; b\in[b]\,.
$$
Clearly 
$$
|\lambda_{a b}^k -( \langle k, \omega\rangle +\lambda_a \pm \lambda_b)| \le C\nu |k|\,.
$$
(we recall \eqref{rel2}). 
Therefore by Propositions \ref{D1D2} and \ref{prop-D3} the first alternative in (i) holds, unless 
\be\label{unless}
|k|\ge C \nu^{-\bar\beta} 
\ee
for some (fixed) $\bar\beta>0$. But if we choose $\zz$ as in \eqref{zet}, then $\p_\zz L(\rho)$ becomes a 
diagonal matrix with the diagonal elements  bigger than $|{}^tMk| - C\nu |k| - C_1\nu$. 
So if $k$ satisfies \eqref{unless}, then the second alternative in (i) holds. 
\medskip

To verify (ii) we write $L(\rho, \Lambda_a)$ as the multiplication from the right by the matrix 
$$
L = (\langle k,\Omega'\rangle +\Lambda_a(\rho) )I +{\bf i}\nu J  \widehat K\,.
$$
 The transformation  $U$ conjugates  $L$ with  the diagonal 
 operator with the eigenvalues 
 $\lambda^k_{a j}=:  \langle k,\Omega'\rangle +\Lambda_a(\rho) \pm \nu{\bf i}\Lambda^h_j$. 
 In view of  \eqref{hyperb}, 
 $ |\lambda^k_{a j}|\ge |\Im \lambda^k_{a j}|\ge C^{-1}   \nu^{ 1+\bar c\bb}$. This 
 implies (ii) by \eqref{Ubound}  and  \eqref{choice0}. 
 \medskip

 It remains to verify (iii). As before,  we restrict ourselves to  the more complicated case $a,b\in\F$. Let us denote 
 $$
 \lambda(\rho) := \langle k,\Omega'(\rho)\rangle = \langle k,\omega\rangle + \nu\langle k, M \rho\rangle +
 \langle k,(\Omega' -\Omega)(\rho)\rangle\,,
 $$
 and write the operator   $L(\rho)$ as 
 $$
 L(\rho) = \lambda(\rho) I +L^0(\rho)\,,\quad L^0(\yy) X = [X, iJ{(\nu K)}(\yy)]\,.
 $$ 
  In view of \eqref{basic3}, 
 \be\label{9.0}
 \|L^0\|_{C^j} \le C_j \nu^{1- \beta(j)\bb}\qquad\text{for}\; j\ge0\,.
 \ee
 Now it is easy to see that if
  $|\langle k,\omega| \rangle \ge C(\nu^{1-\beta(0)\bb} +\nu|k|)$ with a sufficiently big $C$, 
 then the first alternative in (iii) holds. 
 
 So it remains
 to consider the case when
 \be\label{9.1}
 |\langle k,\omega \rangle | \le C(\nu^{1-\beta(0)\bb} +\nu|k|)\,.
 \ee
 By Proposition \ref{D1D2} the l.h.s. is bigger than $\kappa|k|^{-n^2}$. Assuming that $\beta_0\ll1$, we derive from this and 
   \eqref{9.1}  that 
   \be\label{9.2}
 |k|  \geq C \nu^{-1/(1+n^2)}\,.
 \ee
 In view of \eqref{9.0}-\eqref{9.2}, again  if $\beta_0\ll1$, we have:
 \be\label{9.5}
 |\lambda(\rho) 
 | \le C\nu (\nu^{-\beta(0)\bb} +|k|)\le C_1 \nu |k|\,,
 \ee
 \be\label{9.6}
 |(\p_\rho)^j \lambda(\rho)| \le C_j |k| \delta_0,\qquad 2 \le j\le s_*\,,
 \ee
 \be\label{9.3}
 \|L\|_{C^j} \le C\nu (\nu^{-\beta(j)\bb} + |k|)  +C_j|k|\delta_0\,,\qquad j\ge0\,.
  \ee
  
  Denote det$\,L(\rho) = D(\rho)$. Then 
  $$
  D(\rho) = \prod_{a,b\in \F} \prod_{\sigma_1, \sigma_2=\pm} \Lambda(\rho;a,b,\sigma_1, \sigma_2)\,,
  $$
  where 
$   \Lambda(\rho;a,b,\sigma_1, \sigma_2) = 
  \lambda(\rho) +\sigma_1 \nu \Lambda_a(\rho) -\sigma_2 \nu \Lambda_b(\rho)\,.
  $
    Choosing $\zz$ as in 
  \eqref{zet} we get 
  $$
  |\Lambda|\le C\nu|k|\,,\;\;\; |\p_\zz\Lambda| \ge C^{-1} |k|\nu - |k|\delta_0\ge \tfrac12 C^{-1}|k|\nu\,,\;\;\;
   |\p_\zz^j  \Lambda| \le C_j|k|\delta_0\;\;\text{if}\;\;j\ge2
  $$
  (that is, these relations hold for all values of the arguments $\rho, a, b, \sigma_1,\sigma_2$). 
    Recall that  $2\,|\F|=m$; then $s_*=m^2$.  Chose in \eqref{altern1} $j=s_*=m^2$. 
  Then, in view of the relations above,    we get:
     $$
   | \p_\zz^{s_*} D(\rho)| \ge m^2! \, \big(C^{-1} |k|\nu\big)^{m^2} - C_1 (|k|\nu)^{m^2-1} (|k|\delta_0) 
   \ge \tfrac12  m^2! \, \big(C^{-1} |k|\nu\big)^{m^2}
   \,.
  $$
In the same time,  by \eqref{9.3} 
the r.h.s. of \eqref{altern1}  is bounded from above by 
$$
C_m\delta_0 (\nu^{(m^2-1)(1-\beta(m^2) \bb)} + \nu^{m^2-1} |k|^{m^2-1})\,.
$$
  In view of \eqref{choice0}, \eqref{choice2} this implies the relation 
   \eqref{altern1}   if we choose 
  $\bb<(\beta(m^2)(1+n^2))^{-1}$   (as always, we decrease $\nu_0$, if needed). 

 \medskip
 
\noindent
{\it Hypothesis~A3}.   The  required inequality follows from Proposition \ref{prop-D3} since 
the  divisor, corresponding to \eqref{melnikov}  where $a,b\not\in\L_f$, 
cannot be resonant.
\medskip

 Finally, let us denote 
$$
\bb^0= \bb \max(1, \hat c,  2(\beta(0)+\bar c), \beta(s_*-1))\,.
$$
Our argument shows that  the assertions (ii), (iii) of the theorem hold with $\bb$ replaced by $\bb^0$.
The assertion (iv) with $\bb=:\bb^0$ follows from \eqref{estbis}. 
Now it remains to re-denote $\bb^0$ by $\bb$.
\end{proof}

\subsection{The main result}\label{s_10.2}

We have $c_0,\beta_0,\nu_0$ so small so that Theorem~\ref{p_KAM} applies. Now we shall make them even smaller.

\begin{theorem}\label{thm10.2} 
There exists a zero-measure Borel set $\Ca \subset[1,2]$ such that for any 
strongly admissible set  $\A$ and any  $m\notin \Cc$  there exist
 real numbers $c_0,\beta_0 >0$,  depending only on $\A$, $ m$ and $G$,  
such that, for any $0<c_*\le c_0$  and $0<\bb\le \beta_0$ the following hold.

There exists a $\nu_0$ such that
if $\nu\le\nu_0$, then there exist a closed set $Q'=Q'(c_*,\bb,\nu)\subset Q=Q(c_*,\bb,\nu)$, 
and a $\cC^{{s_*}}$-mapping $\Phi$ 
$$\Phi:\O_{ \ga_*}(1/4,\mu_*^2/2)\times Q\to \O_{ \ga_*}(1/2,\mu_*^2),\qquad {\mu_*}={\tfrac{c_*}{2\sqrt2}},\qquad \ga_*=(0,m_*+2),$$
real holomorphic and symplectic for each parameter $\r\in Q$, 
such that  
$$(h_{\textrm{up}}+ f)\circ \Phi(r,w,\r)= \langle \Omega'(\r), r\rangle +\frac 1 2\langle w, A'(\r)w\rangle+f'(r,w,\r)$$
with the following properties: 

\noindent
(i)
 the frequency vector $\Om'$ satisfies
$$|\Omega'-\Omega|_{\cC^{{s_*-1}}(Q)}\le \nu^{1+ \aleph}\,,
$$
and the matrix 
$$A'(\r)=A'_\infty(\r)\oplus H'(\r)\in  \NF_{\infty}$$
 satisfies
$$ || \p_\r^j (H'(\r)-\nu K(\r) || \le  \nu^{1+\aleph} ,
$$
for $ |j| \le {{s_*}}$ and $\r\in Q$;

\noindent
(ii) for any  $x\in \O_{\ga_*}(1/4,\mu_*^2/2)$, $\r\in Q$ and $\ab{j}\le{s_*}-1$,
$$|| \p_\r^j (\Phi(x,\r)-x)||_{\ga_*}+ \aa{ \p_\r^j (d\Phi(x,r)-I)}_{\ga^*,\vark} \le
 \nu^{\frac12-\aleph(\kappa+2)} ;$$

\noindent
(iii) for $\r\in Q'$ and $\zeta=r=0$
$$d_r f'=d_\theta f'=
d_{\zeta} f'=d^2_{\zeta} f'=0;$$

\noindent
(iv)
if $\A$ is strongly admissible, then 
$$
\lim_{\nu\to0}
\Leb Q'(c_*,\bb,\nu)= (1-c_*)^{\#\A}.
$$
If  $\A$ is admissible but not strongly admissible, then 
$$
\liminf_{\nu\to0}
\Leb Q'(c_*,\bb,\nu)\ge \tfrac12 c_0^{\#\A}.$$


The exponent $\aleph$ is defined by $\aleph(\ka+2)=\min(\frac18,\alpha)$ where $\alpha$ and $\kappa$
are given in Corollary~\ref{cMain-bis}.
\end{theorem}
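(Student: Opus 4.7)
The strategy is to feed the output of Theorem~\ref{p_KAM} into Corollary~\ref{cMain-bis}, working on a finite cover of $Q$ by balls/cubes of diameter at most $1$. Concretely, choose once and for all $\aleph\in(0,1/2)$ with $\aleph(\ka+2)=\min(\frac18,\alpha)$, where $\alpha,\ka$ are the exponents produced by Corollary~\ref{cMain-bis}. Then fix $\beta_0>0$ so small (depending on $\aleph,\alpha,\ka$) that all the quantitative conditions below hold with margin, and shrink $\nu_0$ at the end of the argument. Cover $[c_*,1]^\A$ by a finite family of closed cubes $\D_1,\dots,\D_L$ with pairwise disjoint interiors and diameter at most~$1$; in the merely admissible case discard those cubes that do not meet the set $\D_0$ of Theorem~\ref{p_KAM}. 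The number $L$ of cubes depends only on $\A$.

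On each $\D_i\cap Q$, Theorem~\ref{p_KAM} supplies an unperturbed Hamiltonian $h_{\text{up}}$ satisfying Hypotheses A1--A3 with $c'\ge\nu^{1+\bb}$, $\delta_0\ge\nu^{1+\bb}$, $\chi\le C\nu^{1-\bb}$, and a perturbation $f$ with $\xi\le C\nu^{1-\bb}$, $\eps\le C\nu^{3/2-\bb}$. I would verify the hypotheses of Corollary~\ref{cMain-bis} directly:
\begin{itemize}
\item[(a)] $\delta_0^{1+\aleph}\le\nu^{(1+\bb)(1+\aleph)}\le c'$ and $\chi\le C\nu^{1-\bb}\le C'\delta_0^{1-\aleph}$ both hold as soon as $\bb$ is small compared to $\aleph$ and $\nu$ is small;
\item[(b)] $\xi\le C\nu^{1-\bb}\le C'\delta_0^{1-\aleph}$ by the same inequality;
\item[] The smallness condition \eqref{epsi-bis} reads $\eps(\log 1/\eps)^\ka\le \eps_0\,\delta_0^{1+\aleph\ka}$, i.e.\ roughly $\nu^{3/2-\bb}(\log 1/\nu)^\ka\lesssim\nu^{(1+\bb)(1+\aleph\ka)}$; since $\aleph\ka<\frac18$ by the choice of $\aleph$, this holds for all sufficiently small $\nu$ provided $\bb$ has been chosen small enough.
\end{itemize}

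Corollary~\ref{cMain-bis} then produces a closed subset $\D_i'\subset\D_i\cap Q$, a real holomorphic symplectic $\cC^{s_*}$ map $\Phi_i$ on $\D_i$ equal to the identity near $\partial\D_i$, and a decomposition $(h_{\text{up}}+f)\circ\Phi_i=\langle\Omega',r\rangle+\tfrac12\langle w,A'w\rangle+f'$ with all the properties (i)--(iii) claimed in our theorem. Assertion (i) follows because the correction $c'\le\nu^{1+\bb}$ to $\Omega$ and to $H_{\text{up}}=\nu K$ is bounded by $\nu^{1+\aleph}$ after a (small) further shrinkage of $\aleph$ relative to $\bb$, and because $A'\in\NF_\infty$. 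Assertion (ii) is the bound $(ii)'$ of Corollary~\ref{cMain-bis}: a short computation gives $\eps/\delta_0^{1+\aleph\ka}\cdot(\log 1/\delta_0)^\ka\le\nu^{1/2-\aleph(\ka+2)}$ for $\bb$ small, which is the stated $\nu^{1/2-\aleph(\ka+2)}$. Assertion (iii) is $(iii)$ of the corollary verbatim. Because each $\Phi_i$ is the identity near $\partial\D_i$, the maps glue into a single $\cC^{s_*}$ map $\Phi$ on $Q$, and we set $Q':=\bigsqcup_i\D_i'$.

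For (iv), the corollary's measure bound gives
\[
\Leb(\D_i\setminus\D_i')\le \frac1{\eps_0}\delta_0^{-\aleph\ka}\eps^\alpha
\le C\,\nu^{(3/2-\bb)\alpha-(1+\bb)\aleph\ka}.
\]
The exponent is strictly positive: by the choice $\aleph\ka<\alpha$, it is bounded below by $\alpha/2$ for $\bb$ small. Summing over $i\le L$ and combining with $\Leb([c_*,1]^\A\setminus Q)\le C\nu^\bb$ from Theorem~\ref{p_KAM} yields, in the strongly admissible case, $\Leb Q'\to(1-c_*)^{\#\A}$ as $\nu\to 0$. In the admissible-but-not-strongly-admissible case we apply the same argument only to those $\D_i$ that meet $\D_0$, and combine with the lower bound \eqref{hren2} to obtain $\liminf_{\nu\to 0}\Leb Q'\ge\tfrac12c_0^{\#\A}$.

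The only real technical obstacle is the \emph{calibration of the exponents}: one must choose $\aleph$ small enough that $\aleph\ka<\alpha$ (to keep the measure estimate useful) and then $\bb$ small enough (relative to $\aleph$) that the polynomial bounds $\nu^{1\pm\bb}$ supplied by Theorem~\ref{p_KAM} propagate through the chain of inequalities of hypothesis (a) of Corollary~\ref{cMain-bis} and through the smallness condition \eqref{epsi-bis}. Once this bookkeeping is settled, the rest is a direct application of the KAM result and the gluing of finitely many local normal forms.
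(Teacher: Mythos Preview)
Your overall strategy---feed Theorem~\ref{p_KAM} into Corollary~\ref{cMain-bis} on parameter cubes and glue via the ``identity near the boundary'' clause---is the same as the paper's, and the exponent calibration for (i)--(iii) is essentially right (modulo a sign slip: from $\delta_0\ge\nu^{1+\bb}$ one gets $\delta_0^{1+\aleph}\ge\nu^{(1+\bb)(1+\aleph)}$, not $\le$; the correct move is to \emph{choose} $c'=\delta_0$ equal to a specific power of $\nu$ within the allowed range, rather than just quote the lower bound).

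The genuine gap is in the covering step that drives~(iv). Corollary~\ref{cMain-bis} requires its parameter domain $\D$ to be a ball or cube, and Theorem~\ref{p_KAM}(ii) certifies A1--A3 only on cubes $\D\subset Q$; indeed $h_{\text{up}}$ and $f$ are not even defined for $\rho\notin Q$. A \emph{fixed} cover $\{\D_i\}_{i\le L}$ of $[c_*,1]^\A$ will not satisfy $\D_i\subset Q$: the complement of $\bigcup_\nu Q(c_*,\bb,\nu)$ is the non-empty algebraic set $\tilde X$ of Section~\ref{rho_sing}, which generically meets every cube of any fixed grid, so no $\D_i$ ever lies entirely in $Q_\nu$. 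Replacing $\D_i$ by $\D_i\cap Q$ is illegal---that set is not a cube, so the corollary does not apply and its boundary-identity clause (on which your gluing relies) is lost.

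The paper fixes this with a two-layer limit you have skipped. Given $\epsilon>0$, use the monotonicity $Q_\nu\nearrow$ to pick $\nu_\epsilon$ with $\Leb Q_{\nu_\epsilon}$ within $\epsilon$ of $(1-c_*)^{\#\A}$; then (Vitali) take finitely many \emph{disjoint} balls $\D_1,\dots,\D_N\subset Q_{\nu_\epsilon}$ whose union captures all but $\epsilon$ of $\Leb Q_{\nu_\epsilon}$. For every $\nu\le\nu_\epsilon$ these balls lie in $Q_\nu$, so Corollary~\ref{cMain-bis} legitimately applies on each $\D_j$ and yields $\D_j'(\nu)$ with $\Leb(\D_j\setminus\D_j'(\nu))\le\nu^\aleph$. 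Summing over the (now $\epsilon$-dependent) $N$ balls, then sending $\nu\to0$ and finally $\epsilon\to0$, gives $\lim_{\nu\to0}\Leb Q'_\nu=(1-c_*)^{\#\A}$. Your single fixed cover cannot reach full measure because the cubes straddling $\tilde X$ never become admissible, and they carry a fixed positive measure independent of $\nu$.
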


\begin{proof} 
By Proposition \ref{p_KAM} we know that the Hamiltonian $h_{\textrm{up}}$ of \eqref{unperturbedbis}
satisfies the Hypotheses~A1-A3 of Section \ref{ssUnperturbed}
with the choice of parameters \eqref{choice1}-\eqref{choice3}  --  $c',\delta_0 $ are here still to be  determined  --  
on any ball $\D\subset Q(c_*,\bb,\nu)  \subset [c_*,1]^\A$ with
\be\label{}
\Leb ([c_*,1]^\A \setminus  Q (c_*,\bb,\nu)
)\le C\nu^{\bb}\,,\ee
In order to apply Corollary~\ref{cMain-bis} to  the Hamiltonian $h_{up}+f$  it remains to verify the assumptions a), b) of that corollary,  and
  \eqref{epsi-bis}. 

Choose $\aleph$ so that $\aleph(\ka+2)=\min(\frac18,\alpha)$. (Here  $\ka$ and $\alpha$ are given in
Corollary~\ref{cMain-bis}.) If we take $\beta_0\le \aleph^2$, then
$$\chi,\ \xi \le \Cte \nu^{1-\aleph^2}\quad\textrm{and}\quad \eps\le\Cte (\nu^{1-\aleph^2})^{\frac32}$$
for any $\bb\le\beta_0$. By \eqref{choice1} and \eqref{choice2} we have
$$c'=\delta_0\ge \nu^{1+\aleph}.$$
Then a) and b) are fulfilled.

The smallness condition \eqref{epsi-bis} in Corollary~\ref{cMain-bis},  is now easily seen hold, by the first assumption on $\aleph$, if we 
take $\nu$ sufficiently small. (Notice that this bound on $\nu$ depends on $c_*$ through $\mu_*$.) We can therefore apply this corollary:
there exists a subset $\D'(\nu)\subset \D$, with 
the measure bound \eqref{measure-bis} becomes
$$
\Leb(\D\setminus \D'(\nu)) \le\frac1{\eps_0}\delta_0^{-\aleph\ka}\eps^{\alpha}\le \nu^{\aleph},$$
(by the second assumption on $\aleph$);  the bound in (ii) follows since $c'\ge \nu^{1+\aleph}$; the bound in (iii) holds
if $\nu_0$ is small enough. The diffeomorphism $\Phi$ is trivially extended from $\D$ to $Q$ since it equals the identity near the boundary of
$\D$.

  In order to prove (iv), assume first that $\A$ is  strongly admissible. Then for any $c_*$ the sets $Q_\nu=Q(c_*,\bb,\nu)$, form
   an increasing system of open sets in $[c_*,1]^\cA$    such that their union is of full measure. So for any $\epsilon>0$ 
  we can find $\nu_\epsilon>0$ such that $\Leb Q_\nu \ge (1-\epsilon)(1-c_*)^n$ ($n=\#\cA$) if $\nu\le\nu_\epsilon$. 
 Since
  $Q_{\nu_\epsilon}$ is open there is a finite  disjoint union 
  $\cup_{j=1}^N\D_j\subset Q_{\nu_\epsilon}$ of open balls (or  cubes) whose measure differ from that of $Q_{\nu_\epsilon}$
  by at most $\epsilon(1-c_*)^n$.  [Use for example the Vitali covering theorem.]

For any $j\ge1$ we construct a closed 
  set $\D'_j(\nu)$ as above. Then
  $$
  \D'_j(\nu)\subset \D_j \subset Q_{\nu_\epsilon} \subset Q_\nu
  $$
  for any $0<\nu\le\nu_\epsilon$, and $\meas (\D_j\setminus \D_j'(\nu))\le \nu^{\aleph}$. If now 
  $
  Q'_\nu =  \cup _{j=1}^N \D'_j(\nu)
  $
and  $\nu'_\epsilon\in[0,\nu_\epsilon]$ is sufficiently small, then $\Leb (Q_\nu\setminus Q'_\nu)\le 2\epsilon(1-c_*)^n$
for all  $0<\nu\le\nu'_\epsilon$. 
This implies the first assertion in (iv). To prove the second we simply replace in the argument above the cube 
$[0,1]^\A$ by the set $\D_0$ as in \eqref{hren2}. 
\end{proof}

\subsubsection{Proof of Theorem~\ref{t72} and \ref{t73}}

\begin{proof} Given $\bb$. For any
$c_*$ and $\nu$, let $Q'(c_*,\nu)\subset Q(c_*,\bb,\nu)$ be the set defined in Theorem~\ref{thm10.2}. Then, for any $c_*>0$,
$$\bigcup_{\nu\in\Q^*}Q'(c_*,\nu)$$
is of Lebesgue measure: $=(1-c_*)^{\#\A}$ when $\A$ is strongly admissible; $\ge c_0^{\#\A}$ when $\A$ is admissible.
It follows that the set
$$ \tilde\fJ=\{I=\nu\r: \r\in \bigcup_{\begin{subarray}{c}c_*,\nu \in\Q^*\\\nu^{\bb}\le c_*\end{subarray}}Q'(c_*,\nu)\}$$
at $I=0$ has: density $=1$ when $\A$ is strongly admissible;  positive density when $\A$ is admissible.

Chose an enumeration $\{(c_j,\nu_j)\}_j$ of $\Q^*\times\Q^*$ and let $ \tilde  \fJ_j=\nu_jQ'(c_j,\nu_j)$
so that $ \tilde\fJ=\bigcup_j \tilde\fJ_j$. 

Now we  fix $j$ and let $\nu=\nu_j$. We define for any $I\in \tilde  \fJ_j$,
 $$
U'_j(\theta_\A,I=\nu\r)=\Psi_\r\circ\Phi(r_\A=0,\theta_\A,\zeta_\L=0,\r).$$

We have, by Theorem~\ref{thm10.2},
$$ ||\Phi(x,\r)-x||_{\ga_*} \le \nu^{\frac12-\aleph(\ka+2)} ;$$
for any  $x\in \O_{\ga_*}(1/4,\mu_*^2/2)$, $\r\in Q(c_j,\bb,\nu_j)$, and, by Theorem~\ref{NFT},
\begin{equation*}
\begin{split}
\mid\mid \Psi_\r(r,\theta,\xi_\L,\eta_\L)-(\sqrt{\nu\r}\cos(\theta),&\sqrt{\nu\r}\sin(\theta),\sqrt{\nu\r}\xi_\L,\sqrt{\nu\r}\eta_\L    ) \mid\mid_{\ga_*}\le \\
&\le C(\sqrt\nu\ab{r}+\sqrt\nu\aa{(\xi_\L,\eta_\L)}_{\ga_*}+\nu^{\frac32} )\nu^{-\tilde c\bb}
\end{split}
\end{equation*}
for all $(r,\theta,\xi_\L,\eta_\L)\in \O_{\ga} (\frac 12, \mu_*^2)\cap\{\theta\ \textrm{real}\}$. Therefore
\begin{equation*}
\begin{split}
\mid\mid   U'_j(\theta_\A,\nu\r)-&(\sqrt{\nu\r}\cos(\theta),\sqrt{\nu\r}\sin(\theta),0,0  )\mid\mid _{\ga_*}\le\\
&C(\sqrt\nu\nu^{\frac12-\aleph(\ka+2)}+\nu^{\frac32} )\nu^{-\tilde c\bb}\le C\nu^{1-\aleph(\ka+2)-\tilde c\bb}
\le  CI^{1-\aleph(\ka+2)-\tilde c\bb-\bb}
\end{split}
\end{equation*}
which is $\le C I^{1-\aleph(\ka+3)}$ if $\bb$ is small enough.
 Thus $U_j'$ verifies \eqref{dist1}.

Also, by Theorem~\ref{thm10.2}, the frequency vector $\Om'_j$ satisfies
$$|\Omega_j'(\r)-\Omega(\r)|\le \nu^{1+\aleph}\le C  I^{1+\aleph-\bb}\le C  I^{1+\frac\aleph2}$$
for $\r\in Q(c_*,\bb,\nu)$, and, by Theorem~\ref{NFT},
$$\Om(\r)=\om_\A+\nu M\r.$$
Therefore the vector $ \Om'_{\A,j}(\nu\r)=\Omega_j'(\r)$
will satisfy \eqref{dist11}.

Part (i),  for $\r\in\tilde\fJ_j$ is  clear by construction.

If $\r$ is such that $\F=\F_\r$ is non-void, then the
eigenvalues $\{ \pm{\bf i}\Lambda_{a}(\r),  a\in \F\}$ of $J K(\r) $ verifies (see \eqref{hyperb})
$$
 | \Im \Lambda_a(\yy) | \ge C^{-1} \nu^{ \tilde c \bb},\qquad \forall  a\in\F.$$
Since, by Theorem~\ref{thm10.2},
$$ ||\frac1\nu J H'(\r)-JK(\r) || \le  \nu^{\aleph},$$
it follows (see for example Lemma A2 in \cite{E98} and Lemma~C.2 in \cite{EGK1}) 
that the eigenvalues of the matrix $\frac1\nu JH'(\r)$, hence those of $JH'(\r)$,
have  real parts bounded away from $0$ when $\tilde c \bb<\aleph$ and 
$\nu$ is small enough.This proves (iii).

If the $\tilde \fJ_j$'s were mutually disjoint, the mappings $U'_j$ would extend to a mapping $U'$ on $\tilde\fJ$.
But they are not. However 
there are closed subsets $\fJ_j$ of $\tilde\fJ_j$, mutually disjoint, such that the density of the set $\fJ=\bigcup_j \fJ_j$ at $I=0$
is the same as that of the set $\tilde\fJ$. Now we just restrict each $U'_j$ to $\fJ_j$, and these restrictions extend to a mapping 
$U'$ on $\fJ$.

 [To see the existence of the sets $\fJ_j$ we construct, by induction, subsets $\fJ_j'$ of $\tilde\fJ_j$, mutually disjoint, such that $\bigcup_j \fJ'_j=\tilde\fJ$. The set
$\fJ_j'$ are not closed, but each has a closed subset $\fJ_j$ such that $\Leb(\fJ'_j\setminus \fJ_j)<2^{-j}\Leb( \fJ'_j)$. Since each
$\tilde\fJ_j$ is separated from $I=0$, it follows that the density of $\fJ=\bigcup_j \fJ_j$ at $0$ is the same as that of $\tilde\fJ$.]
\end{proof}

\appendix
 
\section{Proofs of Lemmas \ref{lemP}   and \ref{XPanalytic}}

For any $\ga=(\ga_1, \ga_2)$ let us denote by $Z_\gamma$ the space of complex sequences $v=(v_s, s\in\Z^d)$
with the  finite norm $\|v\|_\gamma$, defined by the same relation as the norm in the space $Y_\gamma$. 
By $M_{\ga,0}$ we denote the space of complex 
 $\Z^d\times \Z^d$--matrices, given a norm, defined by the same formula as the norm in $\M_{\ga,0}$,
 but with $[a-b]$ replaced by $|a-b|$. 
 
For any  vector $v\in Z_\varrho$, $\varrho\ge0$,  we will denote by $\F(v)$ its  Fourier-transform:
$$
\F(v)= u(x)\; \Leftrightarrow \;
 u(x)=\sum v_a e^{{\bf i}  \langle a, x\rangle }\,.
 $$
 By Example~\ref{analyt} if $u(x)$ is a bounded real holomorphic 
function with the radius of analyticity $\varrho'>0$, 
  then $\F^{-1} u\in Z_\varrho$ for $\varrho<\varrho'$. 
Finally,  for a Banach space $X$ and $r>0$ we denote by $B_r(X)$ the open ball
$ \{x\in X\mid |x|_X< r\}$. 

Let $F$ be the Fourier-image of the nonlinearity $g$, regarded as the mapping $u(x)\mapsto g(x,u(x))$, i.e. 
$\ 
F(v) = \F^{-1} g(x, \F(v)(x)).
$

\begin{lemma}\label{l1}
For  sufficiently small $\mu_g>0$, $\ga_{g1}>0$ and for 
$\ga_g=(\ga_{g1}, \ga_{g2})$, where $\ga_{g2}\ge m_*+\varkappa$ 
we have:

i) $F$ defines a real holomorphic  mapping $B_{\mu_g}(Z_{\ga_g})
\to Z_{\ga_g}$, 

ii) $d F$ defines a real holomorphic mapping $B_{\mu_g}(Z_{\ga_g}) \to M^b_{\ga',0}\,$, where 
$\ga'= (\ga_{g1}, \ga_{g2}-m_*)$.
\end{lemma}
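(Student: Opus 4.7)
The plan is to expand $g$ as a Taylor series in $u$, reduce everything to convergent convolution series, and exploit the Banach-algebra property of $Z_{\gamma_g}$ under convolution.

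First, since $g(x,u)=\partial_uG(x,u)$ with $G=u^4+O(u^5)$ is real analytic at $\mathbb{T}^d\times\{0\}$, I would write $g(x,u)=\sum_{k\geq 3}g_k(x)\,u^k$ and use joint analyticity to pick $\gamma^0_{g1}>0$ and $R>0$ so that $g$ extends holomorphically to $\mathbb{T}^d_{\gamma^0_{g1}}\times\{|u|<R\}$ with $\sup|g|\leq M_0$. Cauchy estimates in the variable $u$ then give $\sup_{|\Im x|<\gamma^0_{g1}}|g_k(x)|\leq M_0 R^{-k}$. By Example~\ref{analyt}, for any $\gamma_{g1}<\gamma^0_{g1}$ and any $\gamma_{g2}$ the Fourier coefficients $\hat g_k:=\mathcal{F}^{-1}g_k$ lie in $Z_{\gamma_g}$ with $\|\hat g_k\|_{\gamma_g}\leq C_{\gamma_g}M_0R^{-k}$, and reality of $g$ makes the $g_k$ real-valued.

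For (i), the hypothesis $\gamma_{g2}\geq m_*+\varkappa>d/2$ and the remark following \eqref{Ygamma} tell us that $Z_{\gamma_g}$ is a Banach algebra under convolution, so $\|\hat g_k*v^{*k}\|_{\gamma_g}\leq C_1^{k-1}\|\hat g_k\|_{\gamma_g}\|v\|_{\gamma_g}^{k}$. Summing, the series
\begin{equation*}
F(v)=\sum_{k\geq 3}\hat g_k*v^{*k}
\end{equation*}
converges absolutely in $Z_{\gamma_g}$ for $\|v\|_{\gamma_g}<\mu_g:=R/(2C_1)$. Each summand is a homogeneous real-holomorphic polynomial of degree $k$ in $v$, and uniform convergence on compact subsets of $B_{\mu_g}(Z_{\gamma_g})$ yields real holomorphy of $F$.

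For (ii), differentiating the series termwise gives $dF(v)[w]=h(v)*w$ with $h(v):=\mathcal{F}^{-1}\bigl(\partial_ug(x,\mathcal{F}(v)(x))\bigr)=\sum_{k\geq 3}k\,\hat g_k*v^{*(k-1)}$. Applying the previous paragraph with $\partial_ug$ in place of $g$ shows that $v\mapsto h(v)$ is real holomorphic from $B_{\mu_g}(Z_{\gamma_g})$ to $Z_{\gamma_g}$ with $\|h(v)\|_{\gamma_g}$ bounded uniformly on $B_{\mu_g}$. The operator $w\mapsto h*w$ has matrix entries $M_a^b=h_{a-b}$, so $v\mapsto dF(v)$ factors as $v\mapsto h(v)\mapsto(h(v)_{a-b})_{a,b}$, reducing (ii) to showing that the linear convolution map $\mathrm{Conv}:Z_{\gamma_g}\to\mathcal{M}^b_{\gamma',0}$, $h\mapsto(h_{a-b})$, is bounded, where $\gamma'=(\gamma_{g1},\gamma_{g2}-m_*)$.

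The boundedness of $\mathrm{Conv}$ is the main obstacle and splits into two estimates, corresponding to the two factors in $\mathcal{M}^b_{\gamma',0}=\mathcal{B}(Y_{\gamma'},Y_{\gamma'})\cap\mathcal{M}_{(\gamma_{g1},\gamma_{g2}-2m_*),0}$. For the matrix component, using $[a-b]\leq|a-b|$ and Cauchy--Schwarz,
\begin{equation*}
\sup_a\sum_b|h_{a-b}|\,e^{\gamma_{g1}[a-b]}\langle a-b\rangle^{\gamma_{g2}-2m_*}\leq\|h\|_{\gamma_g}\Bigl(\sum_{s\in\mathbb{Z}^d}\langle s\rangle^{-4m_*}\Bigr)^{1/2},
\end{equation*}
which is finite since $m_*>d/2$ gives $4m_*>2d$. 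For the operator-norm component, convolution by $h$ corresponds in position space to pointwise multiplication by the analytic function $\mathcal{F}h\in H^{\gamma_{g2}}(\mathbb{T}^d)$; since $\gamma_{g2}>d/2$, $H^{\gamma_{g2}}$ is a Banach algebra and acts by multiplication on $H^s$ for $0\leq s\leq\gamma_{g2}$, applied here with $s=\gamma_{g2}-m_*\in[0,\gamma_{g2}]$. Combining both bounds gives $\|\mathrm{Conv}(h)\|_{\mathcal{M}^b_{\gamma',0}}\leq C\|h\|_{\gamma_g}$, which with the uniform bound on $\|h(v)\|_{\gamma_g}$ and the real holomorphy of $v\mapsto h(v)$ finishes (ii). The delicate point is that $Y_{\gamma'}$ itself is not assumed to be a Banach algebra (one could have $\gamma_{g2}-m_*\leq d/2$), so the operator estimate genuinely needs the extra regularity $\gamma_{g2}>d/2$ of the multiplier, not just of the argument.
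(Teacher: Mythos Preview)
Your proof is correct and follows the same overall strategy as the paper: expand $g(x,u)=\sum_{k\ge3}g_k(x)u^k$, use Cauchy estimates to control $\|\F^{-1}g_k\|_{\ga_g}$, invoke the convolution-algebra property of $Z_{\ga_g}$ for (i), and for (ii) exploit that $dF(v)$ is a Toeplitz (convolution) operator whose matrix norm is bounded via Cauchy--Schwarz in terms of $\|h(v)\|_{\ga_g}$.

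The only genuine difference is how you obtain the operator-norm half of $M^b_{\ga',0}$. The paper gets it in one line: since $F:B_{\mu_g}(Z_{\ga_g})\to Z_{\ga_g}$ is holomorphic by (i), a Cauchy estimate immediately bounds $dF(v)$ as an operator on $Z_{\ga_g}$; this is what is actually used downstream in the proof of Lemma~\ref{lemP}, where the target is $\M^b_{\ga_g,2}=\B(Y_{\ga_g},Y_{\ga_g})\cap\M_{\ga',2}$. You instead read $M^b_{\ga',0}$ literally via \eqref{b-space} and prove boundedness on $Z_{\ga'}$ through a Sobolev-multiplier argument, and likewise aim at the weaker matrix space $M_{(\ga_{g1},\ga_{g2}-2m_*),0}$. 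Your argument is sound (the reduction of the exponential weight via $|a|\le|a-b|+|b|$ is routine), but it works harder than necessary: your Cauchy--Schwarz splitting, done with one factor $\langle s\rangle^{-m_*}$ instead of $\langle s\rangle^{-2m_*}$, already yields the stronger $M_{\ga',0}$ bound the paper states, and the operator bound on $Z_{\ga_g}$ comes for free from part (i).
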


\begin{proof}
i) For sufficiently small $\varrho', \mu>0$ the 
 nonlinearity $g$ defines a real holomorphic function  $g:\T^d_{\varrho'}\times B_\mu (\C) 
 \to \C$ and the norm
 of this function  is bounded by some constant $M$. We may write it as
$\ 
g(x,u) = \sum_{r=3}^\infty g_r(x) u^r\,, 
$
where $g_r(x) = \frac1{r!} \frac{\p^r}{\p u^r}g(x,u)\!\mid_{u=0}$. So $g_r(x)$ is holomorphic in $x\in\T^d_{\varrho'}$ 
and by the Cauchy estimate $|g_r|\le M\mu^{-r}$ for  all $x\in\T^d_{\varrho'}$. Accordingly, 
$$
\|\F^{-1} g_r\|_{\ga_g}\le C_{\varrho} M\mu^{-r}\quad \text{if}\quad 
0\le\ga_{g1}\le \varrho\,,
$$
for any $\varrho<\varrho'$;  cf.  Example~\ref{analyt}.
We may write $F(v)$ as
\be\label{b1}
F(v) = \sum_{r=3}^\infty (\F^{-1} g_r)\star \underbrace {v\star\dots\star v}_{r}=: \sum_{r=3}^\infty F_r(v)
\,.
\ee
Since the space $Z_{\ga_g}$ is an algebra with respect to the convolution (see Lemma~1.1 in \cite{EK08}), 
the $r$-th term of the sum is a mapping from $Z_{\ga_g}$ to itself, whose norm is  bounded as follows:
\be\label{b2}
\| (\F^{-1} g_r)\star \underbrace {v\star\dots\star v}_{r}\|_{\ga_g}\le C_1
 C^{r+1} \mu^{-r} \|v\|^r_{\ga_g}\,.
\ee
This implies the assertion with  a suitable $\mu_g>0$.
\medskip

ii) The assertion i) and the Cauchy estimate imply that the operator-norm of $dF(v)$ is
bounded if $|v|_\ga <\mu_g$. To estimate $|d F(v)|_{\ga', 0}$, for 
 $r\ge3$ consider the term $F_r(v)$ in \eqref{b1}. This is the Fourier transform of the mapping 
$u(x)\mapsto g_r(x) u(x)^r$, and  its differential $dF_r(v)$ is a linear operator in $Z_{\ga_g}$ which is the Fourier-image 
of the operator of multiplication by the function $rg_r(x) u^{r-1}(x)$. So  the matrix $\big(\, dF_r(v)^b_a, \,a,b\in\Z^d\big)$
of the former operator is nothing but the matrix of the latter operator, written in the trigonometric 
 basis $\{e^{{\bf i}(a,x)}\}$. Therefore 
$$
(dF_r(v))_a^b =  (2\pi)^{-d} \int e^{-{\bf i}\langle b,  x\rangle } rg_r(x) u^{r-1} e^{ {\bf i}  \langle a, x\rangle }\, dx\,. 
$$
That is, $(dF_r(v))_a^b = G_r(b-a)$, where $G_r(a)$ is the Fourier transform  of the function $r g_r(x) u^{r-1}$. So
\begin{equation*}
\begin{split}
|d F_r(v)|_{\ga', 0} =& \sup_a C\sum_b |(|d F_r(v)^b_a|e^{\ga_{g1}|a-b|}  \langle a-b \rangle^{\ga_{g2}-m_*}\\
=& \sup_a C\sum_b |(|G_r(a-b)|e^{\ga_{g1}|a-b|}  \langle a-b \rangle^{\ga_{g2}}  \langle a-b \rangle^{-m_*}
\le C' |G_r(\cdot)|_{\ga_g}\\
\le&C \Big(\sum_c |G_r(c)|^2 e^{2\ga_{g1} |c|}   \langle c\rangle^{2(\ga_{g2}-m_*)}\Big)^{1/2}
\big( \sum_c \langle c\rangle^{-2m_*}\big)^{1/2}= C' |G_r|_{\ga_g}
\end{split}
\end{equation*}
 (we recall that $m_*>d/2$). 
Applying \eqref{b2} with $r$ convolutions instead of $r+1$, we see that 
$\ 
 |G_r(\cdot)|_{\ga_g} \le C_2 C^r \mu^{-r} \|v\|_{\ga_g}^{r-1}\,. 
$
So 
$$
|(dF_r(v))|_{\ga',0}  
\le C_3C^r \mu^{-r} \|v\|_{\ga_g}^{r-1}\,.
$$
Since $d F(v) = \sum_{r\ge3} d F_r(v)$, then the assertion ii) follows, if we replace 
$\mu_g$ by a smaller positive number. 
\end{proof}

\noindent
\begin{proof}[ Proof of  Lemma~\ref{lemP}.]  Let us consider the functional $h_{\ge4}(\zeta)$ as  in \eqref{H1}, 
and write it as
$\ 
h_{\ge4}(\zeta) = {\mathbf G}\circ \Upsilon\circ D^{-}\zeta\,. 
$
Here $D^-$ is defined in \eqref{D-},  $\Upsilon$ is the  operator 
$$
\Upsilon: Y_\ga\to Z_\ga,\qquad \zeta\to v,\;\; v_a= 
{(\xi_a+\eta_{-a})}/{\sqrt2}
\;\;\forall\, a,
$$
and ${\mathbf G}(v) = \int g(x,(\F^{-1}v)(x))\,dx$. Lemma~\ref{l1} with $F$ replaced by ${\mathbf G}$ immediately implies that 
$p$ is a real holomorphic function on
$B_{\mu_g}(Y_\ga)$ 
 with a suitable $\mu_g>0$. 
 Next, since 
$$
\nabla h_{\ge4}(\zeta) =D^{-}\circ{}^t\Upsilon \circ \nabla {\mathbf G}(\Upsilon\circ D^{-}\zeta)\,,
$$
where $\nabla {\mathbf G}=F$ is the map in Lemma~\ref{l1}, then $\nabla h_{\ge4}$ defines a real holomorphic mapping 
$B_{\mu_g}(Y_\ga)\to  Y_{\ga}$, bounded uniformly in $\ga_*\le\ga\le\ga_g$. 

By the Cauchy estimate, for any $0<\mu_g'<\mu$ the Hessian of $h_{\ge4}$   defines an analytic mapping 
\be\label{Phess}
\nabla^2 h_{\ge4}:
B_{\mu_g'}(Y_\ga)\to  \B( Y_{\ga}, Y_\ga)\,,
\ee
and $\nabla^2 h_{\ge4}(\zeta)$  is the linear operator 
$$
\nabla^2 h_{\ge4}(\zeta) =D^{-}({}^t\Upsilon \ \nabla^2  {\mathbf G}(\Upsilon\circ D^{-}\zeta)\ \Upsilon)D^{-}\,.
$$
Note that  for any infinite matrix  $A$ the matrix ${}^t\Upsilon A \Upsilon$ is formed by $2\times2$--blocks and 
satisfies 
$$
|({}^t\Upsilon A \Upsilon)^b_a | \le \frac12 \sum_{a'=\pm a, \,b'=\pm b}
 |A^{b'}_{a'}|\,.
$$
Noting also that for $a' = \pm a$, $b' = \pm b$ we have $[a-b] \le|a'-b'|$, and that 
$
\min(r_1, r_2)^2 r_1^{-1} r_2^{-1}\le 1
$
if $r_1, r_2\ge1$, we find that the first term which enters the definition of $\nabla^2 h_{\ge4}|_{\ga', 2}$ estimates 
as follows:
\begin{equation*}\begin{split}
&\sup_{a\in\Z^d}  \sum_{b\in\Z^d} |\nabla^2 p|^b_a e^{\ga_1 [a-b]} \max(1, [a-b])^{\ga_2 - m_*} \min(\langle a\rangle, 
\langle b\rangle)^2\\
\le& \sup_{a\in\Z^d}  \frac12 \sum_{b\in\Z^d} \,\sum_{ a'=\pm a , b'=\pm b}
 |\nabla^2 {\mathbf G}|^{b'}_{a'} e^{\ga_1 |a'-b'|} \max(1, [a'-b'])^{\ga_2 - m_*} 
 \frac{ \min( \langle a'\rangle, 
\langle b'\rangle)^2}{ \langle a'\rangle \, \langle b'\rangle}\\
\le& \sup_{a'\in\Z^d}  2 \sum_{b'\in\Z^d} 
 |\nabla^2 {\mathbf G}|^{b'}_{a'} e^{\ga_1 |a'-b'|} \max(1, [a'-b'])^{\ga_2 - m_*} \,
\le \,2  |\nabla^2 {\mathbf G}|_{\ga',0}
\end{split}
\end{equation*}
The second term which enters the definition of the norm  estimates similar, so 
\be\label{b3}
|\nabla^2 h_{\ge4}(\zeta)|_{\ga'},2 \le  2 |\nabla^2 {\mathbf G}(v)|_{\ga'} = 
 2 | d F(v)|_{\ga'}\,,
 \ee
$v=\Upsilon\zeta$. 
In view of \eqref{Phess} and 
 item ii) of Lemma~\ref{l1}, the mapping 
$$
\nabla^2  p:   
B_{\mu_g'}(Y_\ga)
 \to \M^{ b}_{\ga,2}  \,,
$$
is real holomorphic and is bounded in norm by a $\ga$-independent constant. Jointly with \eqref{b3}
and Lemma~\ref{l1}  this implies the assertion of Lemma~\ref{lemP}, if we replace $\mu_g$ by any smaller 
positive number. 
 \end{proof}

\noindent
\begin{proof}[ Proof of  Lemma~\ref{XPanalytic}.] 

The proof is similar to that of Lemma~\ref{lemP} but simpler, and we restrict ourselves to estimating the Hessian of $Q^r$.
Let us start with the Hessian of $P^r$. For any $\zeta\in\O(1,1,1)$ we have:
\be\label{z.2}
d^2P^r(\zeta)(\zeta', \zeta') =
2M \sum_a \sum_\vs A^\vs_a ( \zeta_{a_1}^{\vs_1} \dots \zeta_{a_{r-2}}^{\vs_{r-2}})   {\zeta'}_{a_{r-1}}^{\vs_{r-1}}
  {\zeta'}_{a_{r}}^{\vs_{r}}+\dots =: R(\zeta)(\zeta', \zeta')+\dots\,.
\ee
Here the dots $\dots$ stand for similar sums, where the pair $\zeta', \zeta'$ replaces $\zeta,\zeta$ on other 
$\binom{r}2$ positions.  For any $b_1,b_2\in\Z^d$ the element $(\nabla_1^2P^r(\zeta))_{b_1}^{b_2}$ of 
the Hessian 
$(\nabla^2P^r(\zeta))_{b_1}^{b_2}$, coming from the component $R$ of $d^2P^r$, corresponds to the 
quadratic form  $R(\zeta)\Big( 1_{b_1} (\xi,\eta), 1_{b_2} (\xi,\eta)\Big)$, where $1_b$ stands for the $\delta$-function
on the lattice $\Z^d$, equal one at $b$ at equal zero outside $b$.

Denote by $\tilde\zeta$ the vector
$\ 
\tilde\zeta_a = |\zeta_a| +  |\zeta_{-a}|,\; a\in\Z^d\,.
$
Then 
$|\zeta_{(\vs_j^0\ a_j)} | \le |\tilde\zeta_{a_j}|$, and we see from \eqref{z.2} that $| \nabla_1^2P^r(\zeta)_{b_1}^{b_2}|$
is bounded by 
\begin{equation*}
\begin{split}
2^{r-1} M \sum_{\substack{
 a_1+\dots+ a_{r-2} = -\vs^0_{r-1} b_{r-1} -\vs^0_r b_r}} \tilde\zeta_{a_1}\dots
\tilde\zeta_{a_{r-2}}
= 2^{r-1} M (\tilde\zeta\star \dots\star \tilde\zeta)(-\vs^0_{r-1} b_{1} -\vs^0_rb_2)\,.
\end{split}
\end{equation*}
Since the space $Y_\ga$ is an algebra with respect to the convolution, then 
\be\label{z.3}
| \tilde\zeta\star \dots\star \tilde\zeta |_\ga \le C^{r-3}|\tilde\zeta|_\ga^{r-2}\,.
\ee

As in the proof of Lemma~\ref{lemP},
$\ 
| \nabla^2Q^r(\zeta)_{b_1}^{b_2}| \le \langle b_1\rangle^{-1} \langle b_2\rangle^{-1} 
| \nabla^2P^r(D^-\zeta)_{b_1}^{b_2}|\,.
$ 
Denoting by $\nabla_1^2Q^r$ the component of  $\nabla^2Q^r$, corresponding to  $\nabla_1^2P^r$, 
denoting $b_1' = -\vs^0_{r-1}b_1, \ b'_2 = \vs^0_rb_2$,
and using that $[b_1-b_2] \le |b'_1-b'_2|$, we find :
\begin{equation*}
\begin{split}
&\sup_{b_1}\  \sum_{b_2} | (\nabla_1^2Q^r)_{b_1}^{b_2}| e^{\ga_1[b_1-b_2]}
\max(1, [b_1-b_2])^{\ga_2-m_*}\min (\langle b_1\rangle , \langle b_2\rangle )^2 \\
&\le C^r M \sup_{b_1}\  \sum_{b_2} (\tilde\zeta\star \dots\star \tilde\zeta)(b'_1-b'_2)  e^{\ga_1[b_1-b_2]}
\max(1, [b_1-b_2])^{\ga_2-m_*}\frac{\min (\langle b_1\rangle , \langle b_2\rangle )^2 }
{\langle b_1\rangle  \langle b_2\rangle}\\
&\le C^r M \sup_{b'_1}\  \sum_{b'_2} (\tilde\zeta\star \dots\star \tilde\zeta)(b'_1-b'_2)  e^{\ga_1[b_1-b_2]}
 \langle b_1-b_2\rangle^{\ga_2-m_*}
  \le {C'}^r M |\tilde\zeta|_\ga^{r-2} \le C^r M
\end{split}
\end{equation*}
(since $|\zeta|_\ga\le1$). 
This implies the estimate for $\nabla_1^2 Q^r$, required by the lemma. Other components of $\nabla^2 Q^r$,
corresponding to the dots in \eqref{z.2}, may be estimated in the same way. 
\end{proof}

\section{Examples}
In this appendix we discuss some examples of  Hamiltonian operators $\H(\yy)={\bf i}JK(\yy)$ defined in \eqref{diag}, corresponding to 
various dimensions $d$ and   sets $\A$. In particular we are interested in examples which give rise to partially hyperbolic KAM solutions. 

\smallskip

\noindent{\bf Examples with $({\L_f}\times {\L_f})_+=\emptyset$.}\\
As we noticed in \eqref{Lf+=0}, if $({\L_f}\times {\L_f})_+=\emptyset$ then $\H$ is Hermitian, so the constructed 
 KAM-solutions  are linearly stable. This is always the case when $d=1$.\\
When $d=2$ and $\A=\{(k,0),(0,\ell)\}$ with the additional assumption that neither $k^2$ nor $\ell^2$ can be written as 
the sum of squares of two natural numbers,  we also have $({\L_f}\times {\L_f})_+=\emptyset$.\\
Similar examples can be  constructed in higher dimension, for instance for $d=3$ we can take 
$\A=\{(1,0,0),(0,2,0)\}$ or $\A=\{(1,0,0),(0,2,0),(0,0,3)\}$.\\
We note that in \cite{GY06b}  the authors perturb solutions \eqref{sol}, corresponding to 
 set $\A$ for which $({\L_f}\times {\L_f})_+=\emptyset$ 
and $({\L_f}\times {\L_f})_-=\emptyset$. This significantly simplifies 
 the analysis since in that case there is no matrix $K$ in the normal form
  \eqref{HNF} and  the unperturbed quadratic Hamiltonian is diagonal.

\smallskip

\noindent{\bf Examples with $({\L_f}\times {\L_f})_+\neq\emptyset$.}
In this case hyperbolic directions may appear as we show below.\\
The choice $\A=\{(j,k),(0,-k)\}$ leads to $((j,-k),(0,k))\in ({\L_f}\times {\L_f})_+$.\\
Note that this example can be  plunged in higher dimensions, e.g. the 3d-set 
$\A=\{(j,k,0),(0,-k,0)\}$ leads to a non trivial $({\L_f}\times {\L_f})_+$.

\smallskip

\noindent{\bf Examples with hyperbolic directions}\\
Here we  give  examples of normal forms with hyperbolic eigenvalues, first in  
  dimension two, then -- in higher dimensions.  That is, for the beam  equation \eqref{beam} we will find 
   admissible sets $\A$ such that the corresponding matrices ${\bf i}JK(\yy)$ in the normal form \eqref{HNF} have
 unstable directions. Then by  Theorem~\ref{t73}
   the  time-quasiperiodic solutions of \eqref{beam},  constructed in the theorem,  are linearly unstable.

We begin with dimension $d=2$. Let 
$$
\A=\{(0,1),(1,-1)\}\,.
$$
We easily compute using \eqref{L++}, \eqref{L+-} that 
$$\L_f=\big\{ (0,-1),(1,0),(-1,0),(1,1), (-1,1),(-1,-1)\big)\}\,,
$$
and
$$
( {\L_f}\times {\L_f})_+=\{\big( (0,-1),(1,1)\big);\big( (1,1),(0,-1)\big)\}, \qquad
( {\L_f}\times {\L_f})_-=\emptyset.
$$
So in this case the decomposition  \eqref{decomp} of the Hamiltonian operator $\H(\yy)={\bf i}JK(\yy)$ reads
$$
\H(\yy)=\H_1(\yy)\oplus\H_2(\yy)\oplus\H_3(\yy)\oplus\H_4(\yy)\oplus\H_5(\yy)\,,
$$
where $\H_1(\yy)\oplus\H_2(\yy)\oplus\H_3(\yy)\oplus\H_4(\yy)$ is a diagonal operator with purely imaginary eigenvalues and  $\H_5(\yy)$ is
an operator in $\C^4$ which may have  hyperbolic eigenvalues. That is, now $M=5$ and $M_0=4$.
\\
  Let us denote $\zeta_1=(\xi_1,\eta_1)$ (reps. $\zeta_2=(\xi_2,\eta_2)$) the $(\xi,\eta)$-variables corresponding to the mode $(0,-1)$ (reps. $(1,1)$). We also denote $\yy_1=\yy_{(1,0)}$, $\yy_2=\yy_{(1,-1)}$, $\lambda_1=\sqrt{1+m}$ and $\lambda_2=\sqrt{4+m}$. By construction $\H_5(\yy)$ is the restriction of the Hamiltonian $  \langle K(m,\yy)\zeta_f, \zeta_f\rangle$
to the modes $(\xi_1,\eta_1)$ and $(\xi_2,\eta_2)$. 
We   calculate using \eqref{K}
that 
\be\label{hr}\langle \H_5(\yy)(\zeta_1,\zeta_2),(\zeta_1,\zeta_2)\rangle=\beta(\yy) \xi_1\eta_1+\ga(\yy) \xi_2\eta_2+\alpha(\yy) (\eta_1\eta_2+\xi_1\xi_2)\,,
\ee
where
$$
\alpha(\yy)= \frac6{4\pi^{2}} \frac{\sqrt{\yy_1\yy_2}}{\lambda_1\lambda_2}\,,\quad
\beta(\yy)= \frac3{4\pi^{2}}\frac1{\lambda_1}\Big( \frac{\yy_1}{\lambda_1}-\frac{2\yy_2}{\lambda_2} \Big) \,,\quad
\ga(\yy)= \frac3{4\pi^{2}} \frac1{\lambda_2}\Big( \frac{\yy_2}{\lambda_2} -\frac{2\yy_1}{\lambda_1}\Big) \,.
$$
Thus the linear Hamiltonian 
system, governing the two modes, reads\footnote{Recall that the symplectic two-form is: $-{\bf i}\sum d\xi\wedge d\eta$.}
\ben \left\{\begin{array}{ll}
 \dot \xi_1 &=-{\bf i}(\beta \xi_1+\alpha \eta_2)\\
 \dot \eta_1 &={\bf i}(\beta \eta_1+\alpha \xi_2)\\
 \dot \xi_2 &=-{\bf i}(\ga \xi_2+\alpha \eta_1)\\
 \dot \eta_2 &={\bf i}(\ga \eta_2+\alpha \xi_1).
\end{array}\right.
\een
So the Hamiltonian operator $\H_5$ has the matrix ${\bf i}L$, where 
\ben 
 L= \left(\begin{array}{cccc}
 -\beta &0&0&-\alpha\\
  0&\beta &\alpha&0\\
0&-\alpha&    -\ga &0\\
   \alpha &0&0&\ga\\
\end{array}\right).
\een
We  calculate  the characteristic polynomial of $L$ and obtain after a factorisation that
$$\det (L-\lambda I)=\big(\lambda^2+(\ga-\beta)\lambda -\beta\ga+\alpha^2\big)\, \big(\lambda^2-(\ga-\beta)\lambda -\beta\ga+\alpha^2\big)\,.
$$
Both quadratic polynomials which are the factors in the r.h.s. have the same discriminant 
$\ \Delta= (\beta+\ga)^2-4\alpha^2.$ 
If $\yy_1\sim1$ and $0<\yy_2\ll1$, then $\Delta>0$. So all eigenvalues of $L$ are real, while the eigenvalues of $\H_5$
and $\H$ are pure imaginary (in agreement with Lemma~\ref{laK}). But if   $\yy_1=\yy_2=\yy$, then 
$$
\ga-\beta = \frac{3\yy}{4\pi^2}  \Big( \frac1{\lambda_2^2} - \frac1{\lambda_1^2}\big)\,,    \quad
\beta+\ga= \frac{3\yy}{4\pi^2}  \Big(\frac{1}{\lambda_1^2}+ \frac{1}{\lambda_2^2}-\frac{4}{\lambda_1\lambda_2} 
\Big),\quad \alpha=   \frac{6\yy}{4\pi^2}   \frac{1}{\lambda_1\lambda_2}\,,
$$
and
\begin{align*}\Delta=\frac{9 \rho}{(2\pi)^4}\Big(\frac1{\lambda_1^2}+\frac1{\lambda_2^2}\Big)\Big(\frac1{\lambda_1^2}+\frac1{\lambda_2^2}
-\frac{8}{\lambda_1\lambda_2}\Big)
\leq \frac{9  \rho}{(2\pi)^4}\Big(\frac1{\lambda_1^2}+\frac1{\lambda_2^2}\Big)\Big(\frac1{\lambda_1^2}-\frac7{\lambda_2^2}
\Big)\,.
\end{align*}
Thus,  $\Delta <0$ for all $m\in[1,2]$. Since the eigenvalues of the matrix $L=(1/{\bf i}) \H_5$ are $\pm(\ga-\beta)\pm \sqrt\Delta$, 
then all four of them have nontrivial imaginary parts for all values of the parameter $m\in[1,2]$, and accordingly
 the operator $\H$ has 4 
hyperbolic directions. 
By analyticity, 
for all $m\in[1,2]$ with a possible exception of finitely many points, the real parts of the eigenvalues also are non-zero. 
In this case the operator $\H$ has a quadruple of hyperbolic eigenvalues. 
\medskip

This example can be generalised to any dimension $d\geq 3$. Let us do it for $d=3$.
Let 
\be\label{AAA}
\A=\{(0,1,0),(1,-1,0)\}.
\ee
We verify that $\L_f$ contains 16 points,  that $(\L_f\times\L_f)_-=\emptyset$ and 
\begin{align*}(\L_f\times\L_f)_+=\{&((0,-1,0),(1,1,0)); ((1,1,0),(0,-1,0));\\
&((1,0,-1),(0,0,1)); ((0,0,1),(1,0,-1));\\
&((1,0,1),(0,0,-1)); ((0,0,-1),(1,0,1))\}\,.
\end{align*}
I.e. $(\L_f\times\L_f)_+$ contains three pairs of symmetric couples $(a,b),(b,a)$ which give  rise to three non trivial  
$2\times2$-blocks in the matrix $\H$. 
Now $M=13$, $M_0=10$ and the decomposition  \eqref{decomp} reads 
$$
\H(\yy)=\H_1(\yy)\oplus\cdots \oplus\H_{13}(\yy)\,.
$$
Here 
 $\H_1(\yy)\oplus\cdots \oplus\H_{10}(\yy)$ is the  diagonal part of $\H$  with purely imaginary eigenvalues, while the operators 
$\H_{11}(\yy)$, $\H_{12}(\yy)$, $\H_{13}(\yy)$ correspond to non-diagonal $4\times4$--matrices. 

Denoting 
 $\yy_1=\yy_{(0,1,0)}$ and $\yy_2=\yy_{(1,-1,0)}$ we find that   the restriction of the Hamiltonian $  \langle K(m,\yy)\zeta_f, \zeta_f\rangle$
to the modes $(\xi_1,\eta_1):=(\xi_{(0,-1,0)},\eta_{(0,-1,0)})$ and $(\xi_2,\eta_2):=(\xi_{(1,1,0)},\eta_{(1,1,0)})$ is governed by the Hamiltonian  \eqref{hr}, as in the 2d case. Similarly  the restrictions of the Hamiltonian $  \langle K(m,\yy)\zeta_f, \zeta_f\rangle$
to the pair of modes $(\xi_{(1,0,-1)},\eta_{(1,0,-1)})$ and $(\xi_{(0,0,1)},\eta_{(0,0,1)})$ and 
to the pair of modes $(\xi_{(1,0,1)},\eta_{(1,0,1)})$ and $(\xi_{(0,0,-1)},\eta_{(0,0,-1)})$ are given by the same 
Hamiltonian \eqref{hr}. So $\H_{11}(\yy)\equiv\H_{12}(\yy)\equiv\H_{13}(\yy)$ and for $\yy_1=\yy_2$ we have 3 hyperbolic directions, one in each block $Y^{f11}$, $Y^{f12}$ and  $Y^{f13}$ (see \eqref{deco}) with the same eigenvalues. 

We notice that  the eigenvalues are identically the same for all three blocks,  thus the relation \eqref{single} is violated.
This does not contradict Lemma~\ref{l_nond} since the set \eqref{AAA} is not\sa. Indeed, denoting 
$a=(0,1,0)$, $b=(1,-1,0)$ we see that $c:=a+b=(1,0,0)$. So three points 
$(0,-1,0), (0,0,\pm1)\in \{x\mid\, |x|=|a|\}$  all lie at the distance $\sqrt2$ from $c$.
Hence,  it is not true that $a\ann b$.

 \section{Admissible and strongly admissible random $R$-sets}

Given $d$ and $n$, let $B(R)$ be the (round) ball of radius $R$ in $\R^d$, and $\bB(R)=B(R)\cap \Z^d$. 
The family   $\Om=\Om(R)$ of $n$-sets $\{a_1,\dots, a_n\}$ in $\bB(R)$, $\Om = \bB\times \dots\times\bB$
($n$ times) has cardinality of order 
 $CR^{nd}$.

The family on  $n$-sets $\{a_1,\dots, a_n\}$ in $\Om$
such that $\ab{a_j}=\ab{a_k}$ for some $j\not=k$ has cardinality $\le C'R^{nd-1}$ 
(the constant $C'$ as well as all other constants in this section depend, without saying, on  $n,d$). 
Its complement in $\Om$ is
the set $\Om_{\text{adm}} =\Omad(R)$  of admissible $n$-sets  in $\bB(R)$. Hence
$$
\frac{\#\Omad(R)}{\#\Om(R)} = 1-O(R^{-1})\,,
\qquad R\to\infty\,.
$$
We provide the set $\Om$ with the uniform probability measure $\PP$  and will call elements of $\Om$
{\it $n$-points random $R$-sets}. The calculation above shows that 
\be\label{om+}
\PP(\Omad)\to 1 \quad \text{as}
\quad R\to\infty\,.
\ee
That is, admissible $n$-points random $R$-sets with large $R$ are typical. 
\bigskip

To consider strongly admissible sets, let $S(R)$ be the sphere of radius $R$ in $\R^d$, i.e. the boundary of $B(R)$,
and let $\bS(R) = S(R)\cap \Z^d$ (this set is non-empty only if $R^2$ is an integer). 
 We have that, for any $\eps>0$ there exists $C_\eps>0$ such that
 \be\label{VC}
 \Gamma_{R,d}:= 
 |\bS(R)  | \le C_\eps R^{d-2+\eps} \qquad \forall\, R>0.
 \ee
Indeed, for $d=2$ this is a well-known result from number theory (see \cite{Har}, Theorem~338). For $d\ge 3$ it follows by induction and an easy integration argument. For example for $d=3$, then
$$\bS(R)=\{a\in\Z^3: \ab{a}^2=R^2\}=\bigcup_{a_3^2\le R^2}\{a=(a_1,a_2,a_3)\in\Z^3: a_1^2+a_2^2=R^2-a_3^2\}$$
so
$$
 \Gamma_{R,3}=\sum _{n^2\le R^2} \Gamma_{\sqrt{R^2-n^2},2}\le C_\eps \sum _{n^2\le R^2} (R^2-n^2)^{\eps/2} \le  
 C_\eps R^\eps \sum _{n^2\le R^2} \big(1-(\frac nR)^2\big)^{ \eps/2 }\,,$$
 which is
 $\ 
 \le C_\eps R^\eps(2R+1)\le C'_\eps R^{1+\eps}.
 $
\medskip

  For  vectors $a,b \in\Z^d$ we will write 
$\ 
a \ann b \quad \text{iff} \quad a \an a+b\,.
$ 
Consider again the ensemble $\Om=\Om(R) $ of  $n$-points random $R$-sets,  $\Om =\{\omega=(a_1,\dots,a_n) \}$,
and for $j=1,\dots,n$ define 
the random variable $\xi_j$  as $\xi_j(\om) = a_j$. Consider the event 
$$
\Om_{{}_{\ann}}  = \{ \xi^i\ann\xi^j \quad\text{for all}\quad i\ne j\}\,.
$$
Then $\Omsad = \Omad\cap \Om_{{}_{\ann}} $ is the collection of strongly admissible sets. Clearly 
\be\label{w22}
\PP(\Omega\setminus   \Om_{{}_{\ann}} ) \le n(n-1) (1- \PP\{ \xi^1\ann \xi^2\})\,.
\ee
So if  we prove   that 
\be\label{w3}
 1- \PP\{ \xi^1\ann \xi^2\} \le CR^{-\ka}\,,
\ee
then, in view of \eqref{om+}, we would show that 
\be\label{om++}
\PP(\Omsad)\to 1 \quad \text{as}
\quad R\to\infty\,.
\ee
\smallskip

 Below we restrict ourselves to the case $d=3$ since for higher dimension the argument is similar, but
 more cumbersome. We have that 
 \be\label{w4}
 1-\PP\{ \xi^1 \ann \xi^2\} = |\bB(R)|^{-2} C^{**}\,,\quad C^{**}=
  \#  \{ (a,b) \in \bB(R) \times \bB(R)\mid \no a\ann b\}\,, 
 \ee
  and, denoting $a+b=c$, that 
  \be\label{w5}
C^{**}    \le 
      \#  \{ (a,c) \in \bB(2R) \times \bB(2R)\mid \no a\an c\}\,.
  \ee
  Now we will estimate the r.h.s. of \eqref{w5}, re-denoting $2R$ back to $R$. That is, will 
  estimate the cardinality
  of the set
  $$
  X = \{ (a,b) \in \bB(R) \times \bB(R)\mid \no a\an b \}\,. 
  $$
 It is clear that $(a,b) \in X$, $a\ne0$, iff there exist points $a', \ap \in \bS(|a|)$ such that $b$ lies in the line 
 $\Pi_{\aaa}$, which is perpendicular to the triangle $(\aaa)$ and passes through its centre, so it also passes 
 through the origin. Let $v = v_{\aaa}$ be a primitive integer vector in the direction of  $\Pi_{\aaa}$. 
 For any $a\in\Z^d, a\ne0$, denote
 $$
 \Delta(a) = \big\{\, \{a',\ap\} \subset \bS(|a|)\setminus \{a\}\mid a'\ne \ap \big\}\,.
 $$
 Then
 $$
 |\Delta(a)| < \Gamma_{|a|,3}^2 \le C^2_\theta R^{2\theta},\qquad \theta = \theta_3\,, 
 $$
 see \eqref{VC}. For a fixed $a\in \bB(R)\setminus \{0\}$ consider the mapping 
 $$
 \Delta(a) \ni \{a', \ap\} \mapsto v=v_{\aaa}\,.
 $$
 It is  clear that each direction $v=v_{\aaa}$  gives rise to at most $2R |v|^{-1}$ points 
 $b$ such that $(a,b)\in X$. So, denoting 
 $$
 X_a = \{ b\in \bB(R) \mid (a,b) \in X\}\,,
 $$
 we have
 $$
 |X_a| \le 2R \sum   |v_{\aaa}|^{-1}\,,\quad \text{if}\; a\ne0\,,
 $$
 where the summation goes through all different vectors $v$, corresponding to various  $\{a', \ap\}\in\Delta(a)$. 
  As $|v|^{-1}$ is the bigger the smaller $|v|$ is, we 
 see that 
  the r.h.s. is $\,\le 2R\sum_{v\in\bB(R')\setminus\{0\}}|v|^{-1} $, 
 where $R'$ is any number 
 such that $|\bB(R')| \ge |\Delta(a)|$. Since $|\Delta(a)| \le \Gamma_{|a|,3}^2$, then choosing 
 $R'=R'_a=C\Gamma_{|a|,3}^{2/3}$ we get for any $a\in\bB(R)\setminus \{0\}$ that 
 \begin{equation*}
 \begin{split}
 |X_a| \le 2CR \sum_{ \bB(R'_a)\setminus\{0\}}|v|^{-1} \le C_1 R \int_{ B(R'_a)}|x|^{-1}\,dx  
 \le C_2 R(R'_a)^2 = C_3 R \,\Gamma_{|a|,3}^{4/3}\,.
 \end{split}
 \end{equation*}
 Since $0\an b$ for any $b$,  then  $X_0 = \{0\}$ and 
 $$
 |X| = \sum_{a\in\bB(R)} |X_a| \le 1+ CR \sum_{a\in \bB(R)\setminus\{0\}}  \Gamma_{|a|,3}^{4/3}\,.
 $$
 Evoking the estimate \eqref{VC} we finally get that  
  \begin{equation*}
 \begin{split}
 |X| \le C_1R \sum_{a\in \bB(R)\setminus\{0\}}  |a|^{{\frac43\theta_3}}
 \le C_2R \int_{B(R)} |x|^{{\frac43\theta_3}}\,dx\le C_3 R^{1+3+{\frac43\theta_3}} = C_3 R^{5+1/3+\eps'}\,,
 \end{split}
 \end{equation*}
 with any positive $\eps'$.
 Jointly with  \eqref{w4}, \eqref{w5} and the definition of the set $X$ this 
  implies the required relation \eqref{w3} with $\ka=2/3 - \eps'$, and \eqref{om++}  follows. 
  That is, $n$-points random $R$-sets with large $R$ are typical, for any $d$ and any $n$.

  \section{Two lemmas}

 \subsubsection{Transversality}\label{ssTransversality}
\
\begin{lemma}\label{lTransv1} Let $I$ be an open interval and let  $f:I \to\R$ 
be a $\cC^{j}$-function whose $j$:th derivative satisfies
$$\ab{f^{(j)}(x)}\ge \delta,\quad \forall x\in I.$$
Then, 
$$
\Leb \{x\in I: \ab{f(x)}<\eps\}\le C (\frac\eps{\de_0})^{\frac1j}.$$

$C$ is a constant that only depends on $\ab{f}_{\C^j(I)}$.
\end{lemma}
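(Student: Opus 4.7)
The plan is to prove this by induction on $j$, a standard argument sometimes attributed to R\"ussmann (and closely related to the van der Corput sublevel set estimate). The dependence of $C$ on $\ab{f}_{\C^j(I)}$ will only be needed to absorb trivial cases, and the constant that actually matters will be $j$-dependent only.

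For the base case $j=1$, the hypothesis $\ab{f'(x)} \geq \delta$ on $I$ forces $f$ to be strictly monotone, so the sublevel set $\{x\in I : \ab{f(x)} < \eps\}$ is an interval on which $f$ takes values in the interval $(-\eps,\eps)$ of length $2\eps$; since $\ab{f'} \geq \delta$, the Lebesgue measure of the preimage is at most $2\eps/\delta$. This gives the claim with $C_1 = 2$.

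For the inductive step, assume the lemma holds for $j-1$ with constant $C_{j-1}$. Given $\ab{f^{(j)}} \geq \delta$ on $I$, the first-derivative case applied to $f^{(j-1)}$ shows that, for any $h>0$, the set $J_h := \{x\in I : \ab{f^{(j-1)}(x)} < h\}$ is a single sub-interval of measure at most $2h/\delta$. On the complement $I\setminus J_h$, which consists of at most two intervals $I_1, I_2$, we have $\ab{f^{(j-1)}} \geq h$, and the inductive hypothesis (applied on each $I_i$) gives
\[
\Leb\{x\in I_i : \ab{f(x)} < \eps\} \leq C_{j-1} (\eps/h)^{1/(j-1)}.
\]
Summing the three contributions yields
\[
\Leb\{x\in I : \ab{f(x)} < \eps\} \leq \frac{2h}{\delta} + 2 C_{j-1}\left(\frac{\eps}{h}\right)^{1/(j-1)}.
\]
Optimising the right-hand side in $h$ (the two terms balance at $h \sim (\delta \eps^{1/(j-1)})^{(j-1)/j}$) gives the bound $C_j (\eps/\delta)^{1/j}$ with $C_j$ depending only on $j$ and $C_{j-1}$.

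There is essentially no obstacle here; the only subtlety is what to do when the optimising $h$ would exceed $\sup_I \ab{f^{(j-1)}}$, in which case $J_h = I$ and we use instead the trivial estimate $\Leb\{\ab{f}<\eps\} \leq \ab{I} \leq C(\ab{f}_{\C^j(I)})$, which is where the stated dependence of $C$ on $\ab{f}_{\C^j(I)}$ enters. Taking $C$ to be the maximum of the inductive constant $C_j$ and this trivial bound closes the induction.
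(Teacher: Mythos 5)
Your proof is correct and takes essentially the same route as the paper's. The paper proceeds by a forward iteration, setting $g_k = f^{(j-k)}$ and stripping off one derivative at a time: at each level $k$ it discards the sublevel set $E_k = \{\ab{g_k} < \delta_{k+1}\}$, of measure $\lesssim \delta_{k+1}/\delta_k$, using a fixed geometric threshold sequence $\delta_k = \eta^{k-1}\delta$ and choosing $\eta = (\eps/\delta)^{1/j}$ at the end. Your induction on $j$ performs exactly one step of this same iteration, and the balancing point $h \sim \delta^{(j-1)/j}\eps^{1/j}$ you obtain by optimizing is precisely the geometric choice the paper fixes in advance — so the recursion unwinds to the identical threshold sequence and the identical bound $C_j(\eps/\delta)^{1/j}$.

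One small remark: the final paragraph about the optimal $h$ possibly exceeding $\sup_I \ab{f^{(j-1)}}$ is unnecessary. The estimate $\Leb J_h \le 2h/\delta$ from the $j=1$ case holds for every $h>0$, whether or not $J_h$ fills up $I$, so the optimization goes through without a separate case; and the proposed trivial fallback $\ab{I}\le C(\ab{f}_{\cC^j(I)})$ is not actually a consequence of the hypotheses (one can bound $\ab{I}$ by $\Cte/\delta$ but not by $\ab{f}_{\cC^j(I)}$ alone). The dependence of $C$ on $\ab{f}_{\cC^j(I)}$ in the statement is there to absorb the case $\eps\ge1$, which the paper disposes of at the outset.
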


\begin{proof} It is enough to prove this for $\eps<1$. Let $I_1=I$, $\delta_1=\delta$ and
$g_k=f^{(j-k)}$, $k=1,\dots j$. Let $\delta_1,\delta_2,\dots\delta_{j+1}$ be a deceasing sequence of positive numbers.

Since $g_1'=f^{(j)}$ we have $\ab{g'_1(x)}\ge\delta_1$ for all $x\in I_1$ and, hence, the set
$$E_1= \{x\in I_1: \ab{g_1(x)}<\delta_2\}$$
has Lebesgue measure $\lsim\frac{\delta_2}{\delta_1}$. On $I_2=I_1\setminus E_1$ we have $\ab{g'_2(x)}\ge\delta_2$ for all $x\in I_2$ and, hence, the set
$$E_2= \{x\in I_2: \ab{g_2(x)}<\delta_3\}$$
has Lebesgue measure $\lsim\frac{\delta_3}{\delta_2}$.
Continue this $j$ steps. On $I_j=I_{j-1}\setminus E_{j-1}$ we have $\ab{g'_{j}(x)}\ge\delta_{j}$ for all $x\in I_{j}$ and, hence, the set
$$E_j= \{x\in I_j: \ab{g_j(x)}<\delta_{j+1}\}$$
has Lebesgue measure $\lsim\frac{\delta_{j+1}}{\delta_{j}}$.

Now the set $\{x\in I: \ab{f(x)}<\delta_{j+1}\}$ is contained in the union of the sets $E_k$ which has measure
$$\lsim \frac{\delta_{2}}{\delta_{1}}+\dots+\frac{\delta_{j+1}}{\delta_{j}}.$$
Take now $\delta_k=\eta^{k-1}\delta$. Then this measure is $\lsim \eta$  and $\delta_{j+1}=\eta^j\delta$.
Chose finally $\eta$ so that $\eta^j\delta=\eps$.
\end{proof}

\subsubsection{Extension}\label{ssExtension}

\begin{lemma}\label{lExtension} Let $X\subset Y$ be subsets of $\D_0$ such that
$$\dist(\D_0\setminus Y,X)\ge \eps,$$
then there exists a $\cC^\infty$-function $g:\D_0\to\R$, being $=1$ on $X$ and $=0$ outside
$Y$ and such that for all $j\ge 0$
$$| g |_{\cC^j(\D_0)}\le C(\frac C{\eps})^j.$$
$C$ is an absolute constant.

\end{lemma}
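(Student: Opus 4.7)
The plan is to construct $g$ by mollifying the characteristic function of a set wedged between $X$ and $Y$. First, I would introduce the intermediate set
$$Y' := \{x \in \D_0 : \dist(x, \D_0 \setminus Y) \ge \eps/2\},$$
so that the hypothesis $\dist(\D_0 \setminus Y, X) \ge \eps$ gives $X \subset Y'$, while trivially $Y' \subset Y$ and $\dist(\D_0 \setminus Y, Y') \ge \eps/2$.

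Next, I would fix a bump function $\phi \in C_c^\infty(\R^p)$ with $\phi \ge 0$, supported in the Euclidean unit ball, and $\int \phi = 1$, and denote its rescaling by $\phi_\delta(x) = \delta^{-p}\phi(x/\delta)$. Defining $g := \chi_{Y'} * \phi_{\eps/4}$, with $\chi_{Y'}$ extended by zero off $Y'$, produces a $C^\infty$ function with the required pointwise properties: for $x \in X$, the ball $B(x,\eps/4)$ is contained in $Y'$ since each of its points lies at distance $\ge 3\eps/4$ from $\D_0\setminus Y$, so $g(x) = \int \phi_{\eps/4} = 1$; and for $x \notin Y$, the ball $B(x,\eps/4)$ is disjoint from $Y'$, so $g(x) = 0$.

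For the derivative bound, differentiating under the convolution yields
$$\partial^\alpha g = \chi_{Y'} * \partial^\alpha \phi_{\eps/4},$$
so that
$$\|\partial^\alpha g\|_\infty \le \|\partial^\alpha \phi_{\eps/4}\|_{L^1} = (\eps/4)^{-|\alpha|}\, \|\partial^\alpha \phi\|_{L^1}.$$
The only remaining point, which is also the sole technical obstacle, concerns the growth of $\|\partial^\alpha \phi\|_{L^1}$ in $|\alpha|$: a generic compactly supported smooth bump has these norms growing faster than geometrically, so to match the announced bound $C(C/\eps)^j$ with an absolute $C$ one either observes that only finitely many derivatives (of order $\le s_*$) are ever needed in the applications and absorbs the dependence into the absolute constant convention adopted in the paper, or one fixes $\phi$ from a suitable Gevrey class so as to secure the geometric estimate $\|\partial^\alpha \phi\|_{L^1}\le C^{|\alpha|+1}$ directly.
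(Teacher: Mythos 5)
Your construction is essentially the paper's one-line argument (mollification of a characteristic function with a $\cC^\infty$ approximate identity supported in a ball of radius $\lesssim\eps$), filled out with the intermediate set $Y'$ that the paper's phrasing glosses over. One caveat on your closing remark: a Gevrey-class bump does \emph{not} deliver the geometric bound $\|\partial^\alpha\phi\|_{L^1}\le C^{|\alpha|+1}$ (that estimate would force analyticity, hence $\phi\equiv 0$); the correct resolution is your first alternative — only derivatives of order $\le s_*$ are ever invoked, and the paper's convention allows the constant to depend on $s_*$.
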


\begin{proof}
This is  a classical result obtained by convoluting the characteristic function of $X$
with a $\cC^\infty$-approximation of the Dirac-delta supported in a ball of radius $\le  \frac{\eps}2$.
\end{proof}


\begin{thebibliography}{99}
  
\bibitem{Arn}
V.I. Arnold.
 \newblock {Mathematical methods in
 classical mechanics, 3d edition}.
 \newblock
 Springer-Verlag, Berlin, 2006.
   
\bibitem{Bam03}
D.~Bambusi. \emph{Birkhoff normal form for some nonlinear {PDE}s}, Comm. Math.
  Physics \textbf{234} (2003), 253--283.
  
\bibitem{BG06}
{\rm D. Bambusi and B. Gr\'ebert}.
{\em Birkhoff normal form for PDE's with tame modulus}. Duke Math. J.  135  no. 3 (2006), 507-567.


  \bibitem{BB12} M. Berti, P.  Bolle.
   \emph{ Sobolev quasi periodic solutions of multidimensional wave equations with a multiplicative potential},
    Nonlinearity \textbf{25} (2012),   2579-2613.
    

\bibitem{BB13}
M. Berti and P. Bolle. \newblock Quasi-periodic solutions with Sobolev regularity of NLS on $\T^d$ and a multiplicative potential. \newblock{\em J. European Math. Society}, \textbf{15}  (2013) 229-286.

  
\bibitem{BoK}
A. I. Bobenko and   S. B. Kuksin.
\newblock
The nonlinear {K}lein-{G}ordon equation on an interval
as a perturbed {S}ine-{G}ordon
equation.
\newblock{\em Comment. Math. Helv.},
\textbf 70, 1995, 63-112.
  
  \bibitem{bou95}   J. Bourgain. \emph{Construction of approximative and 
  almost-periodic solutions of perturbed linear Schr\"odinger and
wave equations},   \newblock{\em GAFA} \textbf 6, (1995), 201-235.

 \bibitem{B1}   J. Bourgain. \emph{
Quasi-periodic solutions of {H}amiltonian perturbations
of 2{D} linear {S}h\"odinger equation},
 \newblock{\em Ann. Math.} \textbf148 (1998), 363-439.
 
 \bibitem{B2}   J. Bourgain. \emph{Green's function estimates for lattice {S}chr\"odinger
operators and applications},
 \newblock{\em Annals of Mathematical Studies}, Princeton, 2004. 
 
 
 
 
 
 
 \bibitem{Cr}  W. Craig. \emph{Probl\`emes de Petits Diviseurs dans
les \'Equations aux D\'eriv\'ees Partielles},
 \newblock{\em Panoramas et Synth\`eses}, 
 Soci\'et\'e Math\'ematique de France, 2000.
 
 \bibitem{E88}
L.H. Eliasson.
\newblock Perturbations of stable invariant tori for Hamiltonian systems.
\newblock{\em Annali della Scoula Normale Superiore di Pisa}  \textbf{15}  (1988),  115-147.  

   
  \bibitem{E98}
 L.H   Eliasson. \emph{Perturbations of linear quasi-periodic systems}, in 
   \newblock{\em  Dynamical Systems and Small Divisors (Cetraro, Italy, 1998)}, 
   1-60, Lect. Notes Math. 1784, Springer, 2002.


  
  \bibitem{E01}
 L.H   Eliasson. \emph{
 Almost reducibility of linear quasi-periodic systems}, in 
   \newblock{\em  Smooth ergodic theory and its applications (Seattle, WA, 1999)}, 
   679-705, Proc. Sympos. Pure Math., 69, Amer. Math. Soc., Providence, RI,  2001. 
   
   

\bibitem{EGK1} L.H. Eliasson, B. Gr\'ebert and  S.B. Kuksin. \newblock KAM for the non-linear Beam equation 1:
 small-amplitude solutions. \newblock{\em arXiv arXiv:1412.2803v3}.   


  


\bibitem{EK08}
L.H. Eliasson and  S.B. Kuksin.
\newblock Infinite T\"oplitz-Lipschitz matrices and operators.
\newblock{\em Z. Angew. Math. Phys. }  \textbf{59}  (2008), 24-50.   


\bibitem{EK09}
L.H. Eliasson and  S.B. Kuksin.
\newblock On reducibility of Schr\"odinger equations with quasiperiodic in time potentials.
\newblock{\em Comm. Math. Phys.}  286  (2009),  no. 1, 125--135.
     
\bibitem{EK10}
L.H. Eliasson and  S.B. Kuksin.
\newblock KAM for the nonlinear Schr\"odinger equation.
\newblock{\em Ann. Math } \textbf{172} (2010), 371-435.   

  

\bibitem{GY99}    {J. Geng and J. You}.
      \newblock {Perturbations of lower dimensional tori for Hamiltonian systems.}
  \newblock{\em J. Diff. Eq.},
  \textbf{152} (1999), 1--29.

   
\bibitem{GY06a}    {J. Geng and J. You}.
      \newblock {A {KAM} theorem for {H}amiltonian partial differential
              equations in higher dimensional spaces.}
  \newblock{\em Comm. Math. Phys.},
  \textbf{262} (2006), 343--372.

  

\bibitem{GY06b}
   {J. Geng and J. You}.
  \newblock   {K{AM} tori for higher dimensional beam equations with constant
              potentials}.
    \newblock{\em Nonlinearity},
 \textbf{19} (2006), {2405--2423}.
 
 
 \bibitem{Har}
 G. H. Hardy and  E. M. Wright. 
  \newblock An Introduction to the Theory of Number.
   \newblock{\em  Oxford University Press}, Oxford, 2008.
  
  
   \bibitem{Ho} {L. H\"ormander}.
  \newblock {Note on H\"older estimates. The boundary problem of physical geodesy}.
      \newblock{\em Arch. Rational Mech. Anal.},
 \textbf{62} (1976), {1--52}.

  

 \bibitem{KrP}
S. Krantz and H. Parks.
 \newblock A premier of real analytic functions.
 \newblock{\em  Birkh\"auser},  Basel, 2002.
 


\bibitem{K87}
S.~B. Kuksin.
 \newblock Hamiltonian perturbations of infinite-dimensional linear
  systems with an imaginary spectrum.
   \newblock{\em Funct. Anal. Appl.}, \textbf{21} (1987), 192--205.

\bibitem{K93}
S. B. Kuksin.
 \newblock Nearly integrable infinite-dimensional Hamiltonian systems. 
 \newblock{\em  Lecture Notes in Mathematics, 1556.} Springer-Verlag, Berlin, 1993.
 
 \bibitem{K00}
S. B. Kuksin.
 \newblock  Analysis of {H}amiltonian {PDE}s.
 \newblock{   Oxford University Press}, 
  2000.


 
 

 \bibitem{KP}
S. B. Kuksin and J. P\"oschel.
 \newblock Invariant Cantor manifolds of quasi-periodic oscillations for a nonlinear Schr\"odinger equation. 
 \newblock{\em  Ann. Math.} \textbf{143} (1996), 149--179. 
 
  
 


 \bibitem{MS}
J. Moser and C. L. Siegel.
 \newblock Lectures on celestial mechanics.
 \newblock{\em  Springer-Verlag},  Berlin, 1971.
 
\bibitem{P}
J. P\"oschel.
 \newblock  Quasi-periodic solutions for a nonlinear wave equation.
 \newblock{\em  Comment. Math. Helv.}   \textbf{71} (1996) 269--296.
 
 
 \bibitem{PP1}
C. Procesi and M. Procesi.
 \newblock
A normal form of the nonlinear SchrÃÂÃÂÃÂÃÂdinger equation with analytic non--linearities,
 \newblock{\em 
Comm. Math. Phys} \textbf{312} (2012), 501-557. 


 \bibitem{PP2}
C. Procesi and M. Procesi.
 \newblock  A KAM algorithm for the resonant nonlinear Schr\"odinger equation,
\newblock{\em  preprint 2013}.


 
 
 \bibitem{WM}
 W.-M. Wang.
 \newblock Energy supercritical nonlinear Schr\"odinger equations: Quasiperiodic solutions.
 \newblock{\em  Duke Math J., in press}.



\end{thebibliography}
\end{document}